\documentclass[a4paper,reqno,12pt]{amsart}
\usepackage{amscd,amsmath,amssymb}
\usepackage{pstricks}
\usepackage{latexsym,amsbsy,mathrsfs}
\usepackage{xy}
\usepackage{xypic}
\usepackage{pgf,tikz}
\usepackage{float}
\usepackage{appendix}
\usepackage{booktabs}%use Table
\usepackage{hyperref}

\newtheorem{thm}{Theorem}[section]
\newtheorem{lem}[thm]{Lemma}
\newtheorem{cor}[thm]{Corollary}
\newtheorem{prop}[thm]{Proposition}

\newtheorem{innercustomthm}{{\bf Theorem}}
\newenvironment{customthm}[1]
{\renewcommand\theinnercustomthm{#1}\innercustomthm}
{\endinnercustomthm}

\newtheorem*{innercustomconj}{{\bf GLS Conjecture}}
\newenvironment{customconj}[1]
  {\innercustomconj}
  {\endinnercustomconj}

\usepackage{latexsym,todonotes}

\theoremstyle{definition}
\newtheorem{example}[thm]{Example}

\newtheorem{defn}[thm]{Definition}

\newtheorem{conj}[thm]{Conjecture}
\newtheorem{rem}[thm]{Remark}

\numberwithin{equation}{thm}

\define \cs {\mathcal S}

\normalbaselines
\begin{document}
\title[Representation-tame GLS algebras and a revised GLS conjecture]
{Representation-tame Gei\ss-Leclerc-Schr\"{o}er algebras and a revised GLS conjecture on root systems}

\author[Qiang Dong]{Qiang Dong}
\address{School of Mathematics and Statistics, Fujian Normal University, Fuzhou 350117, 
P.R.China}
\email{dongqiang@fjnu.edu.cn}
\author[Zengqiang Lin]{Zengqiang Lin\textsuperscript{*} }
\thanks{\textsuperscript{*}Corresponding author}
\address{School of Mathematical Sciences, Huaqiao University, Quanzhou 362021, P.R.China}
\email{zqlin@hqu.edu.cn}

\author[Ming Lu]{Ming Lu}
\address{Department of Mathematics, Sichuan University, Chengdu 610064, P.R.China}
\email{luming@scu.edu.cn}

	\author[Shiquan Ruan]{Shiquan Ruan}
	\address{ School of Mathematical Sciences,
		Xiamen University, Xiamen 361005, P.R.China}
	\email{sqruan@xmu.edu.cn}

\subjclass[2020]{Primary  16G10, 16G20, 16G70}
\keywords{Gei\ss-Leclerc-Schr\"{o}er algebras (GLS algebras), tame representation type, $\tau$-locally free modules, Gei\ss-Leclerc-Schr\"{o}er conjecture (GLS conjecture), equivariantization}
\begin{abstract}
Using Galois covering theory and equivariant techniques, we classify
all connected representation-tame Gei\ss--Leclerc--Schr\"oer (GLS) algebras
in terms of their defining triples $(C,D,\Omega)$.
We then study the GLS algebras $H(\widetilde{CD}_n)$ with minimal
symmetrizers, where $n\geq 2$ and
$\widetilde{CD}_2=\widetilde{B}_2$.
%These algebras are not string algebras in the sense of
%Butler and Ringel, they
By realizing these algebras as basic algebras of
$\mathbb{Z}_2$-skew group algebras of the string algebras
$H(\widetilde{C}_{2n-2})$,  we introduce extended strings and extended
bands to parameterize the connected components of the
Auslander--Reiten quivers of $H(\widetilde{CD}_n)$ and determine
their shapes. We further classify the indecomposable
$\tau$-locally free modules over representation-tame GLS algebras
of affine type and show that their rank vectors form precisely
the set of positive roots together with explicitly described
non-root vectors in the positive cone of the root lattice.
This yields a precise revision of the GLS conjecture for
representation-tame GLS algebras of affine type.
\end{abstract}

\dedicatory{Dedicated to Professor Yanan Lin
on the occasion of his 70th birthday}

\maketitle

\setcounter{tocdepth}{1}
	
	\tableofcontents
    
	\section{Introduction}
	Given a symmetrizable generalized Cartan matrix $C$, a symmetrizer $D$ of
	$C$, and an orientation $\Omega$, Gei\ss, Leclerc, and Schr\"oer
	introduced a finite-dimensional $1$-Iwanaga--Gorenstein algebra
	\[
		H(C,D,\Omega),
	\]
	now commonly called a GLS algebra; see
	\cite{GLSIII,GLSI,GLSII,GLSIV,GLSV}.  These algebras extend several
	features of the representation theory of quivers, and provide a natural connection among
	representation theory, Lie theory, and cluster algebras.

	By Drozd's trichotomy, every finite-dimensional algebra over an
	algebraically closed field is representation-finite, tame, or wild; see
	\cite{Dro1977,Dro1980}.  The representation type of $H(C,D,\Omega)$
	depends on the Cartan matrix, the symmetrizer, and, in some cases, the
	orientation.  The representation-finite GLS algebras were classified in
	\cite{GLSI}.  Our first main result completes the tame side of the
	classification.  The proof uses explicit Galois coverings of GLS algebras
	and convex hypercritical subcategories in these coverings as obstructions
	to tameness; see \cite{NS1997,Unger1990} for hypercritical algebras and
	\cite{LS,delaPena1990} for the covering-theoretic criteria used below.

	\begin{customthm}{\bf A}[Theorem~\ref{Tame}]
		\label{thm:A}
		Let $C$ be a connected symmetrizable generalized Cartan matrix,
		$D$ a symmetrizer of $C$, and $\Omega$ an orientation.
		Then $H(C,D,\Omega)$ is representation-tame if and only if
		one of the following conditions holds:
		\begin{enumerate}
			\item $C$ is of Dynkin type $A_2$, and
			$D=\operatorname{diag}(4,4)$;
			\item $C$ is of Dynkin type $A_4$,
			$D=\operatorname{diag}(2,2,2,2)$, and $\Omega$ is a linear
			orientation;
			\item $C$ is of affine type
			$\widetilde{A}_{n},\widetilde{C}_n,\widetilde{D}_n,
			\widetilde{CD}_n,\widetilde{E}_6,\widetilde{E}_7,
			\widetilde{E}_8$ or $\widetilde{B}_2$, and $D$ is minimal.
		\end{enumerate}
	\end{customthm}

	The simply-laced affine cases in Theorem~\ref{thm:A} are hereditary, while
	the GLS algebras of type $\widetilde C_n$ with minimal symmetrizer were
	studied in \cite{HLS2023}.  Thus the new Auslander--Reiten and rank-vector
	analysis is concentrated on the remaining nonsimply-laced affine family.
	We use the convention
	\[
		\widetilde{CD}_2=\widetilde B_2
	\]
	and consider $H(\widetilde{CD}_n)$ for all $n\geq 2$, with minimal
	symmetrizer.  For the orientation  fixed in Sections~\ref{sec:CDn}--\ref{Mainresult1},
	\[
		H(\widetilde{CD}_{n})
		={\bf k}Q(\widetilde{CD}_{n})/
		\langle\varepsilon_{1}^2\rangle,
	\]
	where the quiver $Q(\widetilde{CD}_n)$ is depicted in
	Figure~\ref{quiverForCD}.

	\begin{center}
		\begin{figure}[h]
			\begin{tikzpicture}
				\node () at (-2.1,0) {\(Q(\widetilde{CD}_{n})\):};
				\node (1) at (0.7,0) {1};
				\draw[-latex] (0.5,0.15) .. controls (-0.2,0.7) and (-0.2,-0.7) .. (0.5,-0.15);
				\node () at (-0.4,0) {\(\varepsilon_{1}\)};
				\node (2) at (2.2,0) {2};
				\node (3) at (3.7,0) {3};
				\node (4) at (5.2,0) {};
				\node () at (5.95,0) {\(\cdots\)};
				\node (5) at (6.7,0) {};
				\node (6) at (8.2,0) {\small{\(n-1\)}};
				\node (7) at (9.7,0.8) {\(0^{+}\)};
				\node (8) at (9.7,-0.8) {\(0^{-}\)};
				\draw[->] (1) to (2);
				\draw[->] (2) to (3);
				\draw[->] (3) to (4);
				\draw[->] (5) to (6);
				\draw[->] (6) to (7);
				\draw[->] (6) to (8);
				\node () at (1.45,0.2) {\(\gamma_{1}\)};
				\node () at (2.95,0.2) {\(\gamma_{2}\)};
				\node () at (4.45,0.2) {\(\gamma_{3}\)};
				\node () at (7.2,0.2) {\tiny{\(\gamma_{n-2}\)}};
				\node () at (8.9,0.65) {\(\gamma_{0^{+}}\)};
				\node () at (8.9,-0.65) {\(\gamma_{0^{-}}\)};
			\end{tikzpicture}
			\caption{The quiver of GLS algebra $H(\widetilde{CD}_{n})$}
			\label{quiverForCD}
		\end{figure}
	\end{center}

	The structural link with string algebras is provided by the observation
	that $H(\widetilde{CD}_n)$ is a basic algebra of the
	$\mathbb Z_2$-skew group algebra of $H(\widetilde C_{2n-2})$.
	The latter is a string algebra, whereas $H(\widetilde{CD}_n)$ itself is
	not.  Motivated by the string and band combinatorics of
	$H(\widetilde C_{2n-2})$, we introduce \emph{extended strings} and
	\emph{extended bands}.  Equivariantization then allows us to transport
	the relevant Auslander--Reiten information from
	$H(\widetilde C_{2n-2})$ to $H(\widetilde{CD}_n)$ and to determine the
	connected components of its Auslander--Reiten quiver.

	\begin{customthm}{\bf B}[Theorem~\ref{MainTheorem}]
		The Auslander--Reiten quiver of the GLS algebra
		$H(\widetilde{CD}_{n})$ consists of the following components:
		\begin{enumerate}
			\item one component containing all indecomposable preprojective
			modules and all indecomposable preinjective modules;
			\item components of type $\mathbb{Z}D_{\infty}$, indexed by
			$1$-ex-string pairs;
			\item components of type $\mathbb{Z}A_{\infty}^{\infty}$, indexed
			by non-symmetric full ex-strings;
			\item one stable tube of rank $n-1$;
			\item stable tubes of rank $2$, indexed by $2$-ex-string pairs;
			\item homogeneous tubes parametrized by ex-band data and the
			corresponding scalar parameters.
		\end{enumerate}
	\end{customthm}
	The indexing in this statement is understood modulo the equivalence
	relations specified precisely in Theorem~\ref{MainTheorem}.

	We next turn to the root--module correspondence for GLS algebras.
	The relevant class is that of indecomposable $\tau$-locally free modules,
	and the corresponding invariant is the rank vector rather than the
	ordinary dimension vector.  In Dynkin type, Gei\ss, Leclerc, and
	Schr\"oer proved that the rank map induces a canonical bijection between
	the isomorphism classes of indecomposable $\tau$-locally free modules and
	the positive roots of the simple Lie algebra $\mathfrak g(C)$ \cite{GLSI}.  They
	subsequently formulated the following conjecture for arbitrary
	symmetrizable generalized Cartan matrices; see \cite[Conjecture~5.3]{GLSII}.

	\begin{customconj}{}	There exists a canonical bijection between the set of positive roots
		of the Kac--Moody Lie algebra $\mathfrak g(C)$ attached to $C$ and the
		set of rank vectors of indecomposable $\tau$-locally free
		$H(C,D,\Omega)$-modules.
	\end{customconj}

Identifying each simple root $\alpha_i$ with the standard basis
vector corresponding to vertex $i$, the GLS conjecture can be
written as
\[
\bigl\{\underline{\operatorname{rank}}(M)
\mid M \text{ is indecomposable and } \tau\text{-locally free}\bigr\}
=\Delta^+,
\]
where $\Delta^+$ denotes the set of positive roots of
$\mathfrak g(C)$.
	 The conjecture holds for affine type
	$\widetilde C_n$ with minimal symmetrizer \cite{HLS2023}.  Lin and Su
	proved that, for an arbitrary affine GLS algebra, every positive root is
	the rank vector of an indecomposable $\tau$-locally free module, but they
	also constructed affine examples with rank vectors that are not roots;
	see \cite[Theorems~2--3]{LS2025}.  Thus the failure of the original
	conjecture in affine type lies in the converse inclusion: additional
	rank vectors may occur.

We determine all additional non-root rank vectors
in the representation-tame affine cases.
The simply-laced affine cases and type $\widetilde C_n$
are already known, so the remaining family is
$\widetilde{CD}_n$ with $n\geq2$.
For any orientation, Lemma~\ref{lem:long-mouth-pair}
constructs a stable tube of rank $2$ whose mouth modules
have positive long real rank vectors $\beta_1,\beta_2$
satisfying $\beta_1+\beta_2=2\delta$.
These vectors yield the following explicit revision
of the GLS conjecture.
Here $\Delta^+$ denotes the set of positive roots
of $\mathfrak g(C)$, and $\delta$ is the minimal
positive imaginary root.

	% We determine these additional rank vectors in all representation-tame
	% affine cases.  The only new cases that require analysis are
	% $\widetilde{CD}_n$ for $n\geq2$; the simply-laced affine cases and the
	% type $\widetilde C_n$ case are already known.  Let $\delta$ be the
	% minimal positive imaginary root, and let
	% \[
	% 	\Delta_{\mathrm{re},\mathrm{reg}\text{-}\mathrm{sim}}^{\ell,+}
	% \]
	% denote the set of minimal positive long real roots, with respect to the
	% standard root order, that occur as rank vectors of $\tau$-locally free
	% modules lying at the mouths of stable tubes.  Define
	% \[
	% 	\widetilde{\Delta}^{+}
	% 	=
	% 	\Delta^{+}\cup
	% 	\left\{
	% 		\alpha+(2r+1)\delta
	% 		\ \middle|\
	% 		\alpha\in
	% 		\Delta_{\mathrm{re},\mathrm{reg}\text{-}\mathrm{sim}}^{\ell,+},
	% 		\ r\in\mathbb Z_{\geq0}
	% 	\right\}.
	% \]
	% The vectors added to $\Delta^+$ need not themselves be roots.  The
	% precise revised statement is the following.

	\begin{customthm}{\bf C}[Theorem~\ref{Bijection}]
    \label{thm:C}
    Let $C$ be a Cartan matrix of affine type, and let
$H=H(C,D,\Omega)$ be a representation-tame GLS algebra.
\begin{enumerate}
  \item[(1)]
  If $C$ is simply laced or of type $\widetilde C_n$, then
  \[
    \bigl\{
      \underline{\operatorname{rank}}(M)
      \mid M \text{ is indecomposable and }
      \tau\text{-locally free}
    \bigr\}
    =\Delta^+.
  \]

  \item[(2)]
  If $C$ is of type $\widetilde{CD}_n$ with $n\geq2$,
  where $\widetilde{CD}_2=\widetilde B_2$, then
  \[
    \begin{aligned}
      &\bigl\{
        \underline{\operatorname{rank}}(M)
        \mid M \text{ is indecomposable and }
        \tau\text{-locally free}
      \bigr\}\\
      &\qquad
      =\Delta^+\cup
      \bigl\{
        \beta_i+(2r+1)\delta
        \mid i=1,2,\ r\in\mathbb Z_{\geq0}
      \bigr\},
    \end{aligned}
  \]
  where $\beta_1,\beta_2$ are obtained by the construction
  in the proof of Lemma~\ref{lem:long-mouth-pair}.
\end{enumerate}
	\end{customthm}

Thus the original GLS conjecture holds in case~(1).
In case~(2), its failure is measured precisely by
the two families $\beta_i+(2r+1)\delta$,
with $i=1,2$ and $r\in\mathbb Z_{\geq0}$.
Theorem~\ref{thm:C} therefore gives an exact description
of all non-root rank vectors in the
representation-tame affine setting.

	The paper is organized as follows.
	Section~\ref{sec:prelim} reviews the string algebra
	$H(\widetilde C_{2n-2})$ and its module category.
	Section~\ref{sec:CDn} introduces extended strings and extended bands for
	$H(\widetilde{CD}_n)$. Sections~\ref{sec:equivariantization} and~\ref{Mainresult1} develop the
	$\mathbb Z_2$-equivariant correspondence and determine the
	Auslander--Reiten quiver. Section~\ref{sec:galois} constructs locally bounded Galois coverings for GLS algebras and establishes criteria for detecting wildness. Section~\ref{sec:tame-class} proves the classification of
	representation-tame GLS algebras.
    Section~\ref{sec:tau-free} 
    classifies the indecomposable
$\tau$-locally free modules over representation-tame GLS algebras of affine type, determines their rank vectors, and establishes a revised version of the GLS conjecture.
	The appendices contain  explicit equivariant calculations for stable tubes, string modules
	and band modules.

	\vspace{2mm}
	\noindent{\bf Acknowledgments.}
	Q. Dong is supported by the National Natural Science Foundation of China
	(No. 12301054).
	Z. Lin is supported by the National Natural Science Foundation of China
	(No. 12471035) and Fujian Provincial Natural Science Foundation of China
	(No. 2024J01088).
	M. Lu is partially supported by the National Natural Science Foundation
	of China (No. 12671049, 12631002).
	S. Ruan is partially supported by Fundamental Research Funds for Central
	Universities of China (No. 20720250059), Fujian Provincial Natural
	Science Foundation of China (No. 2024J010006), and the National Natural
	Science Foundation of China (No. 12271448).

	%%%%%%%%%%%%%

%%%%%%%%%%%%
\section{The string algebra $H(\widetilde{C}_{2n-2})$}\label{sec:prelim}

%Finally, we fix the notation used throughout the paper. 
Let \(\mathbf{k}\) be an algebraically closed field of characteristic zero, with multiplicative group \(\mathbf{k}^\ast = \mathbf{k}\setminus\{0\}\). Multiplication in algebras and path algebras of quivers is read from left to right, and all modules are right modules unless stated otherwise. For a basic finite-dimensional \(\mathbf{k}\)-algebra $A$, let $\operatorname{mod}A$ stand for the category of finite-dimensional right $A$-modules, and $\operatorname{ind}A$ for its full subcategory consisting of indecomposable objects. The Auslander–Reiten translation is written as \(\tau = D\operatorname{Tr}\).

\subsection{GLS algebras}\label{GLSalgebra}
First, we recall the definition of GLS algebras from \cite[Section 1.4]{GLSI}.

A matrix $C=(c_{ij})\in M_n(\mathbb{Z})$ is a {\emph{symmetrizable generalized Cartan matrix}} if the following hold:
\begin{enumerate}
    \item[(1)] $c_{ii}=2$ for all $1\leq i\leq n$;
    \item[(2)] $c_{ij}\leq0$ for all $1\leq i\neq j\leq n$, and $c_{ij}\neq0$ if and only if $c_{ji}\neq0$;
    \item[(3)] there is a diagonal integer matrix $D=\operatorname{diag}(d_1,d_2,\dots,d_n)$ with $d_i\geq1$ for all $1\leq i\leq n$ such that $DC$ is symmetric.
\end{enumerate}
The matrix $D$ appearing in (3) is called a {\emph{symmetrizer}} of $C$. We say that a symmetrizer $D$ is {\emph{minimal}} if $d_1+d_2+\cdots+d_n$ is minimal. Henceforth, by a Cartan matrix we always mean a symmetrizable generalized Cartan matrix.

Let $C$ be a Cartan matrix. An {\emph{orientation}} of $C$ is a subset $\Omega\subseteq\{1,2,\dots,n\}\times\{1,2,\dots,n\}$ such that the following hold:
\begin{enumerate}
    \item $\{(i,j),(j,i)\}\cap\Omega\neq\varnothing$ if and only if $c_{ij}<0$;
    \item for each sequence $\{(i_1,i_2),(i_2,i_3),\dots,(i_t,i_{t+1})\}$ with $t\geq1$ and $(i_s,i_{s+1})\in\Omega$ for all $1\leq s\leq t$, we have $i_1\neq i_{t+1}$.
\end{enumerate}
For an orientation $\Omega$ of $C$, let $Q:=Q(C,\Omega)$ be the quiver with vertex set 
$$Q_0:=\{1,2,\dots,n\},$$ and arrow set
$$Q_1:=\{\alpha_{ij}^{(k)}:i\rightarrow j\mid(i,j)\in\Omega,1\leq k\leq \textup{gcd}(|c_{ij}|,|c_{ji}|)\}\cup\{\varepsilon_i:i\rightarrow i\mid1\leq i\leq n\}.$$

If $\textup{gcd}(|c_{ij}|,|c_{ji}|)=1$, we also write $\alpha_{ij}$ instead of $\alpha_{ij}^{(1)}$. We call $Q$ a quiver of type $C$. Let $Q^{\textup{o}}:=Q^{\textup{o}}(C,\Omega)$ be the quiver obtained from $Q$ by deleting all loops $\varepsilon_i$. Clearly, $Q^{\textup{o}}$ is a finite acyclic quiver.

\begin{defn}[{\cite[Section 1.4]{GLSI}}]
Let $C$ be a Cartan matrix, $D$ a symmetrizer of $C$, and $\Omega$ an orientation. The {\emph{GLS algebra}} associated to $(C,D,\Omega)$ is
$$H(C,D,\Omega):={\bf k}Q(C,\Omega)/I,$$
where $I$ is the ideal of ${\bf k}Q(C,\Omega)$ generated by the following relations:
\begin{itemize}
    \item[(H1)] for each vertex $i$, the nilpotency relation $$\varepsilon_i^{d_i}=0;$$
    \item[(H2)] for each $(i,j)\in\Omega$ and each $1\leq k\leq \textup{gcd}(|c_{ij}|,|c_{ji}|)$, the commutativity relation
    $$\varepsilon_i^{\frac{|c_{ji}|}{\textup{gcd}(|c_{ij}|,|c_{ji}|)}}\alpha_{ij}^{(k)}=\alpha_{ij}^{(k)}\varepsilon_j^{\frac{|c_{ij}|}{\textup{gcd}(|c_{ij}|,|c_{ji}|)|}}.$$
\end{itemize}
We write $H(C,\Omega)$ when $D$ is the minimal symmetrizer.  
\end{defn}

It is known that every GLS algebra is finite-dimensional and $1$-Iwanaga–Gorenstein. For simplicity, we refer to $H(C,D,\Omega)$ as a GLS algebra of type~$C$.

\subsection{The GLS algebra of type $\widetilde{C}_{2n-2}$}\label{GLSalgebraofC}

In this section, we consider the GLS algebra $$H(\widetilde{C}_{2n-2})={\bf{k}}Q(\widetilde{C}_{2n-2})/\langle\varepsilon_{1}^2,\varepsilon_{-1}^2\rangle$$ of type $\widetilde{C}_{2n-2}$, where the quiver is depicted in Figure \ref{quiverForC}. 

\begin{figure}[h]
\centering
\begin{tikzpicture}
\node () at (-2,0) {$Q(\widetilde{C}_{2n-2})$:};
\node () at (1.75,0.94) {$\beta_1$};
\node () at (3.25,0.94) {$\beta_2$};
\node () at (4.75,0.94) {$\beta_3$};
\node () at (7,0.94) {${\beta_{n-2}}$};
\node () at (9.2,0.5) {${\beta_{n-1}}$};
\node () at (1.75,-0.46) {$\beta_{-1}$};
\node () at (3.25,-0.46) {$\beta_{-2}$};
\node () at (4.75,-0.46) {$\beta_{-3}$};
\node () at (9.2,-0.5) {${\beta_{1-n}}$};
\node () at (7,-0.46) {${\beta_{2-n}}$};
\node (1) at (1,0.7) {$1$};
\node (2) at (2.5,0.7) {$2$};
\node (3) at (4,0.7) {$3$};
\node (4') at (5.5,0.7) {};
\node () at (6,0.7) {$\cdots$};
\node (4) at (6.5,0.7) {};
\node (5) at (8,0.7) {\tiny{$n-1$}};
\draw [->] (1) -- (2);
\draw [->] (2) -- (3);
\draw [->] (3) -- (4');
\draw [->] (4) -- (5);
\node (-1) at (1,-0.7) {\tiny{$-1$}};
\node (-2) at (2.5,-0.7) {\tiny{$-2$}};
\node (-3) at (4,-0.7) {\tiny{$-3$}};
\node (-4') at (5.5,-0.7) {};
\node () at (6,-0.7) {$\cdots$};
\node (-4) at (6.5,-0.7) {};
\node (-5) at (8,-0.7) {\tiny{$1-n$}};
\draw [->] (-1) -- (-2);
\draw [->] (-2) -- (-3);
\draw [->] (-3) -- (-4');
\draw [->] (-4) -- (-5);
\node (n) at (9.5,0) {$0$};
\draw [->] (-5) -- (n);
\draw [->] (5) -- (n);
\draw[-latex] (0.5,0.85) .. controls (-0.2,1.3) and (-0.2,0) .. (0.5,0.55);
\draw[-latex] (0.5,-0.55) .. controls (-0.2,0) and (-0.2,-1.3) .. (0.5,-0.85);
\node () at (-0.4,0.7) {$\varepsilon_1$};
\node () at (-0.4,-0.7) {$\varepsilon_{-1}$};
\end{tikzpicture}
\caption{The quiver of GLS algebra $H(\widetilde{C}_{2n-2})$}
\label{quiverForC}
\end{figure}

The GLS algebra $H(\widetilde{C}_{2n-2})$ is a string algebra. To describe the category of $H(\widetilde{C}_{2n-2})$-modules, we recall fundamental facts about string algebras following \cite{BR1987,HLS2023}.

Let $Q$ be a bound quiver with an admissible ideal $I$ such that $A={\bf{k}}Q/I$ is a string algebra. 
For any arrow $\alpha:i\to j$ in $Q$, we denote its source and target by $s(\alpha)=i$ and $t(\alpha)=j$, respectively.
We write $\alpha^{-1}$ for the {\emph{formal inverse}} of $\alpha$, with $s(\alpha^{-1})=j$ and $t(\alpha^{-1})=i$, and define $(\alpha^{-1})^{-1}=\alpha$. 

A {\emph{word}} $w$ of length $m\geq1$ is a formal composition $c_1c_2\cdots c_m$, where each $c_i$ is either an arrow in $Q$ or its formal inverse, satisfying $s(c_{i+1})=t(c_i)$ for $1\leq i< m$. We define the source and target of $w$ as $s(w)=s(c_1)$ and $t(w)=t(c_m)$, respectively. It is clear that a word can be viewed as a walk in $Q$.

For a word $w=c_1c_2\cdots c_m$, we define 
\begin{itemize}
    \item[-] its inverse $w^{-1}:=c_m^{-1}\cdots c_{2}^{-1} c_1^{-1}$;
    \item[-] its cyclic shift $w_{(i)}:=c_{i}c_{i+1}\cdots c_mc_1\cdots c_{i-1}$ for $1\leq i\leq m$ if $t(c_m)=s(c_1)$;
    \item[-] its truncation $w_{[i,j]}:=c_i c_{i+1}\cdots c_j$ for $1\leq i\leq j\leq m$.
\end{itemize}
For convenience, we set $w_{[i,i-1]}$ to be the primitive idempotent $e_{s(c_{i})}=e_{t(c_{i-1})}$, and write $w_{\geq i}=w_{[i,m]}$ and $w_{\leq j}=w_{[1,j]}$. 

A word $w=c_1c_2\cdots c_m$ of length $m\geq1$ is called a {\emph{string}} if $c_{i+1}\neq c_i^{-1}$ for all $1\leq i<m$, and no subword $c_ic_{i+1}\cdots c_{i+t}$ nor its inverse belongs to $I$. In addition, for each vertex $u$ of $Q$, there are two {\emph{trivial strings}} $1_{(u,1)}$ and $1_{(u,-1)}$ of length $0$, where $1_{(u,1)}^{-1}=1_{(u,-1)}$ and $1_{(u,-1)}^{-1}=1_{(u,1)}$. A non-trivial string $w$ is called a {\emph{band}} if its source and target coincide, and each power $w^r$ ($r\geq2$) is a string, but $w$ itself is not a proper power of a shorter string.

We write $\textup{St}(A)$ \big(resp. $\textup{Ba}(A)$\big) for the set of all strings (resp. bands) in $A$. Let $\rho$ be the equivalence relation on $\textup{St}(A)$ identifying each word with its inverse, and let $\rho'$ be the equivalence relation on $\textup{Ba}(A)$ identifying each band with its cyclic shifts and their inverses. Choose complete sets {$\overline{\textup{St}}(A)$} and {$\overline{\textup{Ba}}(A)$} of representatives of $\textup{St}(A)$ and $\textup{Ba}(A)$ modulo $\rho$ and $\rho'$, respectively. We write $w\sim w'$ whenever $w$ is equivalent to $w'$ (in either context).

To describe the strings and bands in $H(\widetilde{C}_{2n-2})$, we first define the index set. Set
\begin{equation}\label{Index I}\mathcal{I}=\{\delta_1\delta_2\cdots\delta_m\mid m\in\mathbb{N},\;\;\delta_i=1\textup{\;or\;}-1\textup{\;for\;}1\leq i\leq m\}.
\end{equation}
For a sequence $\delta=\delta_1\delta_2\cdots\delta_m\in\mathcal{I}$, we call $m$ the length of $\delta$. The unique index of length $0$ is denoted by $\varnothing$. The multiplication on $\mathcal{I}$ is defined by concatenation
$$(\delta_1\delta_2\cdots\delta_{m})\circ(\delta'_1\delta'_2\cdots\delta'_{m'})=\delta_1\delta_2\cdots\delta_{m}\delta'_1\delta'_2\cdots\delta'_{m'},$$
with the convention that $\varnothing$ acts as the identity, i.e., $\delta\circ\varnothing=\varnothing\circ\delta=\delta$ for all $\delta\in\mathcal{I}$. Moreover, $\mathcal{I}$ carries a shift operator $[1]$ defined by cyclic permutation $$(\delta_1\delta_2\cdots\delta_m)[1]=\delta_2\cdots\delta_m\delta_1,$$ and an involution given by $$(\delta_1\delta_2\cdots\delta_m)^{-1}=(-\delta_m)\cdots(-\delta_2)(-\delta_1).$$ 
We take \begin{equation}\label{I-even}\mathcal{I}_{e}\subseteq\mathcal{I}
\end{equation}
to be the subset consisting of indices of even length.

For the quiver $Q(\widetilde{C}_{2n-2})$ as in Figure \ref{quiverForC}, throughout the paper we denote the string
$$\beta_{\textup{max}}:=\beta_1\beta_2\cdots\beta_{n-1}\beta_{1-n}^{-1}\cdots\beta_{-2}^{-1}\beta_{-1}^{-1},$$ 
and for $\delta=\delta_1\delta_2\cdots\delta_m\in\mathcal{I}$, denote the string
$$\xi(\delta):=\varepsilon_1^{\delta_1}\beta_{\textup{max}}\varepsilon_{-1}^{\delta_2}\beta_{\textup{max}}^{-1}\varepsilon_1^{\delta_3}\beta_{\textup{max}}\cdots\beta_{\textup{max}}^{(-1)^{m}}\varepsilon_{(-1)^{m+1}}^{\delta_{m}}.$$

Note that each word of positive length admits a unique predecessor and a unique successor (each being an arrow or a formal inverse), such that their compositions with the word do not belong to $I$. 

By the shape of $Q(\widetilde{C}_{2n-2})$ and its defining relations, every non-trivial string is a truncation $(\xi(\delta))_{[i,j]}$
for some $\delta=\delta_1\delta_2\cdots\delta_{2m}\in\mathcal{I}_{e}$ and $1\leq i\leq j\leq 4nm-2n-2m+2$, while every band is equivalent to one of the form $\xi(\delta)\cdot\beta_{\textup{max}}^{-1}$ for some $\delta\in\mathcal{I}_{e}$.

%%%%%%%%%
\subsection{The module category of the GLS algebra $H(\widetilde{C}_{2n-2})$}
The aim of this subsection is to recall the Auslander--Reiten
quiver of $\operatorname{mod}H(\widetilde{C}_{2n-2})$.
To simplify the notation used below, we write $Q:=Q(\widetilde{C}_{2n-2})$ throughout this subsection.

Let $w=c_1c_{2}\cdots c_{m}$ be a string with the corresponding walk
$$\xymatrix{x_1\ar@{-}[r]^{c_1}&x_{2}\ar@{-}[r]^{c_{2}}&\cdots\ar@{-}[r]^{c_{m-1}}&x_m\ar@{-}[r]^{c_m}&x_{m+1}.}$$
The {\emph{string module}} $M(w)$ defined by $w$ is the representation $\big((M(w)_u)_{u\in Q_0},(M(w)_\alpha)_{\alpha\in Q_1}\big)$, where the vector spaces
$$M(w)_u=\left\{\begin{array}{ll}
    \oplus_{x_i=u}{\bf{k}}x_i & \textup{if}\;u=x_i\;\textup{for some}\;1\leq i\leq m+1,  \\
    0 & \textup{otherwise,}
\end{array}\right.$$
and the linear maps
$$M(w)_{\alpha}(x_i)=\left\{\begin{array}{ll}
    x_{i+1} & \textup{if}\;\alpha=c_{i}, \\
    x_{i-1} & \textup{if}\;\alpha^{-1}=c_{i-1},\\
    0 & \textup{otherwise.}
\end{array}\right.$$
Obviously, $M(w)$ is isomorphic to $M(w^{-1})$ as $H(\widetilde{C}_{2n-2})$-modules for any string $w$, and $M(1_{(u,j)})$ is the simple representation corresponding to the vertex $u$.

Now we assume that $w=c_1c_{2}\cdots c_m$ is a band as follows. 
$$\xymatrix{x_0\ar@{-}[r]^{c_m}\ar@{-}[rrd]_{c_{1}}&x_{m-1}\ar@{-}[r]^{c_{m-1}}&\cdots\ar@{-}[r]^{c_{4}}&x_{3}\ar@{-}[r]^{c_{3}}&x_{2}\\
&&x_{1}\ar@{-}[rru]_{c_{2}}&&}$$
To avoid confusion in subsequent calculations, we always assume that multiplication in a band is anticlockwise.

Let $V$ be a ${\bf{k}}$-vector space and $\varphi$ an automorphism of $V$. The {\emph{band module}} $M(w,\varphi)$ associated to $(w,\varphi)$ is the representation 
$\big((M(w,\varphi)_u)_{u\in Q_0},(M(w,\varphi)_\alpha)_{\alpha\in Q_1}\big)$,
where the vector spaces are given by
$$M(w,\varphi)_u=\left\{\begin{array}{ll}
   \oplus_{x_i=u}V_{x_i}  & \textup{if}\;u=x_i\;\textup{for some}\;i\in\{0,1,2,\cdots,m-1\},  \\
    0 & \textup{otherwise,}
\end{array}\right.$$
with each $V_{x_i}$ identified with $V$, and the linear maps are defined by
$$M(w,\varphi)_{\alpha}(v)=\left\{\begin{array}{ll}
    \varphi(v) & \textup{if}\;v\in V_{x_{0}}\;\textup{and}\;\alpha=c_1, \\
    \varphi^{-1}(v) & \textup{if}\;v\in V_{x_1}\;\textup{and}\;\alpha^{-1}=c_{1},\\
    v & \textup{if}\;v\in V_{x_{i-1}},\alpha=c_i,\;\textup{or}\;v\in V_{x_{i}},\alpha^{-1}=c_i,\;\textup{for}\;i\neq1,\\
    %v & \textup{if}\;v\in V_{x_{i-1}}\;\textup{and}\;\alpha^{-1}=c_i\;\textup{for}\;i\neq1,\\
    0 & \textup{otherwise,}
\end{array}\right.$$
where $x_m$ is identified with $x_0$. Obviously, $M(w,\varphi)\simeq M(w^{-1},\varphi^{-1})$ and $M(w,\varphi)\simeq M(w_{(i)},\varphi)$. 

\begin{example}\label{Example2.2}
Let $H(\widetilde{C}_{2})$ denote the GLS algebra of type $\widetilde{C}_2$. 
%Then $H(\widetilde{C}_{2})$ is a string algebra.
%Let $H(\widetilde{C}_{2})$ denote the GLS algebra of type $\widetilde{C}_2$ with underlying quiver $Q(\widetilde{C}_{2})$ and relations $\varepsilon^2_{1},\varepsilon^2_{-1}$, defined as follows.
% \begin{center}
% \begin{tikzpicture}[scale=1.1]
% \node (-20) at (0,0) {$1$};
% \node (-20) at (1.5,0) {$0$};
% \node (-20) at (3,0) {$-1$};
% \node (-20) at (0.75,0.2) {$\beta_1$};
% \node (-20) at (2.25,0.2) {$\beta_{-1}$};
% \draw [->] (0.25,0) -- (1.25,0);
% \draw [->] (2.75, 0) -- (1.75,0);
% \draw[-latex] (-0.2,0.2) .. controls (-0.8,0.5) and (-0.8,-0.5) .. (-0.2,-0.2);
% \draw[-latex] (3.35,0.2) .. controls (3.95,0.5) and (3.95,-0.5) .. (3.35,-0.2);
% \node (-20) at (-0.9,0) {$\varepsilon_1$};
% \node (-20) at (4.15,0) {$\varepsilon_{-1}$};
% \end{tikzpicture}    
% \end{center}
Let $w$ denote the non-trivial string $\varepsilon_1\beta_1\beta_{-1}^{-1}\varepsilon_{-1}^{-1}\beta_{-1}\beta_1^{-1}$. The corresponding string module $M(w)$  is given by the representation illustrated in Figure \ref{exampleM(w)}. 
\begin{figure}[h]
\centering
\begin{tikzpicture}
\node () at (-1.2,0) {$M(w):$}; 
\node (1) at (0,0) {${\bf k}^3$}; 
\node () at (0,1.3) {\tiny{$\begin{pmatrix}
0&0&0\\
1&0&0\\
0&0&0
\end{pmatrix}$}}; 
\node () at (1.7,0.6) {$\begin{pmatrix}
0&1&0\\
0&0&1
\end{pmatrix}$}; 
\node (2) at (3,0) {${\bf k}^2$}; 
\node () at (3.9,0.6) {$\begin{pmatrix}
1&0\\
0&1
\end{pmatrix}$}; 
\node () at (5.2,1.2) {$\begin{pmatrix}
0&1\\
0&0
\end{pmatrix}$}; 
\node (3) at (5.2,0) {${\bf k}^2$}; 
\draw [->] (1) -- (2);
\draw [->] (3) -- (2);
\draw[-latex] (-0.2,0.2) .. controls (-0.5,0.8) and (0.5,0.8) .. (0.2,0.15);
\draw[-latex] (5,0.2) .. controls (4.7,0.8) and (5.7,0.8) .. (5.4,0.15);
\end{tikzpicture}\caption{The string module $M(w)$}\label{exampleM(w)}
\end{figure}

Note that $w$ is also a band. Let $V={\bf k}^2$ and $\varphi:\;{\bf k}^2\rightarrow{\bf k}^2$ be the automorphism given by the Jordan block $J=J(\lambda,2)$ with eigenvalue $\lambda\neq0$. The band module $M(w,\varphi)$ corresponds to the representation illustrated in Figure \ref{exampleM(w,varphi)}.
\begin{figure}[h]
\centering
\begin{tikzpicture}
\node () at (-0.6,0) {$M(w,\varphi)$:};
\node (1) at (0.8,0) {${\bf k}^4$};
\node (2) at (3.6,0) {${\bf k}^4$};
\node (3) at (6,0) {${\bf k}^4$};
\node () at (4.8,0.3) {$I_4$}; 
\node () at (2.3,0.6) {$\begin{pmatrix}
J^{-1}&0\\
0&I_2
\end{pmatrix}$}; 
\node () at (0.7,1.25) {$\begin{pmatrix}
0&0\\
I_2&0
\end{pmatrix}$}; 
\node () at (6,1.25) {$\begin{pmatrix}
0&0\\
I_2&0
\end{pmatrix}$};
\draw[-latex] (0.6,0.2) .. controls (0.3,0.8) and (1.3,0.8) .. (1,0.15);
\draw[-latex] (5.8,0.2) .. controls (5.5,0.8) and (6.5,0.8) .. (6.2,0.15);
\draw [->] (1) -- (2);
\draw [->] (3) -- (2);
\end{tikzpicture}\caption{The band module $M(w,\varphi)$}\label{exampleM(w,varphi)}
\end{figure}
\end{example}

We denote by $\Gamma_A$ the Auslander--Reiten quiver of an algebra $A$; see \cite{RepTheIII,ARS1995} for background on almost split sequences and Auslander--Reiten quivers. For a string algebra $A$, a string $A$-module $M(w)$ is called {\emph{minimal}} if each irreducible map $M(w)\rightarrow M(w')$ is injective and each irreducible map $M(w'')\rightarrow M(w)$ is surjective in $\Gamma_{A}$. In this case, we also call $w$ a minimal string. 

Recall that for any indecomposable $A$-modules $M$ and $N$, the quotient $\textup{Irr}(M,N):=\textup{rad}_A(M,N)/\textup{rad}^2_A(M,N)$ is the space of irreducible morphisms.
For any string $w$, set 
$$a=\sum_{w'\in\overline{\textup{St}}(A)}\textup{dim}_{{\bf k}}\textup{Irr}(M(w'),M(w)) \text{\quad and\quad}
b=\sum_{w''\in\overline{\textup{St}}(A)}\textup{dim}_{{\bf k}}\textup{Irr}(M(w),M(w'')),$$ and we say that $w$ is {\emph{of type $(a,b)$}}.

Now we can state the structure of the Auslander--Reiten quiver of the GLS algebra $H(\widetilde{C}_{2n-2})$.

\begin{prop}[{\cite[Theorem A]{HLS2023}}]\label{ThmCn}
The Auslander--Reiten quiver of $H(\widetilde{C}_{2n-2})$ is described as follows.
\begin{enumerate}
    \item One component $\mathcal{H}(\widetilde{C}_{2n-2})_{{PI}}$ containing all indecomposable preprojective modules and indecomposable preinjective modules, which has the form $\mathbb{Z}A_{\infty}^{\infty}-(Q^{\textup{o}}(\widetilde{C}_{2n-2}))^{\text{op}}$.
    \item One stable tube $\mathcal{H}(\widetilde{C}_{2n-2})_{(1,1)}$ of rank $2n-2$, whose bottom consists of all minimal string modules of type $(1,1)$.
    \item Homogeneous tubes $\mathcal{H}(\widetilde{C}_{2n-2})_{w,\lambda}$, where $w\in\overline{\textup{Ba}}\big(H(\widetilde{C}_{2n-2})\big)$ and $\lambda\in{\bf{k}}^{*}$.
    \item Components $\mathcal{H}(\widetilde{C}_{2n-2})_{w}$ of type $\mathbb{Z}A_{\infty}^{\infty}$, indexed by the equivalence classes of minimal strings of type $(2,2)$.
\end{enumerate}
\end{prop}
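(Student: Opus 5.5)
This proposition is quoted from \cite[Theorem A]{HLS2023}, so strictly it can simply be cited. If one wanted to re-derive it, the natural route is the Butler--Ringel theory of string algebras \cite{BR1987}. That theory gives three facts. The indecomposable $H(\widetilde{C}_{2n-2})$-modules are exactly the string modules $M(w)$ and the band modules $M(w,\varphi)$ with $\varphi$ indecomposable. Almost split sequences starting at a string module are computed by adding or deleting hooks and cohooks. Every band module lies in a homogeneous tube, whose modules are $M(w,J(\lambda,r))$ with $r\ge1$.

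So item (3) follows directly from the description of bands given above. Every band is equivalent to some $\xi(\delta)\beta_{\max}^{-1}$ with $\delta\in\mathcal{I}_e$, and the tubes are indexed by pairs $(w,\lambda)$ with $w\in\overline{\textup{Ba}}$ and $\lambda\in\mathbf{k}^*$. The remaining work is to sort the string components.

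The plan for the string modules runs in three steps.

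\textbf{Step 1: the preprojective--preinjective component.} Compute the indecomposable projectives $e_uH$ and injectives $D(He_u)$ as string modules. Then apply hook/cohook calculus, or equivalently $\tau$ and $\tau^{-}$, to show that they lie in a single component. Its shape is obtained by removing the knitting of $Q^{\mathrm{o}}(\widetilde{C}_{2n-2})^{\mathrm{op}}$ from $\mathbb{Z}A_\infty^\infty$.

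\textbf{Step 2: the remaining string modules.} Every other string module lies in a stable component. Count the irreducible maps for each string: in a stable component, each minimal (bottom) string has type $(1,1)$ or $(2,2)$. Type $(1,1)$ strings sit at the mouth of a tube. Type $(2,2)$ strings sit at the ``centre'' of a $\mathbb{Z}A_\infty^\infty$ component, and the AR-sequences through such a string extend in both directions by hooks.

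\textbf{Step 3: identifying the tube of rank $2n-2$.} The minimal type $(1,1)$ strings should be the short strings supported on the two arms, for example the simples at interior vertices together with strings such as $\beta_i$, suitably shifted. One checks that $\tau$ permutes them cyclically with period $2n-2$. This reflects the Euler form of $\widetilde{C}_{2n-2}$, in which the non-homogeneous tube has rank equal to the number of arrows of the cycle in the doubled quiver.

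The main obstacle is completeness. One must show that every string not in the preprojective--preinjective component reduces, after finitely many hook deletions, to a minimal string of type $(1,1)$ or $(2,2)$. One must also show that distinct minimal $(2,2)$ strings, up to inversion, give distinct components. My first approach would be to use the normal form $(\xi(\delta))_{[i,j]}$. Deleting hooks shortens a truncation until both endpoints sit at positions adjacent to $\varepsilon_{\pm1}$; this gives a canonical minimal representative determined by $\delta$ up to inversion.
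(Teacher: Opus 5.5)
Your proposal matches the paper, which gives no argument of its own and simply cites \cite[Theorem~A]{HLS2023}. Your Butler--Ringel outline (string and band modules, hook/cohook calculus, minimal strings of types $(1,1)$ and $(2,2)$, band modules in homogeneous tubes) is the method used there, though note that the guess in Step~3 is off: as recorded in the proof of Proposition~\ref{forthm002}(2), the mouth of the rank-$(2n-2)$ tube consists of the simples $S_{\pm j}$ with $2\le j\le n-1$ together with $M(\beta_{\pm(n-1)}^{-1}\cdots\beta_{\pm1}^{-1}\varepsilon_{\pm1}^{-1})$, not of short modules like $M(\beta_i)$, which for $2\le i\le n-2$ occur only at quasi-length $2$ as extensions of $S_i$ by $S_{i+1}=\tau S_i$.
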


\section{The GLS algebra of type $\widetilde{CD}_{n}$}
\label{sec:CDn}

In this section, we consider the following GLS algebra of type $\widetilde{CD}_{n}$ for $n\geq 2$: $$H(\widetilde{CD}_{n})={\bf{k}}Q(\widetilde{CD}_{n})/\langle\varepsilon_{1}^2\rangle,$$ where  the quiver $Q(\widetilde{CD}_{n})$ is depicted in Figure \ref{quiverForCD}. Throughout this section, we write $Q$ for $Q(\widetilde{CD}_{n})$ and identify $\widetilde{CD}_{2}$ with $\widetilde{B}_{2}$. 

%Specifically, $H(\widetilde{CD}_{n})$ is defined via the bound quiver $Q(\widetilde{CD}_{n})$ with the zero relation $\varepsilon_{1}^2$ and the fixed orientation given in Figure \ref{quiverForCD}.

Note that the GLS algebra $H(\widetilde{CD}_{n})$ is not a string algebra. To describe its Auslander–Reiten quiver, we need to extend the definitions of strings and bands.

\subsection{Extended strings and extended bands}\label{exstring}
We introduce a new symbol
\begin{figure}[H]
    \begin{center}
        \begin{tikzpicture}
        \node() at (-13.5,1) {$\gamma=(\gamma_{0^{+}},\gamma_{0^{-}}):$};
        \node(a) at (-11.5,0.5) {$0^{+}$};
        \node() at (-11,1.1) {$\gamma_{0^{+}}$};
        \node() at (-9,1.15) {$\gamma_{0^{-}}$};
        \node(b) at (-8.5,0.5) {$0^{-}$};
        \node(c) at (-10,1.5) {$n-1$};
\draw [->] (c) -- (a);  
\draw [->] (c) -- (b);        
        \end{tikzpicture}
    \end{center}
    \end{figure}
\noindent from vertex $n-1$ to the vertex pair $(0^{+},0^{-})$, together with its inverse
\begin{figure}[H]
    \begin{center}
        \begin{tikzpicture}       
\node() at (-9,1) {$\gamma^{-1}=(\gamma^{-1}_{0^{+}},\gamma^{-1}_{0^{-}}):$};
\node(x) at (-6.5,1.5) {$0^{+}$};
\node(y) at (-3.5,1.5) {$0^{-}$};
\node(z) at (-5,0.5) {$n-1$};
\node() at (-6,0.85) {$\gamma_{0^{+}}$};
\node() at (-3.9,0.85) {$\gamma_{0^{-}}$};
\draw [<-] (x) -- (z);  
\draw [<-] (y) -- (z);   
        \end{tikzpicture}
    \end{center}
    \end{figure}
\noindent from the vertex pair $(0^{+},0^{-})$ to vertex $n-1$. We regard the composition $\gamma\gamma^{-1}$ as a word of length $2$ from vertex $n-1$ (upper) to itself (lower), as illustrated in the following figure.
\begin{figure}[h]
    \begin{center}
        \begin{tikzpicture}
\node() at (-3,1) {$\gamma\gamma^{-1}:$};
    \node(1) at (0,2) {$n-1$};
    \node(2) at (-1.5,1) {$0^{+}$};
    \node(3) at (1.5,1) {$0^{-}$};
    \node(4) at (0,0) {$n-1$};
     \draw [->] (1) -- (2);   
\draw [->] (1) -- (3);  
\draw [->] (4) -- (2);   
\draw [->] (4) -- (3); 
\node() at (-0.9,1.65) {$\gamma_{0^{+}}$};
\node() at (0.95,1.65) {$\gamma_{0^{-}}$};
\node() at (-1,0.35) {$\gamma_{0^{+}}$};
\node() at (1.05,0.35) {$\gamma_{0^{-}}$};
        \end{tikzpicture}
    \end{center}
    \end{figure}

For simplicity, we denote by $\gamma_{\textup{max}}$ the element $$\gamma_{\textup{max}}:=\gamma_1\gamma_2\cdots\gamma_{n-2}\gamma\gamma^{-1}\gamma_{n-2}^{-1}\cdots\gamma_2^{-1}\gamma_1^{-1},$$
and set
$$\gamma_{\textup{semi}+}:=(\gamma_{\textup{max}})_{\leq n-2}\cdot\gamma_{0^+},\;\;\gamma_{\textup{semi}-}:=(\gamma_{\textup{max}})_{\leq n-2}\cdot\gamma_{0^-}.$$
We define the {\emph{truncation set}} $\Psi$ by
$$\Psi:=\{(\gamma_{\textup{max}})_{\leq k}|0\leq k\leq 2n-2\}\cup\{\gamma_{\textup{semi}+},\gamma_{\textup{semi}-}\}$$
and the {\emph{body set}} $\Theta$ by
$$\Theta:=\{\theta(\delta)=\varepsilon_1^{\delta_1}\gamma_{\textup{max}}\varepsilon_1^{\delta_2}\gamma_{\textup{max}}\varepsilon_1^{\delta_3}\cdots \gamma_{\textup{max}}\varepsilon_1^{\delta_k}\mid\delta=\delta_1\delta_2\cdots\delta_k\in\mathcal{I}\},$$
where $\mathcal{I}$ is the index set defined in \eqref{Index I}.

\begin{defn}\label{DefnExSt}
    % (1) A word $w$ is called an {\emph{extended string}} (abbr. {\emph{ex-string}}) with index $\delta(w)$ in $Q(\widetilde{CD}_{n})$ if it can be expressed in the form $\psi^{-1}\cdot\theta\cdot\psi'$ with $\psi,\psi'\in \Psi$ and $\theta=\theta\big(\delta(w)\big)\in\Theta$.
(1) A word $w$ is called an {\emph{extended string}} (abbr. {\emph{ex-string}}) in $Q(\widetilde{CD}_{n})$ if it can be expressed in the form $\psi^{-1}\cdot\theta\cdot\psi'$ with $\psi,\psi'\in \Psi$ and $\theta\in\Theta$.
    If $\theta=\theta(\delta)$ for some $\delta\in\mathcal{I}$, then $\delta$ is called the index of $w$, and denoted by $\delta(w)$.
    
        (2) A non-trivial ex-string $w$ is called an {\emph{extended band}} (abbr. {\emph{ex-band}}) in $Q(\widetilde{CD}_{n})$ if it has the same source and target, and each power $w^r$ is an ex-string, but $w$ is not a proper power of a shorter ex-string. 
\end{defn}

\begin{rem}
The full subquiver obtained from $Q(\widetilde{CD}_{n})$ by deleting the vertices $0^{+}$, $0^{-}$ and the arrows $\gamma_{0^{+}},\gamma_{0^{-}}$ defines a string algebra. The symbols $\gamma$ and $\gamma^{-1}$ are introduced to connect the two branches through $0^{\pm}$; they are not themselves arrows or formal inverses. Accordingly, the notion of an ex-string is only analogous to the usual notion of a string. In particular, $\gamma^{-1}\gamma$ cannot appear as a subword in an extended string, whereas $\gamma\gamma^{-1}$ can. 
\end{rem}

Denote by $\textup{Ex-St}\big(H(\widetilde{CD}_{n})\big)$ \big(resp. $\textup{Ex-Ba}(H(\widetilde{CD}_{n}))$\big) the set of all ex-strings (resp. all ex-bands) in $H(\widetilde{CD}_{n})$. On $\textup{Ex-St}\big(H(\widetilde{CD}_{n})\big)$, we define the equivalence relation by $w\sim w^{-1}$ for each ex-string $w$, and choose a complete set {\emph{$\overline{\textup{Ex-St}}\big(H(\widetilde{CD}_{n})\big)$}} of representatives relative to this equivalence relation. On $\textup{Ex-Ba}\big(H(\widetilde{CD}_{n})\big)$, we define the equivalence relation by $w\sim w^{-1}$ and $w\sim w_{(i)}$ for each ex-band $w$ and each $i\in\mathbb{Z}$, and choose a complete set {\emph{$\overline{\textup{Ex-Ba}}\big(H(\widetilde{CD}_{n})\big)$}} of representatives relative to this equivalence relation. Obviously, any ex-band is equivalent to 
$$\theta(\delta)\cdot\gamma_{\textup{max}}=\varepsilon_1^{\delta_1}\gamma_{\textup{max}}\varepsilon_1^{\delta_2}\gamma_{\textup{max}}\cdots\gamma_{\textup{max}}\varepsilon_1^{\delta_k}\gamma_{\textup{max}}$$
for some $\delta=\delta_1\delta_2\cdots\delta_k\in\mathcal{I}$.

The following ex-strings play an important role in parametrizing the connected components of the Auslander--Reiten quiver of the category of $H(\widetilde{CD}_{n})$-modules.

\begin{defn}\label{2-ex-string} Let $\theta\in\Theta$.
\begin{enumerate}
%     \item An ex-string of the form
% $\gamma_{\textup{max}}\cdot\theta\cdot\gamma_{\textup{max}}$ is called a {\emph{full ex-string}}. An ex-string of the form 
% $\gamma_{\textup{semi}\pm}^{-1}\cdot\theta\cdot\gamma_{\textup{max}}$ is called a {\emph{$1$-ex-string}}. An ex-string of the form $\gamma_{\textup{semi}\pm}^{-1}\cdot\theta\cdot\gamma_{\textup{semi}\pm}$ is called a {\emph{$2$-ex-string}}.
\item An ex-string of the form
$\gamma_{\textup{max}}\cdot\theta\cdot\gamma_{\textup{max}}$
(resp. $\gamma_{\textup{semi}\pm}^{-1}\cdot\theta\cdot\gamma_{\textup{max}}$  , or  $\gamma_{\textup{semi}\pm}^{-1}\cdot\theta\cdot\gamma_{\textup{semi}\pm}$) is called a {\emph{full ex-string}} (resp. {\emph{$1$-ex-string}}, or {\emph{$2$-ex-string}}).
\item The pair 
$$(\gamma_{\textup{semi}+}^{-1}\cdot\theta\cdot\gamma_{\textup{max}},\;\gamma_{\textup{semi}-}^{-1}\cdot\theta\cdot\gamma_{\textup{max}})$$
is called a {\emph{$1$-ex-string pair}}, and the pairs
$$\begin{array}{c}
(\gamma_{\textup{semi}+}^{-1}\cdot\theta\cdot\gamma_{\textup{semi}+},\;\gamma_{\textup{semi}-}^{-1}\cdot\theta\cdot\gamma_{\textup{semi}-}),\\(\gamma_{\textup{semi}-}^{-1}\cdot\theta\cdot\gamma_{\textup{semi}+},\;\gamma_{\textup{semi}+}^{-1}\cdot\theta\cdot\gamma_{\textup{semi}-})\end{array}$$
are called {\emph{$2$-ex-string pairs}}.
\end{enumerate}
\end{defn} 

\begin{example}
Let $n=2$ and $\delta=\delta_1\delta_2\cdots\delta_k\in\mathcal{I}$. We illustrate the following four ex-strings in Figure \ref{ExampleOfExSt}:
$$\textup{(a)\;}(\gamma\gamma^{-1})\cdot\theta(\delta)\cdot(\gamma\gamma^{-1}),\textup{\;(b)\;} \gamma_{0^{+}}^{-1}\cdot\theta(\delta)\cdot(\gamma\gamma^{-1}),\textup{\;(c)\;}\gamma_{0^{+}}^{-1}\cdot\theta(\delta)\cdot\gamma_{0^{+}},\textup{\;(d)\;} \gamma_{0^{-}}^{-1}\cdot\theta(\delta)\cdot\gamma_{0^{+}}.$$
Here, for each $1\leq l\leq k$, the symbol $\varepsilon_1^{\delta_l}$ denotes a downward arrow if $\delta_l=1$ and an upward arrow if $\delta_l=-1$.    
\end{example}
\begin{figure}[h]
 \centering
    \begin{tikzpicture}
\node (1) at (0,0) {\tiny{$1$}};
\node (2) at (-1,-0.6) {\tiny{$0^{+}$}};
\node (3) at (1,-0.6) {\tiny{$0^{-}$}};
\node (4) at (0,-1.2) {\tiny{$1$}};
\draw[->] (1) to (2);
\draw[->] (1) to (3);
\draw[->] (4) to (2);
\draw[->] (4) to (3); 
\node () at (-0.65,-0.2) {\tiny{$\gamma_{0^{+}}$}};
\node () at (0.65,-0.2) {\tiny{$\gamma_{0^{-}}$}};
\node () at (-0.65,-1) {\tiny{$\gamma_{0^{+}}$}};
\node () at (0.65,-1) {\tiny{$\gamma_{0^{-}}$}};
\node (5) at (0,-2.2) {\tiny{$1$}};
\node (6) at (-1,-2.8) {\tiny{$0^{+}$}};
\node (7) at (1,-2.8) {\tiny{$0^{-}$}};
\node (8) at (0,-3.4) {\tiny{$1$}};
\draw[->] (5) to (6);
\draw[->] (5) to (7);
\draw[->] (8) to (6);
\draw[->] (8) to (7);
\draw[-] (4) to (5);
\node () at (0.3,-1.7) {\tiny{$\varepsilon_{1}^{\delta_1}$}};
\node () at (-0.65,-2.4) {\tiny{$\gamma_{0^{+}}$}};
\node () at (0.65,-2.4) {\tiny{$\gamma_{0^{-}}$}};
\node () at (-0.65,-3.2) {\tiny{$\gamma_{0^{+}}$}};
\node () at (0.65,-3.2) {\tiny{$\gamma_{0^{-}}$}};
\node () at (0.3,-3.9) {\tiny{$\varepsilon_{1}^{\delta_{2}}$}};
\node (9) at (0,-4.4) {\tiny{$1$}};
\draw[-] (8) to (9);
\node (0) at (0,-5.4) {\tiny{$1$}};
\draw[dotted] (9) to (0);
\node () at (0.45,-5.9) {\tiny{$\varepsilon_{1}^{\delta_{k-1}}$}};
\node (a) at (0,-6.4) {\tiny{$1$}};
\node (b) at (-1,-7) {\tiny{$0^{+}$}};
\node (c) at (1,-7) {\tiny{$0^{-}$}};
\node (d) at (0,-7.6) {\tiny{$1$}};
\node () at (-0.65,-6.6) {\tiny{$\gamma_{0^{+}}$}};
\node () at (0.65,-6.6) {\tiny{$\gamma_{0^{-}}$}};
\node () at (-0.65,-7.4) {\tiny{$\gamma_{0^{+}}$}};
\node () at (0.65,-7.4) {\tiny{$\gamma_{0^{-}}$}};
\draw[->] (a) to (b);
\draw[->] (a) to (c);
\draw[->] (d) to (b);
\draw[->] (d) to (c);
\draw[-] (0) to (a);
\node (aa) at (0,-8.6) {\tiny{$1$}};
\node (ab) at (-1,-9.2) {\tiny{$0^{+}$}};
\node (ac) at (1,-9.2) {\tiny{$0^{-}$}};
\node (ad) at (0,-9.8) {\tiny{$1$}};
\node () at (-0.65,-8.8) {\tiny{$\gamma_{0^{+}}$}};
\node () at (0.65,-8.8) {\tiny{$\gamma_{0^{-}}$}};
\node () at (-0.65,-9.6) {\tiny{$\gamma_{0^{+}}$}};
\node () at (0.65,-9.6) {\tiny{$\gamma_{0^{-}}$}};
\node () at (0.3,-8.1) {\tiny{$\varepsilon_{1}^{\delta_k}$}};
\draw[->] (aa) to (ab);
\draw[->] (aa) to (ac);
\draw[->] (ad) to (ab);
\draw[->] (ad) to (ac);
\draw[-] (d) to (aa);
\node () at (0,-10.4) {(a)};
\node (q2) at (2.5,-0.6) {\tiny{$0^{+}$}};
\node (q4) at (3.5,-1.2) {\tiny{$1$}};
\draw[->] (q4) to (q2);
\node () at (2.85,-1) {\tiny{$\gamma_{0^{+}}$}};
\node (q5) at (3.5,-2.2) {\tiny{$1$}};
\node (q6) at (2.5,-2.8) {\tiny{$0^{+}$}};
\node (q7) at (4.5,-2.8) {\tiny{$0^{-}$}};
\node (q8) at (3.5,-3.4) {\tiny{$1$}};
\draw[->] (q5) to (q6);
\draw[->] (q5) to (q7);
\draw[->] (q8) to (q6);
\draw[->] (q8) to (q7);
\draw[-] (q4) to (q5);
\node () at (3.8,-1.7) {\tiny{$\varepsilon_{1}^{\delta_1}$}};
\node () at (2.85,-2.4) {\tiny{$\gamma_{0^{+}}$}};
\node () at (4.15,-2.4) {\tiny{$\gamma_{0^{-}}$}};
\node () at (2.85,-3.2) {\tiny{$\gamma_{0^{+}}$}};
\node () at (4.15,-3.2) {\tiny{$\gamma_{0^{-}}$}};
\node () at (3.8,-3.9) {\tiny{$\varepsilon_{1}^{\delta_{2}}$}};
\node (q9) at (3.5,-4.4) {\tiny{$1$}};
\draw[-] (q8) to (q9);
\node (q0) at (3.5,-5.4) {\tiny{$1$}};
\draw[dotted] (q9) to (q0);
\node () at (3.95,-5.9) {\tiny{$\varepsilon_{1}^{\delta_{k-1}}$}};
\node (qa) at (3.5,-6.4) {\tiny{$1$}};
\node (qb) at (2.5,-7) {\tiny{$0^{+}$}};
\node (qc) at (4.5,-7) {\tiny{$0^{-}$}};
\node (qd) at (3.5,-7.6) {\tiny{$1$}};
\node () at (2.85,-6.6) {\tiny{$\gamma_{0^{+}}$}};
\node () at (4.15,-6.6) {\tiny{$\gamma_{0^{-}}$}};
\node () at (2.85,-7.4) {\tiny{$\gamma_{0^{+}}$}};
\node () at (4.15,-7.4) {\tiny{$\gamma_{0^{-}}$}};
\draw[->] (qa) to (qb);
\draw[->] (qa) to (qc);
\draw[->] (qd) to (qb);
\draw[->] (qd) to (qc);
\draw[-] (q0) to (qa);
\node (qaa) at (3.5,-8.6) {\tiny{$1$}};
\node (qab) at (2.5,-9.2) {\tiny{$0^{+}$}};
\node (qac) at (4.5,-9.2) {\tiny{$0^{-}$}};
\node (qad) at (3.5,-9.8) {\tiny{$1$}};
\node () at (2.85,-8.8) {\tiny{$\gamma_{0^{+}}$}};
\node () at (4.15,-8.8) {\tiny{$\gamma_{0^{-}}$}};
\node () at (2.85,-9.6) {\tiny{$\gamma_{0^{+}}$}};
\node () at (4.15,-9.6) {\tiny{$\gamma_{0^{-}}$}};
\node () at (3.8,-8.1) {\tiny{$\varepsilon_{1}^{\delta_k}$}};
\draw[->] (qaa) to (qab);
\draw[->] (qaa) to (qac);
\draw[->] (qad) to (qab);
\draw[->] (qad) to (qac);
\draw[-] (qd) to (qaa);
\node () at (3.5,-10.4) {(b)};

\node (w2) at (6,-0.6) {\tiny{$0^{+}$}};
\node (w4) at (7,-1.2) {\tiny{$1$}};
\draw[->] (w4) to (w2);
\node () at (6.35,-1) {\tiny{$\gamma_{0^{+}}$}};
\node (w5) at (7,-2.2) {\tiny{$1$}};
\node (w6) at (6,-2.8) {\tiny{$0^{+}$}};
\node (w7) at (8,-2.8) {\tiny{$0^{-}$}};
\node (w8) at (7,-3.4) {\tiny{$1$}};
\draw[->] (w5) to (w6);
\draw[->] (w5) to (w7);
\draw[->] (w8) to (w6);
\draw[->] (w8) to (w7);
\draw[-] (w4) to (w5);
\node () at (7.3,-1.7) {\tiny{$\varepsilon_{1}^{\delta_1}$}};
\node () at (6.35,-2.4) {\tiny{$\gamma_{0^{+}}$}};
\node () at (7.65,-2.4) {\tiny{$\gamma_{0^{-}}$}};
\node () at (6.35,-3.2) {\tiny{$\gamma_{0^{+}}$}};
\node () at (7.65,-3.2) {\tiny{$\gamma_{0^{-}}$}};
\node () at (7.3,-3.9) {\tiny{$\varepsilon_{1}^{\delta_{2}}$}};
\node (w9) at (7,-4.4) {\tiny{$1$}};
\draw[-] (w8) to (w9);
\node (w0) at (7,-5.4) {\tiny{$1$}};
\draw[dotted] (w9) to (w0);
\node () at (7.45,-5.9) {\tiny{$\varepsilon_{1}^{\delta_{k-1}}$}};
\node (wa) at (7,-6.4) {\tiny{$1$}};
\node (wb) at (6,-7) {\tiny{$0^{+}$}};
\node (wc) at (8,-7) {\tiny{$0^{-}$}};
\node (wd) at (7,-7.6) {\tiny{$1$}};
\node () at (6.35,-6.6) {\tiny{$\gamma_{0^{+}}$}};
\node () at (7.65,-6.6) {\tiny{$\gamma_{0^{-}}$}};
\node () at (6.35,-7.4) {\tiny{$\gamma_{0^{+}}$}};
\node () at (7.65,-7.4) {\tiny{$\gamma_{0^{-}}$}};
\draw[->] (wa) to (wb);
\draw[->] (wa) to (wc);
\draw[->] (wd) to (wb);
\draw[->] (wd) to (wc);
\draw[-] (w0) to (wa);
\node (waa) at (7,-8.6) {\tiny{$1$}};
\node (wab) at (6,-9.2) {\tiny{$0^{+}$}};
\node () at (6.35,-8.8) {\tiny{$\gamma_{0^{+}}$}};
\node () at (7.3,-8.1) {\tiny{$\varepsilon_{1}^{\delta_k}$}};
\draw[->] (waa) to (wab);
\draw[-] (wd) to (waa);
\node () at (7,-10.4) {(c)};
\node (zq3) at (11.5,-0.6) {\tiny{$0^{-}$}};
\node (zq4) at (10.5,-1.2) {\tiny{$1$}};
\draw[->] (zq4) to (zq3); 
\node () at (11.15,-1) {\tiny{$\gamma_{0^{-}}$}};
\node (zq5) at (10.5,-2.2) {\tiny{$1$}};
\node (zq6) at (9.5,-2.8) {\tiny{$0^{+}$}};
\node (zq7) at (11.5,-2.8) {\tiny{$0^{-}$}};
\node (zq8) at (10.5,-3.4) {\tiny{$1$}};
\draw[->] (zq5) to (zq6);
\draw[->] (zq5) to (zq7);
\draw[->] (zq8) to (zq6);
\draw[->] (zq8) to (zq7);
\draw[-] (zq4) to (zq5);
\node () at (10.8,-1.7) {\tiny{$\varepsilon_{1}^{\delta_1}$}};
\node () at (9.85,-2.4) {\tiny{$\gamma_{0^{+}}$}};
\node () at (11.15,-2.4) {\tiny{$\gamma_{0^{-}}$}};
\node () at (9.85,-3.2) {\tiny{$\gamma_{0^{+}}$}};
\node () at (11.15,-3.2) {\tiny{$\gamma_{0^{-}}$}};
\node () at (10.8,-3.9) {\tiny{$\varepsilon_{1}^{\delta_{2}}$}};
\node (zq9) at (10.5,-4.4) {\tiny{$1$}};
\draw[-] (zq8) to (zq9);
\node (zq0) at (10.5,-5.4) {\tiny{$1$}};
\draw[dotted] (zq9) to (zq0);
\node () at (10.95,-5.9) {\tiny{$\varepsilon_{1}^{\delta_{k-1}}$}};
\node (zqa) at (10.5,-6.4) {\tiny{$1$}};
\node (zqb) at (9.5,-7) {\tiny{$0^{+}$}};
\node (zqc) at (11.5,-7) {\tiny{$0^{-}$}};
\node (zqd) at (10.5,-7.6) {\tiny{$1$}};
\node () at (9.85,-6.6) {\tiny{$\gamma_{0^{+}}$}};
\node () at (11.15,-6.6) {\tiny{$\gamma_{0^{-}}$}};
\node () at (9.85,-7.4) {\tiny{$\gamma_{0^{+}}$}};
\node () at (11.15,-7.4) {\tiny{$\gamma_{0^{-}}$}};
\draw[->] (zqa) to (zqb);
\draw[->] (zqa) to (zqc);
\draw[->] (zqd) to (zqb);
\draw[->] (zqd) to (zqc);
\draw[-] (zq0) to (zqa);
\node (zqaa) at (10.5,-8.6) {\tiny{$1$}};
\node (zqab) at (9.5,-9.2) {\tiny{$0^{+}$}};
\node () at (9.85,-8.8) {\tiny{$\gamma_{0^{+}}$}};
\node () at (10.8,-8.1) {\tiny{$\varepsilon_{1}^{\delta_k}$}};
\draw[->] (zqaa) to (zqab);
\draw[-] (zqd) to (zqaa);
\node () at (10.5,-10.4) {(d)};
    \end{tikzpicture}
    \caption{Four important ex-strings}
    \label{ExampleOfExSt}
\end{figure}

\subsection{Ex-string modules and ex-band modules}
This subsection introduces $H(\widetilde{CD}_{n})$-modules associated with ex-strings and ex-bands. Consider a full ex-string $\gamma_{\textup{max}}\cdot\theta(\delta)\cdot\gamma_{\textup{max}}$ with $\delta=\delta_1\delta_2\cdots\delta_k\in\mathcal{I}$; the associated ``walk'' (called ex-walk below) is depicted in Figure~\ref{WalkForExSt}. More generally, any ex-string $w=\psi^{-1}\cdot\theta(\delta)\cdot\psi'$ with truncations $\psi,\psi'\in\Psi$ corresponds to an ex-walk obtained from that of $\gamma_{\textup{max}}\cdot\theta(\delta)\cdot\gamma_{\textup{max}}$ by deleting the appropriate arrows and vertices. Let $w$ be an ex-string or an ex-band. If $x_i$ is a vertex in the corresponding ex-walk, we simply write $x_i\in w$ for convenience. 
\begin{figure}[h]
    \begin{center}
        \begin{tikzpicture}
\node(1) at(0,0){$x_1$};
\node(2) at(1.5,0){$x_2$};
\node(3) at(3,0){};
\node() at (0.75,0.2){\tiny{$\gamma_1$}};
\node() at (2.25,0.2){\tiny{$\gamma_2$}};
\draw[->] (1)--(2);
\draw[->] (2)--(3);
\node() at (4,0){$\cdots$};
\node(n-3) at(5,0){};
\node(n-2) at(6.6,0){$x_{n-2}$};
\node(n-1) at(8.5,0){$x_{n-1}$};
\node() at (5.6,0.2){\tiny{$\gamma_{n-3}$}};
\node() at (7.45,0.2){\tiny{$\gamma_{n-2}$}};
\draw[->] (n-3)--(n-2);
\draw[->] (n-2)--(n-1);
\node(n) at(7.5,-0.8){$x_{n}$};
\node(n+1) at(9.8,-0.8){$x_{n+1}$};
\node() at (8.3,-0.5){\tiny{$\gamma_{0^{+}}$}};
\node() at (8.3,-1.1){\tiny{$\gamma_{0^{+}}$}};
\node() at (9,-0.5){\tiny{$\gamma_{0^{-}}$}};
\node() at (9,-1.1){\tiny{$\gamma_{0^{-}}$}};
\draw[->] (n-1)--(n);
\draw[->] (n-1)--(n+1);
\node(2n) at(0,-1.6){$x_{2n}$};
\node(2n-1) at(1.5,-1.6){$x_{2n-1}$};
\node(2n-2) at(3,-1.6){};
\node() at (0.65,-1.4){\tiny{$\gamma_1$}};
\node() at (2.45,-1.4){\tiny{$\gamma_2$}};
\draw[->] (2n)--(2n-1);
\draw[->] (2n-1)--(2n-2);
\node() at (4,-1.6){$\cdots$};
\node(n+4) at(5,-1.6){};
\node(n+3) at(6.6,-1.6){$x_{n+3}$};
\node(n+2) at(8.5,-1.6){$x_{n+2}$};
\node() at (5.6,-1.4){\tiny{$\gamma_{n-3}$}};
\node() at (7.45,-1.4){\tiny{$\gamma_{n-2}$}};
\draw[->] (n+3)--(n+2);
\draw[->] (n+4)--(n+3);
\draw[<-] (n)--(n+2);
\draw[<-] (n+1)--(n+2);
\node(2n+1)at(0,-3){$x_{2n+1}$};
\draw[-] (2n)--(2n+1);
\node () at (-0.25,-2.3) {\tiny{$\varepsilon_1^{\delta_{1}}$}};
\node(2n+2)at(1.5,-3){$x_{2n+2}$};
\draw[->] (2n+1)--(2n+2);
\node() at (0.75,-2.8){\tiny{$\gamma_1$}};
\node(2n+3) at(3,-3){};
\node() at (2.45,-2.8){\tiny{$\gamma_2$}};
\draw[->] (2n+2)--(2n+3);
\node() at (4,-3){$\cdots$};
\node(3n-3) at(5,-3){};
\node(3n-2) at(6.6,-3){$x_{3n-2}$};
\node(3n-1) at(8.5,-3){$x_{3n-1}$};
\node() at (5.6,-2.8){\tiny{$\gamma_{n-3}$}};
\node() at (7.45,-2.8){\tiny{$\gamma_{n-2}$}};
\draw[->] (3n-3)--(3n-2);
\draw[->] (3n-2)--(3n-1);
\node() at (8.3,-3.5){\tiny{$\gamma_{0^{+}}$}};
\node() at (8.3,-4.1){\tiny{$\gamma_{0^{+}}$}};
\node() at (9,-3.5){\tiny{$\gamma_{0^{-}}$}};
\node() at (9,-4.1){\tiny{$\gamma_{0^{-}}$}};
\node(3n) at(7.5,-3.8){$x_{3n}$};
\node(3n+1) at(9.8,-3.8){$x_{3n+1}$};
\draw[<-] (3n)--(3n-1);
\draw[<-] (3n+1)--(3n-1);
\draw[->] (3n-1)--(3n-2);
\node() at (4,-4.6){$\cdots$};
\node(3n+4) at(5,-4.6){};
\node(3n+3) at(6.6,-4.6){$x_{3n+3}$};
\node(3n+2) at(8.5,-4.6){$x_{3n+2}$};
\node() at (5.6,-4.4){\tiny{$\gamma_{n-3}$}};
\node() at (7.45,-4.4){\tiny{$\gamma_{n-2}$}};
\draw[->] (3n+3)--(3n+2);
\draw[->] (3n+4)--(3n+3);
\draw[<-] (3n)--(3n+2);
\draw[<-] (3n+1)--(3n+2);
\node(4n) at(0,-4.6){$x_{4n}$};
\node(4n-1) at(1.5,-4.6){$x_{4n-1}$};
\node(4n-2) at(3,-4.6){};
\node() at (0.65,-4.4){\tiny{$\gamma_1$}};
\node() at (2.45,-4.4){\tiny{$\gamma_2$}};
\draw[->] (4n)--(4n-1);
\draw[->] (4n-1)--(4n-2);
\node () at (-0.25,-5.3) {\tiny{$\varepsilon_1^{\delta_{2}}$}};
\node(4n+1)at(0,-6){$x_{4n+1}$};
\draw[-] (4n)--(4n+1);
\node(4n+2)at(1.5,-6){$x_{4n+2}$};
\draw[->] (4n+1)--(4n+2);
\node() at (0.75,-5.8){\tiny{$\gamma_1$}};
\node(4n+3) at(3,-6){};
\node() at (2.45,-5.8){\tiny{$\gamma_2$}};
\draw[->] (4n+2)--(4n+3);
\node()at (6,-6){$\cdots\;\;\;\cdots\;\;\;\cdots$};
 \node() at (8.5,-6.5) {$\vdots$};
 \node() at (8.5,-7) {$\vdots$};
 \node() at (8.5,-7.5) {$\vdots$};
 \node()at (6,-8){$\cdots\;\;\;\cdots\;\;\;\cdots$};
\node(kn-1)at(1.5,-8){$x_{2kn-1}$};
\node() at (0.65,-7.8){\tiny{$\gamma_1$}};
 \node(kn-2) at(3,-8){};
 \node() at (2.45,-7.8){\tiny{$\gamma_2$}};
\node(kn)at(0,-8){$x_{2kn}$};
 \draw[->] (kn)--(kn-1);
 \draw[->] (kn-1)--(kn-2);
 \node () at (-0.25,-8.7) {\tiny{$\varepsilon_1^{\delta_{k}}$}};
\node(kn+1)at(0,-9.4){$x_{2kn+1}$};
 \node(kn+2)at(1.8,-9.4){$x_{2kn+2}$};
 \node(kn+3) at(3,-9.4){};
 \draw[->] (kn+1)--(kn+2);
 \draw[->] (kn+2)--(kn+3);
 \draw[-] (kn)--(kn+1);
 \node() at (0.9,-9.2){\tiny{$\gamma_1$}};
 \node() at (2.7,-9.2){\tiny{$\gamma_2$}};
 \node() at (4,-9.4){$\cdots$};
 \node(kn+n-3) at(5,-9.4){};
 \node(kn+n-2) at(6.6,-9.4){$x_{2kn+n-2}$};
 \node(kn+n-1) at(9,-9.4){$x_{2kn+n-1}$};
 \node() at (5.4,-9.2){\tiny{$\gamma_{n-3}$}};
 \node() at (7.8,-9.2){\tiny{$\gamma_{n-2}$}};
 \draw[->] (kn+n-3)--(kn+n-2);
 \draw[->] (kn+n-2)--(kn+n-1);
 \node(kn+n) at(7.6,-10.2){$x_{2kn+n}$};
 \node(kn+n+1) at(10.2,-10.2){$x_{2kn+n+1}$};
 \draw[->] (kn+n-1)--(kn+n+1);
 \draw[->] (kn+n-1)--(kn+n);
 \node() at (8.6,-9.9){\tiny{$\gamma_{0^{+}}$}};
 \node() at (8.6,-10.5){\tiny{$\gamma_{0^{+}}$}};
 \node() at (9.3,-9.9){\tiny{$\gamma_{0^{-}}$}};
 \node() at (9.3,-10.5){\tiny{$\gamma_{0^{-}}$}};
 \node(kn+n+4) at(5,-11){};
 \node(kn+n+3) at(6.6,-11){$x_{2kn+n+3}$};
\node(kn+n+2) at(9,-11){$x_{2kn+n+2}$};
 \draw[->] (kn+n+2)--(kn+n+1);
 \draw[->] (kn+n+2)--(kn+n);
 \draw[->] (kn+n+4)--(kn+n+3); \draw[->] (kn+n+3)--(kn+n+2);
 \node() at (5.4,-10.8){\tiny{$\gamma_{n-3}$}};
 \node() at (7.8,-10.8){\tiny{$\gamma_{n-2}$}};
 \node(kn+2n-2) at(3.5,-11){};
 \node(kn+2n) at(-0.2,-11)
 {$x_{2kn+2n}$};
  \node(kn+2n-1) at(2,-11)
 {$x_{2kn+2n-1}$};
 \draw[->] (kn+2n)--(kn+2n-1);
  \draw[->] (kn+2n-1)--(kn+2n-2);
 \node() at (0.8,-10.8){\tiny{$\gamma_{1}$}};
 \node() at (4,-11){$\cdots$};
  \node() at (3.2,-10.8){\tiny{$\gamma_2$}};
\end{tikzpicture}
    \end{center}
    \caption{The ex-walk associated to the full ex-string $\gamma_{\textup{max}}\cdot\theta(\delta_1\delta_2\cdots\delta_k)\cdot\gamma_{\textup{max}}$}
    \label{WalkForExSt}
\end{figure}

Let $w=\psi^{-1}\cdot\theta\cdot\psi'$ be an ex-walk with $\psi,\psi'\in \Psi$ and $\theta\in\Theta$. Now we define the $H(\widetilde{CD}_n)$-module $N(w)$, which is called the {\emph{ex-string module}}, via the representation $$\big((N(w)_u)_{u\in Q_0},(N(w)_{\alpha})_{\alpha\in Q_1}\big)$$ as follows. To each point $x_i$ in the ex-walk $w$, we associate the one-dimensional vector space ${{\bf k}}x_i$. The vector spaces are then defined by
$$N(w)_{u}=\left\{\begin{array}{ll}
    (\bigoplus\limits_{\substack{x_i\in w ;\;i\equiv u (\textup{mod}\; 2n)}}{{\bf k}}x_i)\oplus(\bigoplus\limits_{\substack{x_i\in w;\;i\equiv 1-u(\textup{mod}\; 2n)}}{{\bf k}}x_i) & \textup{if}\;\;1\leq u\leq n-1,  \\
    \bigoplus\limits_{\substack{x_i\in w ;\; i\equiv n(\textup{mod}\;2n)}}{{\bf k}}x_i & \textup{if}\;\;u=0^{+},  \\
     \bigoplus\limits_{\substack{x_i\in w ;\; i\equiv n+1(\textup{mod}\;2n)}}{{\bf k}}x_i & \textup{if}\;\;u=0^{-},
\end{array}\right.$$
and the linear maps are given by
$$N(w)_{\alpha}(x_i)=\left\{\begin{array}{ll}x_j & \textup{if there exists an arrow}\;\alpha:\;x_i\rightarrow x_j\;\textup{in}\;w,  \\
0 & \textup{otherwise}.
\end{array}\right.$$
It is clear that $N(w)$ is isomorphic to $N(w^{-1})$ as $H(\widetilde{CD}_{n})$-modules for any ex-string $w$.

Let $w$ be an ex-band and $\varphi:\;{{\bf k}}^s\rightarrow{{\bf k}}^s$ an automorphism. Then $w$ must be equivalent to the ex-band $$\theta(\delta)\cdot\gamma_{\textup{max}}=\varepsilon_1^{\delta_1}\gamma_{\textup{max}}\varepsilon_1^{\delta_2}\gamma_{\textup{max}}\cdots\gamma_{\textup{max}}\varepsilon_1^{\delta_k}\gamma_{\textup{max}}$$
for some index $\delta=\delta_1\delta_2\cdots\delta_k\in\mathcal{I}$, which can be seen as an ex-walk shown in Figure \ref{FigureOfExBa}. 
\begin{figure}[h]
    \centering
\begin{tikzpicture}
\node (1+) at (0,0) {\tiny{$x_0=x_{2kn}$}};
\node (2+) at (2,0) {\tiny{$x_{2kn-1}$}};
\node (3+) at (4,0) {};
\node () at (5,0) {$\cdots$};
\node (4+) at (6,0) {};
\node (5+) at (8,0) {\tiny{$x_{2kn-n+2}$}};
\node() at (7.5,-0.5){\tiny{$\gamma_{0^{+}}$}};
\node() at (7.5,-1.1){\tiny{$\gamma_{0^{+}}$}};
\node() at (8.6,-0.5){\tiny{$\gamma_{0^{-}}$}};
\node() at (8.6,-1.1){\tiny{$\gamma_{0^{-}}$}};
\node (1-) at (0,-1.6) {\tiny{$x_{2kn-2n+1}$}};
\node (2-) at (2,-1.6) {\tiny{$x_{2kn-2n+2}$}};
\node (3-) at (4,-1.6) {};
\node () at (5,-1.6) {$\cdots$};
\node (4-) at (6,-1.6) {};
\node (5-) at (8,-1.6) {\tiny{$x_{2kn-n-1}$}};
\node (6+) at (6.5,-0.8) {\tiny{$x_{2kn-n}$}};
\node (6-) at (9.5,-0.8) {\tiny{$x_{2kn-n+1}$}};
\draw[->] (1+) to (2+);
\draw[->] (2+) to (3+);
\draw[->] (4+) to (5+);
\draw[->] (5+) to (6+);
\draw[->] (5+) to (6-);
\draw[->] (1-) to (2-);
\draw[->] (2-) to (3-);
\draw[->] (4-) to (5-);
\draw[->] (5-) to (6+);
\draw[->] (5-) to (6-);
\node() at (1,0.2) {\tiny{$\gamma_1$}};
\node() at (3,0.2) {\tiny{$\gamma_2$}};
\node() at (7,0.2) {\tiny{$\gamma_{n-2}$}};
\node() at (1,-1.4) {\tiny{$\gamma_1$}};
\node() at (3,-1.4) {\tiny{$\gamma_2$}};
\node() at (6.75,-1.4) {\tiny{$\gamma_{n-2}$}};
\node (21+) at (0,-2.6) {\tiny{$x_{2kn-2n}$}};
\node (22+) at (2,-2.6) {\tiny{$x_{2kn-2n-1}$}};
\node (23+) at (4,-2.6) {};
\node () at (5,-2.6) {$\cdots$};
\node (24+) at (6,-2.6) {};
\node (25+) at (8,-2.6) {\tiny{$x_{2kn-3n+2}$}};
\node (21-) at (0,-4.2) {\tiny{$x_{2kn-4n+1}$}};
\node (22-) at (2,-4.2) {\tiny{$x_{2kn-4n+2}$}};
\node (23-) at (4,-4.2) {};
\node () at (5,-4.2) {$\cdots$};
\node (24-) at (6,-4.2) {};
\node (25-) at (8,-4.2) {\tiny{$x_{2kn-3n-1}$}};
\node (26+) at (6.5,-3.4) {\tiny{$x_{2kn-3n}$}};
\node (26-) at (9.5,-3.4) {\tiny{$x_{2kn-3n+1}$}};
\node() at (7.5,-3.1)
{\tiny{$\gamma_{0^{+}}$}};
\node() at (7.5,-3.7){\tiny{$\gamma_{0^{+}}$}};
\node() at (8.6,-3.1){\tiny{$\gamma_{0^{-}}$}};
\node() at (8.6,-3.7){\tiny{$\gamma_{0^{-}}$}};
\draw[->] (21+) to (22+);
\draw[->] (22+) to (23+);
\draw[->] (24+) to (25+);
\draw[->] (25+) to (26+);
\draw[->] (25+) to (26-);
\draw[->] (21-) to (22-);
\draw[->] (22-) to (23-);
\draw[->] (24-) to (25-);
\draw[->] (25-) to (26+);
\draw[->] (25-) to (26-);
\node() at (1,-2.4) {\tiny{$\gamma_1$}};
\node() at (3,-2.4) {\tiny{$\gamma_2$}};
\node() at (6.75,-2.4) {\tiny{$\gamma_{n-2}$}};
\node() at (1,-4) {\tiny{$\gamma_1$}};
\node() at (3,-4) {\tiny{$\gamma_2$}};
\node() at (6.75,-4) {\tiny{$\gamma_{n-2}$}};
\draw[-] (1-) to (21+);
\node () at (0.3,-2.1) {\tiny{$\varepsilon_1^{\delta_{k}}$}};
\node () at (0.45,-4.8) {\tiny{$\varepsilon_1^{\delta_{k-1}}$}};
\node () at (0.3,-6.6) {\tiny{$\varepsilon_1^{\delta_{2}}$}};
\draw[-] (0,-5.2) to (21-);
\node () at (5,-5.7) {$\cdots\;\;\;\;\;\cdots\;\;\;\;\;\cdots\;\;\;\;\;\cdots\;\;\;\;\;\cdots$};
\node (31+) at (0,-7.2) {\tiny{$x_{2n}$}};
\node (32+) at (2,-7.2) {\tiny{$x_{2n-1}$}};
\node (33+) at (4,-7.2) {};
\node () at (5,-7.2) {$\cdots$};
\node (34+) at (6,-7.2) {};
\node (35+) at (8,-7.2) {\tiny{$x_{n+2}$}};
\node (31-) at (0,-8.8) {\tiny{$x_1$}};
\node (32-) at (2,-8.8) {\tiny{$x_2$}};
\node (33-) at (4,-8.8) {};
\node () at (5,-8.8) {$\cdots$};
\node (34-) at (6,-8.8) {};
\node (35-) at (8,-8.8) {\tiny{$x_{n-1}$}};
\node (36+) at (6.5,-8) {\tiny{$x_{n}$}};
\node (36-) at (9.5,-8) {\tiny{$x_{n+1}$}};
\node() at (7.5,-7.7)
{\tiny{$\gamma_{0^{+}}$}};
\node() at (7.5,-8.3){\tiny{$\gamma_{0^{+}}$}};
\node() at (8.6,-7.7){\tiny{$\gamma_{0^{-}}$}};
\node() at (8.6,-8.3){\tiny{$\gamma_{0^{-}}$}};
\draw[-] (0,-6.2) to (31+);
\draw[->] (31+) to (32+);
\draw[->] (32+) to (33+);
\draw[->] (34+) to (35+);
\draw[->] (35+) to (36+);
\draw[->] (35+) to (36-);
\draw[->] (31-) to (32-);
\draw[->] (32-) to (33-);
\draw[->] (34-) to (35-);
\draw[->] (35-) to (36+);
\draw[->] (35-) to (36-);
\node() at (1,-7) {\tiny{$\gamma_1$}};
\node() at (3,-7) {\tiny{$\gamma_2$}};
\node() at (6.75,-7) {\tiny{$\gamma_{n-2}$}};
\node() at (1,-8.6) {\tiny{$\gamma_1$}};
\node() at (3,-8.6) {\tiny{$\gamma_2$}};
\node() at (6.75,-8.6) {\tiny{$\gamma_{n-2}$}};
\draw[-] (-0.4,-0.3) arc(153:207:9);
\node () at (-1,-4.5) {\tiny{$\varepsilon_1^{\delta_{1}}$}};
\end{tikzpicture}
\caption{The ex-walk associated to the ex-band $\theta(\delta)\cdot\gamma_{\textup{max}}$}
\label{FigureOfExBa}
\end{figure}

We define an {\emph{ex-band module}} $N(w,\varphi)$ as a representation
$$\big((N(w,\varphi)_u)_{u\in Q_0},(N(w,\varphi)_{\alpha})_{\alpha\in Q_1}\big)$$
of $Q$ as follows. To each point $x_i$ in the ex-walk $w$, where $i=j+2ln$ with $0\leq j<{2n}$, we associate a vector space $U_{j,l}\cong{{\bf k}}^s$. The vector spaces are given by 
$$N(w,\varphi)_u=\left\{\begin{array}{ll}
(\bigoplus_{l=0}^{k-1} U_{u,l})\oplus(\bigoplus_{l=0}^{k-1} U_{2n+1-u,l}) & 1\leq u\leq n-1,  \\
\bigoplus_{l=0}^{k-1} U_{n,l}     & u=0^{+}, \\
\bigoplus_{l=0}^{k-1} U_{n+1,l}     & u=0^{-}, 
\end{array}\right.$$
and the linear maps are defined by
$$N(w,\varphi)_{\varepsilon_1}|_{U_{j,l}}=\left\{
\begin{array}{ll}
\varphi & \textup{if}\;(j,l)=(0,0),\delta_1=1,\\
\varphi^{-1} & \textup{if}\;(j,l)=(1,0),\delta_1=-1,\\
\operatorname{id} & \textup{if there is an arrow}\; \varepsilon_1\;\textup{with source}\; x_{2ln+j}\;\textup{and}\;2ln+j\neq0,1\\
0&\textup{otherwise},
\end{array}\right.$$
and 
$$N(w,\varphi)_{\gamma_i}|_{U_{j,l}}=\left\{
\begin{array}{ll}
\operatorname{id} & \textup{if there is an arrow}\; \gamma_i:\;x_{2ln+j}\rightarrow x_{2ln+j'} \textup{\;with\;} j'-j\neq 2,\\
-\operatorname{id} & \textup{if there is an arrow}\; \gamma_i:\;x_{2ln+j}\rightarrow x_{2ln+j+2},\\
0&\textup{otherwise}.
\end{array}
\right.$$
Obviously, the representation $N(w,\varphi)$ is an $H(\widetilde{CD}_{n})$-module.

\begin{rem}\label{remark3.4}
%\begin{enumerate}
 (1) We caution that not all ex-string modules are indecomposable. For instance,  $N(\gamma\gamma^{-1})\cong N(\gamma_{0^{+}})\oplus N(\gamma^{-1}_{0^{-}})$ by Lemma \ref{A3caseIso} (2). 

 (2) The ex-string modules and ex-band modules play a crucial role in describing the Auslander--Reiten quiver of the category of $H(\widetilde{CD}_{n})$-modules. However, not every $H(\widetilde{CD}_{n})$-module falls into these classes. For instance, let $N$ be the $H(\widetilde{CD}_{2})$-module defined by     
    $$N_1=\textup{span}\{x_1,x_2,x_3,x_4\},\;\;\;N_{0^{+}}=\textup{span}\{y_1,y_2\},\;\;\;N_{0^{-}}=\textup{span}\{z_1,z_2\},$$
    with the action of $H(\widetilde{CD}_{2})$ given by
    $$\varepsilon_1:x_1\mapsto x_2,x_4\mapsto  x_3;\;\;\gamma_{0^+}:x_1\mapsto  y_1,x_2\mapsto y_2,x_3\mapsto y_2;\;\;\gamma_{0^-}:x_3\mapsto  z_1,x_4\mapsto  z_2.$$
    Then $N$ lies on the second layer of a non-homogeneous tube of rank $2$, and is neither an ex-string module nor an ex-band module, see Figure \ref{NStNBdmodule}.
\begin{figure}[h]
   \begin{center}
       \begin{tikzpicture}
        \node(x1) at (0,0) {\tiny{$1$}};
        \node(y1) at (-1.5,0.4) {\tiny{$0^{+}$}};
        \node(x2) at (0,-0.8) {\tiny{$1$}};
        \node(y2) at (-1.5,-1.2) {\tiny{$0^{+}$}};
        \node(z1) at (1.5,-1.2) {\tiny{$0^{-}$}};
        \node(x3) at (0,-1.6) {\tiny{$1$}};
        \node(x4) at (0,-2.4) {\tiny{$1$}};
        \node(z2) at (1.5,-2.8) {\tiny{$0^{-}$}};
        \draw[->](x1)to(x2);
        \draw[->](x4)to(x3);
        \draw[->](x1)to(y1);
        \draw[->](x2)to(y2);
        \draw[->](x3)to(y2);
        \draw[->](x3)to(z1);
        \draw[->](x4)to(z2);
    \end{tikzpicture}
    \caption{The non-ex-string, non-ex-band $H(\widetilde{CD}_{2})$-module $N$}\label{NStNBdmodule}
   \end{center}
\end{figure}
\end{rem}

\subsection{The category of $H(\widetilde{CD}_{n})$-modules}\label{indexSets}
In order to describe the Auslander--Reiten quiver of the category of the GLS algebra $H(\widetilde{CD}_{n})$, we introduce certain index sets together with equivalence relations as follows.

Recall the definition of $\mathcal{I}$ and $\mathcal{I}_e$ from \eqref{Index I} and \eqref{I-even}. For a subset $S$ of $\mathcal{I}$, an index $\delta$ is called $S$-power-free if it is not a non-trivial power of any element in $S$. Set
\begin{itemize}
    \item[-] 
$\mathcal{I}_1=\{\delta\in\mathcal{I}\mid\delta\neq\delta^{-1}\}$;

    \item[-] 
$\mathcal{I}_2=\{\delta\in\mathcal{I}\mid \delta \textup{\;is\;}\mathcal{I}\textup{-power-free}\}$;

\item[-] $\mathcal{I}_{3}=\{\delta\in\mathcal{I}_e\mid \delta \textup{\;is\;}\mathcal{I}_e\textup{-power-free}\}$;

\item[-] $\mathcal{I}_4=\{\delta\in\mathcal{I}\mid\delta\circ\delta^{-1}\textup{\;is\;}\mathcal{I}_e\textup{-power-free}\}$;

\item[-]
$\mathcal{I}_{5}=\{\delta\in\mathcal{I}_{2}\mid \delta[m]=\delta'\circ\delta'^{-1}\textup{\;for some\;} m\in\mathbb{Z} \textup{\;and\;}\delta'\in\mathcal{I}_4\}$.
\end{itemize}

Define $\sim_k \; (1\leq k\leq 3)$ to be the smallest equivalence relations satisfying the following conditions for all $\delta\in\mathcal{I}$:

\begin{itemize}
    \item[-] 
$\delta\sim_1\delta^{-1}$; 

\item[-]
$\delta\sim_2\delta^{-1}$ and $\delta\sim_2\delta[1]$; 
\item[-]
$\delta\sim_3(\delta^{-1})[1]$ and $\delta\sim_3\delta[2]$.
\end{itemize}

Let ${\bf k}^{\#}$ be the set of equivalence classes of ${\bf k}^{*}\backslash\{\pm1\}$ under the relation $x\sim x^{-1}$ for any $x$. Now we can state the following theorem, which is one of the main results of this paper. 

\begin{thm}\label{MainTheorem}
    The Auslander--Reiten quiver $\Gamma_{H(\widetilde{CD}_{n})}$ consists of the following components.
\begin{enumerate}
    \item One component $\mathcal{H}(\widetilde{CD}_{n})_{{PI}}$ containing all indecomposable preprojective modules and all indecomposable preinjective modules, which has the form $\mathbb{Z}D_{\infty}-\big(Q^{\textup{o}}(\widetilde{CD}_{n})\big)^{\textup{op}}$.
    \item Components $\mathcal{H}(\widetilde{CD}_{n})_{(w_1,w_2)}$ of type $\mathbb{Z}D_{\infty}$, where $(w_1,w_2)$ runs through all $1$-ex-string pairs.
    \item Components $\mathcal{H}(\widetilde{CD}_{n})_{w}$ of type $\mathbb{Z}A_{\infty}^{\infty}$, where $w$ runs through the full ex-strings with index in $\mathcal{I}_1/\sim_1$. 
    \item One stable tube $\mathcal{H}(\widetilde{CD}_{n})_{(1,1)}$ of rank $n-1$.
    %with $N\big(\varepsilon_1\cdot(\gamma_{\textup{max}})_{\leq n-1}\big),S_{n-1},\cdots,S_3,S_2$ at the bottom of the tube.
    \item Stable tubes $\mathcal{H}(\widetilde{CD}_{n})_{(w_1,w_2)}$ of rank $2$, where $(w_1,w_2)$ runs through the $2$-ex-string pairs with indices $\delta(w_1)=\delta(w_2)$ in $\mathcal{I}_{4}/{\sim_1}$.
    \item Homogeneous tubes $\mathcal{H}(\widetilde{CD}_{n})_{(w,\lambda)}$, where $w$ is an ex-band and $(w,\lambda)$ runs through the set $$\{(w,\lambda)\mid \delta(w)\in(\mathcal{I}_2-\mathcal{I}_{5})/{\sim_2},\lambda\in{\bf{k}}^{*}  \}\cup\{(w,\lambda)\mid \delta(w)\in\mathcal{I}_{5}/{\sim_{2}},\lambda\in{\bf k}^{\#}\}.$$
\end{enumerate}   
\end{thm}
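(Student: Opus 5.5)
We will deduce Theorem~\ref{MainTheorem} from Proposition~\ref{ThmCn} by equivariantization with respect to a $\mathbb{Z}_2$-action on $A:=H(\widetilde{C}_{2n-2})$.

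\textbf{Setting up the action.} Let $g$ be the automorphism of $A$ that swaps the two branches of $Q(\widetilde{C}_{2n-2})$: it sends $i\mapsto -i$, $\beta_i\mapsto\beta_{-i}$, $\varepsilon_1\leftrightarrow\varepsilon_{-1}$, and it fixes the vertex $0$. Since $\operatorname{char}{\bf k}=0$, the skew group algebra $A\ast\mathbb{Z}_2$ is Morita equivalent to $H(\widetilde{CD}_n)$; this is the realization of $H(\widetilde{CD}_n)$ as a basic algebra of $A\ast\mathbb{Z}_2$ announced in the introduction. Under this equivalence:
\begin{itemize}
\item the $g$-orbit $\{i,-i\}$ with $i\neq 0$ becomes the vertex $i$;
\item the fixed vertex $0$ splits into $0^{+}$ and $0^{-}$, according to the two characters of $\mathbb{Z}_2$.
\end{itemize}
By Reiten--Riedtmann theory, the induction and restriction functors $F\colon\operatorname{mod}A\to\operatorname{mod}H(\widetilde{CD}_n)$ and $G$ in the other direction have the following properties:
\begin{itemize}
\item they preserve almost split sequences and commute with $\tau$;
\item $GF(M)\cong M\oplus {}^gM$;
\item every indecomposable $H(\widetilde{CD}_n)$-module is a direct summand of some $F(M)$ with $M$ indecomposable.
\end{itemize}
Moreover, if $M\not\cong{}^gM$ then $F(M)$ is indecomposable and $F(M)\cong F({}^gM)$; if $M\cong{}^gM$ then $F(M)$ splits into exactly two non-isomorphic summands, indexed by the characters. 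Hence $\Gamma_{H(\widetilde{CD}_n)}$ is obtained from $\Gamma_A$ as follows. A component $\mathcal{C}$ with ${}^g\mathcal{C}\neq\mathcal{C}$ is identified with ${}^g\mathcal{C}$. A $g$-stable component $\mathcal{C}$ is replaced by a component whose shape is determined by how $g$ acts on $\mathcal{C}$: points of $\mathcal{C}$ fixed by $g$ split into two points, and free orbits become single points.

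\textbf{Matching the component types.} We first compute the action of $g$ on strings and bands. We have $g(\beta_{\max})=\beta_{\max}^{-1}$, so ${}^gM(\xi(\delta)_{[i,j]})$ is again a string module, and the index of its string is obtained from $\delta$ by the involution $\delta\mapsto\delta^{-1}$ up to a shift. Applying $F$ to a string or band module yields exactly the ex-string or ex-band module whose ex-walk is the image under the fold map. This dictionary, between $g$-orbits of strings/bands and ex-strings/ex-bands, is recorded in the appendices.

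We then run through the four cases of Proposition~\ref{ThmCn}.
\begin{enumerate}
\item The component $\mathcal{H}(\widetilde{C}_{2n-2})_{PI}$ is $g$-stable. Its $\mathbb{Z}A_\infty^\infty$ shape has the sectional line through the projective $P_0$ fixed by $g$. Folding it therefore gives $\mathbb{Z}D_\infty-\big(Q^{\rm o}(\widetilde{CD}_n)\big)^{\rm op}$, which is item~(1).
\item The tube of rank $2n-2$ is $g$-stable, and $g$ acts on its mouth as a rotation by $n-1$. Since $g$ fixes no mouth vertex, the quotient is a tube of rank $n-1$; this is item~(4).
\item Each component of type $\mathbb{Z}A_\infty^\infty$ is indexed by a minimal $(2,2)$-string $w$. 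There are three possibilities.
\begin{itemize}
\item If $g(w)\not\sim w$, the component is paired with a different one. The image is a $\mathbb{Z}A_\infty^\infty$ component indexed by a full ex-string with $\delta\in\mathcal{I}_1/\sim_1$; this is item~(3).
\item If $g$ fixes the component and acts on it as a reflection fixing a sectional line, the quotient is of type $\mathbb{Z}D_\infty$. These components are indexed by $1$-ex-string pairs; this is item~(2).
\item If $g$ acts as a glide reflection, the quotient is a tube of rank~$2$; this is item~(5). In this case the symmetric strings correspond to indices of the form $\delta\circ\delta^{-1}$, which is exactly how $\mathcal{I}_4$ arises.
\end{itemize}
\item Homogeneous tubes $(w,\lambda)$ are handled similarly. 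The action is $g\cdot(w,\lambda)=(g(w),\lambda^{\pm1})$.
\begin{itemize}
\item If $g(w)\not\sim w$, the tubes for $w$ and $g(w)$ are identified, giving ex-band indices in $(\mathcal{I}_2-\mathcal{I}_5)/\sim_2$ with $\lambda\in{\bf k}^*$.
\item If $g(w)\sim w$, which is the case $\delta\in\mathcal{I}_5$, the parameter transforms by $\lambda\mapsto\lambda^{-1}$. For $\lambda\neq\pm1$ the tubes are identified in pairs, giving the parameter set ${\bf k}^{\#}$.
\item For $\lambda=\pm1$ the tube is $g$-fixed, so its modules split. These split tubes must be shown to coincide with the rank-$2$ tubes already counted in item~(5). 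Remark~\ref{remark3.4}(2) is an instance of this: the module $N$ there arises as a summand on the second layer of such a tube.
\end{itemize}
\end{enumerate}
Completeness of the list follows because every indecomposable is a summand of some $F(M)$.

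\textbf{Main obstacle.} The delicate part is the combinatorial bookkeeping: deciding, for each $g$-stable component, whether $g$ acts as a reflection (giving $\mathbb{Z}D_\infty$) or as a glide (giving a rank-$2$ tube), and verifying that the equivalence relations $\sim_1,\sim_2,\sim_3$ and the sets $\mathcal{I}_1,\dots,\mathcal{I}_5$ classify the orbits without repetition. Part of this is checking that the $\lambda=\pm1$ band tubes do not produce extra components beyond item~(5). I would handle it by computing $g$ explicitly on minimal $(2,2)$-strings $\xi(\delta)_{[i,j]}$ via index manipulations, and by identifying the mouth modules through the explicit equivariant computations in the appendices.
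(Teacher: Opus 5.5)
Your framework (Reiten--Riedtmann/equivariantization applied to $\Gamma_{H(\widetilde{C}_{2n-2})}$, plus the string/band dictionary of the appendices) is the same as the paper's. Items (1)--(4) come out essentially as you say. One correction there: in the $g$-stable cases $F_\sigma$ fixes a $\tau$-orbit pointwise (that of $P_0$, resp.\ of $M(w)$), not a sectional line.

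The genuine problem is item (5). You derive the rank-$2$ tubes from $\mathbb{Z}A_\infty^\infty$ components on which $g$ acts as a glide reflection, but this case does not exist. $F_\sigma$ is an involution commuting with $\tau$, and every involutive automorphism of $\mathbb{Z}A_\infty^\infty$ that swaps the two sectional directions fixes a $\tau$-orbit pointwise. A fixed-point-free one squares to a nonzero power of $\tau$. So every $g$-stable $(2,2)$ component folds to $\mathbb{Z}D_\infty$. Besides, a free quotient of a component without mouth could never be a tube. In the paper the rank-$2$ tubes come \emph{only} from the homogeneous tubes $\mathcal{H}(\widetilde{C}_{2n-2})_{w,\pm1}$ with $w$ rotated. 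You set these aside as something ``to be shown to coincide'' with tubes counted elsewhere.

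For those tubes your folding rule (fixed objects split, free orbits collapse) does not decide the answer. $F_\sigma$ fixes every object of a homogeneous tube both for a rotated band with $\lambda=\pm1$ and for a reversed band. Yet the equivariant categories are $\mathcal{T}_2$ and $\mathcal{T}_1\times\mathcal{T}_1$ respectively.

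The missing ingredient is the action of $F_\sigma$ on irreducible maps. Using the involutive matrices $L(\pm1,r)$ of Lemma~\ref{MatrixLemma}, the paper normalizes the two families of irreducible maps so that $F_\sigma(f_r)=f_r$ and $F_\sigma(f''_r)=-f''_r$ in the rotated case. Then $l_{\zeta f}l_f=-1$, and Proposition~\ref{ImportantEquivar}(2)(ii) gives $\mathcal{T}_2$, whose mouth is the $2$-ex-string pair of Proposition~\ref{ClassifyBaToExba}(3). In the reversed case both families are fixed, and one gets two homogeneous tubes.

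Relatedly, you drop reversed bands altogether by equating $g(w)\sim w$ with $\delta\in\mathcal{I}_5$. By Lemma~\ref{ClassifyOfBands}, a symmetric band may also be reversed, with $\delta=\delta'\circ\delta'$ and $\delta'$ of odd length. These bands give the tubes of $N(\theta(\delta')\cdot\gamma_{\textup{max}},\pm\sqrt{\varphi})$ in Proposition~\ref{ClassifyBaToExba}(2), and they account for exactly the odd-length indices in $(\mathcal{I}_2-\mathcal{I}_5)/{\sim_2}$. Non-symmetric bands only produce even-length indices. So your derivation of item (6) does not yield the stated index set, and without the sign computation item (5) is not established.
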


The proof of this main theorem will be given in Section \ref{Mainresult1}. As an immediate consequence, we obtain the following result.

\begin{cor}\label{tame for H(CD)}
   The GLS algebra $H(\widetilde{CD}_{n})$ is representation-tame.  
\end{cor}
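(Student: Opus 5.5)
The corollary follows from Theorem~\ref{MainTheorem}, read against the standard description of tameness: a finite-dimensional algebra $A$ over ${\bf k}$ is representation-tame if, for every dimension vector $d$, all but finitely many isoclasses of indecomposables of dimension $d$ lie in finitely many one-parameter families. I would first use Theorem~\ref{MainTheorem} to list every indecomposable $H(\widetilde{CD}_n)$-module, and then count, for a fixed dimension vector $d$, the discrete modules and the families separately.

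\emph{Discrete part.} Modules in components (1)--(5), and in the non-mouth layers of the homogeneous tubes in (6), have the following property: for fixed $d$, only finitely many of them lie outside one-parameter families. Components (1)--(4) are indexed by ex-strings, or by the single tube of rank $n-1$. For each index $\delta$ there are only finitely many truncations $\psi,\psi'\in\Psi$. The dimension of $N(w)$, or of the mouth modules of a component, grows at least linearly in the length of $\delta$. Hence only finitely many ex-strings have total dimension $\dim d$.

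Within a single component, dimension vectors strictly increase along sectional paths, and $\tau$-orbits in a $\mathbb{Z}A_\infty^\infty$ or $\mathbb{Z}D_\infty$ component that is not a tube also change dimension. I would verify this via the equivariance with $H(\widetilde C_{2n-2})$ of Sections~\ref{sec:equivariantization}--\ref{Mainresult1}: the corresponding statement holds for the string algebra by Proposition~\ref{ThmCn}, and the skew-group functor changes dimension by at most a factor of $2$. So each component contributes only finitely many modules of dimension $d$. The same argument handles the stable tubes in (4) and (5).

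\emph{Families.} The homogeneous tubes in (6) are indexed by pairs $(w,\lambda)$ with $\delta(w)$ ranging over $\mathcal{I}_2/{\sim_2}$, and only finitely many such $\delta$ have bounded length. For fixed $w$ and fixed tube layer $s$, the modules $N(w,\varphi)$ with $\varphi=J(\lambda,s)$ form a single one-parameter family over $\lambda\in{\bf k}^*$, or over ${\bf k}^{\#}$ in the $\mathcal{I}_5$ case. Such a family is given by a ${\bf k}[x,x^{-1}]$--$H$-bimodule, free of finite rank over ${\bf k}[x,x^{-1}]$, namely the ex-band module with $\varphi=x\cdot J(1,s)$ regarded over the Laurent ring. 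The possible pairs $(w,s)$ are finite for fixed $d$, so the number of families is finite.

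The main obstacle is the finiteness bookkeeping in the non-tube components (2) and (3). There I must rule out infinitely many modules of a fixed dimension inside a single $\mathbb{Z}D_\infty$ or $\mathbb{Z}A_\infty^\infty$ component. I would handle this by pulling back, via equivariantization, to the known string-algebra components $\mathcal{H}(\widetilde C_{2n-2})_w$, where dimensions are given by string lengths and are therefore unbounded along both $\tau$-orbits and sectional paths.
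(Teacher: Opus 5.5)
Your overall strategy is the paper's. The paper treats the corollary as an immediate consequence of the component list in Theorem~\ref{MainTheorem}, and your treatment of the homogeneous tubes in (6) is fine: you use the ex-band module with $\varphi=x\cdot J(1,s)$ over ${\bf k}[x,x^{-1}]$ as the one-parameter family.

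The step that fails is your finiteness argument inside the non-tube components. You claim that dimension vectors strictly increase along sectional paths, and that this ``holds for the string algebra by Proposition~\ref{ThmCn}''. That claim is false. Let $w$ be a minimal string of type $(2,2)$. Every irreducible map into $M(w)$ is surjective and every irreducible map out of $M(w)$ is injective. So on every sectional path $M(w'')\to M(w)\to M(w')$ in $\mathcal{H}(\widetilde C_{2n-2})_w$, the dimension first drops and then rises. Your fallback, that dimensions are unbounded along $\tau$-orbits and sectional paths, does not by itself show that a single component contains only finitely many modules of a given dimension. So the obstacle you single out is not actually overcome by the argument you give.

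The repair makes the component-by-component analysis unnecessary. Take an indecomposable $Y$ in (1)--(5), and let $U$ be the underlying $H(\widetilde C_{2n-2})$-module of $\Phi^{-1}(Y)$. Then $\dim U=2\sum_{u=1}^{n-1}\dim Y_u+\dim Y_{0^+}+\dim Y_{0^-}\le 2\dim Y$. Since $2$ is invertible in ${\bf k}$, $\Phi^{-1}(Y)$ is a summand of $\operatorname{Ind}(U)$. Hence $Y$ is a direct summand of $\Phi\circ\operatorname{Ind}(X)$ for some indecomposable summand $X$ of $U$.

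By Propositions~\ref{forthm002} and~\ref{forthm003} in the proof of Theorem~\ref{MainTheorem}, this $X$ is either a string module or a band module $M(w,J(\pm1,s))$ with $w$ rotated. Since $Q(\widetilde C_{2n-2})$ is finite, there are only finitely many strings and bands of length at most $2\dim Y$. Moreover, $\Phi\circ\operatorname{Ind}(X)$ has at most two indecomposable summands, because its underlying module is $X\oplus F_\sigma X$. Hence only finitely many indecomposables of each dimension lie outside the homogeneous tubes.

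For components (1)--(4), this is just your own remark that there are finitely many ex-strings of bounded length, combined with Propositions~\ref{StToExst} and~\ref{ClassifyStToExst}. Your quasi-length argument for the tubes in (5) is fine as it stands. This is the same transfer of tameness from $H(\widetilde C_{2n-2})$ that the paper invokes in Proposition~\ref{tame for arbitrary orientation}.
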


Moreover, we can prove tameness for arbitrary orientations of the quiver $Q(\widetilde{CD}_{n})$ in Figure \ref{quiverForCD}.

\begin{prop}\label{tame for arbitrary orientation}
    The GLS algebra $H(C,D,\Omega)$ is representation-tame if $C$ is of type $\widetilde{CD}_n$ with $n\geq 2$ and $D$ is minimal. 
\end{prop}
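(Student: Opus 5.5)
The plan is to reduce an arbitrary orientation $\Omega$ of $\widetilde{CD}_n$ to the fixed orientation of Figure~\ref{quiverForCD}, for which tameness is Corollary~\ref{tame for H(CD)}. The reduction uses reflection-type functors and derived equivalences, which are known to preserve representation type for these Gorenstein algebras.

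The underlying graph of $Q^{\textup{o}}(\widetilde{CD}_n)$ is a tree (including $n=2$, where it is the star $0^{+}\!-1-0^{-}$). Hence any two orientations $\Omega,\Omega'$ are connected by a sequence of sink or source reflections $s_i$. For GLS algebras, Gei\ss--Leclerc--Schr\"oer constructed BGP-type reflection functors $F_i^{\pm}$ at a sink/source $i$. Equivalently, $H(C,D,\Omega)$ and $H(C,D,s_i\Omega)$ are related by an APR-type tilting module $T=\tau^{-}P_i\oplus\bigoplus_{j\neq i}P_j$ when $i$ is a sink. This gives a derived equivalence
\[
D^b(\operatorname{mod}H(C,D,\Omega))\simeq D^b(\operatorname{mod}H(C,D,s_i\Omega)).
\]
More precisely, I would check that the endomorphism algebra of this tilting module is again a GLS algebra of the reflected orientation. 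This is what \cite{GLSI} establishes, and I would quote it.

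Next, I would transfer tameness along one reflection. The key point is that $F_i^{+}$ and $F_i^{-}$ induce mutually inverse equivalences between the full subcategory of $\operatorname{mod}H(\Omega)$ without summands isomorphic to the indecomposable modules $E_i$ supported at $i$, and the analogous subcategory for $s_i\Omega$. Here $E_i = e_iH/\langle$arrows other than $\varepsilon_i\rangle$ is the local module at vertex $i$, and only finitely many of these are indecomposable, namely the $\mathbf k[\varepsilon_i]/(\varepsilon_i^{d_i})$-modules. The difficulty is that the GLS reflection functors are only well behaved on locally free modules, while tameness concerns all modules.

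So I would instead argue at the level of one-parameter families. Suppose $H(\Omega)$ were wild. Then there is a strict representation embedding $\operatorname{mod}\mathbf k\langle x,y\rangle\to\operatorname{mod}H(\Omega)$ given by a bimodule $M$. Twisting the embedding so that its image avoids vertex $i$ as a direct summand, and composing with the right-exact functor $-\otimes_{H(\Omega)}T$, would produce a family of $H(s_i\Omega)$-modules. This family should remain non-isomorphic and indecomposable, because $T$-torsionfree/torsion theory identifies $\operatorname{mod}$-categories up to finitely many indecomposables (Brenner--Butler). That would contradict tameness of $H(s_i\Omega)$.

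This is the standard fact that tilting with a splitting torsion pair preserves representation type. It applies here because an APR-tilting module at a sink gives a splitting torsion pair: the torsion-free part is $\operatorname{add}$ of the finitely many modules supported at $i$. I expect this step to be the main obstacle: confirming that APR tilting at a vertex with a loop $\varepsilon_i$ still yields a splitting torsion pair and has GLS endomorphism algebra.

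If that step is problematic, the alternative is a direct argument. I would use the Galois covering of Section~\ref{sec:galois} and the $\mathbb Z_2$-equivariant description of Section~\ref{sec:equivariantization}. For an arbitrary orientation $\Omega$ of $\widetilde{CD}_n$, $H(C,D,\Omega)$ is Morita equivalent to the skew group algebra of $H(\widetilde C_{2n-2},\Omega')$ for a $\mathbb Z_2$-symmetric orientation $\Omega'$. That algebra is a string algebra, hence tame for every orientation, and skew group algebras of tame algebras by groups of order invertible in $\mathbf k$ are tame (Reiten--Riedtmann). Since $\operatorname{char}\mathbf k=0$, this gives the claim directly. In fact I would present this second argument as the main proof, as it avoids the reflection subtleties: the only step to verify is that the basic-algebra identification of Section~\ref{sec:equivariantization} does not depend on the orientation.
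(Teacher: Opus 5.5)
\textbf{The gap.} Your second argument, which you make the main proof, does not cover every orientation. The step you describe as the only thing to verify, namely that the identification $\textup{bas}\bigl(H(\widetilde C_{2n-2})[G]\bigr)\simeq H(\widetilde{CD}_n)$ does not depend on the orientation, is false. Since $\sigma(\beta_{n-1})=\beta_{1-n}$, a $G$-stable orientation $\Omega'$ of $\widetilde C_{2n-2}$ has $\beta_{n-1}$ and $\beta_{1-n}$ both pointing into $0$ or both pointing out of $0$. Then $\gamma_{0^{\pm}}=\frac12(\beta_{n-1}\pm\beta_{1-n}\sigma)$ are both directed away from $n-1$ or both toward $n-1$. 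The arrows $\gamma_i$ with $1\le i\le n-2$ can be oriented arbitrarily. However, an orientation in which $\gamma_{0^{+}}$ and $\gamma_{0^{-}}$ point in opposite directions never arises as such a basic algebra; examples are $0^{+}\to n-1\to 0^{-}$, or $0^{+}\to 1\to 0^{-}$ when $n=2$. So the equivariant argument only proves tameness when $\gamma_{0^{+}}$ and $\gamma_{0^{-}}$ have the same orientation, which is the first half of the paper's proof.

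\textbf{The fix.} What is missing is a reflection at a leaf. The vertices $0^{+},0^{-}$ carry no loop ($H_{0^{\pm}}=\mathbf k$), each is incident with a single arrow, and no defining relation involves them. So the classical BGP reflection at $0^{+}$ or $0^{-}$ applies: it is an APR tilt at a simple projective or simple injective module, with split torsion pair. It induces a bijection on indecomposables other than one simple and preserves representation type. Reflecting at $0^{+}$ or $0^{-}$ therefore reduces the mixed case to the same-orientation case, and this is exactly how the paper finishes.

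\textbf{Your first strategy.} It could also be repaired, by reflecting only at vertices $j\neq 1$. All of these have $H_j=\mathbf k$ and occur in no relation, since for the minimal symmetrizer the only relation is $\varepsilon_1^2=0$. This suffices because any two orientations of a tree are joined by sink/source reflections that avoid a prescribed vertex. Your worry about vertex $1$ is justified. The modules supported at $1$ include the non-locally-free simple $S_1$, which has infinite projective and injective dimension over the $1$-Iwanaga--Gorenstein algebra. Hence the torsion pair of the BGP tilting module at $1$ does not split, and the transfer argument you sketch breaks down there.
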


\begin{proof}
By \cite[Theorem~3.14]{HLS2023}, the GLS algebra of type $\widetilde{C}_{2n-2}$ with minimal symmetrizer is representation-tame for every orientation. Hence, the equivariant argument used in the proof of Theorem~\ref{MainTheorem} applies when the arrows $\gamma_i\;(1\le i\le n-2)$ are arbitrarily oriented, and $\gamma_{0^{+}}$ and $\gamma_{0^{-}}$ have the same orientation.
 
Moreover, suppose that $\gamma_{0^{+}}$ and $\gamma_{0^{-}}$ have opposite orientations. Since $0^{+}$ and $0^{-}$ are leaf vertices, each incident with a unique arrow, and no defining relation involves either of them, the usual BGP reflection functor can be applied at $0^{+}$ or $0^{-}$. For a sink (respectively, source) $i\in\{0^{+},0^{-}\}$, it induces mutually inverse bijections between the indecomposable modules, except for the simple module at $i$, of the two GLS algebras whose orientations differ by reversing the unique arrow incident with $i$; see \cite[Theorem~1.1]{BGP1973}. In particular, such a reflection preserves representation type. Hence, by reflecting at $0^{+}$ or $0^{-}$ if necessary, we may arrange that $\gamma_{0^{+}}$ and $\gamma_{0^{-}}$ have the same orientation. The preceding argument then applies, and the result follows.
\end{proof}

\section{Equivariantization and the $\widetilde{C}$-$\widetilde{CD}$ correspondence}
\label{sec:equivariantization}

In this section, we recall the fundamental constructions of equivariantization and %investigate the equivariant categories of a tube under various $\mathbb{Z}_2$-actions. Additionally, we
introduce a classification of bands, laying the groundwork for the proof of Theorem \ref{MainTheorem}.

\subsection{Equivariantization}
Let $G=\{e,\sigma\}$ be the cyclic group of order $2$. Let $\mathcal{C}$ be an additive category. A (strict) $G$-action on $\mathcal{C}$ is an automorphism $F_{\sigma}$: $\mathcal{C}\rightarrow\mathcal{C}$ satisfying $F_{\sigma}^2=\operatorname{id}_{\mathcal{C}}$. A {\emph{$G$-equivariant object}} is a pair $(X,\alpha)$, where $X$ is an object in $\mathcal{C}$ and $\alpha:X\rightarrow F_{\sigma}(X)$ is an isomorphism satisfying the condition $$F_{\sigma}(\alpha)\circ\alpha=\operatorname{id}_X.$$ A morphism $f:\;(X,\alpha)\rightarrow(Y,\alpha')$ of $G$-equivariant objects is a morphism $f:\;X\rightarrow Y$ in $\mathcal{C}$ such that $\alpha'\circ f=F_{\sigma}(f)\circ \alpha$. We denote by $\mathcal{C}^G$ the {\emph{equivariant category}} of $\mathcal{C}$ (with respect to the given $G$-action), that is, the category of $G$-equivariant objects and their morphisms. 

The {\emph{induction functor}} $\textup{Ind}$: $\mathcal{C}\rightarrow\mathcal{C}^G$ is defined as follows. For an object $X\in\mathcal{C}$, set $$\textup{Ind}(X)=(X\oplus F_{\sigma}(X),\alpha),$$ 
where $\alpha$: $X\oplus F_{\sigma}(X)\rightarrow F_{\sigma}(X\oplus F_{\sigma}(X))$ is the isomorphism represented by the matrix $$\alpha=\begin{pmatrix}
    0&\operatorname{id}_{F_{\sigma}X}\\
    \operatorname{id}_X&0
\end{pmatrix}.$$ 
For a morphism $f$:\;$X\rightarrow Y$ in $\mathcal{C}$, we set $$\textup{Ind}(f)=f\oplus F_{\sigma}(f):\;\textup{Ind}(X)\rightarrow \textup{Ind}(Y).$$ 
By \cite[Theorem 3.8]{ReitenRiedtmann1985Skew}, the induction functor preserves almost split sequences as well as minimal left and right almost split maps in any Hom-finite abelian ${\bf k}$-category.

\subsection{Skew group algebras}
Let $G$ act on a ${\bf k}$-algebra $A$ by algebra automorphisms. The {\emph{skew group algebra}} $A[G]$ is the ${\bf k}$-algebra with underlying ${\bf k}$-vector space $A\otimes_{\bf k}{\bf k}G$ and multiplication defined on basis elements by $$(a\otimes g)(a'\otimes g')=ag(a')\otimes gg'$$ for all $a,a'\in A$ and $g,g'\in G$, extended linearly (see \cite{ReitenRiedtmann1985Skew}). For simplicity, we write $ag$ in place of $a\otimes g$.

The skew group algebra provides {a connection} between a module category and an equivariant category. Let $G=\{e,\sigma\}$ act on $A$ by automorphisms. The $\sigma$-action on $A$ induces an autoequivalence of $\operatorname{mod}A$, denoted by $F_{\sigma}:\;M\mapsto {^{\sigma}M}$, where $^{\sigma}M$ equals $M$ as an abelian group but with $A$-action twisted by $\sigma$, that is, $$m\cdot a=m\sigma(a)$$ 
for $a\in A$ and $m\in M$. The following lemma establishes the fundamental connection between the equivariant module category $(\operatorname{mod}A)^G$ and the module category $\operatorname{mod}A[G]$, which is essential for our subsequent calculations.

\begin{lem}[{\cite[Example 2.6]{ChenXW2017}}]\label{EqFromA^GToA[G]}
There is an isomorphism of categories
$$\Phi:\;(\operatorname{mod}A)^G\rightarrow\operatorname{mod}A[G],\;(M,\alpha)\mapsto M,$$
where the $A[G]$-module structure on $M$ is defined by $m\cdot (a\sigma)=\alpha(m)\cdot a$. 
\end{lem}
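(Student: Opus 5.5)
The plan is to construct $\Phi$ and an explicit inverse $\Psi$, and to check that both are well defined and mutually inverse on objects and on morphisms. Everything reduces to the defining relations of $A[G]$: the algebra $A[G]$ is generated by $A=A\otimes e$ and the element $\sigma=1\otimes\sigma$, subject to
\[
\sigma^2=1 \qquad\text{and}\qquad \sigma a=\sigma(a)\sigma \quad\text{for } a\in A .
\]
Consequently, a right $A[G]$-module structure on $M$ is the same thing as a right $A$-module structure on $M$ together with a $\mathbf k$-linear map $s\colon M\to M$, $s(m)=m\cdot\sigma$, such that
\[
s^2=\operatorname{id}_M \qquad\text{and}\qquad s(m\cdot a)=s(m)\cdot\sigma(a)\quad\text{for all } a\in A,\ m\in M .
\]
This follows from the associativity identities $(m\cdot\sigma)\cdot\sigma=m$ and $(m\cdot a)\cdot\sigma=(m\cdot\sigma)\cdot\sigma(a)$, the latter because $a\sigma=\sigma\,\sigma(a)$ in $A[G]$, using that $\sigma^{-1}=\sigma$ on $A$.

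First, for an object $(M,\alpha)$ of $(\operatorname{mod}A)^G$, I will set $s:=\alpha$, regarded as a map of underlying sets $M\to M$; this makes sense since ${}^{\sigma}M=M$ as abelian groups. I then check the two conditions above.
\begin{itemize}
\item The condition $s(m a)=s(m)\sigma(a)$ is exactly the statement that $\alpha\colon M\to{}^{\sigma}M$ is $A$-linear, since the twisted action on ${}^{\sigma}M$ is $m\ast a=m\sigma(a)$.
\item The condition $s^2=\operatorname{id}_M$ is exactly the equivariance condition $F_\sigma(\alpha)\circ\alpha=\operatorname{id}_M$, because $F_\sigma$ does not change underlying maps, so $F_\sigma(\alpha)=\alpha$ as a set map.
\end{itemize}
Extending by $m\cdot(a\sigma)$ expressed through $s$ and the $A$-action gives the $A[G]$-module $\Phi(M,\alpha)$. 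The precise formula must follow the left-to-right multiplication convention fixed in Section~\ref{sec:prelim}. Conversely, from an $A[G]$-module $M$ I will restrict to $A$ and put $\alpha:=(-)\cdot\sigma$. The same two identities then say that $\alpha$ is an $A$-isomorphism $M\to{}^{\sigma}M$, being its own inverse, satisfying the cocycle condition. These two assignments are visibly inverse on objects.

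For morphisms, an $A$-linear $f\colon M\to N$ is a morphism $(M,\alpha)\to(N,\alpha')$ exactly when $\alpha' f=F_\sigma(f)\alpha=f\alpha$. This is precisely commutation with the action of $\sigma$, and together with $A$-linearity it is exactly $A[G]$-linearity. Hence $\Phi$ is the identity on morphism sets, and it is an isomorphism of categories rather than merely an equivalence.

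The only real obstacle is bookkeeping. One must get the side of the twist and the order of composition consistent with right modules and the multiplication $(a g)(a' g')=a\,g(a')\,gg'$. In particular, one must confirm that the formula $m\cdot(a\sigma)=\alpha(m)\cdot a$ as stated, or its reordering $(m\cdot a)\cdot\sigma$, is associative with respect to that product. I will do this by checking the single relation $\sigma a=\sigma(a)\sigma$ against both possible readings, and then fix the reading that matches Section~\ref{sec:prelim}. Since $\sigma$ is an involution, either reading works after possibly replacing $a$ by $\sigma(a)$.
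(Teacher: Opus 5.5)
Your argument is correct, and it is self-contained where the paper is not. The paper gives no proof of this lemma; it simply imports the statement from \cite[Example~2.6]{ChenXW2017}. Your route is the standard direct verification. You present $A[G]$ by $A$ and $\sigma$ with $\sigma^2=1$ and $\sigma a=\sigma(a)\sigma$, so that right $A[G]$-modules become $A$-modules with a $\mathbf{k}$-linear map $s$ satisfying $s^2=\operatorname{id}$ and $s(m\cdot a)=s(m)\cdot\sigma(a)$. You then match these two identities with $A$-linearity of $\alpha\colon M\to{}^{\sigma}M$ and with $F_\sigma(\alpha)\circ\alpha=\operatorname{id}_M$. Your observation that equivariant morphisms are exactly the $A$-maps commuting with $s$ is what makes $\Phi$ an isomorphism of categories rather than only an equivalence.

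The one thing you should not leave as a hedge is the final convention check, because it comes out negative for the formula as displayed. With right modules and $(ag)(a'g')=a\,g(a')\,gg'$, one has $\sigma a'=\sigma(a')\sigma$ in $A[G]$. The displayed rule therefore gives $m\cdot(\sigma a')=\alpha(m)\cdot\sigma(a')$. Associativity, however, requires $m\cdot(\sigma a')=(m\cdot\sigma)\cdot a'=\alpha(m)\cdot a'$. These differ in general, for example for $A=H(\widetilde{C}_{2n-2})$ and $a'=e_1$, where $\sigma(e_1)=e_{-1}$.

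Your own identities pin down the correct structure. From $m\cdot\sigma=\alpha(m)$ you get $m\cdot(a\sigma)=(m\cdot a)\cdot\sigma=\alpha(m)\cdot\sigma(a)$, equivalently $m\cdot(\sigma a)=\alpha(m)\cdot a$. The displayed formula is the left-module rule $(a\sigma)\cdot m=a\cdot\alpha(m)$ rewritten in right-module notation. Since $\Phi$ is the identity on underlying spaces and on morphisms, the isomorphism of categories itself is unaffected. You should nevertheless state this corrected formula explicitly rather than assert that either reading works.
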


Note that the skew group algebra $A[G]$ need not be basic even when $A$ is basic. We denote by $\textup{bas}(A[G])$ the basic algebra Morita equivalent to $A[G]$ (see \cite[Corollary~I.6.10]{Elements1}). Let $$\Phi:\;(\operatorname{mod}A)^G\rightarrow\operatorname{mod}\big(\textup{bas}(A[G])\big)$$ be the composition of the isomorphism from Lemma \ref{EqFromA^GToA[G]} with the Morita equivalence.

\begin{example}\label{EXAMPLEcd+c}
Let $G$ act on $Q(\widetilde{C}_{2n-2})$ by $\sigma(i)=-i$, $\sigma(\beta_i)=\beta_{-i}$ for each vertex $i$, and by $\sigma(\varepsilon_1)=\varepsilon_{-1}, \sigma(\varepsilon_{-1})=\varepsilon_1$. This induces a $\sigma$-action on $H(\widetilde{C}_{2n-2})$. By the explicit computation in \cite[Section 2]{ReitenRiedtmann1985Skew}, the basic algebra $\textup{bas}\big(H(\widetilde{C}_{2n-2})[G]\big)$ admits a complete set of primitive orthogonal idempotents
$$\{e_1, \cdots, e_{n-1}, e_{0^{+}}, e_{0^{-}}\},$$
where 
$$e_{0^{+}}={\frac{e_0+e_0{\sigma}}{2}}, e_{0^{-}}={\frac{e_0-e_0{\sigma}}{2}}.$$
This set corresponds precisely to a complete set of primitive orthogonal idempotents in $H(\widetilde{CD}_{n})$. Hence, $$\textup{bas}\big(H(\widetilde{C}_{2n-2})[G]\big)\simeq H(\widetilde{CD}_{n}),$$
and under this identification we have
$${{\gamma_i=\beta_i}},\;\gamma_{0^{+}}=\frac{\beta_{n-1}+\beta_{1-n}\sigma}{2},\;\gamma_{0^{-}}=\frac{\beta_{n-1}-\beta_{1-n}\sigma}{2}$$ for each $1\leq i\leq n-2$. By Lemma \ref{EqFromA^GToA[G]}, there is an isomorphism of categories
$$\Phi:\;\big(\operatorname{mod}H(\widetilde{C}_{2n-2})\big)^G\xrightarrow{\simeq}\operatorname{mod}H(\widetilde{CD}_{n}).$$
\end{example}

\subsection{Symmetric strings and bands}
For the remainder of the paper, we adopt the following conventions: $G$ acts on $Q(\widetilde{C}_{2n-2})$ as specified in Example \ref{EXAMPLEcd+c}; all strings and bands are understood to lie in $H(\widetilde{C}_{2n-2})$, while all ex-strings and ex-bands are understood to lie in $H(\widetilde{CD}_{n})$.

Recall the index sets $\mathcal{I}_j$ for $1\leq j\leq 5$ and the equivalence relations $\sim_k$ for $1\leq k\leq 3$ in Subsection
\ref{indexSets}.

Extend $\sigma$ to formal inverses by imposing $\sigma(c^{-1})=\sigma(c)^{-1}$ for every arrow $c$ in $H(\widetilde{C}_{2n-2})$, and then extend it to any word by multiplicativity, namely, $$\sigma(c_1c_2\cdots c_m)=\sigma(c_1)\sigma(c_2)\cdots\sigma(c_m).$$
These assignments induce actions on the sets ${\textup{St}}(H(\widetilde{C}_{2n-2}))$ and ${\textup{Ba}}(H(\widetilde{C}_{2n-2}))$, which we denote again by $\sigma$. One readily verifies that $\sigma$ respects the relevant equivalence relations.

\begin{defn}\label{symstring}
\begin{enumerate}
    \item 
    A string $w$ is called {\emph{symmetric}} if $\sigma(w)$ is equivalent to $w$ under the string equivalence. A band $w$ is called {\emph{symmetric}} if $\sigma(w)$ is equivalent to $w$ under the band equivalence.
\item Given a symmetric band $w$, we say it is:
\begin{itemize}
\item[-] {\emph{reversed}} when $\sigma(w)=w_{(i)}$ for some $i\in\mathbb{Z}$;
    \item[-] {\emph{rotated}} when $\sigma(w)=(w^{-1})_{(i)}$ for some $i\in\mathbb{Z}$.
\end{itemize}
\end{enumerate}
\end{defn}

Since $\sigma$ fixes no arrow, we have $\sigma(w)\neq w$ for every non-trivial string $w$. Thus, a non-trivial string $w$ is symmetric if and only if $\sigma(w)=w^{-1}$. Moreover, under the equivalence relation of $\textup{Ba}\big(H(\widetilde{C}_{2n-2})\big)$, a symmetric band is necessarily either rotated or reversed. The next lemma provides explicit expressions for these two cases.

\begin{lem}\label{ClassifyOfBands}
Let $w=\xi(\delta)\cdot\beta_{\textup{max}}^{-1}$ be a symmetric band with $\delta=\delta_1\delta_2\cdots\delta_{2k}\in\mathcal{I}_{3}$.
\begin{enumerate}
    \item If $w$ is reversed, then $k$ is odd and $\delta=\delta'\circ\delta'$ for some index $\delta'\in\mathcal{I}$;
    \item If $w$ is rotated, then $\delta\in\mathcal{I}_{5}$.
\end{enumerate}
\end{lem}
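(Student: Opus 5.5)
The plan is to compute $\sigma(w)$ explicitly and compare it letter by letter with the cyclic shifts of $w$ and of $w^{-1}$. Write $w=\xi(\delta)\beta_{\max}^{-1}$ as the cyclic word
\[
\varepsilon_1^{\delta_1}\beta_{\max}\varepsilon_{-1}^{\delta_2}\beta_{\max}^{-1}\varepsilon_1^{\delta_3}\beta_{\max}\cdots\varepsilon_{-1}^{\delta_{2k}}\beta_{\max}^{-1}.
\]
Since $\sigma(\beta_{\pm i})=\beta_{\mp i}$, we have $\sigma(\beta_{\max})=\beta_{\max}^{-1}$. Hence
\[
\sigma(w)=\varepsilon_{-1}^{\delta_1}\beta_{\max}^{-1}\varepsilon_1^{\delta_2}\beta_{\max}\cdots\varepsilon_1^{\delta_{2k}}\beta_{\max}.
\]
The loops $\varepsilon_{\pm1}$ are the only letters carrying information, and they occur at positions separated by blocks $\beta_{\max}^{\pm1}$ of fixed length. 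So any equality between $\sigma(w)$ and a cyclic shift of $w^{\pm1}$ must match loop positions. This reduces everything to a statement about the index sequence $\delta$, viewed cyclically, together with a parity bookkeeping of which loop ($\varepsilon_1$ or $\varepsilon_{-1}$) sits at each odd or even slot.

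For (1), the reversed case, I would rotate $\sigma(w)$ to start at a loop $\varepsilon_1$. This forces $\sigma(w)=w_{(i)}$ with a shift by an odd number $j$ of loop slots. Writing $\sigma(w)$ starting at slot $2$ gives
\[
\varepsilon_1^{\delta_2}\beta_{\max}\cdots\varepsilon_{-1}^{\delta_1}\beta_{\max}^{-1},
\]
which is $w$ with index $\delta[1]$. More generally, $\sigma(w)=w_{(i)}$ translates to $\delta[1]=\delta[j]$ for some even shift $j-1$, that is, $\delta[m]=\delta$ with $m$ odd. From $\delta[m]=\delta$, the sequence $\delta$ is a power of its prefix of length $\gcd(m,2k)$. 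Since $\delta\in\mathcal{I}_3$ is not a nontrivial power of an even-length index, this $\gcd$ must be odd or equal to $2k$. The case $\gcd=2k$ would force $m\equiv0 \pmod{2k}$, contradicting $m$ odd, so $\gcd(m,2k)=d$ is odd. Then $\delta=\eta^{2k/d}$ with $\eta$ of odd length. Because $\eta^2$ has even length, $\mathcal{I}_e$-power-freeness forces $2k/d=2$. Thus $\delta=\delta'\circ\delta'$ with $\delta'=\eta$ of odd length $k$, which gives both conclusions of (1).

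For (2), the rotated case, I would use
\[
w^{-1}=\beta_{\max}\varepsilon_{-1}^{-\delta_{2k}}\beta_{\max}^{-1}\cdots\varepsilon_1^{-\delta_1}.
\]
Matching $\sigma(w)$ to a cyclic shift of it gives a relation of the form $\delta[m]=\delta^{-1}$, after checking that the parity of the slot matches; here the parity is automatically consistent. A cyclic word equal to its own reverse-negation decomposes, after a suitable rotation, as $\delta[m']=\delta'\circ\delta'^{-1}$. The argument is standard: a rotation turns the reflection symmetry of a cyclic word of even length into a palindrome-type decomposition, and here the reflection axis passes between letters because no letter equals its own negation. It remains to check that $\delta\in\mathcal{I}_2$ and $\delta'\in\mathcal{I}_4$. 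Both follow from $\delta\in\mathcal{I}_3$: the rotation $\delta'\circ\delta'^{-1}$ is still $\mathcal{I}_e$-power-free. Moreover, $\delta$ cannot be an odd-length-root power, since the band being primitive excludes powers $\eta^r$ with $r$ even, and an odd $r$ with $|\eta|$ even is excluded by $\mathcal{I}_3$.

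The main obstacle I expect is the bookkeeping in (2), specifically the step "reflection-symmetric cyclic word implies rotation to $\delta'\circ\delta'^{-1}$." The reflection axis must avoid a letter, and the $\mathcal{I}_2$ condition has to be proved for odd-length roots. I would handle the latter by noting that $\delta=\eta^r$ with $|\eta|$ odd and $r\ge2$ even makes $w$ a power of the band with index $\eta^2$. That contradicts $w$ being a band unless $r=2$, and the $r=2$ case is exactly the situation of (1), which I would check is incompatible with rotation or fold into the statement.
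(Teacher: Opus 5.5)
Part (1) is correct and is essentially the paper's argument. Your finish via $\delta=(\eta^2)^{k/d}$, which forces $d=k$, is an equivalent variant of the paper's ``$d\mid k$, hence $\delta_i=\delta_{i+k}$, and $k$ even would put $\delta'$ in $\mathcal{I}_e$''.

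One slip in (1): the phrase ``$\delta[1]=\delta[j]$ for some even shift $j-1$'' has the parity reversed. $\sigma(w)$ is a rotation of the band with index $\delta[1]$, and the rotations of $w$ beginning with $\varepsilon_1$ have indices $\delta[2t]$. So one gets $\delta=\delta[2t-1]$, which is the odd-shift relation you then use.

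In (2), the reduction to $\delta[l]=\delta'\circ\delta'^{-1}$ and the check $\delta'\in\mathcal{I}_4$ also agree with the paper.

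The gap is the proof that $\delta\in\mathcal{I}_2$.

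First, this does not follow from $\delta\in\mathcal{I}_3$. Second, primitivity of $w$ does not exclude $\delta=\eta^r$ with $r$ even. Take $|\eta|$ odd and $r=2$: the two halves of $w$ are interchanged by $\sigma$ and are not closed walks, so $w$ is not a square. Such indices really do lie in $\mathcal{I}_3\setminus\mathcal{I}_2$ (e.g.\ $k=1$, $\delta_1=\delta_2=1$); they are precisely the reversed bands of part (1). Even $r\geq4$ causes no trouble, since then $\delta$ is a nontrivial power of $\eta^2\in\mathcal{I}_e$.

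You flag the $r=2$ case at the end, but you only say you would show it is incompatible with rotation ``or fold it into the statement''. Folding is not an option, because $\mathcal{I}_5\subseteq\mathcal{I}_2$. The incompatibility has to be proved, and it is the one nontrivial step of (2).

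The missing argument is the observation you already used for the reflection axis, applied modulo the period. Suppose $\delta$ has odd period $q$. Then the rotation symmetry $\delta_i=-\delta_{2l+1-i}$ on $\mathbb{Z}/2k\mathbb{Z}$ descends to $\mathbb{Z}/q\mathbb{Z}$. Since $2$ is invertible modulo $q$, the involution $i\mapsto 2l+1-i$ has a fixed point there, giving $\delta_i=-\delta_i$, a contradiction. In other words, no band is both reversed and rotated. Without this step, (2) is not established.
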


\begin{proof}
Recall that
$$\sigma(w)=\varepsilon_{-1}^{\delta_{1}}\beta_{\textup{max}}^{-1}\varepsilon_{1}^{\delta_{2}}\beta_{\textup{max}}\cdots\beta_{\textup{max}}^{-1}\varepsilon_1^{\delta_{2k}}\beta_{\textup{max}},$$ and
$$w^{-1}=\beta_{\textup{max}}\varepsilon_{-1}^{-\delta_{2k}}\beta_{\textup{max}}^{-1}\cdots\beta_{\textup{max}}\varepsilon_{-1}^{-\delta_{2}}\beta_{\textup{max}}^{-1}\varepsilon_1^{-\delta_1}.$$

(1) Suppose $w$ is reversed, i.e., {$\sigma(w)$ is a cyclic shift of $w$.} Then there exists an odd $j$ such that
$$\delta_{i}=\delta_{i+j}\;\;\textup{for all}\;i\in\mathbb{Z}_{2k}.$$
Let $d=\textup{gcd}(j,k)$. There exist $u,v\in\mathbb{Z}$ with $uj+v(2k)=d$. This implies
$$\delta_i=\delta_{i+uj}=\delta_{i+d-2kv}=\delta_{i+d}\;\;\textup{for each}\;i.$$
Since $d\mid k$, we obtain $\delta_i=\delta_{i+d}=\delta_{i+k}$ for all $i$. Thus $\delta=\delta'\circ\delta'$ with $\delta'=\delta_1\delta_2\cdots\delta_k$.

Finally, $k$ must be odd. Indeed, if $k$ were even, then $\delta'\in \mathcal{I}_e$, so $\delta=(\delta')^2$ would contradict the assumption that $\delta\in \mathcal{I}_3$ is $\mathcal{I}_e$-power-free.

(2) Suppose $w$ is rotated, so $\sigma(w)$ is a cyclic shift of $w^{-1}$. Then there exists $l\in\mathbb{Z}_{2k}$ such that
$$\delta_i=-\delta_{2l+1-i}\;\;\textup{for all}\;i\in\mathbb{Z}_{2k}.$$
Consequently,
$$\delta_{l+1}\delta_{l+2}\cdots\delta_{l+k}=(-\delta_l)(-\delta_{l-1})\cdots(-\delta_{l-k+2})(-\delta_{l-k+1})=(\delta_{l+k+1}\delta_{l+k+2}\cdots\delta_{l-1}\delta_l)^{-1},$$
where $\delta_{l-k+i}=\delta_{2k+(l-k+i)}=\delta_{l+k+i}$. Thus,
$$\delta[l]=(\delta_{l+1}\delta_{l+2}\cdots\delta_{l+k})\circ(\delta_{l+1}\delta_{l+2}\cdots\delta_{l+k})^{-1}.$$
Since $\delta\in \mathcal{I}_3$, every cyclic shift of $\delta$ is $\mathcal{I}_e$-power-free. Hence $\delta_{l+1}\delta_{l+2}\cdots\delta_{l+k}\in \mathcal{I}_4$. 

It remains to show that $\delta$ is $\mathcal{I}$-power-free. Suppose that $\delta=\rho^m$ with $m\geq2$. If the length of $\rho$ is even, this contradicts the fact that $\delta$ is $I_e$-power-free. Hence the length $q$ of $\rho$ is odd; since $\delta$ has even length, $m$ is even. The relation $\delta_i = -\delta_{2l+1-i}$ descends modulo the odd period $q$. But the involution $i\mapsto 2l+1-i$ on $\mathbb{Z}/q\mathbb{Z}$ has a fixed point, which would give $\delta_i=-\delta_i$, a contradiction. Thus $\delta$ lies in $\mathcal{I}_2$. Hence, $\delta\in \mathcal{I}_5$.
\end{proof}

\begin{example}
Consider $H(\widetilde{C}_{2})$ as in Example \ref{Example2.2}. The bands
$$w_1=\varepsilon_{1}\beta_1\beta_{-1}^{-1}\varepsilon_{-1}\beta_{-1}\beta_1^{-1}\;\;\textup{and}\;\;w_2=\varepsilon_{1}\beta_1\beta_{-1}^{-1}\varepsilon_{-1}^{-1}\beta_{-1}\beta_1^{-1}$$ are reversed and rotated, respectively. Furthermore, $$w_3=\varepsilon_1\beta_1\beta_{-1}^{-1}\varepsilon_{-1}\beta_{-1}\beta_1^{-1}\varepsilon_1\beta_1\beta_{-1}^{-1}\varepsilon_{-1}^{-1}\beta_{-1}\beta_1^{-1}$$ is a band that is not symmetric. These three bands correspond to the diagrams in Figure \ref{SymBands+NonSymBand}, where composition proceeds anticlockwise.
\begin{figure}[h]
    \centering
\begin{tikzpicture}
    \node (1) at (0,5) {$x_0$};
    \node (2) at (0,4) {$x_1$};
    \node (3) at (1.5,4) {$x_2$};
    \node (4) at (3,4) {$x_3$};
    \node (5) at (3,5) {$x_4$};
    \node (6) at (1.5,5) {$x_5$};
    \draw [->] (1) -- (2);
    \draw [->] (2) -- (3);
    \draw [->] (4) -- (3);
    \draw [->] (4) -- (5);
    \draw [->] (5) -- (6);
    \draw [->] (1) -- (6);
    \node () at (1.5,3.5) {$w_1=\varepsilon_{1}\beta_1\beta_{-1}^{-1}\varepsilon_{-1}\beta_{-1}\beta_1^{-1}$};
     \node (11) at (7,5) {$x_3$};
    \node (12) at (7,4) {$x_4$};
    \node (13) at (8.5,4) {$x_5$};
    \node (14) at (10,4) {$x_0$};
    \node (15) at (10,5) {$x_1$};
    \node (16) at (8.5,5) {$x_2$};
    \draw [<-] (12) -- (11);
    \draw [->] (12) -- (13);
    \draw [->] (14) -- (13);
    \draw [<-] (15) -- (14);
    \draw [->] (15) -- (16);
    \draw [->] (11) -- (16);
    \node () at (8.5,3.5) {$\sigma(w_1)=\varepsilon_{-1}\beta_{-1}\beta_{1}^{-1}\varepsilon_{1}\beta_{1}\beta_{-1}^{-1}=(w_1)_{(4)}$};
    \node (21) at (0,2.5) {$x_0$};
    \node (22) at (0,1.5) {$x_1$};
    \node (23) at (1.5,1.5) {$x_2$};
    \node (24) at (3,1.5) {$x_3$};
    \node (25) at (3,2.5) {$x_4$};
    \node (26) at (1.5,2.5) {$x_5$};
    \draw [->] (21) -- (22);
    \draw [->] (22) -- (23);
    \draw [->] (24) -- (23);
    \draw [->] (25) -- (24);
    \draw [->] (25) -- (26);
    \draw [->] (21) -- (26);
    \node () at (1.5,1) {$w_2=\varepsilon_{1}\beta_1\beta_{-1}^{-1}\varepsilon_{-1}^{-1}\beta_{-1}\beta_1^{-1}$};
    \node (31) at (7,2.5) {$x_3$};
    \node (32) at (7,1.5) {$x_4$};
    \node (33) at (8.5,1.5) {$x_5$};
    \node (34) at (10,1.5) {$x_0$};
    \node (35) at (10,2.5) {$x_1$};
    \node (36) at (8.5,2.5) {$x_2$};
    \draw [<-] (31) -- (32);
    \draw [->] (32) -- (33);
    \draw [->] (34) -- (33);
    \draw [<-] (35) -- (34);
    \draw [->] (35) -- (36);
    \draw [->] (31) -- (36);
    \node () at (8.5,1) {$\sigma(w_2)=\varepsilon_{-1}\beta_{-1}\beta_{1}^{-1}\varepsilon_{1}^{-1}\beta_{1}\beta_{-1}^{-1}=(w_2^{-1})_{(3)}$};
    \node (41) at (0,0) {$x_0$};
    \node (42) at (0,-1) {$x_{1}$};
    \node (43) at (1.5,-1) {$x_{2}$};
    \node (44) at (3,-1) {$x_3$};
    \node (45) at (3,0) {$x_{10}$};
    \node (46) at (1.5,0) {$x_{11}$};
    \node (51) at (0,-2) {$x_6$};
    \node (52) at (0,-3) {$x_7$};
    \node (53) at (1.5,-3) {$x_8$};
    \node (54) at (3,-3) {$x_9$};
    \node (55) at (3,-2) {$x_4$};
    \node (56) at (1.5,-2) {$x_5$};
    \draw [->] (41) -- (42);
    \draw [->] (42) -- (43);
    \draw [->] (44) -- (43);
    \draw [->] (44) -- (55);
    \draw [->] (55) -- (56);
    \draw [->] (51) -- (56);
    \draw [->] (51) -- (52);
    \draw [->] (52) -- (53);
    \draw [->] (54) -- (53);
    \draw [->] (3.2,-0.2) arc(45:-45:1.9);
    \draw [->] (45) -- (46);
    \draw [->] (41) -- (46);
    \node () at (1.35,-3.5) {$w_3=\varepsilon_1\beta_{\textup{max}}\varepsilon_{-1}\beta_{\textup{max}}^{-1}\varepsilon_1\beta_{\textup{max}}\varepsilon_{-1}^{-1}\beta_{\textup{max}}^{-1}$};
    \node (61) at (7,0) {$x_{9}$};
    \node (62) at (7,-1) {$x_{4}$};
    \node (63) at (8.5,-1) {$x_{5}$};
    \node (64) at (10,-1) {$x_6$};
    \node (65) at (10,0) {$x_7$};
    \node (66) at (8.5,0) {$x_{8}$};
    \node (71) at (7,-2) {$x_3$};
    \node (72) at (7,-3) {$x_{10}$};
    \node (73) at (8.5,-3) {$x_{11}$};
    \node (74) at (10,-3) {$x_0$};
    \node (75) at (10,-2) {$x_1$};
    \node (76) at (8.5,-2) {$x_2$};
    \draw [<-] (65) -- (64);
    \draw [->] (62) -- (63);
    \draw [->] (64) -- (63);
    \draw [<-] (62) -- (71);
    \draw [->] (75) -- (76);
    \draw [->] (71) -- (76);
    \draw [<-] (75) -- (74);
    \draw [->] (72) -- (73);
    \draw [->] (74) -- (73);
    \draw [<-] (6.8,-0.2) arc(135:225:1.9);
    \draw [->] (65) -- (66);
    \draw [->] (61) -- (66);
    \node () at (8.8,-3.5) {$\sigma(w_3)=\varepsilon_{-1}\beta_{\textup{max}}^{-1}\varepsilon_{1}\beta_{\textup{max}}\varepsilon_{-1}\beta_{\textup{max}}^{-1}\varepsilon_1^{-1}\beta_{\textup{max}}$};
\end{tikzpicture}
\caption{Two symmetric bands and a non-symmetric band}
\label{SymBands+NonSymBand}
\end{figure}
\end{example}

The next proposition determines the conditions under which a band module remains invariant under the $\sigma$-action.

\begin{prop}\label{StringEquai}
    Let $w$ be a band of $H(\widetilde{C}_{2n-2})$ and $\varphi$ an automorphism of a finite-dimensional ${\bf{k}}$-vector space. Then $$F_{\sigma}\big(M(w,\varphi)\big)\cong M\big(\sigma(w),\varphi\big).$$ 
Moreover, $F_{\sigma}\big(M(w,\varphi)\big)\cong M(w,\varphi)$ if and only if one of the following holds:
    \begin{enumerate}
        \item $w$ is a reversed band;
        \item $w$ is a rotated band, and $\varphi$ is similar to $\varphi^{-1}$.
    \end{enumerate}
\end{prop}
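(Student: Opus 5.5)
The argument has two parts. First we identify the twisted module $F_{\sigma}(M(w,\varphi))$ explicitly. Then we compare it with $M(w,\varphi)$ using the classification of band modules over string algebras \cite{BR1987}.

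For the first part, recall that $F_\sigma(M)={}^{\sigma}M$ has the same underlying space, with $m\cdot a=m\sigma(a)$. Writing $M=M(w,\varphi)$ as a representation $\big((M_u),(M_\alpha)\big)$, the twisted representation has
\[
({}^{\sigma}M)_u=M_{\sigma(u)},\qquad ({}^{\sigma}M)_\alpha=M_{\sigma(\alpha)}.
\]
Since $\sigma$ is an involution on vertices and arrows, this holds exactly, not just up to isomorphism. Now take the ex-walk of $w=c_1\cdots c_m$ with vertices $x_0,\dots,x_{m-1}$, where $V_{x_i}\cong V$ sits over the vertex $x_i$. Relabel each point $x_i$ as lying over $\sigma(x_i)$, and each letter $c_i$ as $\sigma(c_i)$. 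This produces precisely the band representation of $\sigma(w)=\sigma(c_1)\cdots\sigma(c_m)$. The map $\varphi$ still sits on the letter $\sigma(c_1)$, with the same direction convention, because $\sigma$ commutes with formal inversion. Hence $F_\sigma(M(w,\varphi))\cong M(\sigma(w),\varphi)$. This step is bookkeeping; the identity map gives the isomorphism.

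For the second part, we use the Butler--Ringel classification of band modules \cite{BR1987}. Two band modules $M(w,\varphi)$ and $M(w',\varphi')$ with $\varphi,\varphi'$ indecomposable are isomorphic if and only if one of the following holds:
\begin{itemize}
\item $w'=w_{(i)}$ and $\varphi'\simeq\varphi$;
\item $w'=(w^{-1})_{(i)}$ and $\varphi'\simeq\varphi^{-1}$.
\end{itemize}
This also uses the observations $M(w,\varphi)\simeq M(w^{-1},\varphi^{-1})$ and $M(w,\varphi)\simeq M(w_{(i)},\varphi)$ recorded earlier.

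First we reduce to $\varphi$ indecomposable. Over an algebraically closed field, $\varphi$ is a direct sum of Jordan blocks, and $M(w,\varphi)$ is the corresponding direct sum of band modules. For a general $\varphi$ the condition then reads: the multiset of pairs $(w,J)$ is preserved. Since $w$ is fixed, this says the Jordan multiset is preserved, which is exactly the statement "$\varphi$ similar to $\varphi^{-1}$" in the rotated case. So we run the argument with all blocks at once.

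Now assume $F_\sigma(M(w,\varphi))\cong M(w,\varphi)$. By the first part and the classification, $\sigma(w)$ is band-equivalent to $w$, so $w$ is symmetric. Hence $w$ is either reversed or rotated; the text after Definition~\ref{symstring} shows these are the only options.
\begin{itemize}
\item If $w$ is reversed, then $\sigma(w)=w_{(i)}$, so $M(\sigma(w),\varphi)\cong M(w_{(i)},\varphi)\cong M(w,\varphi)$ with no condition on $\varphi$. This gives condition~(1).
\item If $w$ is rotated and not reversed, then $\sigma(w)=(w^{-1})_{(i)}$. In this case $M(\sigma(w),\varphi)\cong M(w^{-1},\varphi)\cong M(w,\varphi^{-1})$. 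By the classification, this is isomorphic to $M(w,\varphi)$ if and only if $\varphi^{-1}\simeq\varphi$. This gives condition~(2).
\end{itemize}
Conversely, each of conditions~(1) and~(2) yields the isomorphism by the same computations.

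The main obstacle is the overlap case, where $w$ is simultaneously reversed and rotated. Then $w_{(i)}=(w^{-1})_{(j)}$, which would make $M(w,\varphi)\cong M(w,\varphi^{-1})$ for every $\varphi$. We must check that this case is harmless. I would rule it out as follows. A band $w$ with $w\sim w^{-1}$ under cyclic shift forces, on the index level, $\delta_i=-\delta_{c-i}$ for some constant $c$ and even period $2k$. The reversed condition, by Lemma~\ref{ClassifyOfBands}(1), makes $\delta$ a square of a word of odd length $k$. Combining the two relations should produce a fixed point of the involution $i\mapsto c-i$ modulo an odd period, and hence a contradiction $\delta_i=-\delta_i$. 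This is the same parity trick as in the proof of Lemma~\ref{ClassifyOfBands}(2). If the overlap turns out to be impossible, no band is both reversed and rotated, the cases are disjoint, and the stated equivalence holds.
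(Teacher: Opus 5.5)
Your argument is correct and is essentially the paper's: you identify ${}^{\sigma}M(w,\varphi)$ with $M(\sigma(w),\varphi)$ by relabelling, then split symmetric bands into reversed and rotated ones and apply the band-module isomorphism rules, making explicit (via Butler--Ringel and a reduction to Jordan blocks) what the paper leaves implicit. Your closing hedge is unnecessary, for three reasons: a band that is both reversed and rotated would simply fall under~(1); your branch ``rotated and not reversed'' automatically has $w\not\sim w^{-1}$, since otherwise $\sigma(w)$ would be a rotation of $w$; and in fact the overlap never occurs, because $w\sim w^{-1}$ already forces $\delta_i=-\delta_{c-i}$ with $c$ even, so $i\mapsto c-i$ has a fixed point in $\mathbb{Z}/2k$, giving $\delta_i=-\delta_i$.
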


\begin{proof}
Assume that $w=\xi(\delta)\cdot\beta_{\textup{max}}^{-1}$. Since $\sigma$ permutes $\beta_{i}$ and $\beta_{-i}$ for each $1\leq i\leq n-2$, and swaps $\varepsilon_1$ with $\varepsilon_{-1}$, we obtain
$$F_{\sigma}(M(w,\varphi))=M(\varepsilon_{-1}^{\delta_{1}}\beta_{\textup{max}}^{-1}\varepsilon_{1}^{\delta_{2}}\beta_{\textup{max}}\cdots\beta_{\textup{max}}^{-1}\varepsilon_1^{\delta_{2k}}\beta_{\textup{max}},\varphi)=M(\sigma(w),\varphi).$$
Thus $F_{\sigma}(M(w,\varphi))\cong M(w,\varphi)$ only when $\sigma(w)\sim w$, i.e., only when $w$ is symmetric.

(1) If $w$ is reversed, by Lemma \ref{ClassifyOfBands}, $k$ is odd and $\delta_1\delta_2\cdots\delta_{k}=\delta_{k+1}\delta_{k+2}\cdots\delta_{2k}$. Consequently, $$M\big(\sigma(w),\varphi\big)=M(w_{(2kn-k+1)},\varphi)\cong M(w,\varphi).$$  

(2) If $w$ is rotated, by Lemma \ref{ClassifyOfBands}, we may assume that $\delta=\delta'\circ\delta'^{-1}$. Then
$$M\big(\sigma(w),\varphi\big)=M(w^{-1}_{(2k-1)},\varphi)\cong M(w,\varphi^{-1}),$$
where the last isomorphism follows from $\delta_1=-\delta_{2k}$. In this case, $F_{\sigma}\big(M(w,\varphi)\big)\cong M(w,\varphi)$ holds if and only if $\varphi$ is similar to $\varphi^{-1}$.
\end{proof}

\section{The Auslander--Reiten quiver of $H(\widetilde{CD}_{n})$}\label{Mainresult1}
In this section, we prove one of the main results of this paper---Theorem \ref{MainTheorem}---via equivariant methods. Throughout this section, we use the $G$-action introduced in Example \ref{EXAMPLEcd+c} and the equivalence
$$\Phi:\;\big(\operatorname{mod}H(\widetilde{C}_{2n-2})\big)^G\xrightarrow{\simeq}\operatorname{mod}H(\widetilde{CD}_{n}).$$

\subsection{Subcategories arising from string modules}
In this subsection, we study the equivariant category of the subcategories arising from string modules. 

\begin{defn}[{\cite[Definition 2.8]{HLS2023}}]
\begin{enumerate}
    \item A string $w$ is {\emph{right directly extendable}} ({\emph{RDE}}) if there is an arrow $\alpha$ such that $w\alpha$ is a string. In this case, we write $w_c:\;=w\cdot\alpha \cdot\alpha_{-}$, where $\alpha_{-}$ is the inverse string of maximal length such that $\alpha\cdot\alpha_{-}$ is a string.
    \item A string $w$ is {\emph{right inversely extendable}} ({\emph{RIE}}) if there is an arrow $\alpha$ such that $w\alpha^{-1}$ is a string. In this case, we write $w_h:\;=w\cdot\alpha^{-1} \cdot (\alpha^{-1})_{+}$, where $(\alpha^{-1})_{+}$ is the direct string of maximal length such that $\alpha^{-1}\cdot(\alpha^{-1})_{+}$ is a string.
    \item A string $w$ is {\emph{left directly extendable}} ({\emph{LDE}}) if there is an arrow $\alpha$ such that $\alpha w$ is a string. In this case, we write $_hw:\;=_{-}\alpha\cdot\alpha\cdot w$, where $_{-}\alpha$ is the inverse string of maximal length such that $_{-}\alpha\cdot\alpha$ is a string.
    \item A string $w$ is {\emph{left inversely extendable}} ({\emph{LIE}}) if there is an arrow $\alpha$ such that $\alpha^{-1}w$ is a string. In this case, we write $_cw:\;=_{+}(\alpha^{-1})\cdot\alpha^{-1}\cdot w$, where $_{+}(\alpha^{-1})$ is the direct string of maximal length such that $_{+}(\alpha^{-1})\cdot\alpha^{-1}$ is a string.
\end{enumerate}
\end{defn}

\begin{lem}\label{extendString}
    Let $w$ be a string of $H(\widetilde{C}_{2n-2})$. If the string $_{i}w_j$ exists with $i,j\in\{c,h\}$, then
    $$F_{\sigma}\big(M(_{i}w_j)\big)\cong M\big(_{i}\sigma(w)_j\big).$$
\end{lem}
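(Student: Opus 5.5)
The plan has two steps. First I would show $F_{\sigma}(M(v))\cong M(\sigma(v))$ for every string $v$. Then I would show that the hook/cohook extension operations commute with $\sigma$ on the level of words, that is, ${}_{i}\sigma(w)_j$ exists and equals $\sigma({}_{i}w_j)$. Combining the two with $v={}_{i}w_j$ gives the claim.

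\emph{Step 1.} Let $v=c_1\cdots c_m$ with walk $x_1,\dots,x_{m+1}$. The twisted module ${}^{\sigma}M(v)$ has the same underlying space, but the idempotent $e_u$ now acts as $e_{\sigma(u)}$ and an arrow $\alpha$ acts as $\sigma(\alpha)$. So ${}^{\sigma}M(v)_u=M(v)_{\sigma(u)}$, which is spanned by the $x_i$ with $x_i=\sigma(u)$, i.e.\ $\sigma(x_i)=u$. Likewise ${}^{\sigma}M(v)_\alpha(x_i)=M(v)_{\sigma(\alpha)}(x_i)$, and this equals $x_{i+1}$ exactly when $c_i=\sigma(\alpha)$, i.e.\ $\sigma(c_i)=\alpha$ (and similarly for inverse letters). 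Hence the basis $\{x_i\}$ realizes ${}^{\sigma}M(v)$ as the string module of the word $\sigma(c_1)\cdots\sigma(c_m)=\sigma(v)$, with walk $\sigma(x_1),\dots,\sigma(x_{m+1})$. This is the same computation as in the first part of Proposition~\ref{StringEquai}, but without the $\varphi$-twist. For trivial strings $1_{(u,\pm1)}$ it just says ${}^{\sigma}S_u\cong S_{\sigma(u)}$.

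\emph{Step 2.} The map $\sigma$ is induced by an automorphism of the bound quiver. It maps arrows to arrows, formal inverses to formal inverses, and $I=\langle\varepsilon_1^2,\varepsilon_{-1}^2\rangle$ onto itself. Therefore a word $u$ is a string if and only if $\sigma(u)$ is, and $u$ is a direct (resp.\ inverse) string if and only if $\sigma(u)$ is. Consequently:
\begin{itemize}
\item $w\alpha$ is a string if and only if $\sigma(w)\sigma(\alpha)$ is, so $w$ is RDE if and only if $\sigma(w)$ is, with witnessing arrow $\sigma(\alpha)$.
\item The arrow $\alpha$ is unique, since in a string algebra each word has at most one direct successor. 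Hence the witnessing arrow for $\sigma(w)$ is exactly $\sigma(\alpha)$.
\item Maximality of $\alpha_-$ is preserved, because $\sigma$ preserves lengths and string-ness of $\alpha\cdot u$. Thus $\sigma(\alpha)_-=\sigma(\alpha_-)$.
\end{itemize}
This gives $\sigma(w_c)=\sigma(w)_c$, and the same argument yields $\sigma(w_h)=\sigma(w)_h$, $\sigma({}_hw)={}_h\sigma(w)$ and $\sigma({}_cw)={}_c\sigma(w)$.

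Iterating, with the left and right operations applied independently, we get $\sigma({}_iw_j)={}_i\sigma(w)_j$, and the right-hand side exists whenever ${}_iw_j$ does. Combined with Step~1, this gives $F_{\sigma}(M({}_iw_j))\cong M(\sigma({}_iw_j))=M({}_i\sigma(w)_j)$.

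The only delicate point is the degenerate case where $w$ is a trivial string $1_{(u,\pm1)}$. There the conventions of \cite{BR1987,HLS2023} decide which side an added arrow goes on via the sign, and these sign conventions are preserved by $\sigma$ by construction. I expect this bookkeeping, together with the uniqueness of the extending arrow, to be the main thing to check carefully; no genuine obstacle arises, since everything follows from $\sigma$ being a bound-quiver automorphism.
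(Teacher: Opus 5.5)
Your proposal follows the paper's own proof, which simply observes that, because the bound quiver of $H(\widetilde{C}_{2n-2})$ is symmetric about vertex $0$, one has ${}_i\sigma(w)_j=\sigma({}_iw_j)$, and then uses $F_\sigma(M(v))\cong M(\sigma(v))$. Your Steps~1 and~2 supply exactly these two facts, and for strings of positive length they are correct as written.

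The one statement that is wrong is in your treatment of the degenerate case. The Butler--Ringel sign conventions are \emph{not} $\sigma$-invariant ``by construction''; at vertex $0$ they cannot be made $\sigma$-invariant at all.
\begin{itemize}
\item The arrows $\beta_{n-1}$ and $\beta_{1-n}$ both end at $0$, so they must receive opposite end-signs, while $\sigma$ interchanges them.
\item So if $\sigma$ acts on trivial strings by $1_{(0,t)}\mapsto 1_{(0,t)}$, the identities $\sigma({}_hw)={}_h\sigma(w)$ and $\sigma(w_h)=\sigma(w)_h$ fail for $w=1_{(0,t)}$.
\item For example, ${}_h1_{(0,t)}$ is one of $\beta_{n-1}$, $\beta_{1-n}$, according to $t$. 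Under this convention the lemma would assert $M(\beta_{1-n})\cong M(\beta_{n-1})$, which is false since the two modules have different supports.
\end{itemize}

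The fix is to define $\sigma$ on trivial strings so that it intertwines the chosen sign functions, i.e.\ $\sigma(1_{(0,t)})=1_{(0,-t)}$. At every vertex $u\neq 0$ a $\sigma$-equivariant choice of signs does exist, because the only sign constraint incompatible with equivariance is the one at the sink $0$. There $\sigma(1_{(u,t)})=1_{(-u,t)}$ works.

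With this convention your argument proves the lemma for all strings, including the trivial ones. The paper's proof does not address trivial strings at all.
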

\begin{proof}
Note that the quiver $Q(\widetilde{C}_{2n-2})$ is symmetric with respect to the vertex $0$. Consequently, a string $w$ of $H(\widetilde{C}_{2n-2})$ is RDE (resp. RIE, LDE, LIE) if and only if so is $\sigma(w)$. Moreover,
$$_i\sigma(w)_j=\sigma(_iw_j).$$
This implies that
$$F_{\sigma}(M(_iw_j))\cong M(\sigma(_iw_j))=M(_i\sigma(w)_j).$$
\end{proof}

Recall from Proposition \ref{ThmCn} the structure of the Auslander–Reiten quiver $\Gamma_{H(\widetilde{C}_{2n-2})}$ of the GLS algebra $H(\widetilde{C}_{2n-2})$. We now determine the equivariant categories of the full subcategories consisting of string modules.
 We denote by $(\mathcal{T}_p,\tau)$, or simply $\mathcal{T}_p$, the standard tube of rank $p$, that is, the Auslander--Reiten quiver of $\mathcal{T}_p$ is of the form $\mathbb{Z}A_{\infty}/(\tau^p)$ (see \cite{Elements2}). 
 % For a non-projective indecomposable object $X$ in a standard tube $\mathcal{T}_p$, there is a canonical bijection $\zeta$ from the set $X^{-}$ of arrows ending at $X$ to the set $(\tau X)^{+}$ of arrows starting at $\tau X$ (see \cite{Covering15}). 
 
\begin{prop}\label{forthm002} The following statements hold.
   \begin{enumerate}
       \item $\big(\mathcal{H}(\widetilde{C}_{2n-2})_{PI}\big)^G\simeq \mathcal{H}(\widetilde{CD}_{n})_{PI}$, which has the form $\mathbb{Z}D_{\infty}-\big(Q^{\textup{o}}(\widetilde{CD}_{n})\big)^{\textup{op}}$.
       \item $\big(\mathcal{H}(\widetilde{C}_{2n-2})_{(1,1)}\big)^G\simeq \mathcal{T}_{n-1}$.
       \item Let $w$ be a minimal string of type $(2,2)$.
        \begin{itemize}
           \item[(a)] If $w$ is symmetric, then $$\big(\mathcal{H}(\widetilde{C}_{2n-2})_w\big)^G\simeq \mathbb{Z}D_{\infty}.$$
           \item[(b)] If $w$ is non-symmetric, then $$\big(\mathcal{H}(\widetilde{C}_{2n-2})_w\times\mathcal{H}(\widetilde{C}_{2n-2})_{\sigma(w)}\big)^G\simeq \mathbb{Z}A_{\infty}^{\infty}.$$
        \end{itemize}
   \end{enumerate} 
\end{prop}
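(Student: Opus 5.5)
The plan is to compute, component by component, how the involution $F_\sigma$ acts on the relevant parts of $\Gamma_{H(\widetilde C_{2n-2})}$ from Proposition~\ref{ThmCn}. Each equivariant category is then read off from that action. Two facts are used throughout. First, $\mathrm{Ind}$ preserves almost split sequences \cite[Theorem 3.8]{ReitenRiedtmann1985Skew}. Second, the standard description of indecomposables in $\mathcal C^G$ for $G=\mathbb Z_2$ over $\mathrm{char}\,{\bf k}=0$. If $X$ is indecomposable with $F_\sigma X\not\cong X$, then $\mathrm{Ind}(X)\cong\mathrm{Ind}(F_\sigma X)$ is indecomposable. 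If $F_\sigma X\cong X$, then $X$ admits exactly two equivariant structures $(X,\pm\alpha)$, and $\mathrm{Ind}(X)\cong (X,\alpha)\oplus(X,-\alpha)$. Consequently, the AR quiver of $\mathcal C^G$ is the quotient of the $\sigma$-action on $\Gamma_{\mathcal C}$, with each $\sigma$-fixed vertex split into two vertices. Arrows are determined by applying $\mathrm{Ind}$ to the meshes (the ``folding'' of translation quivers under a $\mathbb Z_2$-action).

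For (1), $\sigma$ induces an automorphism of the preprojective component $\mathbb ZA_\infty^\infty-(Q^{\rm o}(\widetilde C_{2n-2}))^{\rm op}$ determined by the quiver symmetry $i\mapsto -i$. The projectives $P_i,P_{-i}$ are swapped, and $P_0$ is fixed. Every $\tau$-orbit is either swapped with its mirror or, for the orbit through $P_0$, fixed. Folding $\mathbb ZA_\infty^\infty$ along a fixed $\tau$-orbit, with that orbit doubled, gives $\mathbb ZD_\infty$. The doubled projective $P_0$ becomes $P_{0^+},P_{0^-}$ under $\Phi$, which matches $Q^{\rm o}(\widetilde{CD}_n)^{\rm op}$. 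Preinjectives are handled dually. One must check that the fixed vertices are exactly those on one line of the $\mathbb ZA_\infty^\infty$. For this I will use Lemma~\ref{extendString}: $\sigma$ commutes with the hook/cohook operations that generate the component from projectives, so $\sigma$ acts as a reflection of the component.

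For (2), the mouth of the rank $2n-2$ tube consists of the minimal $(1,1)$ strings: the simples $S_{\pm1},\dots,S_{\pm(n-1)}$ except those at projective/injective positions, plus the strings through $\varepsilon_{\pm1}$. I will list them and check that $\sigma$ acts on the mouth as $\tau^{n-1}$, i.e.\ a rotation by half the rank with no fixed points. Then $F_\sigma$ acts freely on the tube, each $\mathrm{Ind}$ is indecomposable, and $\mathrm{Ind}$ induces a quotient translation quiver $\mathbb ZA_\infty/(\tau^{2n-2},\sigma)=\mathbb ZA_\infty/(\tau^{n-1})$. This gives $\mathcal T_{n-1}$. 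The main check is that $\sigma$ commutes with $\tau$, which holds since $F_\sigma$ is an autoequivalence. Fixed-point-freeness follows because a fixed module would have to be a symmetric string of the tube, and the $\tau$-orbits have length $2n-2$ shifted by $n-1$.

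For (3), the component $\mathcal H_w$ has the minimal string $w$ at a distinguished position, and its vertices are $M({}_{i_1\cdots}w_{\cdots j_1})$ obtained by iterated hooks/cohooks. By Lemma~\ref{extendString}, $F_\sigma$ maps $\mathcal H_w$ to $\mathcal H_{\sigma(w)}$ and respects this coordinatization. In case (b) $\sigma(w)\not\sim w$, so the components are distinct, $\mathrm{Ind}$ identifies them, and the equivariant category is a single $\mathbb ZA_\infty^\infty$. In case (a) $\sigma(w)=w^{-1}$. Passing to the inverse exchanges left and right extensions, so $F_\sigma$ acts on $\mathbb ZA_\infty^\infty$ as the reflection swapping the two sectional directions. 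Its fixed points are the modules $M({}_{i}\cdots w\cdots_j)$ with symmetric extension data, forming one $\tau$-orbit through $M(w)$. Folding as in (1) yields $\mathbb ZD_\infty$. The main obstacle is precisely this step: verifying that the inverse symmetry acts as a reflection, not as a glide that shifts $\tau$. This requires checking that $\sigma$ intertwines ${}_cw$ with $(\sigma(w))_c^{-1}$ appropriately, and for that I will use the explicit form of $w$ as a truncation of $\xi(\delta)$.
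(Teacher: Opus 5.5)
Your proposal is correct and follows essentially the paper's own route: via Lemma~\ref{extendString} you show that $F_\sigma$ reflects $\mathcal{H}(\widetilde{C}_{2n-2})_{PI}$ along the $\tau$-orbit of $P_0$, acts as $\tau^{n-1}$ on the rank $2n-2$ tube, reflects $\mathcal{H}(\widetilde{C}_{2n-2})_w$ along the $\tau$-orbit of $M(w)$ when $\sigma(w)=w^{-1}$ and exchanges it with $\mathcal{H}(\widetilde{C}_{2n-2})_{\sigma(w)}$ otherwise, and you then fold, which is legitimate here because no two $\sigma$-fixed vertices are adjacent. The reflection-versus-glide issue you single out as the main obstacle is automatic from $F_\sigma^2=\mathrm{id}$: an involutive automorphism of $\mathbb{Z}A_\infty^\infty$ that exchanges the two sectional directions fixes exactly one $\tau$-orbit pointwise (necessarily that of $M(w)$), and likewise an involutive automorphism of a tube of rank $2n-2$ is $\mathrm{id}$ or $\tau^{n-1}$, so in (2) it suffices that one mouth module is moved (the mouth being $S_{\pm2},\dots,S_{\pm(n-1)}$ and $M(\beta_{\pm(n-1)}^{-1}\cdots\beta_{\pm1}^{-1}\varepsilon_{\pm1}^{-1})$ by \cite[Proposition~3.7]{HLS2023}).
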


\begin{proof}
(1) Let $P_i$ denote the projective $H(\widetilde{C}_{2n-2})$-module (resp. $H(\widetilde{CD}_{n})$-module) associated to the vertex $i$. By direct calculation, we obtain 
$$\Phi\circ\textup{Ind}(P_0)=P_{0^{+}}\oplus P_{0^{-}}\;\;\textup{and}\;\;\;\Phi\circ\textup{Ind}(P_{\pm i})=P_i\;\;\textup{for each}\;1\leq i\leq n-1.$$
Moreover, the action of $F_{\sigma}$ reverses the component $\mathcal{H}(\widetilde{C}_{2n-2})_{PI}$ upside-down along the $\tau$-orbit of $P_0$. Hence the result follows directly from Proposition \ref{ThmCn} (1).

(2) Let $S_i$ denote the simple $H(\widetilde{C}_{2n-2})$-module (resp. $H(\widetilde{CD}_{n})$-module) associated to the vertex $i$. By \cite[Proposition 3.7]{HLS2023}, the minimal string modules of type $(1,1)$ are $M\big(_{-}(\beta_i)\big)$ for each $i$. More precisely, these are
$$M(\beta_{n-1}^{-1}\cdots\beta_2^{-1}\beta_1^{-1}\varepsilon_1^{-1}),S_2,S_3,\cdots,S_{n-1},S_{1-n},\cdots,S_{-3},S_{-2},M(\beta_{1-n}^{-1}\cdots\beta_{-2}^{-1}\beta_{-1}^{-1}\varepsilon_{-1}^{-1}).$$
By \cite[Proposition 3.8]{HLS2023}, we have $$\tau^{-1}S_{\pm2}=M(\beta_{\pm(n-1)}^{-1}\cdots\beta_{\pm2}^{-1}\beta_{\pm1}^{-1}\varepsilon_{\pm1}^{-1}),\;\tau S_{\pm(n-1)}=M(\beta_{\mp(n-1)}^{-1}\cdots\beta_{\mp2}^{-1}\beta_{\mp1}^{-1}\varepsilon_{\mp1}^{-1}),$$ and 
$$\tau S_i=S_{i+1},\;\;\tau S_j=S_{j-1}\textup{\;\;for\;}1< i,-j< n-1.$$ 
By Lemma \ref{extendString}, we obtain $$F_{\sigma}|_{\mathcal{H}(\widetilde{C}_{2n-2})_{(1,1)}}=\tau^{n-1}.$$ 
Consequently, \cite[Proposition 2.1]{DRZ} yields $\big(\mathcal{H}(\widetilde{C}_{2n-2})_{(1,1)}\big)^G\simeq \mathcal{T}_{n-1}$.

(3) By \cite[Proposition 3.7]{HLS2023}, both the source and the target of a minimal string $w$ of type $(2,2)$ belong to $\{1,-1\}$. Hence $\sigma(w)$ is also minimal of type $(2,2)$, and by Proposition \ref{ThmCn}\;(4), both $\mathcal{H}(\widetilde{C}_{2n-2})_w$ and $\mathcal{H}(\widetilde{C}_{2n-2})_{\sigma(w)}$ are of type $\mathbb{Z}A_{\infty}^{\infty}$. 

Using the almost split sequences constructed in \cite{BR1987} and Lemma \ref{extendString}, the successors $M\big(_h\sigma(w)\big)$ and $M\big(\sigma(w)_h\big)$ of $F_{\sigma}\big(M(w)\big)$ coincide with $F_{\sigma}\big(M(_hw)\big)$ and $F_{\sigma}\big(M(w_h)\big)$. A similar result holds for predecessors.

If $w$ is symmetric, then $\sigma(w)=w^{-1}$. So $$F_{\sigma}\big(M(w)_h\big)=F_{\sigma}\big(M(\sigma(w)^{-1})_h\big)\cong F_{\sigma}\big(M(_h\sigma(w))\big)\cong M(_hw).$$ By induction, $F_{\sigma}$ reverses the component $\mathcal{H}(\widetilde{C}_{2n-2})_w$ upside-down along the $\tau$-orbit of $M(w)$. An equivariant calculation then shows that $\big(\mathcal{H}(\widetilde{C}_{2n-2})_w\big)^G\simeq \mathbb{Z}D_{\infty}$. 

If $w$ is non-symmetric, then $F_{\sigma}$ permutes $\mathcal{H}(\widetilde{C}_{2n-2})_w$ and $\mathcal{H}(\widetilde{C}_{2n-2})_{\sigma(w)}$. An equivariant calculation then shows that $\big(\mathcal{H}(\widetilde{C}_{2n-2})_w\times\mathcal{H}(\widetilde{C}_{2n-2})_{\sigma(w)}\big)^G\simeq \mathbb{Z}A_{\infty}^{\infty}$. 
\end{proof}

\subsection{Subcategories arising from band modules}

This subsection studies the equivariant categories associated
with subcategories of band modules. Some of these equivariant
categories are tubes of rank $2$, which play an important role
in our reformulation of the GLS conjecture.

\begin{prop}\label{forthm003}
    Let $w$ be a band of $H(\widetilde{C}_{2n-2})$ and $\lambda\in{\bf{k}}^{*}$.
    \begin{enumerate}
        \item If $w$ is non-symmetric, then $$\big(\mathcal{H}(\widetilde{C}_{2n-2})_{w,\lambda}\times\mathcal{H}(\widetilde{C}_{2n-2})_{\sigma(w),\lambda}\big)^G\simeq \mathcal{T}_1.$$
        \item If $w$ is rotated, then
        \begin{itemize}
            \item[(a)] $\big(\mathcal{H}(\widetilde{C}_{2n-2})_{w,\lambda}\times\mathcal{H}(\widetilde{C}_{2n-2})_{w,\lambda^{-1}}\big)^G\simeq \mathcal{T}_1$, when $\lambda\neq\pm1$;
            \item[(b)] $\big(\mathcal{H}(\widetilde{C}_{2n-2})_{w,\lambda}\big)^G\simeq \mathcal{T}_2$, when $\lambda=\pm1$.
        \end{itemize}
        \item If $w$ is reversed, then $$\big(\mathcal{H}(\widetilde{C}_{2n-2})_{w,\lambda}\big)^G\simeq \mathcal{T}_1\times\mathcal{T}_1.$$
    \end{enumerate}
\end{prop}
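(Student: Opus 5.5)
The plan is to reduce each case to a standard statement about $\mathbb{Z}_2$-equivariantization of homogeneous tubes. The main tool is the dichotomy already used in Proposition~\ref{forthm002}: if $F_\sigma$ interchanges two components, the equivariant category of their product is equivalent to one of them; if $F_\sigma$ stabilizes a component, one must find the induced automorphism of the tube. The key input is Proposition~\ref{StringEquai}, $F_\sigma(M(w,\varphi))\cong M(\sigma(w),\varphi)$. Since $F_\sigma$ is an exact autoequivalence commuting with $\tau$, it maps the homogeneous tube $\mathcal{H}(\widetilde{C}_{2n-2})_{w,\lambda}$, whose objects are $M(w,J(\lambda,m))$, onto the tube containing $M(\sigma(w),J(\lambda,1))$, preserving quasi-length.

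For (1), $\sigma(w)\not\sim w$, so $F_\sigma$ interchanges the two distinct homogeneous tubes $\mathcal{H}_{w,\lambda}$ and $\mathcal{H}_{\sigma(w),\lambda}$. For any $X$ in the first tube, $\operatorname{Ind}(X)$ is then indecomposable in the equivariant category, with endomorphism ring $\operatorname{End}(X)$. Every equivariant object $(X\oplus F_\sigma X,\alpha)$ is isomorphic to $\operatorname{Ind}(X)$: the component $X\to F_\sigma X$ of $\alpha$ determines the rest. Hence $\operatorname{Ind}$ restricted to $\mathcal{H}_{w,\lambda}$ is an equivalence onto the equivariant category. Since $\operatorname{Ind}$ preserves almost split sequences \cite[Theorem 3.8]{ReitenRiedtmann1985Skew}, the result is $\mathcal{T}_1$.

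Case (2)(a) is the same argument. For rotated $w$, the proof of Proposition~\ref{StringEquai} gives $M(\sigma(w),\varphi)\cong M(w,\varphi^{-1})$, so $F_\sigma$ swaps $\mathcal{H}_{w,\lambda}$ and $\mathcal{H}_{w,\lambda^{-1}}$. These tubes are distinct when $\lambda\neq\pm1$.

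For (2)(b) and (3), $F_\sigma$ stabilizes the homogeneous tube $\mathcal{T}=\mathcal{H}_{w,\lambda}$. I would identify $\mathcal{T}$ with finite-length modules over ${\bf k}[[t]]$ (uniserial objects $E_m=M(w,J(\lambda,m))$), so that $F_\sigma$ induces an automorphism of order dividing $2$ on the mouth endomorphism algebra, compatible up to natural isomorphism. The equivariant category is then $\operatorname{mod}$ of a skew group algebra ${\bf k}[[t]][\mathbb{Z}_2]$, where $\mathbb{Z}_2$ acts by $t\mapsto t$ or $t\mapsto -t$ (up to a unit).
\begin{itemize}
\item If the action is trivial, as I expect in the reversed case, where $\sigma(w)$ is a cyclic shift of $w$ and the parameter $\varphi$ is unchanged, the skew group algebra is ${\bf k}[[t]]\times{\bf k}[[t]]$. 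Since $\operatorname{char}{\bf k}=0$, $E_1$ has exactly two equivariant structures $\pm\alpha$, and the result is $\mathcal{T}_1\times\mathcal{T}_1$.
\item If $t\mapsto -t$, as I expect for rotated $w$ with $\lambda=\pm1$, where the identification $M(w,\varphi)\cong M(w^{-1},\varphi^{-1})$ inverts $J(\lambda,m)$ and $J(\lambda,m)^{-1}$ is conjugate to $J(\lambda,m)$ with the nilpotent part acting as $t\mapsto -t+O(t^2)$, the skew group algebra is a hereditary order with two simples interchanged by $\tau$, giving $\mathcal{T}_2$. Concretely, $E_1$ carries two equivariant structures $(E_1,\pm\alpha)$, non-isomorphic, forming the mouth with $\tau$ swapping them, consistent with \cite[Proposition 2.1]{DRZ}.
\end{itemize}

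The main obstacle is deciding which of the two actions occurs. This requires computing an explicit isomorphism $\alpha\colon M(w,J(\lambda,2))\to F_\sigma M(w,J(\lambda,2))$ and checking whether it commutes with, or anticommutes modulo $t^2$ with, the radical endomorphism $t$. I would do this by hand on the band matrices, tracking the sign introduced by reversing the walk, using the explicit choice $\delta=\delta'\circ\delta'^{-1}$ (rotated) or $\delta=\delta'\circ\delta'$ with $k$ odd (reversed) from Lemma~\ref{ClassifyOfBands}. The check must also confirm $F_\sigma(\alpha)\alpha=\operatorname{id}$ after rescaling, which is possible since $\alpha^2$ is a nonzero scalar on the mouth and square roots exist in ${\bf k}$.
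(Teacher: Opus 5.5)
Your treatment of (1) and (2)(a) is the paper's. $F_\sigma$ swaps two distinct homogeneous tubes, and induction identifies the equivariant category with one of them, as in Proposition~\ref{ImportantEquivar}(1). Your equality $\operatorname{End}(\operatorname{Ind}X)=\operatorname{End}(X)$ uses that the product category has no morphisms between its two factors.

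For (2)(b) and (3) your route is correct but differs in execution.
\begin{itemize}
\item The paper fixes Jordan bases and builds explicit equivariant structures on every $M(w,J(\lambda,r))$ from the involutive matrices $L(\lambda,r)$ of Lemma~\ref{MatrixLemma}, which conjugate $J(\lambda,r)$ to its inverse. It replaces $f'_r$ by the averaged irreducible map $f''_r=\frac12(f'_r-F_\sigma(f'_r))$. It reads off $F_\sigma(f_r)=f_r$ and $F_\sigma(f''_r)=-f''_r$ in the rotated case (all signs $+1$ in the reversed case), and feeds these signs into Proposition~\ref{ImportantEquivar}(2).
\item You instead determine the involution $\iota$ induced on the completed endomorphism ring $\varprojlim_m\operatorname{End}(E_m)\cong\mathbf{k}[[t]]$ of the tube, and pass to the skew group algebra $\mathbf{k}[[t]][G]$. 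Note that the relevant ring is this limit, not the endomorphism ring of the mouth, which modulo $\operatorname{rad}^\infty$ is only $\mathbf{k}$.
\end{itemize}

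Your heuristics already settle what you call the main obstacle. Since $F_\sigma$ does not change underlying linear maps, $\iota$ is conjugation by an equivariant structure, which transports the diagonal $\mathbf{k}[\varphi]$-action.
\begin{itemize}
\item In the rotated case, write $\varphi=\lambda(1+s)$; then $s\mapsto(1+s)^{-1}-1=-s+s^2-\cdots$. In characteristic $0$ this becomes exactly $t\mapsto -t$ in the uniformizer $\frac12(t-\iota(t))$, which is the same averaging trick as the paper's $f''_r$.
\item In the reversed case, the cyclic-shift isomorphism is assembled from powers of $\varphi$ and commutes with the diagonal action, so $\iota$ is trivial.
\end{itemize}

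Your route is uniform and needs no matrix identities. Its cost is a justification, which you give only on the mouth, that the $G$-action on the tube is equivalent to the strict twist by $\iota$. Only then is the equivariant category really the category of finite-length $\mathbf{k}[[t]][G]$-modules. This does hold: every $\iota$-invariant unit of $\mathbf{k}[[t]]$ has an invariant square root and is therefore a norm. The paper's computation, in exchange, yields explicit equivariant structures and normalized irreducible maps on all layers. Both arguments tacitly treat $\mathcal{H}(\widetilde{C}_{2n-2})_{w,\lambda}$ as a standard tube.
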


\begin{proof}
(1) If $w$ is not symmetric, then by Proposition \ref{StringEquai}, the functor $F_{\sigma}$ exchanges the two homogeneous tubes $\mathcal{H}(\widetilde{C}_{2n-2})_{w,\lambda}$ and $\mathcal{H}(\widetilde{C}_{2n-2})_{\sigma(w),\lambda}$. The desired result now follows immediately from Proposition \ref{ImportantEquivar} (1).

(2) If $w$ is rotated and $\lambda\neq\pm1$, then $F_{\sigma}$ permutes the two distinguished tubes 
$$\mathcal{H}(\widetilde{C}_{2n-2})_{w,\lambda} \textup{\;\;\;and\;\;\;} \mathcal{H}(\widetilde{C}_{2n-2})_{\sigma(w),\lambda}=\mathcal{H}(\widetilde{C}_{2n-2})_{w,\lambda^{-1}}.$$
Hence, assertion (a) follows from Proposition \ref{ImportantEquivar} (1).

Now we assume $w$ is rotated and $\lambda=\pm1$. For $r\geq1$, write $M_{\lambda,r}:=M(w,J(\lambda,r))$, and set $M_{\lambda,0}=0$. Choose the standard bases in which $\varphi$ is represented by the Jordan block $J(\lambda,r)$. We have the almost split sequence
$$0\rightarrow M_{\lambda,r}\xrightarrow{[f_r,f'_{r-1}]^t}M_{\lambda,r+1}\oplus M_{\lambda,r-1}\xrightarrow{[f'_{r},f_{r-1}]}M_{\lambda,r}\rightarrow0,$$
where, on every vertex space $V_{x_i}$,
$$f_r|_{V_{x_i}}=\begin{pmatrix}
    I_r\\0_{1\times r}
\end{pmatrix},\;f'_r|_{V_{x_i}}=\begin{pmatrix}
    0_{r\times 1}&(-1)^rI_r
\end{pmatrix}.$$
For each $r$, let $\alpha_r:\;F_{\sigma}(M_{\lambda,r})\xrightarrow{\sim}M_{\lambda,r}$ be the isomorphism whose components are given by $$\alpha_r|_{V_{i}}=L(\lambda,r)$$
when $i\neq l(2n-1)+n-1$ (where $l$ is the integer introduced in the proof of Lemma \ref{ClassifyOfBands}), and by
$$\alpha_r|_{V_{x_{l(2n-1)+n-1}}}=L(\lambda,r)J(\lambda,r)^{-1},$$
where $L(\lambda,r)$ is the involutive matrix from Lemma \ref{MatrixLemma}. By Lemma \ref{MatrixLemma}, $(M_{\lambda,r},\alpha_r^{-1})$ is a $G$-equivariant object.

For a morphism $h:\;M_{\lambda,s}\to M_{\lambda,t}$, we have $F_{\sigma}(h)=\alpha_t^{-1}\circ h\circ\alpha_s$. Because 
$$J(\lambda,r)^{-1}\begin{pmatrix}
    0_{r\times 1}&(-1)^rI_r
\end{pmatrix}=\begin{pmatrix}
    0_{r\times 1}&(-1)^rI_r
\end{pmatrix}J(\lambda,r+1)^{-1},$$
it follows that $$F_{\sigma}(f'_r)|_{V_{x_i}}=\alpha_r^{-1}\circ f'_r\circ\alpha_{r+1}=L(\lambda,r)\begin{pmatrix}
    0_{r\times 1}&(-1)^rI_r
\end{pmatrix}L(\lambda,r+1)$$
holds at each vertex $x_i$.

Define $f''_{r}=\frac{1}{2}(f'_r-F_{\sigma}(f'_r))=\frac{1}{2}(f'_r-\alpha_r\cdot f'_r\cdot\alpha_{r+1})$. Then on each vertex space $V_{x_i}$,
$$\begin{aligned}
    (f''_r-f'_r)|_{V_{x_i}}&=
   -\frac{1}{2}(
    0_{r\times 1}\;(-1)^rI_r)J(0,r+1)J(\lambda,r+1)^{-1}\\
&=\frac{(-1)^r}{2}\begin{pmatrix}
    0&0&-\lambda&(-\lambda)^2&\cdots&(-\lambda)^{r-1}\\
    0&0&0&-\lambda&\cdots&(-\lambda)^{r-2}\\
    \vdots&\vdots&\ddots&\ddots&\ddots&\vdots\\
    0&0&\cdots&0&0&-\lambda\\
    0&0&\cdots&0&0&0
\end{pmatrix}.
\end{aligned}$$
Let $\eta_{r+1}\in\textup{End}(M_{\lambda,r+1})$ be the endomorphism represented on each vertex space by $\eta_{r+1}|_{V_{x_i}}=-\frac{1}{2}J(0,r+1)J(\lambda,r+1)^{-1}$. Since $M_{\lambda,r+1}$ is indecomposable and $\eta_{r+1}$ is nilpotent, we have $\eta_{r+1}\in\textup{rad}\;\textup{End}(M_{\lambda,r+1})$. Therefore,
$$f''_r-f'_r=f'_r\;\eta_{r+1}\in\textup{rad}^2(M_{\lambda,r+1},M_{\lambda,r}).$$
Thus, $f''_r$ is an irreducible morphism and represents the same arrow in the Auslander--Reiten quiver as $f'_r$.

The identities 
$$L(\lambda,r+1)\begin{pmatrix}
    I_r\\0_{1\times r}
\end{pmatrix}=\begin{pmatrix}
    I_r\\0_{1\times r}
\end{pmatrix}L(\lambda,r),$$
and
$$L(\lambda,r+1)J(\lambda,r+1)^{-1}\begin{pmatrix}
    I_r\\0_{1\times r}
\end{pmatrix}=\begin{pmatrix}
    I_r\\0_{1\times r}
\end{pmatrix}L(\lambda,r)J(\lambda,r)^{-1},$$ show that $F_{\sigma}(f_r)=f_r$. On the other hand, $$F_{\sigma}(f_r'')=\frac{1}{2}\big(F_{\sigma}(f_r')-F^2_{\sigma}(f_r')\big)=\frac{1}{2}\big(F_{\sigma}(f_r')-f_r'\big)=-f''_r.$$

We may therefore choose the two types of irreducible generators $f_r$ and $f_r''$ so
that
$$F_\sigma(f_r)=f_r,\;\;\;\textup{and}\;\;\;\;F_\sigma(f''_r)=-f''_r.$$
Since the canonical bijection $\zeta$ pairs an arrow of one type with
an arrow of the other type, we obtain $l_{\zeta f}l_f=-1$ for each irreducible arrow $f$. Proposition
\ref{ImportantEquivar}(2)(ii) now yields
$$\big(\mathcal{H}(\widetilde{C}_{2n-2})_{w,\lambda}\big)^G\simeq \mathcal{T}_2.$$

(3) By a similar calculation, in the reversed case, the irreducible generators may be chosen such that
$$F_\sigma(f_r)=f_r,\;\;\;\textup{and}\;\;\;\;F_\sigma(f'_r)=f'_r.$$
Thus, $l_{\zeta f}l_f=1$ for each irreducible arrow $f$. The statement then follows directly from Proposition \ref{ImportantEquivar} (2)(i).
\end{proof}

\subsection{Proof of Theorem \ref{MainTheorem}}
We now proceed to the proof of Theorem \ref{MainTheorem}. Let $\widehat{G}$ denote the character group of $G$. By \cite[Theorem 4.6]{CCR}, there is a dual $\widehat{G}$-action on $\big(\operatorname{mod}H(\widetilde{C}_{2n-2})\big)^G$ satisfying
$$\Big(\big(\operatorname{mod}H(\widetilde{C}_{2n-2})\big)^G\Big)^{\widehat{G}}\simeq \operatorname{mod}H(\widetilde{C}_{2n-2}).$$
By equivariantization duality, every indecomposable object in $$\operatorname{mod}H(\widetilde{CD}_{n})\simeq\big(\operatorname{mod}H(\widetilde{C}_{2n-2})\big)^G$$ occurs as a direct summand of $\Phi\circ\textup{Ind}(X)$ for some indecomposable $X\in\operatorname{mod}H(\widetilde{C}_{2n-2})$.

The components labelled (1)--(4) in Theorem \ref{MainTheorem} arise from the string components of $\operatorname{mod}H(\widetilde{C}_{2n-2})$ that consist of string modules. Their shapes follow from Proposition \ref{forthm002}, while the indexing sets are determined by the equivariant correspondences established in Propositions \ref{StToExst} and \ref{ClassifyStToExst}.

The components (5) and (6) arise from those components of  $\operatorname{mod}H(\widetilde{C}_{2n-2})$ that consist of band modules. Their shapes are described by Proposition \ref{forthm003}, and the relevant equivariant correspondences are provided in Propositions \ref{FROMBdTOExBd} and \ref{ClassifyBaToExba}. Because the computation of these equivariant correspondences is somewhat lengthy yet essential for describing the index sets, we defer the details to the appendix.

This completes the proof.

\section{Galois coverings of GLS algebras}\label{sec:galois}

By Drozd's trichotomy, every finite-dimensional algebra is either
representation-finite, tame, or wild. In this section, we develop the
covering-theoretic tools needed to establish the wildness of GLS algebras.
More precisely, we first construct locally bounded Galois coverings of GLS
algebras and then identify hypercritical convex subcategories in these coverings. These subcategories will serve as the wildness witnesses used in
the classification in Section \ref{sec:tame-class}. We use the standard terminology concerning tame and wild representation type; see \cite{Dro1980}.

% We use the standard definitions of tame and wild representation type. Let $A$ be a finite-dimensional basic connected algebra. It is {\emph{representation-finite}} (or {\emph{of finite type}}) if there are only finitely many isomorphism classes of finitely generated indecomposable $A$-modules. Otherwise, $A$ is either tame or wild. Roughly speaking, $A$ is tame if, in each fixed dimension, all but finitely many indecomposable $A$-modules occur in finitely many one-parameter families; otherwise, $A$ is wild. We refer to \cite{Dro1980} for the precise definitions.

\subsection{The covering construction}
Galois coverings provide an effective tool for studying representation type; we refer to \cite{Gar1981} for the relevant definitions. We now construct explicitly a locally bounded Galois covering of a GLS algebra $H=H(C,D,\Omega)$.

Write $$D=\operatorname{diag}(d_1,d_2,\cdots,d_n),
\qquad
d:=\textup{lcm}(d_1,d_2,\cdots,d_n),
\qquad
b_i=\frac{d}{d_i}$$
for $1\leq i\leq n$. Let $Q=Q(C,\Omega)$, and assume the ordinary quiver $Q^{\textup{o}}$ (obtained from $Q$ by deleting all loops) is a tree. In this case, we can construct a precise Galois covering via quiver with relations as follows.

Let $(Q^{\mathbb{Z},D})_0:=Q_0\times\mathbb{Z}$ and
$$\begin{array}{c}
    (Q^{\mathbb{Z},D})_1:=\{(\alpha,j):(s(\alpha),j)\rightarrow(t(\alpha),j)|\alpha\in Q^{\textup{o}}_1,j\in\mathbb{Z}\}\\
    \;\;\;\;\;\;\;\;\;\;\;\;\;\;\;\;\;\cup\{(\varepsilon_x,j):(x,j)\rightarrow(x,j+b_x)|x\in Q_0,j\in\mathbb{Z}\}.
\end{array}$$
The relation set $I^{\mathbb{Z},D}$ is generated by the zero relations
$$(\varepsilon_x,j)(\varepsilon_x,j+b_x)\cdots(\varepsilon_x,j+d-b_x)$$
for each $x\in Q_0,j\in\mathbb{Z}$, and the commutativity relations
$$\begin{array}{c}
    (\alpha,j)(\varepsilon_{t(\alpha)},j)(\varepsilon_{t(\alpha)},j+b_{t(\alpha)})\cdots(\varepsilon_{t(\alpha)},j+\frac{d}{\textup{gcd}(d_{s(\alpha)},d_{t(\alpha)})}-b_{t(\alpha)})\\-(\varepsilon_{s(\alpha)},j)(\varepsilon_{s(\alpha)},j+b_{s(\alpha)})\cdots
(\varepsilon_{s(\alpha)},j+\frac{d}{\textup{gcd}(d_{s(\alpha)},d_{t(\alpha)})}-b_{s(\alpha)})(\alpha,j+\frac{d}{\textup{gcd}(d_{s(\alpha)},d_{t(\alpha)})})
\end{array}$$
for each $\alpha\in Q_1^{\textup{o}},j\in\mathbb{Z}$. 

\begin{prop}\label{GaloisCovering}
    With the notation above, $H=H(C,D,\Omega)$ admits a Galois
    covering
    $$\pi:\textup{Gal}(H)\longrightarrow H$$
    of locally bounded $\mathbf{k}$-categories, where $\textup{Gal}(H):\;=\mathbf{k}Q^{\mathbb{Z},D}/I^{\mathbb Z,D}$. 
\end{prop}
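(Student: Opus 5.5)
The plan is to realize $\textup{Gal}(H)$ as the category attached to a $\mathbb{Z}$-grading of $H$, so that the covering is the standard smash-product/grading covering with group $\mathbb{Z}$ (see \cite{Gar1981}).

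\emph{Step 1: the grading.} Since $Q^{\textup{o}}$ is a tree, we may choose integers $\deg(\alpha)$ for $\alpha\in Q^{\textup{o}}_1$ freely. Put
\[
\deg(\varepsilon_x)=b_x,\qquad \deg(\alpha)=0 \quad (\alpha\in Q^{\textup{o}}_1).
\]
We must check that the relations (H1) and (H2) are homogeneous. Relation (H1) is a monomial, so it is homogeneous automatically. For (H2) with $\alpha:i\to j$, write $g=\gcd(d_i,d_j)$. Since $d_ic_{ij}=d_jc_{ji}$, we get $|c_{ji}|/\gcd(|c_{ij}|,|c_{ji}|)=d_j/g$ and $|c_{ij}|/\gcd(|c_{ij}|,|c_{ji}|)=d_i/g$. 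Hence the two sides have degrees
\[
\frac{d_j}{g}\,b_i=\frac{d_jd}{g\,d_i}
\qquad\text{and}\qquad
\frac{d_i}{g}\,b_j=\frac{d_id}{g\,d_j}.
\]
These differ in general, so the plain choice $\deg(\alpha)=0$ needs adjusting. The correct bookkeeping is the one encoded in $I^{\mathbb{Z},D}$: both sides of the covering commutativity relation run from $(i,j)$ to a common vertex $(t(\alpha),j+d/g)$. The arrow $\alpha$ then shifts the level by $0$ and each $\varepsilon_x$ shifts it by $b_x$. I would therefore verify directly that the number of loops in the covering relation matches the exponents in (H2). On the $t(\alpha)$ side the number is
\[
\frac{d/g}{b_{t(\alpha)}}=\frac{d_{t(\alpha)}}{g},
\]
which should equal the exponent $|c_{ij}|/\gcd$. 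I would check this against (H2) carefully; if a mismatch appears, I would correct it by assigning nonzero degrees to the arrows $\alpha$. This is possible because $Q^{\textup{o}}$ is a tree, so the arrow degrees impose no cycle constraints and can absorb any required shift. This consistency check, together with the connectedness/tree argument, is the point I expect to be the main (if routine) obstacle.

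\emph{Step 2: the covering.} Given a homogeneous admissible presentation $H=\mathbf{k}Q/I$ with a $\mathbb{Z}$-grading, the covering quiver has vertex set $Q_0\times\mathbb{Z}$ and an arrow $(a,j):(s(a),j)\to(t(a),j+\deg a)$ for each arrow $a$ of $Q$. The lifted relations are the homogeneous relations placed at every level. Under the degree choices of Step 1, this is exactly $Q^{\mathbb{Z},D}$ with $I^{\mathbb{Z},D}$. The group $\mathbb{Z}$ acts freely by shifting the second coordinate, and the map
\[
\pi:(x,j)\mapsto x,\qquad (a,j)\mapsto a
\]
induces an isomorphism of the orbit category $\textup{Gal}(H)/\mathbb{Z}$ with $H$. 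This is the standard fact that the orbit category of the smash product of a graded algebra is the algebra itself. Concretely, one checks that for all $x,y$,
\[
\bigoplus_{j'}\textup{Gal}(H)\big((x,j),(y,j')\big)\cong e_xHe_y,
\]
the sum being over the homogeneous components.

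\emph{Step 3: local boundedness.} Fix a vertex $(x,j)$. Every path from $(x,j)$ either raises the level through loops or moves along the finite acyclic part $Q^{\textup{o}}$. Any $\varepsilon$-power of length $d_x$ vanishes, and repeated use of the commutativity relations lets one move all loops to one end of a path. So only finitely many nonzero paths start or end at $(x,j)$, since $H$ is finite-dimensional and the grading is a $\mathbb{Z}$-refinement of it. Finally, connectedness of $\textup{Gal}(H)$ follows because the tree $Q^{\textup{o}}$ links all vertices at a fixed level, and loops link adjacent levels. The gcd of the shifts $b_x$ must be $1$ for connectedness, and this holds because $\gcd_x(d/d_x)=1$.
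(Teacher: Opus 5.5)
\textbf{Verdict: genuine error in the key step.} Your architecture matches the paper's proof. You grade by $\deg\varepsilon_x=b_x$ and $\deg\alpha=0$, take the $\mathbb{Z}$-covering attached to the grading, and get local boundedness from $e_{(x,j)}\textup{Gal}(H)e_{(y,j')}\cong e_xH_{j'-j}e_y$ and $\dim H<\infty$. However, Step 1, the only substantive point, is wrong as written.

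From $d_ic_{ij}=d_jc_{ji}$ you get $|c_{ji}|/|c_{ij}|=d_i/d_j$. So with $g=\gcd(d_i,d_j)$ one has $|c_{ji}|/\gcd(|c_{ij}|,|c_{ji}|)=d_i/g$ and $|c_{ij}|/\gcd(|c_{ij}|,|c_{ji}|)=d_j/g$; you swapped them. With the correct exponents, both sides of (H2) have degree $(d_i/g)b_i=(d_j/g)b_j=d/g$. Hence (H2) is homogeneous for $\deg\alpha=0$, and there is no mismatch to repair. This is exactly the paper's observation that symmetry of $DC$ makes all relations homogeneous. It also matches $I^{\mathbb{Z},D}$, whose two terms run from $(s(\alpha),j)$ to $(t(\alpha),j+d/g)$ through $d_{s(\alpha)}/g$, resp.\ $d_{t(\alpha)}/g$, loops. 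Your own count $d_{t(\alpha)}/g$ on the target side is correct and already contradicts your earlier formula.

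The fallback you propose could not have worked anyway. In $\varepsilon_i^{a}\alpha-\alpha\varepsilon_j^{b}$ the arrow $\alpha$ occurs exactly once in each term, so $\deg\alpha$ cancels. Homogeneity is therefore equivalent to $ab_i=bb_j$ however you grade the arrows, and the tree property of $Q^{\textup{o}}$ plays no role. Moreover, giving $\alpha$ a nonzero degree would produce arrows $(s(\alpha),j)\to(t(\alpha),j+\deg\alpha)$. That is a covering different from $Q^{\mathbb{Z},D}$, so you would not be proving the stated proposition. If a genuine mismatch existed, the proposition as stated would simply be false.

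Two smaller points in Step 3:
\begin{itemize}
\item Commutativity relations only move the blocks $\varepsilon_i^{d_i/g}$ across $\alpha$, not arbitrary loops. Rely on the graded isomorphism instead, and note that $e_{(x,j)}\textup{Gal}(H)e_{(x,j)}\cong e_xH_0e_x=\mathbf{k}e_x$, since loops have positive degree and $Q^{\textup{o}}$ is acyclic.
\item Connectedness is not part of the claim. Your gcd argument must ignore loops at vertices with $d_x=1$, which are zero in the algebra. If all $d_x=1$, the covering is in fact disconnected.
\end{itemize}
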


\begin{proof}
Give $H$ the $\mathbb Z_{\geq0}$-grading
\[
\deg(\alpha)=0\qquad(\alpha\in Q_1^\circ),
\qquad
\deg(\varepsilon_i)=b_i.
\]
Since $DC$ is symmetric, all defining relations of $H$ are
homogeneous with respect to this grading.

By the construction of $Q^{\mathbb Z,D}$ and $I^{\mathbb Z,D}$,
every homogeneous path in $Q$ of degree $j'-j$ has a unique lift
from $(x,j)$ to $(y,j')$, and the relations in
$I^{\mathbb Z,D}$ are precisely the lifts of the homogeneous
relations of $H$. Hence
\begin{equation}\label{eq:graded-cover}
e_{x,j}\operatorname{Gal}(H)e_{y,j'}
\cong
e_xH_{j'-j}e_y
\end{equation}
for all $x,y\in Q_0$ and $j,j'\in\mathbb Z$.

The shift
$$h(x,j)=(x,j+1)\textup{\;\;and\;\;}h(\alpha,j)=(\alpha,j+1)\;\;\;\;(x\in Q_0,\alpha\in Q_1,j\in\mathbb{Z})$$
% \[h(x,j)=(x,j+1)\]
defines a free $\mathbb Z$-action on $\operatorname{Gal}(H)$, and
the projection
$$\pi(x,j)=x\textup{\;\;and\;\;}\pi(\alpha,j)=\alpha\;\;\;\;(x\in Q_0,\alpha\in Q_1,j\in\mathbb{Z})$$
is constant on $\mathbb Z$-orbits. Moreover, by
\eqref{eq:graded-cover},
\[
\bigoplus_{m\in\mathbb Z}
e_{x,j}\operatorname{Gal}(H)e_{y,m}
\cong e_xHe_y,
\qquad
\bigoplus_{m\in\mathbb Z}
e_{x,m}\operatorname{Gal}(H)e_{y,j'}
\cong e_xHe_y.
\]
Thus $\pi$ is a Galois covering with group $\mathbb Z$.

Finally, since $H$ is finite-dimensional, only finitely many
homogeneous components $H_m$ are nonzero. Hence
\eqref{eq:graded-cover} implies that all morphism spaces in
$\operatorname{Gal}(H)$ are finite-dimensional and that every
object has nonzero morphisms to or from only finitely many objects.
Furthermore,
\[
e_{x,j}\operatorname{Gal}(H)e_{x,j}
\cong e_xH_0e_x=ke_x,
\]
since $Q^\circ$ is acyclic. Therefore,
$\operatorname{Gal}(H)$ is locally bounded.
\end{proof}

We next explain how the covering constructed above can be used to establish
wildness. The main idea is to detect a bounded wild subcategory, and in
particular a convex hypercritical subcategory, inside the Galois covering.

\subsection{Wildness criteria}
We first recall the notion of a hypercritical algebra, which arises from the
classification of minimal wild concealed algebras.

Let $A={\bf{k}}\Delta$ be the path algebra of an acyclic quiver $\Delta$, and let $T$ be a preprojective tilting $A$-module. The endomorphism algebra $\operatorname{End}_A(T)$ is called a {\emph{concealed algebra}} of type $\overline{\Delta}$, where $\overline{\Delta}$ is the underlying graph of $\Delta$. 

For the extended Euclidean types $\widetilde{\widetilde{A}}_m,T_5,\widetilde{\widetilde{D}}_n,\widetilde{\widetilde{E}}_6,\widetilde{\widetilde{E}}_7,\widetilde{\widetilde{E}}_8$ (see Figure \ref{ExEuclidean}), concealed algebras (which are minimal wild) have been classified by quivers and relations \cite{Ler1987,Unger1990,Wittman1990}. The ones arising from minimal wild hereditary tree algebras of types $T_5,\widetilde{\widetilde{D}}_n,\widetilde{\widetilde{E}}_6,\widetilde{\widetilde{E}}_7,\widetilde{\widetilde{E}}_8$ are termed {\emph{hypercritical algebras}}, a key benchmark for distinguishing tame from wild type.
\begin{figure}[h]
    \centering
    \begin{tikzpicture}
    \node () at (-1,0) {$\widetilde{\widetilde{A}}_m$:};
        \node (00) at (0,0) {$\circ$};
        \node (01) at (0.8,0.6) {$\circ$};
        \node (02) at (0.8,-0.6) {$\circ$};
        \node (03) at (1.8,0.6) {};
        \node (04) at (1.8,-0.6) {};
        \node (05) at (2.8,0.6) {};
        \node (06) at (2.8,-0.6) {};
        \node (07) at (3.8,0.6) {$\circ$};
        \node (08) at (3.8,-0.6) {$\circ$};
        \node (09) at (4.6,0) {$\circ$};
        \node (0a) at (5.6,0) {$\circ$};
        \draw [-] (00) -- (01);
        \draw [-] (00) -- (02);
        \draw [-] (03) -- (01);
        \draw [-] (04) -- (02);
        \node () at (2.3,0.6) {$\cdots$};
        \node () at (2.3,-0.6) {$\cdots$};
        \draw [-] (07) -- (05);
        \draw [-] (08) -- (06);
        \draw [-] (07) -- (09);
        \draw [-] (08) -- (09);
        \draw [-] (0a) -- (09);
         \node () at (7,0) {$T_5$:};
         \node (10) at (8,0.7) {$\circ$};
         \node (11) at (8,-0.7) {$\circ$};
         \node (12) at (9,0) {$\circ$};
         \node (13) at (10,0.7) {$\circ$};
         \node (14) at (10,-0.7) {$\circ$};
         \node (15) at (10,0) {$\circ$};
         \draw [-] (10) -- (12);
         \draw [-] (11) -- (12);
         \draw [-] (13) -- (12);
         \draw [-] (14) -- (12);
         \draw [-] (15) -- (12);
         \node () at (-1,-2) {$\widetilde{\widetilde{D}}_n$:};
         \node (21) at (0,-1.4) {$\circ$};
        \node (22) at (0,-2.6) {$\circ$};
        \node (23) at (0.8,-2) {$\circ$};
        \node (24) at (1.8,-2) {$\circ$};
        \node (25) at (2.8,-2) {};
        \node () at (3.3,-2) {$\cdots$};
        \node (26) at (3.8,-2) {};
        \node (27) at (4.8,-2) {$\circ$};
        \node (28) at (5.8,-2) {$\circ$};
         \node (29) at (6.6,-1.4) {$\circ$};
        \node (2a) at (6.6,-2.6) {$\circ$};
        \node (2b) at (7.6,-2.6) {$\circ$};
        \draw [-] (21) -- (23);
        \draw [-] (22) -- (23);
        \draw [-] (24) -- (23);
        \draw [-] (24) -- (25);
        \draw [-] (26) -- (27);
        \draw [-] (27) -- (28);
        \draw [-] (29) -- (28);
        \draw [-] (2a) -- (28);
        \draw [-] (2a) -- (2b);
        \node () at (-1,-4) {$\widetilde{\widetilde{E}}_6$:};
        \node (30) at (0,-4.6) {$\circ$};
        \node (31) at (1,-4.6) {$\circ$};
        \node (32) at (2,-3) {$\circ$};
        \node (33) at (2,-3.8) {$\circ$};
        \node (34) at (2,-4.6) {$\circ$};
         \node (35) at (3,-4.6) {$\circ$};
         \node (36) at (4,-4.6) {$\circ$};
         \node (37) at (5,-4.6) {$\circ$};
         \draw [-] (31) -- (30);
        \draw [-] (31) -- (34);
        \draw [-] (32) -- (33);
        \draw [-] (33) -- (34);
        \draw [-] (35) -- (34);
        \draw [-] (35) -- (36);
        \draw [-] (36) -- (37);
        \node () at (-1,-5.9) 
        {$\widetilde{\widetilde{E}}_7$:};
        \node (40) at (0,-6.4) {$\circ$};
        \node (41) at (1,-6.4) {$\circ$};
        \node (42) at (2,-6.4) {$\circ$};
        \node (43) at (3,-6.4) {$\circ$};
        \node (44) at (3,-5.4) {$\circ$};
        \node (45) at (4,-6.4) {$\circ$};
        \node (46) at (5,-6.4) {$\circ$};
        \node (47) at (6,-6.4) {$\circ$};
        \node (48) at (7,-6.4) {$\circ$};
        \draw [-] (41) -- (40);
        \draw [-] (41) -- (42);
        \draw [-] (42) -- (43);
        \draw [-] (44) -- (43);
        \draw [-] (45) -- (43);
        \draw [-] (45) -- (46);
        \draw [-] (46) -- (47);
        \draw [-] (47) -- (48);
        \node () at (-1,-7.7) 
        {$\widetilde{\widetilde{E}}_8$:};
        \node (50) at (0,-8.2) {$\circ$};
        \node (51) at (1,-8.2) {$\circ$};
        \node (52) at (2,-8.2) {$\circ$};
        \node (53) at (2,-7.2) {$\circ$};
        \node (54) at (3,-8.2) {$\circ$};
        \node (55) at (4,-8.2) {$\circ$};
        \node (56) at (5,-8.2) {$\circ$};
        \node (57) at (6,-8.2) {$\circ$};
        \node (58) at (7,-8.2) {$\circ$};
        \node (59) at (8,-8.2) {$\circ$};
        \draw [-] (51) -- (50);
        \draw [-] (51) -- (52);
        \draw [-] (52) -- (53);
        \draw [-] (52) -- (54);
        \draw [-] (54) -- (55);
        \draw [-] (59) -- (58);
        \draw [-] (57) -- (56);
        \draw [-] (57) -- (58);
        \draw [-] (56) -- (55);
    \end{tikzpicture}
    \caption{The extended Euclidean graphs}
    \label{ExEuclidean}
\end{figure}

The following lemma allows us to establish wildness by detecting a bounded
wild subcategory in a locally bounded Galois covering.

\begin{lem}\label{CriterionForWild}
    Let $A$ be a representation-infinite finite-dimensional algebra which admits a Galois covering $\textup{Gal}(A)\to A$ of locally bounded ${\bf k}$-categories. If $\textup{Gal}(A)$ contains a bounded full subcategory of wild representation type, then $A$ is wild. In particular, this holds if $\textup{Gal}(A)$ contains a convex hypercritical subcategory.
\end{lem}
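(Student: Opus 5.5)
The plan is to combine the push-down functor of the Galois covering with the standard fact that wildness of a full subcategory transfers to the algebra via an exact functor preserving indecomposability and reflecting isomorphism classes. Let $\pi:\operatorname{Gal}(A)\to A$ be the Galois covering with group $\mathbb Z$ as in Proposition~\ref{GaloisCovering}, and let $\pi_\lambda:\operatorname{mod}\operatorname{Gal}(A)\to\operatorname{mod}A$ be the push-down functor. Recall from \cite{Gar1981} that $\pi_\lambda$ is exact, preserves indecomposability, and that $\pi_\lambda M\cong\pi_\lambda N$ for indecomposable finite-dimensional $M,N$ if and only if $N\cong {}^{g}M$ for some $g$ in the group. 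These properties hold in general for a free action of a torsion-free group, and $\mathbb Z$ is such a group.

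First, let $\mathcal B\subseteq\operatorname{Gal}(A)$ be a bounded full subcategory of wild type. I would reduce to a convex one: replace $\mathcal B$ by its convex hull in $\operatorname{Gal}(A)$, which is still bounded because $\operatorname{Gal}(A)$ is locally bounded. Wildness passes from $\mathcal B$ to the hull, since the hull $\mathcal C$ has $\mathcal B$ as a full subcategory of the form $e\mathcal C e$ and the functor $-\otimes_{e\mathcal C e}e\mathcal C$ is a full embedding preserving indecomposables. Alternatively, one can use directly that $\operatorname{mod}\mathcal B$ embeds into $\operatorname{mod}\operatorname{Gal}(A)$ via extension by zero when $\mathcal B$ is convex. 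In the intended application, where $\mathcal B$ is a convex hypercritical subcategory, this step is automatic.

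Next, I would compose a strict wild representation embedding $\operatorname{mod}\mathbf k\langle x,y\rangle\to\operatorname{mod}\mathcal B$, given by a bimodule free of finite rank over $\mathbf k\langle x,y\rangle$, with extension by zero and with $\pi_\lambda$. The result is an exact functor $\operatorname{mod}\mathbf k\langle x,y\rangle\to\operatorname{mod}A$, again of the form $-\otimes N$ with $N$ finitely generated free over $\mathbf k\langle x,y\rangle$, which preserves indecomposability. The main obstacle is showing that it reflects isomorphism classes up to finitely many, or equivalently producing two-parameter families of pairwise non-isomorphic indecomposables, so that Drozd's definition of wildness is met. The key point is that the $\mathbb Z$-translates ${}^{g}\mathcal B$ for $g\neq0$ meet $\mathcal B$ in a proper subcategory, because $\mathcal B$ is bounded and the action is free. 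A module supported exactly on the support of $\mathcal B$ therefore cannot be isomorphic to a nontrivial translate of another such module. Since modules in a wild family can be taken sincere on $\mathcal B$ (restricting to a sincere minimal wild subcategory if necessary), $\pi_\lambda M\cong\pi_\lambda M'$ forces $M\cong M'$.

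It follows that $A$ is not tame, and by Drozd's trichotomy together with the assumption that $A$ is representation-infinite, $A$ is wild. For the final sentence of the statement, it suffices to note that a hypercritical algebra is concealed of a minimal wild hereditary tree type among $T_5,\widetilde{\widetilde D}_n,\widetilde{\widetilde E}_{6,7,8}$. Tilting preserves wildness, since the tilting functor restricted to a torsion class that contains almost all indecomposables of a wild family is faithful and preserves indecomposability, and these hereditary algebras are wild because their graphs are neither Dynkin nor Euclidean. A convex hypercritical subcategory is also bounded, so the first part applies.
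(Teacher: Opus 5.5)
Your argument is correct in substance, but it takes a different route from the paper.

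The paper argues by contraposition and cites the descent result \cite[Proposition~2.1]{LS}. If $A$ were tame, then $\operatorname{Gal}(A)$ would be tame. Tameness of a locally bounded category means by definition that every bounded full subcategory is tame, which contradicts the existence of a wild one. Drozd's trichotomy then gives wildness.

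You instead push a wild family down explicitly. You compose a wild functor $-\otimes_{\mathbf{k}\langle x,y\rangle}N$ into $\operatorname{mod}\mathcal B$ with extension by zero and $\pi_\lambda$, and use Gabriel's description of the fibres of $\pi_\lambda$. In effect you reprove the special case of the cited result that is needed. This gives a wild functor into $\operatorname{mod}A$ directly, so the final appeal to the trichotomy is superfluous, and it makes the mechanism explicit. However, it relies on $\pi_\lambda$ preserving indecomposability, hence on the group being torsion-free. That covers the $\mathbb Z$-coverings of Proposition~\ref{GaloisCovering}, whereas the cited result works for any group acting freely on objects, which matches the generality in which the lemma is stated.

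Three steps should be tightened.

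First, the reflection of isomorphism classes does not need a ``sincere minimal wild subcategory''. Such a subcategory need not be convex, so extension by zero could fail. Since $N$ is free of finite rank over $\mathbf{k}\langle x,y\rangle$, each $X\otimes N$ with $X\neq 0$ has dimension vector $(\dim X)\,\underline{d}$, hence the same finite support $S$. On the other hand, ${}^{g}(X\otimes N)$ has support $gS\neq S$ for $g\neq 0$. So $\pi_\lambda(X\otimes N)\cong\pi_\lambda(Y\otimes N)$ already forces $X\otimes N\cong Y\otimes N$.

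Second, boundedness of the convex hull does not follow from local boundedness alone, since convexity refers to paths in the quiver rather than to nonzero morphisms. It does hold here: each arrow of $Q^{\mathbb Z,D}$ either stays in one level inside the acyclic quiver $Q^{\textup{o}}$ or raises the level, so paths between two fixed vertices have bounded length.

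Third, $-\otimes_{e\mathcal C e}e\mathcal C$ is only right exact, and the composite bimodule need not be free over $\mathbf{k}\langle x,y\rangle$. Wildness of the hull $\mathcal C$ is better deduced from the fact that tameness passes from $\mathcal C$ to $e\mathcal Ce$ by restricting families along $M\mapsto Me$.

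None of these issues arises for the convex hypercritical subcategories actually used in the paper.
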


\begin{proof}
    Suppose that $A$ is tame. By \cite[Proposition 2.1]{LS}, the locally bounded category $\textup{Gal}(A)$ is tame. By definition, every bounded full subcategory of a tame locally bounded category is tame. This contradicts the existence of a bounded full wild subcategory of $\textup{Gal}(A)$. Hence $A$ is not tame. Since $A$ is representation-infinite, Drozd's trichotomy implies that $A$ is wild.
\end{proof}

The following lemma allows us to reduce the wildness analysis to the smallest relevant scalar multiples of the minimal symmetrizer.

\begin{lem}\label{prop:symmetrizer-quotient}
Let $H=H(C,D,\Omega)$ be a GLS algebra of type $C$ with a symmetrizer $D=\operatorname{diag}(d_1,d_2,\dots,d_n)$. For any integers $s\geq t\geq1$, there is a natural surjective algebra homomorphism
$$H(C,sD,\Omega)\twoheadrightarrow H(C,tD,\Omega).$$
Consequently, if $H(C,tD,\Omega)$ is wild, then
$H(C,sD,\Omega)$ is wild.
\end{lem}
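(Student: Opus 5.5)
The plan is to write down the surjection explicitly on generators and then check that it kills the defining relations. Both algebras are quotients of the same path algebra $\mathbf{k}Q(C,\Omega)$. The quiver $Q(C,\Omega)$ depends only on $C$ and $\Omega$: its arrows are indexed by $\gcd(|c_{ij}|,|c_{ji}|)$ and one loop per vertex, and neither depends on $D$. So $H(C,sD,\Omega)=\mathbf{k}Q/I_s$ and $H(C,tD,\Omega)=\mathbf{k}Q/I_t$. It suffices to show $I_s\subseteq I_t$. The identity of $\mathbf{k}Q$ then descends to a surjective algebra homomorphism $\mathbf{k}Q/I_s\twoheadrightarrow\mathbf{k}Q/I_t$, sending each vertex, arrow and loop to its namesake.

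The inclusion $I_s\subseteq I_t$ is checked on the generators of $I_s$.

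\textbf{(H2) relations.} These read
\[
\varepsilon_i^{|c_{ji}|/g_{ij}}\,\alpha_{ij}^{(k)}=\alpha_{ij}^{(k)}\,\varepsilon_j^{|c_{ij}|/g_{ij}},
\qquad g_{ij}=\gcd(|c_{ij}|,|c_{ji}|).
\]
Here the exponents involve only $C$, not the symmetrizer. Hence the (H2) generators of $I_s$ and $I_t$ literally coincide.

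\textbf{(H1) relations.} The generator of $I_s$ at vertex $i$ is $\varepsilon_i^{sd_i}$. Since $s\geq t$ we have $sd_i\geq td_i$, so
\[
\varepsilon_i^{sd_i}=\varepsilon_i^{(s-t)d_i}\,\varepsilon_i^{td_i}\in I_t.
\]

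Thus $I_s\subseteq I_t$, and the map is a natural surjection. It also sends $e_i\mapsto e_i$, so it is compatible with the idempotents. Note that $sD$ and $tD$ are genuine symmetrizers, because $(sD)C=s(DC)$ is symmetric.

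For the consequence, use the surjection $H(C,sD,\Omega)\twoheadrightarrow H(C,tD,\Omega)$ to identify $\operatorname{mod}H(C,tD,\Omega)$ with the full subcategory of $\operatorname{mod}H(C,sD,\Omega)$ of modules annihilated by the kernel. Restriction of scalars along the surjection is a fully faithful exact embedding that preserves indecomposability and dimension.

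Suppose $H(C,tD,\Omega)$ is wild. Then there is a $\mathbf{k}\langle x,y\rangle$-$H(C,tD,\Omega)$-bimodule, finitely generated free on the left, whose tensor functor preserves indecomposability and reflects isomorphisms. Regarding this bimodule as a $\mathbf{k}\langle x,y\rangle$-$H(C,sD,\Omega)$-bimodule gives the same properties over $H(C,sD,\Omega)$, because the embedding is fully faithful. Hence $H(C,sD,\Omega)$ is wild.

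Equivalently, a quotient of a tame algebra is tame, and Drozd's trichotomy then excludes tameness of $H(C,sD,\Omega)$. It is not finite either, since it has infinitely many indecomposables coming from $H(C,tD,\Omega)$.

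There is no real obstacle here. The only point needing care is that the (H2) exponents depend only on $C$ and not on $D$; this must be read off correctly from the definition, since otherwise the relations would not match.
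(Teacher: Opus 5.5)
Your proof is correct and follows the same route as the paper. The paper also notes that the quiver and (H2) relations do not depend on the symmetrizer, identifies $H(C,tD,\Omega)$ with $H(C,sD,\Omega)/\langle\varepsilon_i^{td_i}\mid 1\le i\le n\rangle$, and uses that an algebra with a wild quotient is wild. You simply make explicit the inclusion $I_s\subseteq I_t$ and the restriction-of-scalars justification of that last step, which the paper leaves implicit.
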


\begin{proof}
The two algebras have the same quiver and commutativity relations, and
$$H(C,tD,\Omega)\cong
H(C,sD,\Omega)/
\langle\varepsilon_i^{td_i}\mid 1\le i\le n\rangle.$$
The last assertion follows since an algebra with a wild quotient is wild.
\end{proof}

\subsection{Typical examples}
We next present the explicit wildness witnesses needed in the proof of
Theorem \ref{Tame}. In each example, we exhibit a convex hypercritical
subcategory in the Galois covering of the corresponding GLS algebra.

\begin{example}\label{D4wild}
    Let $H=H(C,D,\Omega)$ be the GLS algebra of type $D_4$ with $D=\operatorname{diag}(2,2,2,2)$. Thus, the underlying graph $Q^{\textup{o}}$ is given by
    \begin{center}
\begin{tikzpicture}[scale=0.7]
\node (1) at (0,0.8) {$1$};
\node (2) at (0,-0.8) {$2$};
\node (3) at (1.5,0) {$3$};
\node (4) at (3,0) {$4$.};
\draw [-] (1) -- (3);
\node () at (0.8,0.65) {$\alpha$};
\node () at (0.7,-0.2) {$\beta$};
\node () at (2.2,0.25) {$\gamma$};
\draw [-] (2) -- (3);
\draw [-] (4) -- (3);
\end{tikzpicture}
\end{center}
By Proposition \ref{GaloisCovering}, $H$ admits a locally bounded Galois covering ${\bf{k}}Q^{\mathbb{Z},D}/I^{\mathbb{Z},D}$, where 
$(Q^{\mathbb{Z},D})_0=\{(i,j)\mid i=1,2,3,4,\;j\in\mathbb{Z}\}$ and
{\small{$$\begin{aligned}
     (Q^{\mathbb{Z},D})_1&=\{(\varepsilon_i,j):\;(i,j)\to(i,j+1)\mid i=1,2,3,4,\;j\in\mathbb{Z}\}\cup\{(\alpha,j):\;(1,j)-(3,j)\mid j\in\mathbb{Z}\}  \\
     & \cup\{(\beta,j):\;(2,j)-(3,j)\mid j\in\mathbb{Z}\}\cup\{(\gamma,j):\;(3,j)-(4,j)\mid j\in\mathbb{Z}\},
\end{aligned}$$}}
For each $j\in\mathbb{Z}$, the arrows $(\alpha,j)$, $(\beta,j)$, and $(\gamma,j)$ have the same orientations as $\alpha$, $\beta$, and $\gamma$, respectively. Denote by $t$ the number of arrows $\alpha$, $\beta$ and $\gamma$ whose source is $3$.

(1) If $t=0$, the convex subcategory indicated by the rectangles in the left-hand diagram below is hypercritical of type $\widetilde{\widetilde{D}}_4$.

(2) If $t=1$, the convex subcategory indicated by the rectangles in the right-hand diagram below is hypercritical of type $\widetilde{\widetilde{E}}_6$.

\begin{figure}[h]
    \begin{center}
\begin{tikzpicture}
    \node (10) at (-1.5,0.8) {$(1,0)$};
    \node (20) at (-2.5,-0.4) {$(2,0)$};
    {\node[draw] (11) at (-1.5,-1.2) {$(1,1)$};}
    {\node[draw] (21) at (-2.5,-2.4) {$(2,1)$};}
    \node (12) at (-1.5,-3.2) {$(1,2)$};
    \node (22) at (-2.5,-4.4) {$(2,2)$};
    {\node[draw] (30) at (0,0) {$(3,0)$};}
    {\node[draw] (31) at (0,-2) {$(3,1)$};}
   \node (32) at (0,-4) {$(3,2)$};
   \node (40) at (2,0) {$(4,0)$};
    {\node[draw] (41) at (2,-2) {$(4,1)$};}
   {\node[draw] (42) at (2,-4) {$(4,2)$};}
    \draw[->](10) to (30);
    \draw[->](20) to (30);
    {\draw[->](11) to (31);}
    {\draw[->](21) to (31);}
    \draw[->](12) to (32);
    \draw[->](22) to (32);
    \draw[->](10) to (11);
    \draw[->](11) to (12);
    \draw[->](20) to (21);
    \draw[->](21) to (22);
    {\draw[->](30) to (31);}
    \draw[->](31) to (32);
    \draw[->](40) to (41);
    {\draw[->](41) to (42);}
    \draw[<-](30) to (40);
    {\draw[<-](31) to (41);}
    \draw[<-](32) to (42);

    \node (50) at (6,1) {$(1,0)$};
    \node (60) at (5,0.2) {$(2,0)$};
    \node (51) at (6,-0.5) {$(1,1)$};
    \node (61) at (5,-1.3) {$(2,1)$};
    {\node[draw] (52) at (6,-2) {$(1,2)$};}
    {\node[draw] (62) at (5,-2.8) {$(2,2)$};}
    {\node[draw] (53) at (6,-3.5) {$(1,3)$};}
    {\node[draw] (63) at (5,-4.4) {$(2,3)$};}
    \node (70) at (7.7,0.5) {$(3,0)$};
    {\node[draw] (71) at (7.7,-1) {$(3,1)$};}
    {\node[draw] (72) at (7.7,-2.5) {$(3,2)$};}
    \node (73) at (7.7,-4) {$(3,3)$};
    {\node[draw] (80) at (9.7,0.5) {$(4,0)$};}
    {\node[draw] (81) at (9.7,-1) {$(4,1)$};}
    \node (82) at (9.7,-2.5) {$(4,2)$};
    \node (83) at (9.7,-4) {$(4,3)$};
    \draw[->] (50)to(70);
    \draw[->] (51)to(71);
    {\draw[->] (52)to(72);}
    \draw[->] (53)to(73);
    \draw[->] (60)to(70);
    \draw[->] (61)to(71);
    {\draw[->] (62)to(72);}
    \draw[->] (63)to(73);
    \draw[->] (50)to(51);
    \draw[->] (51)to(52);
    {\draw[->] (52)to(53);}
    \draw[->] (60)to(61);
    \draw[->] (61)to(62);
    {\draw[->] (62)to(63);}
    \draw[->] (70)to(71);
    {\draw[->] (71)to(72);}
    \draw[->] (72)to(73);
     {\draw[->] (80)to(81);}
    \draw[->] (81)to(82);
    \draw[->] (82)to(83);
     \draw[->] (70)to(80);
    {\draw[->] (71)to(81);}
    \draw[->] (72)to(82);
    \draw[->] (73)to(83);
\end{tikzpicture}
\end{center}
\end{figure}

(3) If $t=2$, there is a convex subcategory that is the dual of the one described in (2).

(4) If $t=3$, there is a convex subcategory that is the dual of the one described in (1).

Therefore, this GLS algebra $H$ is wild by Lemma \ref{CriterionForWild}.
\end{example}

\begin{example}\label{B2wild}
    Let $H=H(C,D,\Omega)$ be the GLS algebra of type $B_2$ with symmetrizer $D=\operatorname{diag}(4,2)$, that is, $H$ is given by the quiver
    \begin{center}
\begin{tikzpicture}[scale=1.1]
\node (-20) at (0,0) {$1$};
\node (-20) at (1.5,0) {$2$};
\node (-20) at (0.75,-0.2) {$\alpha$};
\draw [->] (0.25,0) -- (1.25,0);
\draw[-latex] (-0.2,0.1) .. controls (-0.5,0.7) and (0.5,0.7) .. (0.2,0.05);
\draw[-latex] (1.3,0.1) .. controls (1,0.7) and (2,0.7) .. (1.7,0.05);
\node (-20) at (0,0.8) {$\varepsilon_1$};
\node (-20) at (1.5,0.8) {$\varepsilon_2$};
\end{tikzpicture}
\end{center}
with relations $\varepsilon_1^4,\varepsilon_2^2,\varepsilon_1^2\alpha-\alpha\varepsilon_2$. Then according to Proposition \ref{GaloisCovering}, $H$ admits a locally bounded Galois covering ${\bf{k}}Q^{\mathbb{Z},D}/I^{\mathbb{Z},D}$, where 
$(Q^{\mathbb{Z},D})_0=\{(1,i),(2,j)\mid i,j\in\mathbb{Z}\}$ and
$$(Q^{\mathbb{Z},D})_1=\{(\alpha,i):\;(1,i)\to(2,i)\mid i\in\mathbb{Z}\}\cup\{(\varepsilon_i,j):\;(i,j)\to(i,j+i)\mid i=1,2,\;j\in\mathbb{Z}\}.$$
In this Galois covering, there exists a convex subcategory marked by rectangles in the figure below, which is 
hypercritical of type $\widetilde{\widetilde{E}}_6$.
\begin{center}
\begin{tikzpicture}
    \node (10) at (0,1.5) {$(1,0)$};
    \node[draw] (11) at (1.5,1.5) {{$(1,1)$}};
    \node[draw] (12) at (3,1.5) {{$(1,2)$}};
    \node (1-1) at (-1.5,1.5) {\small{$(1,-1)$}};
    \node[draw] (13) at (4.5,1.5) {{$(1,3)$}};
    \node[draw] (14) at (6,1.5) {{$(1,4)$}};
    \node (23) at (4.5,0) {$(2,3)$};
    \node (24) at (6,0) {$(2,4)$};
    \node[draw] (20) at (0,0) {{$(2,0)$}};
    \node[draw] (21) at (1.5,0) {{$(2,1)$}};
    \node[draw] (22) at (3,0) {{$(2,2)$}};
    \node[draw] (2-1) at (-1.5,0) {{\small{$(2,-1)$}}};
    \node (15) at (7.5,1.5) {$(1,5)$};
    \node (1-2) at (-3.3,1.5) {\small{$(1,-2)$}};
    \node (25) at (7.5,0) {$(2,5)$};
    \node (2-2) at (-3.3,0) {\small{$(2,-2)$}};
    \draw [->]  (10) -- (11); 
    \node (16) at (9,1.5) {$\cdots$};
    \node (26) at (9,0) {$\cdots$};
    \node (1-3) at (-4.8,1.5) {$\cdots$};
    \node (2-3) at (-4.8,0) {$\cdots$};
    {\draw [->]  (11) -- (12);} 
    {\draw [->]  (12) -- (13);} 
    {\draw [->]  (13) -- (14);} 
    \draw [->]  (13) -- (23);
    \draw [->]  (14) -- (24);
    \draw [->]  (14) -- (15);
    \draw [->]  (15) -- (16);
    \draw [->]  (1-1) -- (10); 
    \draw [->]  (1-2) -- (1-1); 
    \draw [->]  (10) -- (20); 
    \draw [->]  (15) -- (25); 
    \draw [->]  (1-2) -- (2-2);
    \draw [->]  (1-3) -- (1-2); 
    {\draw [->]  (11) -- (21);}
    {\draw [->]  (12) -- (22);}
    \draw [->]  (1-1) -- (2-1); 
    {\draw [->] (2-1) .. controls (-0.3,0.7) and (0.3,0.7) .. (21);}
    {\draw [->] (20) .. controls (1.2,-0.7) and (1.8,-0.7) .. (22);}
    \draw [->] (22) .. controls (4.2,-0.7) and (4.8,-0.7) .. (24);
    \draw [->] (24) .. controls (7.2,-0.7) and (7.8,-0.7) .. (26);
    \draw [->] (2-2) .. controls (-1.8,-0.7) and (-1.2,-0.7) .. (20);
    \draw [->] (21) .. controls (2.7,0.7) and (3.3,0.7) .. (23);
    \draw [->] (23) .. controls (5.7,0.7) and (6.3,0.7) .. (25);
    \draw [->] (2-3) .. controls (-3.3,0.7) and (-2.7,0.7) .. (2-1);
\end{tikzpicture}
\end{center}
Therefore, this GLS algebra is wild by Lemma \ref{CriterionForWild}.
\end{example}

\begin{rem}\label{dualConst}
    Let $\Omega'$ be the orientation obtained from $\Omega$ by reversing the arrow $\alpha$ in Example \ref{B2wild}. The Galois covering of $H(C,D,\Omega')$ is the opposite of the Galois covering of $H(C,D,\Omega)$ and therefore contains a convex hypercritical subcategory of type $\widetilde{\widetilde{E}}_6$, dual to the one given in Example \ref{B2wild}. Hence, whenever only the existence of a convex hypercritical subcategory is relevant, we consider one orientation and omit its dual.
\end{rem}

\begin{example}\label{B4wild}
Let $H=H(C,D,\Omega)$ be the GLS algebra of type $B_4$ with minimal symmetrizer $D=\operatorname{diag}(2,2,2,1)$, that is, $H$ is given by the quiver
\begin{center}
\begin{tikzpicture}[scale=1.1]
\node (-20) at (0,0) {$1$};
\node (-20) at (1.5,0) {$2$};
\node () at (3,0) {$3$};
\node () at (4.5,0) {$4$};
\node (-20) at (0.75,-0.2) {$\alpha_1$};
\node (-20) at (2.25,-0.2) {$\alpha_2$};
\node (-20) at (3.75,-0.2) {$\alpha_3$};
\draw [<-] (0.25,0) -- (1.25,0);
\draw [->] (1.75,0) -- (2.75,0);
\draw [->] (3.25,0) -- (4.25,0);
\draw[-latex] (-0.2,0.1) .. controls (-0.5,0.7) and (0.5,0.7) .. (0.2,0.05);
\draw[-latex] (1.3,0.1) .. controls (1,0.7) and (2,0.7) .. (1.7,0.05);
\draw[-latex] (2.8,0.1) .. controls (2.5,0.7) and (3.5,0.7) .. (3.2,0.05);
\node (-20) at (0,0.8) {$\varepsilon_1$};
\node (-20) at (1.5,0.8) {$\varepsilon_2$};
\node (-20) at (3,0.8) {$\varepsilon_3$};
\end{tikzpicture}
\end{center}
with relations $\varepsilon_1^2,\varepsilon_2^2,\varepsilon_3^2,\varepsilon_2\alpha_1-\alpha_1\varepsilon_1,\varepsilon_2\alpha_2-\alpha_2\varepsilon_3$. Then $H$ admits a locally bounded Galois covering ${\bf{k}}Q^{\mathbb{Z},D}/I^{\mathbb{Z},D}$, where 
$(Q^{\mathbb{Z},D})_0=\{(i,j)\mid i=1,2,3,4,\;j\in\mathbb{Z}\}$ and
$$\begin{array}{c}
     (Q^{\mathbb{Z},D})_1=\{(\alpha_i,j):\;(s(\alpha_i),j)\to(t(\alpha_i),j)\mid i=1,2,3,\;j\in\mathbb{Z}\}  \\
     \cup\{(\varepsilon_i,j):\;(i,j)\to(i,j+1)\mid i=1,2,3,\;j\in\mathbb{Z}\}.
\end{array}$$
In this Galois covering, there exists a convex subcategory marked by rectangles in the figure below, which is 
hypercritical of type $\widetilde{\widetilde{D}}_5$. It implies that this GLS algebra is wild by Lemma \ref{CriterionForWild}.
\begin{center}
\begin{tikzpicture}
    \node (1-1) at (0,1) {$\cdots$};
    \node (2-1) at (1.5,1) {$\cdots$};
    \node (3-1) at (3,1) {$\cdots$};
    \node (4-1) at (4.5,1) {$\cdots$};
    \node (13) at (0,-3) {$\cdots$};
    \node (23) at (1.5,-3) {$\cdots$};
    \node (33) at (3,-3) {$\cdots$};
    \node (43) at (4.5,-3) {$\cdots$};
    {\node[draw] (10) at (0,0) {$(1,0)$};}
    {\node[draw] (11) at (0,-1) {$(1,1)$};}
    \node (12) at (0,-2) {$(1,2)$};
    \node (20) at (1.5,0) {$(2,0)$};
    {\node[draw] (21) at (1.5,-1) {$(2,1)$};}
    {\node[draw] (22) at (1.5,-2) {$(2,2)$};}
    {\node[draw] (30) at (3,0) {$(3,0)$};}
    {\node[draw] (31) at (3,-1) {$(3,1)$};}
    \node (32) at (3,-2) {$(3,2)$};
    \node (40) at (4.5,0) {$(4,0)$};
    {\node[draw] (41) at (4.5,-1) {$(4,1)$};}
    \node (42) at (4.5,-2) {$(4,2)$};
    \draw[->] (20)--(10);
    \draw[->] (20)--(30);
    {\draw[->] (21)--(11);}
    {\draw[->] (21)--(31);}
    \draw[->] (22)--(12);
    \draw[->] (22)--(32);
    \draw[->] (30)--(40);
    \draw[->] (32)--(42);
    {\draw[->] (31)--(41);}
    \draw[->] (11)--(12);
    \draw[->] (31)--(32);
    \draw[->] (20)--(21);
    {\draw[->] (10)--(11);}
    {\draw[->] (30)--(31);}
    {\draw[->] (21)--(22);}
    \draw[->] (1-1)--(10);
    \draw[->] (2-1)--(20);
    \draw[->] (3-1)--(30);
    \draw[->] (12)--(13);
    \draw[->] (22)--(23);
    \draw[->] (32)--(33);
\end{tikzpicture}
\end{center}
\end{example}

\begin{rem}\label{RemarkB4}
For arbitrary orientation, we claim the GLS algebra of type $B_4$ is wild. 

In fact, reversing the arrow $\alpha_3$ in the above example yields another convex hypercritical subcategory, again of type $\widetilde{\widetilde{D}}_5$, by reversing the arrow $(3,1)\to (4,1)$ in the original one. 
Since the orientation of $\alpha_3$ does not affect the existence of such a subcategory in the Galois covering,  we may represent $\alpha_3$ by an undirected line in the sequel. 

  If we reverse both $\alpha_1$ and $\alpha_2$, then the Galois covering  contains a convex hypercritical subcategory obtained via the dual construction in Remark \ref{dualConst}.
  
  If we reverse $\alpha_1$ or $\alpha_2$, then the Galois covering contains a convex hypercritical subcategory as shown below, both of type $\widetilde{\widetilde{E}}_6$ according to \cite{Unger1990}. This proves the claim. 
\begin{center}
    \begin{tikzpicture}
        \node (1) at (0,0.8) {$\circ$};
        \node (2) at (1,0.8) {$\circ$};
        \node (3) at (0,0) {$\circ$};
        \node (4) at (1,0) {$\circ$};
        \node (5) at (-1,0) {$\circ$};
        \node (6) at (2,0) {$\circ$};
        \node (7) at (2,0.8) {$\circ$};
        \node (8) at (-1,-0.8) {$\circ$};
        \draw[->] (1) to (2);
        \draw[->] (1) to (3);
        \draw[->] (2) to (4);
        \draw[->] (3) to (4);
        \draw[->] (5) to (3);
        \draw[->] (5) to (8);
        \draw[-] (6) to (4);
        \draw[-] (2) to (7);
        \draw[dashed] (1) to (4);
         \node (11) at (5,-0.8) {$\circ$};
        \node (12) at (6,-0.8) {$\circ$};
        \node (13) at (5,0) {$\circ$};
        \node (14) at (6,0) {$\circ$};
        \node (15) at (4,0) {$\circ$};
        \node (16) at (7,0) {$\circ$};
        \node (17) at (7,-0.8) {$\circ$};
        \node (18) at (4,0.8) {$\circ$};
        \draw[<-] (11) to (12);
        \draw[<-] (11) to (13);
        \draw[<-] (12) to (14);
        \draw[<-] (13) to (14);
        \draw[<-] (15) to (13);
        \draw[<-] (15) to (18);
        \draw[-] (16) to (14);
        \draw[-] (12) to (17);
        \draw[dashed] (14) to (11);
    \end{tikzpicture}
\end{center}
\end{rem}

\begin{example}\label{G2Wild}
    Let $H=H(C,D,\Omega)$ be the GLS algebra of type $G_2$ with symmetrizer $D=\operatorname{diag}(6,2)$, that is, $H$ is given by the same quiver as in Example \ref{B2wild}, but 
with relations $\varepsilon_1^6,\varepsilon_2^2,\varepsilon_1^3\alpha-\alpha\varepsilon_2$. Then $H$ admits a locally bounded Galois covering ${\bf{k}}Q^{\mathbb{Z},D}/I^{\mathbb{Z},D}$, where 
$(Q^{\mathbb{Z},D})_0=\{(i,j)\mid i=1,2,\;j\in\mathbb{Z}\}$ and
$$\begin{array}{c}
     (Q^{\mathbb{Z},D})_1=\{(\alpha,i):\;(1,i)\to(2,i)\mid i\in\mathbb{Z}\}  \\
     \cup\{(\varepsilon_1,j):\;(1,j)\to(1,j+1)\mid j\in\mathbb{Z}\}\cup\{(\varepsilon_2,j):\;(2,j)\to(2,j+3)\mid j\in\mathbb{Z}\}.
\end{array}$$
In this Galois covering, there exists a convex subcategory marked by rectangles in the figure below, which is 
hypercritical of type $\widetilde{\widetilde{D}}_5$. Hence $H$ is wild by Lemma \ref{CriterionForWild}.
\begin{center}
\begin{tikzpicture}
    \node (10) at (0,0) {\tiny{$(1,0)$}};
    \node[draw] (11) at (1.5,0) {\tiny{$(1,1)$}};
    \node[draw] (12) at (3,0) {\tiny{$(1,2)$}};
    \node[draw] (13) at (4.5,0) {\tiny{$(1,3)$}};
    \node[draw] (14) at (6,0) {\tiny{$(1,4)$}};
    \node (15) at (7.5,0) {\tiny{$(1,5)$}};
    \node (20) at (0,-1) {\tiny{$(2,0)$}};
    \node[draw] (21) at (1.5,-1) {\tiny{$(2,1)$}};
    \node[draw] (22) at (3,-1) {\tiny{$(2,2)$}};
    \node[draw] (23) at (4.5,-1) {\tiny{$(2,3)$}};
    \node (24) at (6,-1) {\tiny{$(2,4)$}};
    \node (25) at (7.5,-1) {\tiny{$(2,5)$}};
    \node (16) at (9,0) {\tiny{$\cdots$}};
    \node (26) at (9,-1) {\tiny{$\cdots$}};
    \node (27) at (10,-1) {};
    \node (1-1) at (-1.5,0) {\tiny{$\cdots$}};
    \node (2-1) at (-1.5,-1) {\tiny{$\cdots$}};
    \node (2-2) at (-2.5,-1) {};
    \draw[->](1-1)--(10);
    \draw[->](15)--(16);
    \draw[->](10)--(11);
    \draw[->](11)--(12);
    \draw[->](12)--(13);
    \draw[->](13)--(14);
    \draw[->](14)--(15);
    \draw[->](10)--(20);
    \draw[->](11)--(21);
    \draw[->](12)--(22);
    \draw[->](13)--(23);
    \draw[->](14)--(24);
    \draw[->](15)--(25);
    \draw [->] (2-2) .. controls (-1.7,-0.4) and (0.2,-0.4) .. (21);
    \draw [->] (20) .. controls (1.3,-0.4) and (3.2,-0.4) .. (23);
    \draw [->] (21) .. controls (2.8,-1.6) and (4.7,-1.6) .. (24);
    \draw [->] (22) .. controls (4.3,-0.4) and (6.2,-0.4) .. (25);
    \draw [->] (23) .. controls (5.8,-1.6) and (7.7,-1.6) .. (26);
    \draw [->] (2-1) .. controls (-0.2,-1.6) and (1.7,-1.6) .. (22);
    \draw [->] (24) .. controls (7.3,-0.4) and (9.2,-0.4) .. (27);
\end{tikzpicture}
\end{center}
\end{example}

\begin{example}\label{OtherWild}
    Let $H=H(C,D,\Omega)$ be the GLS algebra of one of the following types: $$F_4,\;\widetilde{A}_{11},\;\widetilde{BD}_3,\;\widetilde{G}_{21},\;\widetilde{G}_{22}$$ with minimal symmetrizer $D$. We claim that
    $H$ is wild in each case. 
    Omitting the analogous procedure presented above, we turn instead to exhibiting the convex hypercritical subcategory in the Galois covering $\text{Gal}(H)$ of 
$H$.

%     (1) The GLS algebra of type $F_4$ is given by the quiver
%     \begin{center}
% \begin{tikzpicture}[scale=1.1]
% \node (-20) at (0,0) {$\circ$};
% \node (-20) at (1.5,0) {$\circ$};
% \node () at (3,0) {$\circ$};
% \node () at (4.5,0) {$\circ$};
% \node (-20) at (0.75,-0.2) {$\alpha_1$};
% \node (-20) at (2.25,-0.2) {$\alpha_2$};
% \node (-20) at (3.75,-0.2) {$\alpha_3$};
% \draw [-] (0.25,0) -- (1.25,0);
% \draw [-] (1.75,0) -- (2.75,0);
% \draw [-] (3.25,0) -- (4.25,0);
% \draw[-latex] (-0.2,0.1) .. controls (-0.5,0.7) and (0.5,0.7) .. (0.2,0.05);
% \draw[-latex] (1.3,0.1) .. controls (1,0.7) and (2,0.7) .. (1.7,0.05);
% \node (-20) at (0,0.8) {$\varepsilon_1$};
% \node (-20) at (1.5,0.8) {$\varepsilon_2$};
% \end{tikzpicture}
% \end{center}
% with relations $\varepsilon_1^2,\varepsilon_2^2,\alpha_1\varepsilon_2-\varepsilon_1\alpha_1$ (resp. $\varepsilon_1^2,\varepsilon_2^2,\varepsilon_2\alpha_1-\alpha_1\varepsilon_1$). 
(1) For the case of type $F_4$, $\text{Gal}(H)$ contains a convex hypercritical subcategory of type $\widetilde{\widetilde{E}}_6$ as illustrated below. 
\begin{center}
    \begin{tikzpicture}
        \node (111) at (2,3) {$\circ$};
    \node (112) at (2,3.8) {$\circ$};
    \node (113) at (3,3.8) {$\circ$};
    \node (114) at (3,4.6) {$\circ$};
    \node (115) at (4,4.6) {$\circ$};
    \node (116) at (5,4.6) {$\circ$};
    \node (117) at (4,3.8) {$\circ$};
    \node (118) at (5,3.8) {$\circ$};
    \draw [->] (112) -- (111);
    \draw [->] (112) -- (113);
    \draw [->] (114) -- (113);
    \draw [-] (115) -- (114);
    \draw [-] (115) -- (116);
    \draw [-] (113) -- (117);
    \draw [-] (117) -- (118);
    \node (211) at (6.2,4.6) {$\circ$};
    \node (212) at (6.2,3.8) {$\circ$};
    \node (213) at (7.2,3.8) {$\circ$};
    \node (214) at (7.2,3) {$\circ$};
    \node (215) at (8.2,3) {$\circ$};
    \node (216) at (9.2,3) {$\circ$};
    \node (217) at (8.2,3.8) {$\circ$};
    \node (218) at (9.2,3.8) {$\circ$};
    \draw [<-] (212) -- (211);
    \draw [<-] (212) -- (213);
    \draw [<-] (214) -- (213);
    \draw [-] (215) -- (214);
    \draw [-] (215) -- (216);
    \draw [-] (213) -- (217);
    \draw [-] (217) -- (218);
    \end{tikzpicture}
\end{center}

(2) For the case of type $\widetilde{BD}_3$,  $\text{Gal}(H)$ contains a convex hypercritical subcategory of type $\widetilde{\widetilde{D}}_5$ or $\widetilde{\widetilde{D}}_4$ according to \cite{Unger1990}, as illustrated below. 
\begin{center}
    \begin{tikzpicture}
        \node (61) at (5.2,-1) {$\circ$};
    \node (62) at (6.2,-1) {$\circ$};
    \node (63) at (6.2,-1.8) {$\circ$};
    \node (64) at (7.2,-0.7) {$\circ$};
    \node (65) at (7.2,-1.6) {$\circ$};
    \node (66) at (7.2,-2.4) {$\circ$};
    \node (67) at (7.7,-1.2) {$\circ$};
    \draw [-] (61) -- (62);
    \draw [->] (62) -- (63);
    \draw [->] (64) -- (62);
    \draw [->] (64) -- (65);
    \draw [->] (65) -- (63);
    \draw [->] (65) -- (66);
    \draw [->] (62) -- (67);
    \draw [dashed] (64) -- (63);
    \node (71) at (9,-1) {$\circ$};
    \node (72) at (9,-2) {$\circ$};
    \node (73) at (10,-1) {$\circ$};
    \node (74) at (10,-2) {$\circ$};
    \node (75) at (11,-1.6) {$\circ$};
    \node (76) at (11,-2.4) {$\circ$};
    \draw [-] (71) -- (73);
    \draw [-] (72) -- (74);
    \draw [->] (73) -- (74);
    \draw [->] (75) -- (74);
    \draw [->] (76) -- (74);
    \end{tikzpicture}
\end{center}

(3) For the case of types $\widetilde{A}_{11}$, $\widetilde{G}_{21}$ or $\widetilde{G}_{22}$, the Galois covering $\text{Gal}(H)$ contains a convex hypercritical subcategory of type $\widetilde{\widetilde{D}}_5$, $\widetilde{\widetilde{E}}_6$ or $\widetilde{\widetilde{D}}_5$ according to \cite{Unger1990}, respectively, as illustrated below.
\begin{center}
    \begin{tikzpicture}
         \node (1) at (0,2.4) {$\circ$};
    \node (2) at (0,1.6) {$\circ$};
    \node (3) at (0,0.8) {$\circ$};
    \node (4) at (0,0) {$\circ$};
    \node (5) at (1,2.4) {$\circ$};
    \node (6) at (1,1.6) {$\circ$};
    \node (7) at (1,0.8) {$\circ$};
    \draw [->] (1) -- (5);
    \draw [->] (2) -- (6);
    \draw [->] (3) -- (7);
    \draw [->] (1) -- (2);
    \draw [->] (3) -- (4);
    \draw [->] (2) -- (3);
    \node () at (0.5,-0.2) {$\widetilde{A}_{11}$};
    \node (11) at (2.5,2.2) {$\circ$};
    \node (12) at (2.5,1.2) {$\circ$};
    \node (13) at (2.5,0.2) {$\circ$};
    \node (15) at (3.5,2.2) {$\circ$};
    \node (16) at (3.5,1.2) {$\circ$};
    \node (17) at (3.5,0.2) {$\circ$};
    \node (18) at (4.5,2.2) {$\circ$};
    \node (19) at (4.5,1.2) {$\circ$};
    \draw [->] (11) -- (15);
    \draw [->] (12) -- (16);
    \draw [->] (13) -- (17);
    \draw [->] (11) -- (12);
    \draw [->] (12) -- (13);
    \draw [-] (15) -- (18);
    \draw [-] (16) -- (19);
    \node () at (3.5,-0.2) {$\widetilde{G}_{21}$};
    \node (21) at (2.5,2.2) {$\circ$};
    \node (22) at (2.5,1.2) {$\circ$};
    \node (23) at (2.5,0.2) {$\circ$};
    \node (25) at (3.5,2.2) {$\circ$};
    \node (26) at (3.5,1.2) {$\circ$};
    \node (27) at (3.5,0.2) {$\circ$};
    \node (28) at (4.5,2.2) {$\circ$};
    \node (29) at (4.5,1.2) {$\circ$};
    \draw [->] (21) -- (25);
    \draw [->] (22) -- (26);
    \draw [->] (23) -- (27);
    \draw [->] (21) -- (22);
    \draw [->] (22) -- (23);
    \draw [-] (25) -- (28);
    \draw [-] (26) -- (29);
    \node () at (3.5,-0.2) {$\widetilde{G}_{21}$};
    \node (31) at (6,2.2) {$\circ$};
    \node (32) at (6,1.2) {$\circ$};
    \node (33) at (7,2.2) {$\circ$};
    \node (34) at (7,1.2) {$\circ$};
    \node (35) at (7,0.2) {$\circ$};
    \node (36) at (8,2.2) {$\circ$};
    \node (37) at (8,1.2) {$\circ$};
    \draw [->] (31) -- (32);
    \draw [<-] (31) -- (33);
    \draw [->] (33) -- (34);
    \draw [<-] (32) -- (34);
    \draw [->] (34) -- (35);
    \draw [-] (34) -- (37);
    \draw [-] (33) -- (36);
    \draw [dashed] (31) -- (34);
    \node (41) at (9,1.2) {$\circ$};
    \node (42) at (9,0.2) {$\circ$};
    \node (43) at (10,2.2) {$\circ$};
    \node (44) at (10,1.2) {$\circ$};
    \node (45) at (10,0.2) {$\circ$};
    \node (46) at (11,2.2) {$\circ$};
    \node (47) at (11,1.2) {$\circ$};
    \draw [->] (41) -- (42);
    \draw [->] (41) -- (44);
    \draw [->] (43) -- (44);
    \draw [->] (42) -- (45);
    \draw [->] (44) -- (45);
    \draw [-] (44) -- (47);
    \draw [-] (43) -- (46);
    \draw [dashed] (41) -- (45);
    \node () at (8.5,-0.2) {two cases for $\widetilde{G}_{22}$};
    \end{tikzpicture}
\end{center}
\end{example}

\section{Classification of representation-tame GLS algebras}\label{sec:tame-class}
We now combine the covering-theoretic criteria and wildness arguments established in Section~\ref{sec:galois} with the classification of representation-finite GLS algebras due to Gei\ss-Leclerc-Schr\"oer \cite{GLSI} to obtain the classification of all representation-tame GLS algebras. We first recall the relevant terminology and the known representation-finite cases.

Let $C=(c_{ij})\in M_n(\mathbb Z)$ be a connected symmetrizable generalized
Cartan matrix. Its {\emph{valued graph}} $\Gamma(C)$ has vertex set $\{1,2,\dots,n\}$. Two vertices $i$ and $j$ are joined whenever
$c_{ij}<0$, and the corresponding edge is assigned the valuation
$(|c_{ji}|,|c_{ij}|)$.

% Let $C=(c_{ij})\in M_n(\mathbb{Z})$ be a Cartan matrix. We associate to $C$ a {\emph{valued graph}} $\Gamma(C)$ defined as follows.
% \begin{itemize}
%     \item[-] The vertex set is $\{1,2,\dots,n\}$.
%     \item[-] There is an unoriented edge $\xymatrix{i\ar@{-}[r]&j}$ if and only if $c_{ij}<0$.
%     \item[-] Such an edge is assigned the value $(|c_{ji}|,|c_{ij}|)$.
% \end{itemize}

We say that $C$ is {\emph{of Dynkin type}} if its quadratic form is positive definite, and {\emph{of affine type}} if the form is positive semi-definite but not positive definite. Equivalently, $\Gamma(C)$ is a connected Dynkin diagram or a connected
Euclidean diagram, respectively, in the sense of valued graphs; see \cite{ChenWang2019}. Throughout, we always assume $C$ is connected, i.e., that $\Gamma(C)$ is a connected graph.

The representation-finite GLS algebras are classified in the following proposition.

\begin{prop}[{\cite[Proposition 13.1]{GLSI}}]\label{FiniteClassification}
The GLS algebra $H=H(C,D,\Omega)$ is representation-finite if and only if one of the following holds:  
\begin{enumerate}
    \item $C$ is of Dynkin type $A_n,C_n,D_n,E_6,E_7,E_8,B_2,B_3$ or $G_2$, and $D$ is minimal;
    \item $C$ is of Dynkin type $A_1$;
    \item $C$ is of Dynkin type $A_2$, and $D=\operatorname{diag}(2,2)$ or $D=\operatorname{diag}(3,3)$;
    \item $C$ is of Dynkin type $A_3$, and $D=\operatorname{diag}(2,2,2)$.
\end{enumerate}
\end{prop}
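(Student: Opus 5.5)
The statement immediately above is Proposition~\ref{FiniteClassification}, the classification of representation-finite GLS algebras, quoted from \cite[Proposition 13.1]{GLSI}. We take it as given and only sketch how one would reprove it, using the tools of Section~\ref{sec:galois}.

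\textbf{Sufficiency.} The plan is to exhibit finiteness case by case.

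For $C$ of type $A_1$, the algebra is $\mathbf{k}[\varepsilon]/(\varepsilon^{d})$, a Nakayama algebra, so it has only finitely many indecomposables.

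For the minimal symmetrizer cases listed in (1), we use the Galois covering $\operatorname{Gal}(H)$ of Proposition~\ref{GaloisCovering}.
\begin{enumerate}
\item In the simply laced cases $A_n,D_n,E_6,E_7,E_8$ the algebra $H$ is hereditary of Dynkin type, so finiteness is Gabriel's theorem.
\item In the cases $C_n$, $B_2$, $B_3$, $G_2$ the covering is locally representation-finite. One checks this by verifying that every finite convex subcategory is tilted or iterated tilted of Dynkin type, or has a Tits form that is weakly positive.
\item Gabriel's covering theorem (applicable since the group $\mathbb{Z}$ acts freely) then transfers local representation-finiteness down to $H$.
\end{enumerate}

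The cases $A_2$ with $d=2,3$ and $A_3$ with $d=2$ are handled the same way. Their coverings are simply connected directed categories of small size whose Tits forms are weakly positive, so Bongartz's criterion applies.

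\textbf{Necessity.} Suppose $H$ is not in the list. We find a convex subcategory of $\operatorname{Gal}(H)$ that is representation-infinite, namely a tame concealed (critical) algebra in the sense of Happel--Vossieck. A covering containing a representation-infinite bounded full subcategory forces $H$ to be representation-infinite, in the same way as Lemma~\ref{CriterionForWild} but with finite replacing tame. The reductions run as follows.
\begin{enumerate}
\item If $\Gamma(C)$ is not Dynkin, then $H$ already has the hereditary quotient $\mathbf{k}Q^{\mathrm{o}}$ of non-Dynkin type. This works when $C$ is simply laced. In general, the quotient by all $\varepsilon_i$ gives the path algebra of $Q^{\mathrm{o}}$, and one must argue via positive roots of infinite type, or use the Lin--Su existence result of infinitely many $\tau$-locally free indecomposables for non-Dynkin $C$.
\item If $\Gamma(C)$ is Dynkin but $D$ is non-minimal, Lemma~\ref{prop:symmetrizer-quotient} lets us pass to the smallest multiple. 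We then exhibit a critical convex subcategory in the covering: $\widetilde{A}$-type squares for $A_2$ with $d=4$, and the $\widetilde{D}$ or $\widetilde{E}$ configurations analogous to Examples~\ref{D4wild} and \ref{B2wild} for the other types.
\item For minimal $D$ in types $B_n$ ($n\ge 4$) and $F_4$, reuse the configurations of Example~\ref{B4wild} and Example~\ref{OtherWild}(1), suitably shrunk to find critical (not hypercritical) subcategories.
\end{enumerate}

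\textbf{Main obstacle.} The hardest step is the orientation-independent bookkeeping: one must verify that a critical convex subcategory exists for every orientation. Following Remark~\ref{dualConst}, I would first reduce by duality and by reflections at leaves. After that, I would check the remaining orientations using the Happel--Vossieck list.
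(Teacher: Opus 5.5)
The paper gives no proof of this proposition; it quotes it from \cite[Proposition~13.1]{GLSI}. So taking it as given is exactly the paper's treatment. Read as an argument, however, your reproof sketch has a genuine gap in the sufficiency direction.

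Gabriel's theorem needs $\operatorname{Gal}(H)$ to be \emph{locally} representation-finite. This cannot be certified by checking that every finite convex subcategory is iterated tilted of Dynkin type or has weakly positive Tits form. The Kronecker algebra refutes that inference. Grading one arrow in degree $0$ and the other in degree $1$ gives a $\mathbb{Z}$-Galois covering by the infinite zigzag, with vertices $(1,j),(2,j)$ and arrows $(1,j)\to(2,j)$, $(1,j)\to(2,j+1)$. All its finite convex subcategories are hereditary of type $A$ with positive definite Tits form. Yet the interval modules of unbounded length through $(1,0)$ show the covering is not locally representation-finite, and the Kronecker algebra is representation-infinite.

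What is missing is a uniform bound on the supports of indecomposable $\operatorname{Gal}(H)$-modules (local support-finiteness in the sense of Dowbor--Skowro\'nski). You neither state nor prove this. Likewise, the coverings for $A_2$ with $D=\operatorname{diag}(2,2)$ or $\operatorname{diag}(3,3)$, and for $A_3$ with $D=\operatorname{diag}(2,2,2)$, are not ``of small size''. Since $\operatorname{Gal}(H)$ is always infinite, Bongartz's criterion \cite{Bongartz1983} does not apply to it as stated.

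For $A_1$ and $C_n$ (including $B_2=C_2$) there is a direct route. Here $H$ is a monomial string algebra with finitely many strings and no bands, so \cite{BR1987} gives finiteness at once. This does not extend to $G_2$: there $\varepsilon_1^2\neq0$ gives $\varepsilon_1$ two non-vanishing continuations, so $H$ is not special biserial. For $B_3$, $G_2$ and the tensor products $\mathbf{k}Q^{\textup{o}}\otimes\mathbf{k}[x]/(x^l)$, you must either quote the literature or knit the Auslander--Reiten quiver. In the latter case, use Auslander's theorem that a finite component of a connected algebra is the whole quiver.

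The necessity direction is sound in outline. Push-down along the torsion-free group $\mathbb{Z}$ preserves indecomposability, so a representation-infinite bounded full subcategory of $\operatorname{Gal}(H)$ makes $H$ representation-infinite. Several ingredients are misstated or missing, however.

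First, the result of \cite{LS2025} concerns affine $C$ only. For indefinite $C$, use Lemma~\ref{IndWild} (wild, hence representation-infinite).

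Second, no critical subcategory of type $\widetilde{A}$ can occur for $A_2$ with $D=\operatorname{diag}(4,4)$. Concealed algebras of type $\widetilde{A}$ are hereditary, whereas any full subcategory of this covering containing a cycle also contains the rungs across it, hence commutativity relations. A valid witness is the convex subcategory on $(1,1),(1,2),(1,3),(1,4),(2,0),(2,1),(2,2),(2,3)$. Its vertical paths have length at most $3$, so it has no zero relation, only the two commutativity squares. It is simply connected, and its Tits form vanishes at the positive vector with entries $1,2,2,1$ along each row. Hence it is representation-infinite by \cite{Bongartz1983}.

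Third, you still owe witnesses for $A_3$ with $D=\operatorname{diag}(3,3,3)$ and for $A_4$ with $D=\operatorname{diag}(2,2,2,2)$, in all orientations; $A_n$ with $n\geq4$ reduces to the latter. These are not covered by Examples~\ref{D4wild} and~\ref{B2wild}.

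Finally, for $B_n$ ($n\geq4$) and $F_4$ nothing needs shrinking. The hypercritical convex subcategories of Example~\ref{B4wild} and Example~\ref{OtherWild}(1) are wild, hence already representation-infinite.
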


The next lemma excludes all Cartan matrices of indefinite type.

\begin{lem}\label{IndWild}
Let $C=(c_{ij})$ be a connected symmetrizable generalized Cartan matrix. If $C$ is neither of Dynkin nor of affine type, then the GLS algebra $H(C,D,\Omega)$ is representation-wild for every symmetrizer
$D$ and every orientation $\Omega$.
\end{lem}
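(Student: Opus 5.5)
We argue by reduction to a minimal wild configuration. Since $C$ is connected and neither Dynkin nor affine, its valued graph $\Gamma(C)$ contains a connected full subgraph that is \emph{minimal} non-Dynkin-non-affine, i.e.\ a minimal indefinite (hyperbolic-type) valued graph; equivalently, by the standard classification of valued graphs (cf.\ \cite{ChenWang2019}), $\Gamma(C)$ contains a full connected subgraph $\Gamma'$ which is either a Euclidean valued graph with one extra vertex attached, or contains an edge with valuation product $|c_{ij}c_{ji}|\geq 5$, or contains a cycle with a nontrivial valuation. The first step is to pass to such a subgraph: if $J\subseteq\{1,\dots,n\}$ is the vertex set of $\Gamma'$, then $e_JHe_J$ (with $e_J=\sum_{j\in J}e_j$) is again a GLS algebra $H(C_J,D_J,\Omega_J)$, and $e_JHe_J$ is an idempotent subalgebra whose module category embeds fully faithfully into $\operatorname{mod}H$ via $-\otimes_{e_JHe_J}e_JH$, so wildness of $H(C_J,D_J,\Omega_J)$ implies wildness of $H$. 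Also $D_J$ is a positive multiple of the minimal symmetrizer of $C_J$, so by Lemma~\ref{prop:symmetrizer-quotient} it suffices to treat minimal $D_J$.

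The second step is a case split over the minimal indefinite configurations. If some edge has $|c_{ij}c_{ji}|\geq 4$ with gcd at least $2$ (multiple arrows), or the underlying quiver $Q^{\textup{o}}$ contains a cycle, then already $Q^{\textup{o}}$ or a two-vertex subquiver is a wild hereditary quotient (Kronecker with at least $3$ arrows, or $\widetilde{A}$ with extra edges plus loops), and wildness follows from the corresponding quotient by the ideal generated by all loops, which is the path algebra of $Q^{\textup{o}}$ — wild whenever $\Gamma(C)$, viewed as an unvalued graph with multiplicities, is not Dynkin/Euclidean. For the remaining tree cases with single arrows and valuations, I would use the Galois covering of Proposition~\ref{GaloisCovering} and Lemma~\ref{CriterionForWild}: each minimal indefinite tree contains a full subgraph of one of the types already shown wild, namely $B_4$ (Example~\ref{B4wild} and Remark~\ref{RemarkB4}), $F_4$, $\widetilde{A}_{11}$, $\widetilde{BD}_3$, $\widetilde{G}_{21}$, $\widetilde{G}_{22}$ (Example~\ref{OtherWild}), $G_2$ or $B_2$ with nonminimal symmetrizer (Examples~\ref{G2Wild}, \ref{B2wild}), or a simply-laced graph with $D\geq 2$ somewhere containing $D_4$ with $D=\operatorname{diag}(2,2,2,2)$ (Example~\ref{D4wild}); simply-laced wild trees with $D=I$ are wild hereditary. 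Note a restriction to $J$ can force a nonminimal symmetrizer on $C_J$ (e.g.\ an edge $(1,4)$ or $(1,2)$ inside a larger graph), which is exactly why the nonminimal Dynkin witnesses are needed.

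The main obstacle is the combinatorial completeness of this reduction: verifying that every minimal indefinite valued graph (all orientations, all induced symmetrizers) contains one of the listed witnesses. I would organize it by the maximal valuation product: product $\geq5$ (edges $(1,5)$, $(1,6)$ beyond $G_2$, $(2,3)$, etc.) reduces to a two-vertex GLS algebra with $D$ not of the representation-finite/tame list of Proposition~\ref{FiniteClassification} and Theorem~\ref{thm:A}, handled by coverings as in Examples~\ref{B2wild}, \ref{G2Wild}; product $4$ gives $\widetilde{A}_{11}$, $\widetilde{A}_{12}$ or $\widetilde{G}$/$\widetilde{BC}$-type extended by a vertex, yielding $\widetilde{A}_{11}$, $\widetilde{G}_{21}$, $\widetilde{G}_{22}$ or $\widetilde{BD}_3$ subgraphs or nonminimal $B_2$; products $\leq3$ with a single valued edge reduce to $B_4$, $F_4$, $G_2$-extensions, or via the long branch to $B_4$-type subgraphs. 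Orientation independence follows from Remarks~\ref{dualConst} and \ref{RemarkB4}. Finally $H$ is representation-infinite (it has a wild quotient/subcategory), so Lemma~\ref{CriterionForWild} gives wildness.
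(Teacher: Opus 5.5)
\paragraph{There is a genuine gap in your second step.} Your catalogue of minimal indefinite configurations is incomplete, and several cases you do list are not covered by the arguments you give. So the proof cannot be finished by citing the witnesses of Section~\ref{sec:galois}.

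\emph{A missing tree case.} Some hyperbolic valued graphs have only Dynkin proper connected full subgraphs. Take the path on vertices $1,2,3$ with $c_{12}=-1$, $c_{21}=-3$, $c_{23}=-2$, $c_{32}=-1$. Then $\det C=-2$, so $C$ is indefinite. But it is not Euclidean-plus-a-vertex, has no edge of product $\geq 5$, and has no cycle. Its minimal symmetrizer is $\operatorname{diag}(3,1,2)$, which restricts to the \emph{minimal} symmetrizers of $G_2$ on $\{1,2\}$ and of $B_2$ on $\{2,3\}$. Both restrictions are representation-finite by Proposition~\ref{FiniteClassification}, and $Q^{\mathrm o}$ is of type $A_3$. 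So none of your witnesses applies: not $B_4$, $F_4$, the affine examples, non-minimal $B_2$ or $G_2$, nor $D_4$.

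\emph{The cycle case fails.} Take the triangle with $c_{12}=-1$, $c_{21}=-2$, $c_{23}=-2$, $c_{32}=-1$, $c_{13}=c_{31}=-1$. It is symmetrizable with minimal $D=\operatorname{diag}(2,1,2)$ and $\det C=-6$. Yet every arrow is single, so $kQ^{\mathrm o}$ is of type $\widetilde A_2$ and tame, contradicting your claim that a cycle in $Q^{\mathrm o}$ gives a wild hereditary quotient. Its edge restrictions are minimal $B_2$, or $A_2$ with $\operatorname{diag}(2,2)$, all representation-finite.

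\emph{Other failures.} A valuation $(2,4)$ has $\gcd=2$, which gives only the tame Kronecker quiver, not three arrows. The rank-two hyperbolic matrices form an infinite family, whereas Examples~\ref{B2wild} and~\ref{G2Wild} treat only $B_2$ and $G_2$ with twice the minimal symmetrizer. Saying these are ``handled by coverings as in'' those examples leaves an infinite case check undone. Each of these cases would need a new covering computation.

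\emph{A smaller point.} When $\Gamma(C)$ has cycles, $e_JHe_J$ need not equal $H(C_J,D_J,\Omega_J)$, because paths can leave $J$ and return. The correct object is the quotient $H/\langle e_k\mid k\notin J\rangle$.

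\paragraph{The missing idea is the paper's uniform dimension count, which needs no classification.} For connected indefinite $C$ there is $\mathbf r\in\mathbb Z_{>0}^n$ with $C\mathbf r<0$ componentwise. Hence $q_{C,D}(\mathbf r)=\frac12\mathbf r^{\mathsf T}DC\mathbf r<0$ for every symmetrizer $D$. The formula $\dim\operatorname{Rep}_{\mathrm{lf}}(H,\mathbf r)=\dim\operatorname{GL}(D\mathbf r)-q_{C,D}(\mathbf r)$ from \cite{GLSII} then gives $\dim\operatorname{Rep}(H,D\mathbf r)>\dim\operatorname{GL}(D\mathbf r)$. This is impossible for tame or representation-finite algebras by de la Pe\~na's bound \cite{delaPena1991}, and Drozd's trichotomy finishes the proof. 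For the path above, $\mathbf r=(3,7,3)$ gives $C\mathbf r=(-1,-1,-1)$ and $q_{C,D}(\mathbf r)=-11$. This argument is independent of $D$ and $\Omega$, so your reductions to subgraphs and to minimal symmetrizers become unnecessary.
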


\begin{proof}
Write
\[
D=\operatorname{diag}(d_1,\ldots,d_n).
\]
Since $C$ is connected and is neither of Dynkin nor of affine type,
it is of indefinite type. By the standard trichotomy for
indecomposable generalized Cartan matrices, there exists
\[
\mathbf r=(r_1,\ldots,r_n)\in\mathbb Z_{>0}^n
\]
such that
\[
C\mathbf r<0
\]
componentwise. Hence
\[
q_{C,D}(\mathbf r)
:=
\frac12\mathbf r^{\mathsf T}DC\mathbf r
=
\frac12\sum_{i=1}^n d_i r_i(C\mathbf r)_i
<0.
\]

Let
$\operatorname{Rep}_{\mathrm{lf}}(H,\mathbf r)$
denote the variety of locally free $H$-modules of rank vector
$\mathbf r$.
For a dimension vector $\mathbf{d}$, we set
$\mathrm{GL}(\mathbf{d})=\prod_{1\leq i\leq n} \mathrm{GL}(d_i)$. 
Denote 
$D\mathbf{r}=(d_1r_1,\dots,d_nr_n)$.
By
\cite[Proposition~3.1(i)]{GLSII},
\begin{align}
\dim\operatorname{Rep}_{\mathrm{lf}}(H,\mathbf r)
=
\dim\operatorname{GL}(D\mathbf r)
-q_{C,D}(\mathbf r).
\end{align}
Therefore,
\[
\dim\operatorname{Rep}_{\mathrm{lf}}(H,\mathbf r)
>
\dim\operatorname{GL}(D\mathbf r).
\]
Since
\[
\operatorname{Rep}_{\mathrm{lf}}(H,\mathbf r)
\subseteq
\operatorname{Rep}(H,D\mathbf r),
\]
we obtain
\begin{align}
\label{eq:wild}
\dim\operatorname{Rep}(H,D\mathbf r)
>
\dim\operatorname{GL}(D\mathbf r).
\end{align}

On the other hand, for every tame (or representation-finite)
finite-dimensional algebra $A$ and every dimension vector
$\mathbf d$, we have 
\[
\dim\operatorname{Rep}(A,\mathbf d)
\leq
\dim\operatorname{GL}(\mathbf d);
\]
see \cite{delaPena1991}. Thus, \eqref{eq:wild}  implies that $H$ is not tame (also not representation-finite). By Drozd's trichotomy, $H$ is representation-wild.
\end{proof}

It remains to treat the representation-infinite Dynkin and affine cases. We can now state the classification theorem, which is one of the main results of this paper.

\begin{thm}\label{Tame}
   Let $C$ be a connected symmetrizable generalized Cartan matrix. Then the GLS algebra $H=H(C,D,\Omega)$ is representation-tame if and only if one of the following conditions is satisfied:
    \begin{enumerate}
        \item $C$ is of Dynkin type $A_2$, and  $D=\operatorname{diag}(4,4)$;
        \item $C$ is of Dynkin type $A_4$, $D=\operatorname{diag}(2,2,2,2)$, and $\Omega$ is a linear orientation;
        \item $C$ is of affine type $\widetilde{A}_{n},\widetilde{C}_n,\widetilde{D}_n,\widetilde{CD}_n,\widetilde{E}_6,\widetilde{E}_7,\widetilde{E}_8$ or $\widetilde{B}_2$, and $D$ is minimal.
    \end{enumerate}
\end{thm}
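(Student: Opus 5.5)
The plan is to split according to the type of $C$, using Lemma~\ref{IndWild} to discard all indefinite $C$ at once. Two families remain: $C$ of Dynkin type and $C$ of affine type. In each family the representation-finite algebras are removed via Proposition~\ref{FiniteClassification}. For the surviving triples $(C,D,\Omega)$ we then have two jobs: prove tameness for those listed in (1)--(3), and exhibit wildness for all others. Wildness will always be shown through Lemma~\ref{CriterionForWild}, by locating a convex hypercritical subcategory in the Galois covering of Proposition~\ref{GaloisCovering}. Lemma~\ref{prop:symmetrizer-quotient} lets us pass to the smallest non-finite or non-tame multiple $tD_{\min}$ of the minimal symmetrizer. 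When $\Gamma(C)$ is a cycle, i.e.\ $\widetilde A_n$, the covering construction does not apply as stated, but the symmetrizer there is scalar and can be handled directly.

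\emph{Tameness.} In the affine simply-laced case with $D$ minimal, $H$ is a hereditary path algebra of Euclidean type and hence tame. For $\widetilde C_n$ we invoke \cite[Theorem~3.14]{HLS2023}, and for $\widetilde{CD}_n$, including $\widetilde B_2$, Proposition~\ref{tame for arbitrary orientation}. For $A_2$ with $D=\operatorname{diag}(4,4)$ and $A_4$ with $D=2I$ and linear orientation, we plan to check that the Galois covering is locally support-finite. Its finite convex pieces are then tame concealed or tubular/iterated tilted of Euclidean type, since the covering of $A_2$ with $d=4$ is a $\mathbb Z$-strip of commutative squares of width $2$ and height bounded by $4$. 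Tameness of $H$ then follows from the covering criterion of \cite{delaPena1990,LS}.

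\emph{Wildness, Dynkin case.} For type $A_n$, $D=dI$: when $d\geq5$ with $n=2$, and $d\geq 4$ or $3$ with $n\geq3$, we reduce by Lemma~\ref{prop:symmetrizer-quotient} and restriction to a convex subquiver $A_2$ or $A_3$. This reduces the problem to exhibiting $\widetilde{\widetilde E}$ or $\widetilde{\widetilde D}$ hypercritical pieces in the coverings of $H(A_2,5I)$, $H(A_3,3I)$ and $H(A_3,4I)$ (the last is already captured by $A_3$, $D=3I$), and of $H(A_4,2I)$ with non-linear orientation. We also need $H(A_5,2I)$ for all orientations. Type $D_n$, $n\ge4$, with $D=2I$ is handled by Example~\ref{D4wild}. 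Types $B_n$ with $n\ge4$ and $D$ minimal are handled by Example~\ref{B4wild} and Remark~\ref{RemarkB4}, via restriction of a convex $B_4$ piece at the short end. For $F_4$ we use Example~\ref{OtherWild}(1). Non-minimal symmetrizers of $B_n$, $C_n$ and $G_2$ are handled by Examples~\ref{B2wild} and~\ref{G2Wild}, transported along idempotent subalgebras containing a $B_2$ piece. $E_6,E_7,E_8$ with $D=2I$ contain $D_4$.

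\emph{Wildness, affine case.} Every affine diagram not in list (3) contains as a full subgraph either a wild Dynkin piece from the previous step, or one of $\widetilde A_{11},\widetilde{BD}_3,\widetilde G_{21},\widetilde G_{22}$ treated in Example~\ref{OtherWild}. The non-minimal symmetrizers of the types in (3) likewise restrict to wild Dynkin pieces with $D=2D_{\min}$, such as $A_2$ with $2I$ extending to $A_3$ or $D_4$ with $2I$.

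The main obstacle is the bookkeeping. We must check that every excluded triple, including all orientations of $A_4$ and $A_5$ with $D=2I$ and the affine types $\widetilde{BC}$, $\widetilde F_4$, $\widetilde{G}$ and twisted variants, really contains a convex subalgebra (an idempotent-closed full subquiver with inherited relations) already known to be wild, compatible with the covering. For $A_4$ with non-linear orientation I would first try to find a $\widetilde{\widetilde D}_5$ or $\widetilde{\widetilde E}_6$ piece analogous to Remark~\ref{RemarkB4}.
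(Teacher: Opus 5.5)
Your treatment of the non-simply-laced cases follows the paper's route: reduce via Lemma~\ref{prop:symmetrizer-quotient} to critical symmetrizers, then exhibit convex hypercritical subcategories in the covering of Proposition~\ref{GaloisCovering}. However, the reduction claim in your affine paragraph is false for $\widetilde B_3$ with minimal symmetrizer. This is the path $\circ-\bullet-\bullet-\circ$ with $D=\operatorname{diag}(1,2,2,1)$, the $n=3$ member of the family containing $\widetilde B_2=\widetilde{CD}_2$. Its proper connected full subdiagrams are $B_3$ and $B_2$ with minimal symmetrizer and $A_2$ with $D=2I$, all representation-finite by Proposition~\ref{FiniteClassification}. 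Nor is it one of $\widetilde A_{11},\widetilde{BD}_3,\widetilde G_{21},\widetilde G_{22}$. Your $B_4$ argument only covers $\widetilde B_n$ for $n\geq4$. So $\widetilde B_3$ needs its own witness; the paper gives, for every $n\geq3$, an explicit convex hypercritical subcategory of type $\widetilde{\widetilde D}_{n+2}$ in the covering. Separately, your example ``$A_2$ with $2I$ extending to $A_3$'' is not a wildness witness, since $H(A_3,2I)$ is representation-finite.

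The second gap is the equal-symmetrizer part, which you leave as a list of checks that are not carried out. Tameness of $H(A_2,4I)$ and of linear $H(A_4,2I)$ rests on local support-finiteness of the covering, which is exactly the nontrivial point. Your remark that the strip has ``height bounded by $4$'' bounds the length of nonzero paths, not the supports of indecomposable covering modules. Wildness is likewise only announced for $H(A_2,5I)$, $H(A_3,3I)$, non-linear $H(A_4,2I)$, all $H(A_5,2I)$, and $\widetilde A_n$ with $D=2I$ for $n\leq3$ (e.g.\ the Kronecker case, where no proper subdiagram helps). The idea you are missing is this: when $d_1=\cdots=d_n=l$, the relations (H2) just say that $\sum_i\varepsilon_i$ is central, so $H\cong\mathbf kQ^{\mathrm o}(C,\Omega)\otimes_{\mathbf k}\mathbf k[x]/(x^l)$. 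The paper then applies Skowro\'nski's classification of tame algebras of this form \cite[Proposition~2]{Skow1987}, together with \cite{[Rin]} for $l=1$. That settles every equal-symmetrizer case at once, Dynkin and affine, tame and wild. This includes $\widetilde A_n$, where the covering of Proposition~\ref{GaloisCovering} is not even set up.
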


\begin{proof}
By Proposition~\ref{FiniteClassification} and Lemma \ref{IndWild}, it remains to classify the
representation-tame algebras among the representation-infinite
Dynkin and affine cases. Write $D=\operatorname{diag}(d_1,d_2,\cdots,d_n)$. We distinguish two cases.

\medskip
\noindent{\bf Case I}: $d_1=d_2=\cdots=d_n:=l$.

If $l=1$, the GLS algebra $H=H(C,D,\Omega)$ is the path algebra of $Q^{\textup{o}}(C,\Omega)$. By \cite{[Rin]}, $H$ is tame precisely when $C$ is of affine type $\widetilde{A}_{n},\widetilde{D}_n,\widetilde{E}_6,\widetilde{E}_7,\widetilde{E}_8$.

If $l\geq2$, then $H\cong {\bf k}Q^{\textup{o}}(C,\Omega)\otimes_{\bf k}{\bf{k}}[x]/(x^l)$. According to \cite[Proposition 2]{Skow1987}, $H$ is tame exactly when
\begin{itemize}
    \item[-] $Q^{\textup{o}}(C,\Omega)$ is the Dynkin quiver of type $A_2$ and $l=4$, or
    \item[-]
    $Q^{\textup{o}}(C,\Omega)$ is the Dynkin quiver of type $A_4$ with a linear orientation and $l=2$.
\end{itemize}

\medskip
\noindent{\bf Case II}: the integers $d_1,d_2,\dots,d_n$ are not all equal.

If $C$ is of affine type $\widetilde{C}_n$ or $\widetilde{CD}_n$, with the convention
$\widetilde{B}_2=\widetilde{CD}_2$, and $D$ is minimal, then $H(C,D,\Omega)$ is tame
 by \cite[Theorem A]{HLS2023} and Proposition \ref{tame for arbitrary orientation} respectively. Together with Case I, this proves that all GLS algebras listed in (1)--(3) are tame. 

It remains to show that all other representation-infinite GLS algebras
are wild. Since $C$ is connected, each symmetrizer is of the form $mD_{\textup{min}}$ for some integer $m\geq1$, where $D_{\textup{min}}$ is the minimal symmetrizer of $C$. By Lemma \ref{prop:symmetrizer-quotient}, if $H(C,m_0D_{\textup{min}},\Omega)$ is wild, then so is $H(C,mD_{\textup{min}},\Omega)$ for every $m\geq m_0$.

Therefore, for each Dynkin or affine type, it suffices to consider the smallest remaining scalar multiple not covered by Proposition \ref{FiniteClassification} or by
the tame cases established above. These critical cases are listed in Table \ref{tab:remaining-wild-cases}.

\begin{table}[htbp]
\centering 
\tiny
\begin{tabular}{p{2.4cm}cp{8cm}p{2cm}}
\toprule
Type of $C$ & Symmetrizer $D$& Convex subcategory or wildness criterion&Reference\\
\midrule
$B_n$, $C_n$, $F_4$, $\widetilde{B}_n$, $\widetilde{C}_n$, $\widetilde{BD}_n$, $\widetilde{CD}_n$, $\widetilde{F}_{41}$, $\widetilde{F}_{42}$ & Non-minimal& The Galois covering contains, as a full convex subcategory, the Galois covering of a GLS algebra of type $B_2$, which
contains a convex hypercritical subcategory of type
$\widetilde{\widetilde{E}}_6$.&Example~\ref{B2wild}\\
\midrule
$\widetilde{BC}_n$ & \textup{Minimal}&The same reduction to the Galois covering of an appropriate GLS
algebra of type $B_2$ applies.&Example~\ref{B2wild}\\ 
\midrule
$B_n\;(n\geq4)$ & \textup{Minimal}&The Galois covering contains, as a full convex subcategory, the Galois covering of a GLS algebra of type $B_4$, which
contains a convex hypercritical subcategory of type
$\widetilde{\widetilde{D}}_5$ or of type $\widetilde{\widetilde{E}}_6$.&Example~\ref{B4wild}, Remark \ref{RemarkB4}\\ 
\midrule
$\widetilde{B}_n\;(n\geq3)$ & \textup{Minimal}&The Galois covering contains a convex hypercritical subcategory of type $\widetilde{\widetilde{D}}_{n+2}$.&Explicit construction below\\ 
\midrule
$\widetilde{BD}_3$ & \textup{Minimal}&The corresponding GLS algebra is wild.&Example~\ref{OtherWild}(2)\\ 
\midrule
$\widetilde{BD}_n\;(n\geq4)$ & \textup{Minimal}&The Galois covering contains, as a full convex subcategory, the Galois covering of a GLS algebra of Dynkin type $D_4$ with a
non-minimal symmetrizer; the latter contains a convex hypercritical
subcategory of type
$\widetilde{\widetilde{D}}_4$ or of type $\widetilde{\widetilde{E}}_6$.&Example~\ref{D4wild}\\ 
\midrule
$F_4$, $\widetilde{F}_{41}$,
$\widetilde{F}_{42}$&Minimal&The Galois covering contains, as a full convex subcategory, the
Galois covering of a GLS algebra of type $F_4$, which contains a
convex hypercritical subcategory of type
$\widetilde{\widetilde{E}}_6$.
&Example~\ref{OtherWild}(1)\\
\midrule
$G_2$&Non-minimal&The corresponding GLS algebra is wild.&Example \ref{G2Wild}\\
\midrule
$\widetilde{A}_{11},\widetilde{G}_{21}$, or $\widetilde{G}_{22}$&Minimal&The corresponding GLS algebra is wild.&Example~\ref{OtherWild}(3)\\
\bottomrule
\end{tabular}
\vspace{10pt}
\caption{The remaining representation-infinite cases.}
\label{tab:remaining-wild-cases}
\end{table}

For $C$ of type $\widetilde{B}_n$ with $n\geq3$ and $D$ minimal, the convex hypercritical subcategory of type
$\widetilde{\widetilde{D}}_{n+2}$ occurring in its Galois covering has the following form:
\begin{center}
    \begin{tikzpicture}
    \node (52) at (0,-1.6) {$\circ$};
    \node (53) at (1,-0.8) {$\circ$};
    \node (54) at (1,-1.6) {$\circ$};
    \node (55) at (2,-1.6) {$\circ$};
    \node (57) at (3.3,-1.6) {$\cdots$};
    \draw[-] (2.3,-1.6)--(2.8,-1.6);
    \draw[-] (3.8,-1.6)--(4.3,-1.6);
    \node (58) at (4.6,-1.6) {$\circ$};
    \node (59) at (5.6,-1.6) {$\circ$};
    \node (60) at (5.6,-2.4) {$\circ$};
    \node (61) at (6.6,-1.6) {$\circ$};
    \node (62) at (6.6,-2.4) {$\circ$};
    \node () at (1.3,-1.2) {$\alpha_1$};
    \node () at (1.5,-1.8) {$\alpha_2$};
    \node () at (5.1,-1.4) {$\alpha_3$};
    \node () at (5.9,-2) {$\alpha_4$};
    \draw [-] (52) -- (54);
    \draw [-] (53) -- (54);
    \draw [-] (54) -- (55);
    \draw [-] (58) -- (59);
    \draw [-] (59) -- (60);
    \draw [-] (59) -- (61);
    \draw [-] (60) -- (62);
    \end{tikzpicture}
\end{center}
Here $\alpha_1$ and $\alpha_2$ have a common source or a common target,
and the same holds for $\alpha_3$ and $\alpha_4$.

In each case listed in Table \ref{tab:remaining-wild-cases}, either the corresponding GLS algebra is already known to be wild or its Galois covering contains a convex hypercritical subcategory. Lemmas \ref{CriterionForWild} and \ref{prop:symmetrizer-quotient} together with Proposition \ref{GaloisCovering} therefore imply that every remaining
representation-infinite GLS algebra is wild. This completes the proof.
\end{proof}

\begin{rem}
The case $\widetilde A_{12}$ is omitted because its associated GLS algebra
is isomorphic to the GLS algebra of type $\widetilde A_1$.
\end{rem}

\section{$\tau$-locally free modules and a revised GLS conjecture}\label{sec:tau-free}

In this section, we study indecomposable
$\tau$-locally free modules over representation-tame
GLS algebras of affine type and establish a revised
version of the GLS conjecture.

%In this section, we classify the indecomposable $\tau$-locally free modules over representation-tame GLS algebras and formulate a revised version of the GLS conjecture. Since the conjecture is known for GLS algebras of Dynkin type, it remains to consider the affine case.

\subsection{Classification of $\tau$-locally free modules}

Let $e_i$ be the primitive idempotent corresponding to the vertex $i$ of $Q$ in the GLS algebra $H=H(C,D,\Omega)$. Denote by $H_i=e_iHe_i$ for each $i$. 
%Then 
%$$H(CD,n)_i=\left\{\begin{array}{ll}
%    {\bf k}[\varepsilon_i]/(\varepsilon_i^2) & %\textup{if}\;i=1  \\
%    {\bf k} & \textup{otherwise}
%\end{array}\right.$$

\begin{defn} [{\cite[Definition 1.1]{GLSI}}]
A right $H$-module $M$ is called {\emph{locally free}} if $M_i=Me_i$ is a free $H_i$-module for each $i\in Q_0$. An indecomposable locally free $H$-module $M$ is called {\emph{$\tau$-locally free}} if $\tau^k(M)$ is locally free for all $k\in\mathbb{Z}$.   
\end{defn}

\begin{lem} \label{lem8.2}
Let $H=H(\widetilde{CD}_n)$ with $n\geq2$, and let $N(w)$ be an ex-string module. Then the
following statements hold.
\begin{enumerate}
    \item If $w$ is a full ex-string or $1$-ex-string, then $N(w)$ is not  $\tau$-locally free.

    \item If $w$ is a $2$-ex-string, then $N(w)$ is  $\tau$-locally free.
\end{enumerate}
\end{lem}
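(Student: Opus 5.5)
We will argue componentwise, using the description of $\Gamma_{H(\widetilde{CD}_n)}$ in Theorem~\ref{MainTheorem} together with the fact that local freeness only concerns vertex $1$. Indeed, $H_i={\bf k}$ for $i\neq 1$ and $H_1={\bf k}[\varepsilon_1]/(\varepsilon_1^2)$, so a module $M$ is locally free if and only if $\dim M_1=2\operatorname{rank}(\varepsilon_1|_{M_1})$. In other words, $M_1$ must split as a direct sum of copies of the regular $H_1$-module. For an ex-string module $N(w)$ this is a combinatorial condition on the ex-walk $w$: every occurrence of the vertex $1$ in $w$ must be paired by an $\varepsilon_1$-arrow inside $w$. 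Vertex $1$ occurs at the ends of the segments $(\gamma_{\max})_{\le k}$ with $k\ge 0$, and in the full truncation $\gamma_{\max}$ the last point $x_{2n}$ lies at vertex $1$.

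For part (1), let $w$ be a full ex-string $\gamma_{\max}\theta\gamma_{\max}$ or a $1$-ex-string $\gamma_{\textup{semi}\pm}^{-1}\theta\gamma_{\max}$. The terminal truncation $\gamma_{\max}$ ends at a copy of vertex $1$ with no $\varepsilon_1$ attached. Hence $N(w)_1$ contains a basis vector $x_{\rm end}$ that is neither the source nor the target of $\varepsilon_1$, and this vector spans a trivial $H_1$-summand. So $N(w)$ itself fails to be locally free, provided it is indecomposable.

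Remark~\ref{remark3.4}(1) warns that an ex-string module need not be indecomposable. In that case we pass to the indecomposable summands, which are the members of the relevant pair modules in components (2) and (3). We then argue at the level of components. In a $\mathbb{Z}D_\infty$ component or a $\mathbb{Z}A_\infty^\infty$ component, the $\tau$-orbit of such a module consists of modules $N(w')$ whose ex-strings still carry a $\gamma_{\max}$ end. This follows from the equivariant correspondence with $\mathcal{H}(\widetilde C_{2n-2})_w$ (Proposition~\ref{forthm002}(3)), since $\tau$ there acts by adding or removing hooks and cohooks and never removes the ends at $\pm1$. So at least one module in the orbit is non-locally free, which suffices.

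For part (2), let $w=\gamma_{\textup{semi}\pm}^{-1}\theta\gamma_{\textup{semi}\pm}$. Both ends of $w$ lie at $0^\pm$, and every occurrence of $1$ in $w$ is an endpoint of some $\varepsilon_1^{\delta_l}$ inside $\theta$. Therefore $N(w)_1\cong H_1^{k}$ and $N(w)$ is locally free. By Theorem~\ref{MainTheorem}(5), $N(w)$ lies on the mouth of a stable tube of rank $2$, whose $\tau$-orbit is $\{N(w_1),N(w_2)\}$ for the $2$-ex-string pair $(w_1,w_2)$. Both members are $2$-ex-strings with the same body, so both are locally free. Hence $\tau^kN(w)$ is locally free for all $k$.

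The main obstacle is identifying $\tau N(w)$ explicitly as the partner $N(w')$. We would do this through the equivariant correspondence: $N(w)$ is a summand of $\Phi\circ\mathrm{Ind}$ of a band module $M(v,\pm1)$ for a rotated band $v$, by Proposition~\ref{forthm003}(2b). Since $\tau$ commutes with $\mathrm{Ind}$ and $M(v,\pm1)$ is $\tau$-periodic, we only need to check that the two summands are swapped, which is what rank $2$ gives. If the identification via the appendix is awkward, the fallback is a direct computation of the projective presentation of $N(w)$ and of $D\operatorname{Tr}$, using the explicit quiver and the single relation $\varepsilon_1^2$.
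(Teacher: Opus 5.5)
Your proposal is correct and is essentially the paper's proof: in (1) the unpaired vertex-$1$ end coming from $\gamma_{\textup{max}}$ makes $N(w)$ non-locally-free, and in (2) both members of the $2$-ex-string pair are locally free and are swapped by $\tau$ via Theorem~\ref{MainTheorem}(5). Your route through $\textup{Ind}$ and Proposition~\ref{forthm003}(2)(b) only makes that swap explicit; as in the paper, it tacitly assumes the index lies in $\mathcal{I}_4$, so that the pair actually indexes a tube. The indecomposability caveat and the $\tau$-orbit detour in (1) are unnecessary, and the claim there that $\tau$ never removes the ends at $\pm1$ is not justified: a $\tau$-locally free module is by definition indecomposable and locally free, so the failure of local freeness of $N(w)$ already settles (1).
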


\begin{proof}
Suppose first that $w$ is a full ex-string or a $1$-ex-string.  By the
construction of the corresponding ex-string module, $N(w)$ is not
locally free. Since every $\tau$-locally free module is locally free,
statement~(1) follows.

Now suppose that $w$ is a $2$-ex-string. Let $(w,w_2)$ be the
$2$-ex-string pair containing $w$. By the construction of
$2$-ex-string modules, both $N(w)$ and $N(w_2)$ are locally free.
By Theorem~\ref{MainTheorem}(5),
\[
    \tau N(w)\cong N(w_2)
    \qquad\text{and}\qquad
    \tau N(w_2)\cong N(w).
\]
Therefore, $\tau^kN(w)$ is locally free for every $k\in\mathbb Z$,
and thus $N(w)$ is $\tau$-locally free.
\end{proof}
% If $w$ belongs to a $2$-ex-string pair defining a stable tube of rank two, then $N(w)$ occurs at the mouth of this tube. Hence all Auslander--Reiten translates of $N(w)$ are locally free by \cite[Proposition~11.14]{GLS}. Therefore $N(w)$ is $\tau$-locally free.

\begin{thm} \label{Thm2} Let $C$ be a Cartan matrix of affine type and $H=H(C,D,\Omega)$ be a representation-tame GLS algebra. 
 Let $M$ be an indecomposable $H$-module. Then $M$ is $\tau$-locally free if and only if one
of the following is satisfied.
\begin{enumerate}
\item $M$ is preprojective.

\item $M$ is preinjective.

\item  $M$ lies in a stable tube.
%$M$ is a regular module occurring in any tube.
\end{enumerate}
 \end{thm}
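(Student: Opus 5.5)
The plan is to split by type, following Theorem~\ref{Tame}(3). The affine representation-tame cases are: simply-laced with $D$ minimal (so $D=I$ and $H$ is hereditary), type $\widetilde C_n$ with minimal symmetrizer, and type $\widetilde{CD}_n$ ($n\ge2$) with minimal symmetrizer.

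\textbf{Hereditary case.} Every module is locally free, so every indecomposable module is $\tau$-locally free. The claim then reduces to the fact that every indecomposable module over a tame hereditary algebra is preprojective, preinjective, or lies in a stable tube.

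\textbf{Type $\widetilde C_n$.} I would quote the description of $\tau$-locally free modules in \cite{HLS2023}, which is built on Proposition~\ref{ThmCn}. The only modules that can fail to be $\tau$-locally free are those in the $\mathbb{Z}A_\infty^\infty$ components $\mathcal H_w$. In those components the minimal string starts and ends at the vertices $\pm1$ carrying the loop. Hence $\tau^k$ of it is never free over ${\bf k}[\varepsilon]/(\varepsilon^2)$ for some $k$.

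\textbf{Type $\widetilde{CD}_n$.} For the fixed orientation I would go through the list in Theorem~\ref{MainTheorem} component by component; other orientations reduce to this one via the reflection argument of Proposition~\ref{tame for arbitrary orientation}.
\begin{enumerate}
\item Preprojective and preinjective modules are $\tau$-locally free by the general result of GLS (\cite{GLSI}): $\tau^{\pm k}$ of projective and injective modules are locally free, as they are obtained from locally free generalized simples by Coxeter functors.
\item The modules in the $\mathbb ZD_\infty$ components (2) and the $\mathbb ZA_\infty^\infty$ components (3) have, by Lemma~\ref{lem8.2}(1), a $\tau$-orbit containing a non-locally-free ex-string module. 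More precisely, each such component contains $N(w)$ with $w$ a $1$-ex-string or a full ex-string. So every module in the component has some $\tau$-translate on the mouth, or, using sectional paths, I would argue as follows.
\item For regular components, every module $M$ has a rank-type decomposition of its dimension vector along the sectional path to the mouth: $\underline{\dim}M$ is a sum of dimension vectors of mouth modules. The key facts are that local freeness at vertex $1$ is equivalent to $\dim M_1=2\dim(M\varepsilon_1)$, and that $\varepsilon_1$ acts via Auslander--Reiten sequences additively.
\item Concretely, I would test $M$ at a suitable $\tau$-shift. The $\tau$-orbit hits a module whose restriction to vertex $1$ contains a ${\bf k}$-summand (the endpoint $\varepsilon$-free letter of the ex-string). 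This is seen through Ind and the equivariant correspondence: $\Phi\circ\mathrm{Ind}$ sends a string module $M(v)$ of $H(\widetilde C_{2n-2})$ to a module whose vertex-$1$ part is $M(v)_1\oplus M(v)_{-1}$. Moreover, $M(v)$ is locally free iff $\Phi\mathrm{Ind}M(v)$ is, and Ind commutes with $\tau$ (Reiten--Riedtmann).
\item So a module in components (2)--(3) is a summand of $\Phi\mathrm{Ind}X$ with $X$ in some $\mathcal H_w$ of type $(2,2)$. That component contains non-$\tau$-locally free modules in every $\tau$-orbit, by the $\widetilde C$ case, so the same transfers to $M$.
\end{enumerate}

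\textbf{Stable tubes.} The tube of rank $n-1$, the rank-$2$ tubes and the homogeneous tubes all arise as $G$-equivariantizations of the tube $\mathcal H_{(1,1)}$ and of band tubes of $H(\widetilde C_{2n-2})$ (Propositions~\ref{forthm002}(2) and~\ref{forthm003}). All of their modules are $\tau$-locally free on the $\widetilde C$ side. Since local freeness is preserved and reflected by $\Phi\circ\mathrm{Ind}$ and by taking summands (each vertex-$1$ space is a direct summand as ${\bf k}[\varepsilon_1]/(\varepsilon_1^2)$-module), they remain $\tau$-locally free. Lemma~\ref{lem8.2}(2) gives the mouth of the rank-$2$ tubes explicitly, as a consistency check.

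\textbf{Main obstacle.} I expect the hardest step to be the transfer of non-local-freeness from $X$ to the summands of $\Phi\mathrm{Ind}X$ in the symmetric $\mathbb ZD_\infty$ case, where $\Phi\mathrm{Ind}X$ splits. There I would use $\tau$-periodicity of the equivariant action along the orbit of the mouth: a summand of a locally free module is locally free, and conversely $\mathrm{Res}$ of each summand recovers $X$ or $X\oplus F_\sigma X$. Since $X_1\cong (F_\sigma X)_{-1}$, non-freeness of $X$ at $\pm1$ forces non-freeness of each summand at $1$.
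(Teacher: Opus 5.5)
Your ``only if'' direction has a genuine gap: you never treat the modules of $\mathcal H(\widetilde{CD}_n)_{PI}$ that are neither preprojective nor preinjective, and this component does contain such modules. For example, $S_1$ is a direct successor of $I_1$, via $I_1\to I_1/\operatorname{soc}I_1\cong S_1$. Likewise, $\varepsilon_1H$ is an indecomposable direct summand of $\operatorname{rad}P_1$ whose space at vertex $1$ is one-dimensional. Both lie in the PI component and are not locally free, so by \cite[Proposition~11.6]{GLSI} they are neither preprojective nor preinjective.

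One therefore has to show that no module in the regular part of this component is $\tau$-locally free. Your list goes straight from ``preprojective and preinjective modules are $\tau$-locally free'' to components (2)--(3) and the tubes, and skips this step. The same misconception is in your $\widetilde C$ paragraph: it is false that only the $\mathbb ZA_\infty^\infty$ components $\mathcal H_w$ contain non-$\tau$-locally free modules, because the PI component of $H(\widetilde C_{2n-2})$ contains them as well. The paper closes exactly this case. It notes that $S_1\in\mathcal H(\widetilde{CD}_n)_{PI}$ is not locally free and then uses \cite[Proposition~11.14]{GLSI} to exclude regular $\tau$-locally free modules from the component. This is the same way the modules of Lemma~\ref{lem8.2}(1) are used for components (2)--(3).

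Drop your items 2--3. Components of type $\mathbb ZA_\infty^\infty$ and $\mathbb ZD_\infty$ have no mouth, so ``some $\tau$-translate on the mouth'' and a decomposition of $\underline{\dim}M$ into mouth vectors make no sense there.

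The transfer in items 4--5 and in your ``main obstacle'' paragraph is sound, and applied to every component it repairs the gap. Take $M$ indecomposable. Then $\operatorname{Res}M\cong X$ or $X\oplus F_\sigma X$ with $X$ indecomposable. Moreover $M_1\cong(\operatorname{Res}M)_1\cong(\operatorname{Res}M)_{-1}$ as $\mathbf k[\varepsilon_1]/(\varepsilon_1^2)$-modules, and $\operatorname{Res}$ commutes with $\tau$. Hence $M$ is $\tau$-locally free if and only if $X$ is. The component correspondence of Propositions~\ref{forthm002} and~\ref{forthm003} shows that $M$ is preprojective, preinjective, or in a stable tube if and only if $X$ is. Then \cite[Theorem~3.19]{HLS2023}, read correctly, yields the $\widetilde{CD}_n$ statement for the fixed orientation in one stroke, including the regular part of the PI component.

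That is a genuinely different route from the paper's. The paper checks a few witnesses directly: the mouth modules of tubes (4)--(6), the modules of Lemma~\ref{lem8.2}(1), and $S_1$. It then propagates through each component by \cite[Proposition~11.14]{GLSI}. Your route needs no propagation argument, but it relies on the full $\widetilde C$--$\widetilde{CD}$ component correspondence of Section~\ref{Mainresult1}.

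Finally, your one-line reduction for other orientations is too thin. Proposition~\ref{tame for arbitrary orientation} only reflects at $0^{\pm}$. You also need that reflections at vertices other than $1$ preserve local freeness and stable tubes, together with \cite[Propositions~3.8--3.9]{LS2025}, as the paper supplies.
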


\begin{proof}
By Theorem \ref{Tame}, the symmetrizer $D$ is minimal.
The simply-laced affine cases are immediate, since
$H$ is the path algebra of a Euclidean quiver.
The case of type $\widetilde C_n$ follows from
\cite[Theorem~3.19]{HLS2023}.
It remains to consider type $\widetilde{CD}_n$
with $n\geq2$.

By \cite[Proposition~11.6]{GLSI}, every indecomposable
preprojective or preinjective module is
$\tau$-locally free.

First assume that $H=H(\widetilde{CD}_n)$ has the
orientation in Figure~\ref{quiverForCD}.
The mouth modules of the stable tubes in
Theorem~\ref{MainTheorem}(4)--(6) are locally free by the
mouth-module calculation in Proposition~\ref{forthm002}(2),
Lemma~\ref{lem8.2}(2), and the construction of ex-band modules,
respectively.
Since $\tau$ permutes the mouth modules of each tube,
these modules are $\tau$-locally free.
It follows from \cite[Proposition~11.14]{GLSI}
that every module in these tubes is $\tau$-locally free.

The components in Theorem~\ref{MainTheorem}(2)--(3) contain
full ex-string or $1$-ex-string modules that are
not $\tau$-locally free.
Hence these components contain no regular
$\tau$-locally free modules by
\cite[Proposition~11.14]{GLSI}.
Finally, the component
$\mathcal H(\widetilde{CD}_n)_{PI}$ contains the
regular simple module $S_1$, which is not locally free.
The same proposition therefore excludes all regular
$\tau$-locally free modules from this component.
This proves the assertion for the orientation
in Figure~\ref{quiverForCD}.

% Now let $\Omega$ be arbitrary.
% Any two orientations of a finite tree can be connected
% by sink and source reflections avoiding a prescribed
% vertex. Hence we may connect $\Omega$ to the orientation
% in Figure~\ref{quiverForCD} by such reflections at vertices other than~$1$.
% As in the proof of Proposition~\ref{tame for arbitrary orientation}, the corresponding
% BGP reflection functors preserve stable tubes.
% They also preserve local freeness, since they leave
% the component at vertex~$1$ unchanged and
% $H_j\simeq\mathbf k$ for $j\neq1$.
% Thus every stable tube for the new orientation
% consists of locally free modules and hence of
% $\tau$-locally free modules.

% Conversely, by
% \cite[Propositions~3.8--3.9]{LS2025}, the inverse
% reflection sequence sends every regular
% $\tau$-locally free module to one for the orientation
% in Figure~\ref{quiverForCD}.
% The latter lies in a stable tube by the case already
% proved, so the original module also lies in a stable
% tube.
% The assertion follows.

Now let $\Omega$ be arbitrary.
Any two orientations of a finite tree can be connected
by sink and source reflections avoiding a prescribed
vertex. Hence we may connect $\Omega$ to the orientation
in Figure~\ref{quiverForCD} by such reflections at
vertices other than~$1$.

As in the proof of
Proposition~\ref{tame for arbitrary orientation},
the corresponding BGP reflection functors restrict
to mutually quasi-inverse equivalences between
the subcategories without the exceptional simple
summands. They carry almost split sequences inside
stable tubes to almost split sequences and therefore
preserve stable tubes in both directions.
Indeed, for a sink reflection, exactness follows
because the first term has no simple projective
summand at the reflected vertex; the right almost
split property extends from the subcategory to the
whole module category because the excluded simple
is injective and has no nonzero maps to the
indecomposable end term. The source case is dual.

These reflections also preserve local freeness,
since they leave the $H_1$-module structure at
vertex~$1$ unchanged and $H_j\simeq\mathbf k$
for $j\neq1$.
Thus every stable tube for $\Omega$ corresponds to
a stable tube for the orientation in
Figure~\ref{quiverForCD}, whose modules are locally free.
Applying the inverse reflections shows that every
module in the original tube is locally free and
hence $\tau$-locally free.

Conversely, by
\cite[Propositions~3.8--3.9]{LS2025}, the chosen
reflection sequence sends every regular
$\tau$-locally free module to one for the orientation
in Figure~\ref{quiverForCD}.
The latter lies in a stable tube by the case already
proved, so the original module also lies in a stable
tube.
The assertion follows.
\end{proof}

% \begin{proof}
% The simply-laced affine cases are immediate, since $H$ is the path
% algebra of a Euclidean quiver. The type $\widetilde C_n$ case follows
% from \cite[Theorem~3.19]{HLS2023}. Thus it remains to consider
% $H=H(\widetilde{CD}_n)$ with $n\geq2$.

% By \cite[Proposition~11.6]{GLSI}, every indecomposable
% preprojective or preinjective module is $\tau$-locally free.

% Consider now the regular components in Theorem~3.6.
% The mouth modules of the stable tubes listed in items~(4), (5), and~(6)
% are locally free by Proposition~\ref{forthm002}(2),
% by Lemma~\ref{lem8.2}(2), and by definition of ex-band modules, respectively.
% Since $\tau$ permutes the mouth modules of each tube,
% these mouth modules are $\tau$-locally free.
% It follows from \cite[Proposition~11.14]{GLSI} that every module
% in these tubes is $\tau$-locally free.

% On the other hand, the components in Theorem~3.6(2)--(3) contain
% full ex-string or $1$-ex-string modules, which are not locally free by
% Lemma~\ref{lem8.2}(1). Hence these components contain no $\tau$-locally free
% regular modules by \cite[Proposition~11.14]{GLSI}.
% Finally, the component $H(\widetilde{CD}_n)_{PI}$ contains the
% regular simple module $S_1$, which is not locally free; the same
% proposition therefore excludes all regular modules in this
% component.

% The assertion follows.
% \end{proof}

\subsection{A revised GLS conjecture}
As established by Gei\ss, Leclerc and Schr\"{o}er \cite[Theorem 1.3]{GLSI}, if $C$ is the Cartan matrix of Dynkin type, the set of  isomorphism classes of indecomposable $\tau$-locally free $H$-modules is in  bijection with the positive roots  of the complex simple Lie algebra $\mathfrak{g}(C)$ associated with $C$. For arbitrary Cartan matrices, the same authors formulated the so-called GLS conjecture, stated as follows.

\begin{conj}[{\cite[Conjecture 5.3]{GLSII}}] There exists a bijection between the positive roots of the Kac--Moody Lie algebra $\mathfrak{g}(C)$ attached to $C$  and the rank vectors of indecomposable $\tau$-locally
free $H$-modules.
\end{conj}

For the remainder of this section,
we suppose the Cartan matrix $C$ is of affine type.  
By \cite[Theorem 2]{LS2025}, every positive root of $\mathfrak{g}(C)$ occurs as the rank vector of some $\tau$-locally free $H$-module. However, the converse fails if $C$ is of type $\widetilde{B}_n$, $\widetilde{CD}_n$, $\widetilde{F}_{41}$, or $\widetilde{G}_{21}$; see \cite[Theorem 3]{LS2025}. Consequently, the GLS conjecture fails in full generality. 

We now reformulate the GLS conjecture for representation-tame
GLS algebras of affine type. The conjecture has already been
established for the simply-laced affine types and type
$\widetilde{C}_n$, so it remains to consider type
$\widetilde{CD}_n$ with $n\geq 2$.
We first recall the standard decomposition of the root system
of the Kac--Moody algebra $\mathfrak{g}(C)$:
\[
\Delta=\Delta_{\mathrm{re}}\sqcup\Delta_{\mathrm{im}},
\]
where $\Delta_{\mathrm{re}}$ denotes the set of real roots,
$\Delta_{\mathrm{im}}=\{m\delta\mid m\in\mathbb{Z}\setminus\{0\}\}$
is the set of imaginary roots, and $\delta$ is the unique
minimal positive imaginary root associated with $C$.
For type $\widetilde{CD}_n$, the set of real roots decomposes
as
\[
\Delta_{\mathrm{re}}
=\Delta_{\mathrm{re}}^{s}\sqcup\Delta_{\mathrm{re}}^{\ell},
\]
where $\Delta_{\mathrm{re}}^{s}$ and
$\Delta_{\mathrm{re}}^{\ell}$ denote the sets of short and
long real roots, respectively.

By removing the affine vertex from the Dynkin diagram of $C$, we obtain a finite-type Cartan submatrix associated to a simple finite-dimensional Lie algebra $\mathfrak{g}_0$, with finite root system decomposed disjointly as $$\Delta_0 = \Delta_0^{\mathrm{s}} \sqcup \Delta_0^{\ell}.$$ The distinct translation rules for short and long finite roots along $\delta$ are recorded in the following lemma, taken from \cite[Theorem 17.17]{Car}.

\begin{lem}\label{lem:root-translation}
Let $C$ be a Cartan matrix of type $\widetilde{CD}_n$ with $n\geq2$. Then
\[
\begin{aligned}
\Delta_{\mathrm{re}}^{\mathrm{s}} &= \big\{\alpha + r\delta \mid \alpha \in \Delta_{0}^{\mathrm{s}},\; r \in \mathbb{Z}\big\},\\
\Delta_{\mathrm{re}}^{\ell} &= \big\{\alpha + 2r\delta \mid \alpha \in \Delta_{0}^{\ell},\; r \in \mathbb{Z}\big\}.
\end{aligned}
\]
\end{lem}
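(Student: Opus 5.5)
The statement is the standard description of the real roots of an affine Kac--Moody algebra of twisted type. I would prove it by writing $\Delta_{\mathrm{re}}=W\cdot\Pi$ and computing the $W$-orbits explicitly, rather than simply quoting \cite[Theorem 17.17]{Car}.

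\emph{Step 1: normalizations.} Identify the valued graph of $\widetilde{CD}_n$ with the appropriate entry of Kac's table of twisted affine diagrams. My guess is $D_{n+1}^{(2)}$ or its dual; the choice is fixed by which of the two end configurations of Figure~\ref{quiverForCD} carries the vertex $1$ with $d_1=2$. Then do the following.
\begin{enumerate}
\item Fix the invariant form $(\alpha_i\mid\alpha_j)=d_ic_{ij}$ coming from the minimal symmetrizer, so that $(\alpha_i\mid\alpha_i)=2d_i$.
\item Record which simple roots are long and which are short.
\item Write $\delta=\sum_i a_i\alpha_i$ with the labels read off from the kernel of $C$.
\item Choose the affine vertex $0$ so that the remaining diagram is the finite type with root system $\Delta_0$, and note the relation $\alpha_0=c\,\delta-\theta$ for the appropriate highest (short or long) root $\theta$ of $\Delta_0$.
\end{enumerate}
Getting this bookkeeping right, in particular the constant $c$ and whether $\theta$ is short or long, is what later produces the asymmetry between the two formulas.

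\emph{Step 2: structure of $W$.} Show that $W=W_0\ltimes T$, where $T=\{t_h\}$ consists of the translations
\[
t_h(\lambda)=\lambda+(\lambda\mid\delta)h-\bigl((\lambda\mid h)+\tfrac12|h|^2(\lambda\mid\delta)\bigr)\delta .
\]
On roots, where $(\lambda\mid\delta)=0$, this reduces to $t_h(\alpha)=\alpha-(\alpha\mid h)\delta$. The parameter $h$ runs over the lattice $M$ generated by $W_0\theta^\vee$ (suitably normalized); this follows from $s_0s_\theta=t_{\theta^\vee}$ up to scaling.

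\emph{Step 3: the inclusion $\subseteq$.} Every real root is $W$-conjugate to a simple root. Hence every real root has the form $w(\alpha)+m\delta$ with $w\in W_0$ and $\alpha\in\Delta_0$ of the same length, where $\alpha_0$ itself contributes $-\theta+c\delta$. Compute the set $\{(\alpha\mid h): h\in M\}$ separately for short and for long $\alpha$. The expected outcome is that it equals $\mathbb Z$ for short $\alpha$ and $2\mathbb Z$ for long $\alpha$. Together with the residue of $m$ forced by $\alpha_0=c\delta-\theta$, this shows that every real root has the form $\alpha+r\delta$ with $\alpha$ short, or $\alpha+2r\delta$ with $\alpha$ long.

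\emph{Step 4: the inclusion $\supseteq$.} Apply translations $t_h$ to $\alpha\in\Delta_0$ and to $\alpha_0$ to realize every $\alpha+r\delta$ with $\alpha$ short and every $\alpha+2r\delta$ with $\alpha$ long. Then rule out the long roots with odd $\delta$-coefficient. For this I would check $W$-invariance of the residue of the $\delta$-coefficient mod $2$ on long roots. Equivalently, I would use the norm: $(\beta\mid\beta)$ identifies long versus short, and the coefficient of a long simple root mod $2$ is a $W$-invariant.

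\emph{Main obstacle.} The hardest step is the lattice computation in Step 3, namely determining exactly which multiples of $\delta$ appear for long roots. I would do it concretely in coordinates $\varepsilon_i$ of the finite type, where long and short roots have explicit forms such as $\pm\varepsilon_i\pm\varepsilon_j$ versus $\pm\varepsilon_i$, or the reverse. I would first check the case $n=2$ ($\widetilde B_2$) by hand to pin down the normalization.
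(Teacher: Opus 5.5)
The paper gives no argument for this lemma; it simply quotes \cite[Theorem~17.17]{Car}. Your plan is the standard textbook argument: $\Delta_{\mathrm{re}}=W\Pi$, $W=W_0\ltimes T$, and $t_h(\alpha)=\alpha-(\alpha\mid h)\delta$ on roots. As a route it is legitimate and self-contained. What it buys over the citation is that the conventions become visible. The lemma depends on your normalization $(\alpha_i\mid\alpha_j)=d_ic_{ij}$, which makes vertex $1$ long and the type twisted. For the transpose of $C$ the same graph is untwisted (of type $B_n^{(1)}$ for $n\geq3$); there the long real roots are $\alpha+\mathbb{Z}\delta$, and the second formula fails.

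Two pieces of your bookkeeping need correcting.

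\emph{The type identification.} The graph of $\widetilde{CD}_n$ branches at $n-1$ (towards $0^{+},0^{-}$) and has its double bond at the long vertex $1$. So it is $A^{(2)}_{2n-1}$ for $n\geq3$. Neither $D^{(2)}_{n+1}$ nor its dual $C^{(1)}_n$ has a branch point; your guess is right only for $n=2$, where the type is $D^{(2)}_3$. With the correct identification:
the affine vertex must be $0^{+}$ or $0^{-}$, since deleting $1$ leaves a simply-laced diagram;
$\Delta_0$ is of type $C_n$, with short roots $\pm\varepsilon_i\pm\varepsilon_j$ and long roots $\pm2\varepsilon_i$;
$\delta=\alpha_1+2\sum_{j=2}^{n-1}\alpha_j+\alpha_{0^{+}}+\alpha_{0^{-}}$;
and $\alpha_0=\delta-\theta$ with $c=1$ and $\theta=\varepsilon_1+\varepsilon_2$ the highest short root.
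Hence $M=\mathbb{Z}W_0\theta=\{x\in\mathbb{Z}^n\mid \sum_i x_i\ \text{even}\}$.

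\emph{The lattice computation for $n=2$.} The expected outcome of your Step 3 is false here. We have $M=\{(a,b)\mid a\equiv b \bmod 2\}$, and $(\pm\varepsilon_1\pm\varepsilon_2\mid h)\in2\mathbb{Z}$ for all $h\in M$. So translating short roots of $\Delta_0$ gives only even shifts, and the odd shifts come exclusively from $W\alpha_0=\{-w_0\theta+(1+(w_0\theta\mid h))\delta\}$. Your Step 4 also translates $\alpha_0$, so the conclusion survives, but that ingredient is essential rather than optional. For $n\geq3$ your expectation holds, e.g.\ $(\varepsilon_i-\varepsilon_j\mid\varepsilon_i+\varepsilon_k)=1$.

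Finally, the exclusion of long roots with odd shift is already contained in Step 3. For long $\alpha$ the shift lies in $(\alpha\mid M)=2\mathbb{Z}$, and $\alpha_0$ is short, so Step 4 only needs the realization part. The invariant you propose there is not $W$-invariant on all real roots: $s_1\alpha_2=\alpha_1+\alpha_2$ for $n\geq3$. On long roots, proving its invariance needs further input. If you want an invariant-style argument, use the lattice $L=\mathbb{Z}\alpha_1\oplus\bigoplus_{j\neq1}2\mathbb{Z}\alpha_j$ instead. It is $W$-stable because $c_{j1}\in\{0,-2\}$ for $j\neq1$, and it contains $\alpha_1$, hence every long real root. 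Since the coefficient of the affine vertex in $\alpha+r\delta$ is $r$, this forces $r$ to be even.
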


%Since the rank vector of any indecomposable  preprojective or preinjective module is a positive root \cite[Proposition 11.6]{GLSI}, we say that a root is preprojective, preinjective, and regular (resp. regular simple) if it is the rank vector of a preprojective, preinjective, and regular (resp. regular simple) module, respectively. By Theorem \ref{Thm2} we know that each regular root occurs as a rank vector of certain $\tau$-locally free module in a tube. Denote by $\Delta^+_{\textup{reg},l}$ (resp. $\Delta^+_{\textup{reg-sim},l}$) the subset of $\Delta_{\textup{re},l}$ consisting of regular (resp. regular simple) and long roots. We have $$\Delta^+_{\textup{reg},l}=\{\alpha+2r\delta|\alpha\in\Delta^+_{\textup{reg-sim},l},r\in\mathbb{Z}_{\geq 0}\}.$$
%The proof of \cite[Theorem 3.18]{LS2025} implies that 
%$$\Delta^+_{\textup{reg-sim},l}=
%{\varphi=\alpha_1+2\alpha_2+\dots+2\alpha_{n-%1}+2\alpha_n, %\phi=\alpha_1+2\alpha_2+\dots+2\alpha_{n-%1}+2\alpha_{n+1}\}.$$

Let $\Delta^+$ denote the set of positive roots of
$\mathfrak g(C)$. We identify the root lattice with
the lattice of rank vectors by identifying each
simple root $\alpha_i$ with the standard basis
vector corresponding to the vertex $i$.

For type $\widetilde{CD}_n$ with $n\geq2$, where
$\widetilde{CD}_2=\widetilde B_2$, we use the vertex
labels $\{1,\ldots,n-1,0^+,0^-\}$ from Figure~\ref{quiverForCD}.
The following lemma constructs, for an arbitrary
orientation, a stable tube of rank $2$ whose mouth
modules have positive long real rank vectors.
These vectors will be used in Theorem~\ref{Bijection}
to describe all additional non-root rank vectors.

% Let $C$ be a Cartan matrix of type $\widetilde{B}_2$ or $\widetilde{CD}_n$. 
% For convenience, we identify the root lattice with $\mathbb{Z}^{n+1}$
% by identifying its simple roots with the standard basis vectors
% $\varepsilon_1,\ldots,\varepsilon_{n+1}$ as follows:
% \[
%     \alpha_i \longleftrightarrow \varepsilon_i
%     \quad (1\leq i\leq n-1),\qquad
%     \alpha_{0^+} \longleftrightarrow \varepsilon_n,\qquad
%     \alpha_{0^-} \longleftrightarrow \varepsilon_{n+1}.
% \]
% With this identification, we state the following revised version of
% the GLS conjecture.

\begin{lem}\label{lem:long-mouth-pair}
Let $C$ be of type $\widetilde{CD}_n$ with $n\geq2$,
let $D$ be its minimal symmetrizer, and let $\Omega$
be any orientation. Set $H=H(C,D,\Omega)$.
Then there exists a stable tube of rank $2$ consisting
of $\tau$-locally free modules whose mouth modules
$X_1,X_2$ have positive long real rank vectors
\[
  \beta_i:=\underline{\operatorname{rank}}(X_i),
  \qquad i=1,2,
\]
satisfying $\beta_1+\beta_2=2\delta$.
\end{lem}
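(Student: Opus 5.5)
First treat the fixed orientation of Figure~\ref{quiverForCD}, then transport to arbitrary $\Omega$ by reflections.

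\textbf{Fixed orientation.} Use Theorem~\ref{MainTheorem}(5). Take the simplest body $\theta=\theta(\varnothing)=\varepsilon_1^{0}$, i.e.\ the trivial body at vertex $1$, with trivial index $\varnothing\in\mathcal I_4$. Form the $2$-ex-string pair
\[
(w_1,w_2)=(\gamma_{\textup{semi}+}^{-1}\cdot\theta\cdot\gamma_{\textup{semi}+},\;\gamma_{\textup{semi}-}^{-1}\cdot\theta\cdot\gamma_{\textup{semi}-}).
\]
If the empty index causes trouble in the definition, use instead the index $\delta=1$ (one loop $\varepsilon_1$), whose class lies in $\mathcal I_4/{\sim_1}$ as well. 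By Theorem~\ref{MainTheorem}(5) and Lemma~\ref{lem8.2}(2), $X_i:=N(w_i)$ are the two mouth modules of a rank-$2$ stable tube. They are $\tau$-locally free, with $\tau X_1\cong X_2$.

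Next, read off rank vectors from the definition of $N(w)$. The ex-walk of $w_1$ passes through vertex $0^+$ twice and $0^-$ not at all. It passes through each of $2,\dots,n-1$ twice, and through vertex $1$ with total dimension $2$ (or $2|\delta|+2$), so rank $1$ (resp.\ $|\delta|+1$) at the loop vertex. In the simplest case this gives
\[
\beta_1=\alpha_1+2\alpha_2+\cdots+2\alpha_{n-1}+2\alpha_{0^+},
\]
and $\beta_2$ is the same with $0^-$ in place of $0^+$. Then compute $\delta$ for $\widetilde{CD}_n$ from $C\delta=0$ with the minimal symmetrizer. I expect
\[
\delta=\alpha_1+2\alpha_2+\cdots+2\alpha_{n-1}+\alpha_{0^+}+\alpha_{0^-}
\]
in rank coordinates, up to the identification of rank at vertex $1$. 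The identity $\beta_1+\beta_2=2\delta$ then becomes a direct check. If the rank at vertex $1$ is off, I adjust the choice of $\theta$ until the sum matches.

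To show $\beta_i$ is a positive long real root, write $\beta_1=\alpha+2r\delta$ with $\alpha\in\Delta_0^\ell$. One way is to note $\beta_1-\delta=\alpha_{0^+}-\alpha_{0^-}$ and then use a Weyl group argument. A cleaner way is to exhibit $\beta_1=s_{i_1}\cdots s_{i_k}(\alpha_j)$ with $\alpha_j$ long, and verify that its norm equals the long norm $2d_j$. By Lemma~\ref{lem:root-translation}, a vector in the root lattice of the form $\alpha+2r\delta$ with $\alpha$ a long finite root is a long real root. So it suffices to find the finite part $\alpha$ (removing the affine vertex $1$ or $0^\pm$, whichever is affine) and check that it is long.

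\textbf{Arbitrary orientation.} As in the proof of Theorem~\ref{Thm2}, connect $\Omega$ to the fixed orientation by sink and source reflections at vertices other than $1$. These BGP functors preserve stable tubes, their ranks and local freeness. They act on rank vectors by the simple reflections $s_j$, which preserve long real roots and fix $\delta$, so $\beta_1+\beta_2=2\delta$ is kept.

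The delicate point is positivity. The reflected images of the $\beta_i$ must stay positive, and a reflection must never hit an exceptional simple. Both hold because regular modules in tubes are never the exceptional simple projective or injective. Their rank vectors are rank vectors of genuine modules, hence nonnegative and nonzero.

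The main obstacle is the exact bookkeeping in the rank vector at vertex $1$ and at the $0^\pm$ vertices, and identifying $\beta_i$ as a long (not short) real root. I would settle this first by checking the case $n=2$ ($\widetilde B_2$) by hand, then generalising.
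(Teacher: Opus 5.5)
Your proposal follows the paper's proof. You realize the pair in the fixed orientation of Figure~\ref{quiverForCD} through Theorem~\ref{MainTheorem}(5) and Lemma~\ref{lem8.2}(2) and read off the rank vectors. You then transport to an arbitrary $\Omega$ by sink and source reflections; the paper cites \cite[Propositions~3.8--3.9]{LS2025} for preservation of regular $\tau$-locally free modules, stable tubes and mouths. Positivity comes from nonzero modules, and $\beta_1+\beta_2=2\delta$ from the Weyl group fixing $\delta$. Your fallback choice $w_i=\gamma_{\mathrm{semi}\pm}^{-1}\varepsilon_1\gamma_{\mathrm{semi}\pm}$, with index $1$, is exactly the paper's pair.

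Two corrections are needed. First, drop the empty index. It is not in $\mathcal I_4$, since $\varnothing\circ\varnothing^{-1}=\varnothing$ is a nontrivial power of $\varnothing\in\mathcal I_e$. Moreover, the walk $\gamma_{\mathrm{semi}+}^{-1}\gamma_{\mathrm{semi}+}$ meets vertex $1$ only once, so that module would not even be locally free. Second, your count is off by two. For an index of length $k\geq1$, the walk of $\gamma_{\mathrm{semi}+}^{-1}\theta(\delta)\gamma_{\mathrm{semi}+}$ meets vertex $1$ exactly $2k$ times, not $2k+2$. So it is the index $1$, not $\varnothing$, that gives rank $1$ at vertex $1$ and the vector $\beta_1^0=\alpha_1+2\alpha_2+\cdots+2\alpha_{n-1}+2\alpha_{0^+}$ you wrote down.

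The paper only asserts the long-root claim, and your route via Lemma~\ref{lem:root-translation} settles it with $r=0$. Take $0^-$ as the affine node. Then $\beta_1^0$ is the highest root of the type $C_n$ subdiagram on $\{1,\dots,n-1,0^+\}$, whose unique long simple root is $\alpha_1$, so $\beta_1^0$ is long. The case of $\beta_2^0$ is symmetric.
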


\begin{proof}
Let $H_0=H(C,D,\Omega_0)$, where $\Omega_0$ is the
orientation in Figure~\ref{quiverForCD}. Consider the $2$-ex-string pair
\[
  w_1=\gamma_{\mathrm{semi}+}^{-1}
      \varepsilon_1\gamma_{\mathrm{semi}+},
  \qquad
  w_2=\gamma_{\mathrm{semi}-}^{-1}
      \varepsilon_1\gamma_{\mathrm{semi}-}.
\]
By Theorem~\ref{MainTheorem}(5) and Lemma~\ref{lem8.2}(2), the modules
$X_i^0=N(w_i)$ are $\tau$-locally free mouth modules
of a stable tube of rank $2$. All modules in this
tube are $\tau$-locally free by Theorem~\ref{Thm2}.
Their rank vectors are
\[
  \beta_1^0
  =\alpha_1+\sum_{j=2}^{n-1}2\alpha_j+2\alpha_{0^+},
  \qquad
  \beta_2^0
  =\alpha_1+\sum_{j=2}^{n-1}2\alpha_j+2\alpha_{0^-}.
\]
These are positive long real roots and satisfy
$\beta_1^0+\beta_2^0=2\delta$.

Since the underlying graph of $C$ is a tree, there
exists a sequence of reflections at sinks or sources
$i_1,\ldots,i_t$ transforming $\Omega_0$ into $\Omega$.
Let $F$ be the corresponding composition of reflection
functors, and set
\[
  X_i=F(X_i^0),\qquad
  w=s_{i_t}\cdots s_{i_1}.
\]
By \cite[Propositions~3.8--3.9]{LS2025}, these functors
preserve regular $\tau$-locally free modules, stable
tubes and their mouth modules. Hence $X_1,X_2$ are
the mouth modules of a stable rank-$2$ tube consisting
of $\tau$-locally free $H$-modules.
The rank-vector formula for reflection functors gives
\[
  \beta_i:=\underline{\operatorname{rank}}(X_i)
  =w\beta_i^0,\qquad i=1,2.
\]
Since the Weyl group preserves real roots and their
lengths and fixes $\delta$, the vectors $\beta_1,\beta_2$
are long real roots satisfying
\[
  \beta_1+\beta_2
  =w(\beta_1^0+\beta_2^0)=2\delta.
\]
They are positive because they are rank vectors
of nonzero modules.
\end{proof}

Using the pair $\beta_1,\beta_2$ constructed above,
we formulate a revised version of the GLS conjecture
for representation-tame GLS algebras of affine type,
explicitly describing all rank vectors that occur
in addition to the positive roots.

\begin{thm}\label{Bijection}
Let $C$ be a Cartan matrix of affine type, and let
$H=H(C,D,\Omega)$ be a representation-tame GLS algebra.
\begin{enumerate}
  \item[(1)]
  If $C$ is simply laced or of type $\widetilde C_n$, then
  \[
    \bigl\{
      \underline{\operatorname{rank}}(M)
      \mid M \text{ is indecomposable and }
      \tau\text{-locally free}
    \bigr\}
    =\Delta^+.
  \]

  \item[(2)]
  If $C$ is of type $\widetilde{CD}_n$ with $n\geq2$,
  where $\widetilde{CD}_2=\widetilde B_2$, then
  \[
    \begin{aligned}
      &\bigl\{
        \underline{\operatorname{rank}}(M)
        \mid M \text{ is indecomposable and }
        \tau\text{-locally free}
      \bigr\}\\
      &\qquad
      =\Delta^+\cup
      \bigl\{
        \beta_i+(2r+1)\delta
        \mid i=1,2,\ r\in\mathbb Z_{\geq0}
      \bigr\}.
    \end{aligned}
  \]
  %where $\beta_1, \beta_2$ are constructed in the proof of  Lemma~\ref{lem:long-mouth-pair}.
\end{enumerate}
\end{thm}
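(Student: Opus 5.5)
We prove Theorem~\ref{Bijection} by combining Theorem~\ref{Thm2}, which says that the indecomposable $\tau$-locally free modules are exactly the preprojective and preinjective modules together with the modules in stable tubes, with a tube-by-tube computation of rank vectors. By \cite[Theorem~2]{LS2025}, every positive root already occurs, so only the inclusion ``$\subseteq$'' needs work: we must show that every rank vector lies in the stated set, and that each vector $\beta_i+(2r+1)\delta$ does occur.

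\emph{Part (1).} For simply laced $C$, $H$ is a tame hereditary path algebra, and Kac's theorem applies directly. For $\widetilde C_n$, the equality is \cite{HLS2023}. So we only need to cite these results.

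\emph{Part (2).} Preprojective and preinjective indecomposables have real positive roots as rank vectors by \cite[Proposition~11.6]{GLSI}. For the regular part, we first fix the orientation $\Omega_0$ of Figure~\ref{quiverForCD} and use Theorem~\ref{MainTheorem} to list the stable tubes and their mouths.

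\begin{itemize}
\item[(i)] \emph{The rank-$(n-1)$ tube.} Its mouth is $\tau$-locally free by Proposition~\ref{forthm002}(2); the mouth modules are the simples $S_2,\dots,S_{n-1}$ and one string module. Their rank vectors are short real roots summing to $\delta$. The module of quasi-length $\ell$ has rank vector equal to a sum of $\ell$ cyclically consecutive mouth vectors. This gives short real roots when $(n-1)\nmid\ell$ and multiples of $\delta$ otherwise. All of these are roots by Lemma~\ref{lem:root-translation}.
\item[(ii)] \emph{Homogeneous tubes.} Ex-band modules of index length $k$ have rank vector $k\delta$. We must check this from the definition of $N(w,\varphi)$; at vertex $1$ the rank is half the dimension. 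So every module in these tubes has a rank vector in $\mathbb Z_{>0}\delta\subseteq\Delta^+$.
\item[(iii)] \emph{Rank-$2$ tubes from $2$-ex-string pairs.} The mouth $N(w_1),N(w_2)$ has rank vectors $\gamma_1,\gamma_2$. Reading them off the ex-walk of $\gamma_{\mathrm{semi}\pm}^{-1}\theta(\delta)\gamma_{\mathrm{semi}\pm}$, we get $\gamma_1+\gamma_2=2k\delta$, where $k$ is the index length. Moreover $\{\gamma_1,\gamma_2\}=\{\beta_1^0+(k-1)\delta,\ \beta_2^0+(k-1)\delta\}$ in the same-sign case. In the mixed case $\gamma_{\mathrm{semi}-}^{-1}\theta\gamma_{\mathrm{semi}+}$, both mouth vectors equal $k\delta$. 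Hmm — but if both are $k\delta$, the mouth vectors are roots, and the higher layers give $k\delta\cdot\ell$, which is fine. Modules of quasi-length $2m$ have rank vector $2mk\delta$, an imaginary root. Modules of quasi-length $2m+1$ have rank vector $\gamma_i+mk\cdot 2\delta=\beta_i^0+(k-1+2mk)\delta$. By Lemma~\ref{lem:root-translation}, since $\beta_i^0$ is long, this is a root exactly when the coefficient $k-1+2mk$ is even; otherwise it is of the form $\beta_i^0+(2r+1)\delta$. Conversely, taking $k=2$ and $m=0$ already gives $\beta_i^0+\delta$, and letting $k,m$ vary realizes every odd coefficient $2r+1$. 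So this family of tubes accounts for exactly the extra set.
\end{itemize}

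For an arbitrary orientation $\Omega$, we transport everything along the reflection functors used in Lemma~\ref{lem:long-mouth-pair}. By \cite[Propositions~3.8--3.9]{LS2025}, these give a bijection between regular $\tau$-locally free modules and act on rank vectors by a Weyl group element $w$. Since $w$ preserves $\Delta^+_{\mathrm{re}}$ (on regular roots), fixes $\delta$, and sends $\beta_i^0\mapsto\beta_i$, the extra set is transported to $\{\beta_i+(2r+1)\delta\}$. For preprojective and preinjective modules we again cite \cite{GLSI}.

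\emph{Main obstacle.} The hardest step is (iii): the exact rank-vector bookkeeping for $2$-ex-string modules, in particular handling index parity and the mixed-sign pairs correctly. We would first verify it on $n=2$ (type $\widetilde B_2$) using Figure~\ref{ExampleOfExSt}(c),(d), and then generalize via the explicit ex-walk in Figure~\ref{WalkForExSt}.
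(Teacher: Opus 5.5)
Your proposal is correct and follows essentially the same route as the paper. Like the paper, you combine Theorem~\ref{Thm2} with \cite[Theorem~2]{LS2025}, compute rank vectors tube by tube for the orientation $\Omega_0$ (the parity criterion of Lemma~\ref{lem:root-translation} isolates the odd quasi-length modules in the same-sign rank-$2$ tubes with even $k$), and transport to arbitrary $\Omega$ via the reflection functors of \cite[Propositions~3.8--3.9]{LS2025}. There are only two harmless slips: for $n=2$ the single mouth vector of the rank-$(n-1)$ tube is $\delta$ itself rather than a short real root, and an ex-band module with $\varphi$ of size $s$ has rank vector $sk\delta$ rather than $k\delta$; neither affects your conclusions.
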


\begin{proof}
By Theorem \ref{Tame}, the symmetrizer $D$ is minimal.
If $C$ is simply laced, then $H$ is the path algebra
of a Euclidean quiver, and assertion~(1) follows from
the classical root--dimension vector correspondence.
The case of type $\widetilde C_n$ follows from
\cite[Corollary~4.10]{HLS2023}.

We prove assertion~(2).
By \cite[Theorem~2]{LS2025}, every positive root occurs
as the rank vector of an indecomposable $\tau$-locally
free module. Thus it remains to determine all non-root
rank vectors.

First consider
$H_0=H(C,D,\Omega_0)$, where $\Omega_0$ is the
orientation in Figure~\ref{quiverForCD}. Let $\beta_1^0,\beta_2^0$
be the vectors in the proof of Lemma~\ref{lem:long-mouth-pair}.
In particular, they are positive long real roots and
\[
  \beta_1^0+\beta_2^0=2\delta.
\]

By Theorem~\ref{Thm2}, every indecomposable $\tau$-locally
free $H_0$-module is preprojective, preinjective,
or contained in a stable tube.
Preprojective and preinjective modules have positive
real rank vectors.

The tubes in Theorem~\ref{MainTheorem}(4) and~(6) contribute
only positive roots. Indeed, for $n\geq3$, the mouth
vectors of the tube in Theorem~\ref{MainTheorem}(4) are
\[
  \alpha_2,\ldots,\alpha_{n-1},
  \qquad
  \rho=\delta-\sum_{j=2}^{n-1}\alpha_j.
\]
They sum to $\delta$, and their nonempty proper
consecutive sums in the cyclic order are short real
roots. Rank additivity and Lemma~\ref{lem:root-translation} therefore show
that every module in this tube has a positive root
as its rank vector. For $n=2$, its mouth vector is
$\delta$, so all its rank vectors are positive
multiples of $\delta$.
The construction of ex-band modules likewise shows
that the tubes in Theorem~\ref{MainTheorem}(6) contribute only
positive multiples of $\delta$.

It remains to consider the rank-$2$ tubes in
Theorem~\ref{MainTheorem}(5). Let
$(\widetilde w_1,\widetilde w_2)$ be a
$2$-ex-string pair, and set
\[
  \mathbf r_i
  =\underline{\operatorname{rank}}N(\widetilde w_i),
  \qquad i=1,2.
\]
By Definition~\ref{2-ex-string} and the construction of ex-string
modules, there are two cases.

\smallskip
\noindent
\textit{Case 1.}
For some $k\geq1$,
\[
 \widetilde w_1=
 \gamma_{\mathrm{semi}+}^{-1}
 \theta(\delta_1\cdots\delta_k)\gamma_{\mathrm{semi}+},
 \qquad
 \widetilde w_2=
 \gamma_{\mathrm{semi}-}^{-1}
 \theta(\delta_1\cdots\delta_k)\gamma_{\mathrm{semi}-}.
\]
Then
\[
  \mathbf r_i=\beta_i^0+(k-1)\delta,
  \qquad
  \mathbf r_1+\mathbf r_2=2k\delta.
\]
An indecomposable module with quasi-socle
$N(\widetilde w_i)$ and quasi-length $m$ therefore
has rank vector
\[
 \begin{cases}
   2sk\delta,
     & m=2s,\quad s\geq1,\\[2pt]
   \beta_i^0+(k-1+2sk)\delta,
     & m=2s+1,\quad s\geq0.
 \end{cases}
\]
The even quasi-lengths give positive imaginary roots.
Since $\beta_i^0$ is a long real root,
Lemma~\ref{lem:root-translation} implies that
\[
  \beta_i^0+t\delta\in\Delta^+
  \quad\Longleftrightarrow\quad
  t\equiv0\pmod2
  \qquad (t\in\mathbb Z_{\geq0}).
\]
Thus the odd quasi-lengths give roots when $k$ is odd,
and non-root vectors of the form
$\beta_i^0+(2r+1)\delta$ when $k$ is even.
Conversely, every such non-root vector occurs as a
mouth vector by taking $k=2r+2$.

\smallskip
\noindent
\textit{Case 2.}
For some $k\geq1$,
\[
 \widetilde w_1=
 \gamma_{\mathrm{semi}-}^{-1}
 \theta(\delta_1\cdots\delta_k)\gamma_{\mathrm{semi}+},
 \qquad
 \widetilde w_2=
 \gamma_{\mathrm{semi}+}^{-1}
 \theta(\delta_1\cdots\delta_k)\gamma_{\mathrm{semi}-}.
\]
Here $\mathbf r_1=\mathbf r_2=k\delta$.
Hence every module of quasi-length $m$ has rank
vector $mk\delta$, a positive imaginary root.

Consequently, the non-root rank vectors for $H_0$
are precisely
\[
  \left\{
    \beta_i^0+(2r+1)\delta
    \;\middle|\;
    i=1,2,\ r\in\mathbb Z_{\geq0}
  \right\}.
\]

Now let $\Omega$ be arbitrary. 
Choose the reflection sequence and the Weyl group
element $w$ used in the proof of
Lemma~\ref{lem:long-mouth-pair}, so that
$\beta_i=w\beta_i^0$ for $i=1,2$.
By \cite[Propositions~3.8--3.9]{LS2025}, the associated
reflection functors give mutually inverse
correspondences on indecomposable regular
$\tau$-locally free modules. Their rank vectors
are transformed by $w$.

Preprojective and preinjective rank vectors are real
roots, so every non-root rank vector comes from a
regular module. Since $w$ preserves the root system
and fixes $\delta$, the non-root rank vectors for
$H(C,D,\Omega)$ are exactly
\[
  w\bigl(\beta_i^0+(2r+1)\delta\bigr)
  =\beta_i+(2r+1)\delta,
  \qquad i=1,2,\quad r\in\mathbb Z_{\geq0}.
\]
Together with the realization of every positive root,
this proves assertion~(2).
\end{proof}

\begin{rem}
Let $\mathcal L_H$ be the set of positive long real
rank vectors of $\tau$-locally free mouth modules
of stable tubes, and put
\[
  \mathcal P_H
  :=\{\alpha\in\mathcal L_H
       \mid \alpha-2\delta\notin\mathcal L_H\}.
\]
Both sets are defined for the fixed algebra $H$ and may depend
on its orientation.
In the simply-laced cases, set
$\mathcal L_H=\mathcal P_H=\varnothing$ by convention.

The mouth-vector calculations give
$\mathcal L_H=\varnothing$ in type $\widetilde C_n$ and
\[
  \mathcal L_H
  =\{\beta_i+2r\delta
      \mid i=1,2,\ r\in\mathbb Z_{\geq0}\},
  \qquad
  \mathcal P_H=\{\beta_1,\beta_2\}
\]
in type $\widetilde{CD}_n$.
Hence Theorem~\ref{Bijection} can be written uniformly as
\[
 \begin{aligned}
 &\bigl\{
   \underline{\operatorname{rank}}(M)
   \mid M\text{ is indecomposable and }
   \tau\text{-locally free}
 \bigr\}\\
 &\qquad
 =\Delta^+\cup
 \bigl\{
   \alpha+(2r+1)\delta
   \mid \alpha\in\mathcal P_H,\ r\in\mathbb Z_{\geq0}
 \bigr\}.
 \end{aligned}
\]
\end{rem}

% \begin{rem}
% Theorem~8.6 also admits the following uniform formulation.
% In the nonsimply-laced cases, let
% $\Delta^{\ell,+}_{\mathrm{re,reg\text{-}sim}}$
% denote the set of minimal elements, with respect to the
% standard root order, among the positive long real roots
% occurring as rank vectors of $\tau$-locally free mouth
% modules of stable tubes. In the simply-laced cases, set
% $\Delta^{\ell,+}_{\mathrm{re,reg\text{-}sim}}=\varnothing$
% by convention.

% This set is empty in type $\widetilde C_n$.
% For type $\widetilde{CD}_n$ with $n\geq2$,
% the mouth-vector calculation in the proof gives
% \[
%  \Delta^{\ell,+}_{\mathrm{re,reg\text{-}sim}}
%  =\{\beta_1,\beta_2\}.
% \]
% Indeed, the long real mouth vectors are precisely of the
% form $\beta_i+2r\delta$, with $i=1,2$ and
% $r\in\mathbb Z_{\geq0}$, and $\beta_1$ and $\beta_2$
% are incomparable in the standard root order.

% Consequently, in the cases covered by Theorem~8.6,
% \[
%  \left\{
%  \underline{\operatorname{rank}}M
%  \;\middle|\;
%  M\text{ is indecomposable and $\tau$-locally free}
%  \right\}
%  =\widetilde\Delta^+,
% \]
% where
% \[
%  \widetilde\Delta^+
%  :=
%  \Delta^+\cup
%  \left\{
%  \alpha+(2r+1)\delta
%  \;\middle|\;
%  \alpha\in\Delta^{\ell,+}_{\mathrm{re,reg\text{-}sim}},
%  \ r\in\mathbb Z_{\geq0}
%  \right\}.
% \]
% \end{rem}

\begin{rem}
In type $\widetilde{CD}_n$ with $n\geq2$,  where $\widetilde{CD}_2=\widetilde B_2$, the non-root rank vectors in
\cite[Propositions~6.5 and  6.8]{LS2025}
are initial instances of the family
\[
 \{\beta_i+(2r+1)\delta
   \mid i=1,2,\ r\in\mathbb Z_{\geq0}\}.
\]
Theorem~\ref{Bijection} shows that this family exhausts the non-root
rank vectors for $H$.
They occur precisely at odd quasi-length in the tubes
of Case~1 with even $k$.
\end{rem}

\begin{rem}
Theorem~\ref{Bijection} concerns rank vectors rather than isomorphism
classes. In particular, a positive real root need not
determine a unique indecomposable $\tau$-locally free module.
For example, with the orientation shown in Figure~\ref{quiverForCD}, the vector $\beta^0_i+2\delta$ occurs both as a mouth vector
in Case~1 with $k=3$ and as the rank vector of a module
of quasi-length $3$ in Case~1 with $k=1$.
These modules lie in distinct tubes and are therefore
nonisomorphic.
\end{rem}

\begin{rem}
For the orientation in Figure~\ref{quiverForCD},
$\underline{\operatorname{rank}}I_1=\alpha_1$.
Although $\alpha_1$ is a positive long real root, none of
the vectors $\alpha_1+(2r+1)\delta$, with
$r\in\mathbb Z_{\geq0}$, occurs as the rank vector of an
indecomposable $\tau$-locally free module.
Thus the additional vectors in Theorem~\ref{Bijection} cannot be
obtained by allowing arbitrary positive long real roots
in place of $\beta_1$ and $\beta_2$.
\end{rem}

\appendix\label{appendix}

\section{Equivariantization of stable tubes}

In this appendix, we investigate the equivariant categories of a tube under various $\mathbb{Z}_2$-actions. Recall that we denote by $(\mathcal{T}_p,\tau)$, or simply $\mathcal{T}_p$, the standard tube of rank $p$. %, that is, the Auslander--Reiten quiver of $\mathcal{T}_p$ is of the form $\mathbb{Z}A_{\infty}/(\tau^p)$ (see \cite{Elements2}).  
For a non-projective indecomposable object $X$ in a standard tube $\mathcal{T}_p$, there is a canonical bijection $\zeta$ from the set $X^{-}$ of arrows ending at $X$ to the set $(\tau X)^{+}$ of arrows starting at $\tau X$ (see \cite{Covering15}). 

The following proposition describes the equivariant categories
for three types of $\mathbb Z_2$-actions used in Section~\ref{Mainresult1}.

\begin{prop}\label{ImportantEquivar}
Let $G=\{e,\sigma\}$ be the cyclic group of order two.
\begin{enumerate}
\item If $G$ acts on $\mathcal{T}_p\times\mathcal{T}_p$ via the involution swapping the two factors, then $$(\mathcal{T}_p\times\mathcal{T}_p)^G\cong \mathcal{T}_p.$$
\item Assume that $G$ acts on $\mathcal{T}_p$ and that $F_{\sigma}$ fixes every object. For each arrow $f$ in the Auslander--Reiten quiver of $\mathcal{T}_p$, choose an irreducible representative, still denoted by $f$, such that $F_\sigma(f)=l_f f$, where $l_f\in\{1,-1\}$. Then the following statements hold.
\begin{itemize}
\item[(i)]
If $l_{\zeta f}l_f=1$ for each irreducible morphism $f$, then
$$(\mathcal{T}_p)^G\cong \mathcal{T}_p\times \mathcal{T}_p.$$
\item[(ii)] If $p=1$ and $l_{\zeta f}l_f=-1$ for each irreducible morphism $f$, then  $$(\mathcal{T}_1)^G\cong \mathcal{T}_2.$$
\end{itemize}
\end{enumerate}
\end{prop}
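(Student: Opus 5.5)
We prove the three parts separately, describing $\mathcal T^G$ by its indecomposable objects and irreducible morphisms. We use two general facts about $\mathbb Z_2$-equivariantization in characteristic zero.

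\emph{Indecomposables.} Every indecomposable of $\mathcal C^G$ is a direct summand of $\operatorname{Ind}(X)$ for an indecomposable $X$; we take this from the duality used in the proof of Theorem~\ref{MainTheorem}.

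\emph{Almost split sequences.} $\operatorname{Ind}$ preserves almost split sequences \cite[Theorem 3.8]{ReitenRiedtmann1985Skew}.

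Hence the Auslander--Reiten quiver of $\mathcal T^G$ is obtained from that of $\mathcal T$ by applying $\operatorname{Ind}$ and then splitting summands. Standardness of the result follows as in \cite[Proposition 2.1]{DRZ}. The shape is then read off from the mesh structure.

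\textbf{Part (1).} Write $F_\sigma(X,Y)=(Y,X)$. An equivariant object $((X,Y),\alpha)$ is determined up to isomorphism by $X$, because $\alpha$ forces $Y\cong X$. So the functor
\[
(X,Y,\alpha)\mapsto X
\]
is an equivalence $(\mathcal T_p\times\mathcal T_p)^G\to\mathcal T_p$, with quasi-inverse $X\mapsto \operatorname{Ind}(X,0)$. It is fully faithful: an equivariant morphism $(f,g)$ satisfies $g=f$ after conjugating by $\alpha$. This settles (1) directly.

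\textbf{Part (2).} Here $F_\sigma$ fixes every object $X$. Choose $\alpha_X\colon X\to F_\sigma X$ with $F_\sigma(\alpha_X)\alpha_X=\mathrm{id}$; this is possible since $\operatorname{End}(X)$ is local and $\mathbf k$ is algebraically closed. Then $(X,\alpha_X)$ and $(X,-\alpha_X)$ are the two non-isomorphic equivariant structures on $X$, and
\[
\operatorname{Ind}X\cong (X,\alpha_X)\oplus(X,-\alpha_X).
\]
So each vertex of $\mathcal T_p$ splits into two vertices $X^{+},X^{-}$. An irreducible map $f\colon X\to Y$ with $F_\sigma(f)=l_f f$ joins $X^{\epsilon}$ to $Y^{l_f\epsilon}$, after normalizing the $\alpha$'s compatibly. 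The point needing care is that this normalization is consistent. I plan to normalize $\alpha$ inductively along sectional paths, starting from the mouth. Each mesh $\tau X\to E\to X$ gives the rule
\[
\tau(X^{\epsilon})=(\tau X)^{\epsilon\, l_{\zeta f}l_f},
\]
where the AR translate in $\mathcal T^G$ is computed by applying $\operatorname{Ind}$ to the almost split sequence.

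For (i), $\tau$ preserves the sign. The two sheets $\{X^{+}\}$ and $\{X^{-}\}$, each with the arrows $X^{\epsilon}\to Y^{l_f\epsilon}$ relabelled, are then disconnected copies of $\mathbb ZA_\infty/(\tau^p)$. This gives $\mathcal T_p\times\mathcal T_p$.

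For (ii), with $p=1$, $\tau$ swaps the signs. The translation quiver becomes a single connected $\mathbb ZA_\infty/(\tau^2)$, that is, the tube $\mathcal T_2$.

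The main obstacle is justifying that the signs $l_f$ can be chosen coherently, so that $l_{\zeta f}l_f$ is well defined on meshes, and computing $\tau$ of $X^{\pm}$ correctly. I would handle this by the explicit induction on quasi-length: fix $\alpha$ on the mouth objects, then extend $\alpha$ up each ray using the irreducible monomorphisms $f_r$ chosen with $F_\sigma(f_r)=f_r$, as in the proof of Proposition~\ref{forthm003}.
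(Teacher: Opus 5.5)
Your Part (1) is correct, and it is more direct than the paper's argument. The projection $((X,Y),\alpha)\mapsto X$ is an equivalence for the swap action on any $\mathcal C\times\mathcal C$, whereas the paper only matches up almost split sequences.

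The local bookkeeping in Part (2) is also right. With $F_\sigma X=X$ you may take $\alpha_X=\mathrm{id}$. Then $\operatorname{Hom}_G((X,\epsilon),(Y,\epsilon'))$ is the $\epsilon\epsilon'$-eigenspace of $F_\sigma$ on $\operatorname{Hom}(X,Y)$, so $f$ gives an arrow $X^{\epsilon}\to Y^{l_f\epsilon}$, and $\tau(X^{\epsilon})=(\tau X)^{\epsilon l_{\zeta f}l_f}$. However, the argument stops short in two places.

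\textbf{First gap: the relabelling.} The obstacle you name is not the real one. The sign $l_f$ is intrinsic: it is the eigenvalue of $F_\sigma$ on the one-dimensional space $\operatorname{Irr}(X,Y)$, so nothing has to be chosen coherently.
\begin{itemize}
\item What (i) actually requires is a global relabelling: a function $c$ from vertices to $\{\pm1\}$ with $c_Y=l_f c_X$ for every arrow $f\colon X\to Y$. Until you show it exists, the sets $\{X^+\}$ and $\{X^-\}$ are not components, because arrows with $l_f=-1$ cross between them.
\item Propagating signs up the rays from the mouth does not by itself make the coray arrows sign-preserving. You also need the $\pm1$-cocycle to be trivial on the mesh squares (automatic under either hypothesis) and on the loop around the tube. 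That loop has weight $\prod l_{\zeta f}l_f$ over a full $\tau$-orbit.
\item Equivalently, $X^{\epsilon}\mapsto X$ is a two-sheeted covering of $\mathbb{Z}A_{\infty}/(\tau^p)$, and it splits if and only if $\tau^p$ fixes the fibres.
\item This is exactly where $p=1$ enters (ii). With $l_{\zeta f}l_f=-1$ for all $f$ and $p$ even, the cover splits again.
\end{itemize}
The paper's proof that $c_Y$ is independent of the walk (mesh boundaries plus one fundamental circuit) is precisely this check.

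\textbf{Second gap, and the more serious one: category versus quiver.} All of the above identifies only the Auslander--Reiten quiver of $\mathcal T^G$, whereas the statement is an equivalence of categories.
\begin{itemize}
\item Your sentence that standardness ``follows as in \cite[Proposition 2.1]{DRZ}'' carries the whole step and is unsupported. The paper invokes that result only for the trivial action and for an action by a power of $\tau$. It is not used as a recognition theorem turning a computed AR quiver into an equivalence.
\item The missing idea is to move the sign data to the level of functors, using standardness of $\mathcal T_p$ rather than of $\mathcal T^G$.
\item In (i), the paper turns your relabelling into a natural isomorphism $\eta\colon F_\sigma\Rightarrow\mathrm{id}$ with $\eta_Y=c_Y\,\mathrm{id}_Y$. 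This is natural because $\mathcal T_p$ is standard and hence generated by irreducible maps. By \cite[Lemma 2.1]{CCR} the action is then equivalent to the trivial one, whose equivariant category is $\mathcal T_p\times\mathcal T_p$.
\item In (ii), the paper shows the action is equivalent to the one induced by $X\mapsto -X$ on $\mathbf k[X]$. This gives $\mathcal T_1^G\simeq\operatorname{mod}^{\mathrm{nil}}(\mathbf k[X][G])$, the nilpotent representations of the two-vertex oriented cycle, which is $\mathcal T_2$.
\end{itemize}
To finish, either adopt this route or give an independent proof that $\mathcal T^G$ is standard. For example, show that it is a hereditary uniserial length category, so that it is determined by the Ext-quiver of its simple objects.
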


\begin{proof}
(1) For each indecomposable object $X$ in either copy of $\mathcal{T}_p$, the object $\textup{Ind}(X)$ is indecomposable, since 
$F_{\sigma}(X)\ncong X$. Let $\eta_X$ be the almost split sequence 
ending at $X$. Then $F_{\sigma}(\eta_X)$ is the almost split sequence 
ending at $F_{\sigma}(X)$. By \cite[Theorem~3.8]{ReitenRiedtmann1985Skew}, 
$\eta_X$ and $F_{\sigma}(\eta_X)$ are identified under 
the induction functor and yield, up to isomorphism, the same almost split sequence in $(\mathcal{T}_p\times\mathcal{T}_p)^G$. Thus the two tubes are identified along the $G$-orbits, and hence
$$(\mathcal{T}_p\times\mathcal{T}_p)^G\cong \mathcal{T}_p.$$

(2) (i) Since $F_\sigma$ fixes every object, we regard $F_\sigma(Y)=Y$ for every object $Y$ of $\mathcal{T}_p$. By assumption, the chosen irreducible representatives satisfy
\begin{equation}\label{lflf}
    l_{\zeta f}\cdot l_f=1.
\end{equation}

Fix an object $X$ at the mouth of $\mathcal{T}_p$. For any object $Y$, choose a walk $w=f_1f_2\cdots f_t$ from $X$ to $Y$ in the Auslander--Reiten quiver, where each $f_i$ is either an irreducible arrow or the formal inverse of one. Define
$$c_Y=\prod_{i=1}^tl_{f_i}^{\,\epsilon_i},\;\;\;\;\textup{where}\;\epsilon_i=\left\{\begin{array}{ll}
     -1&  f_i\;\textup{is an irreducible arrow}\\
     1&  f_i\;\textup{is the formal inverse of an irreducible arrow}
\end{array}\right..$$

We claim that $c_Y$ is independent of the chosen walk. Since $\mathcal T_p$ is a standard tube, every closed walk in its Auslander--Reiten quiver is, up to cancellation and the mesh relations, generated by mesh boundaries and a fundamental circuit around the tube. The scalar weight of every two-arrow mesh path
associated with $f$ is $(l_{\zeta f}l_f)^{-1}=1$. Hence every mesh boundary has weight $1$. A fundamental circuit is obtained by concatenating these two-arrow paths along a complete $\tau$-orbit, and therefore also has weight $1$. Thus every closed walk has scalar weight $1$, proving the claim.

Let $\{F'_e,F'_{\sigma}\}$ be the trivial $G$-action on $\mathcal{T}_p$. We construct a natural isomorphism
$$\eta:\;\operatorname{id}_{\mathcal{T}_p}\circ F_{\sigma}\rightarrow F'_{\sigma}\circ\operatorname{id}_{\mathcal{T}_p}$$ 
satisfying the condition $\operatorname{id}=F'_{\sigma}\eta\circ \eta F_{\sigma}$. 

Set $\eta_Y=c_Y\operatorname{id}_Y$. If $f:\;Y\rightarrow Z$ is irreducible, then
$c_Z=l_{f}^{-1}c_Y$. Consequently,
$$\eta_Z\circ F_{\sigma}(f)=c_Zl_f f=c_Y f=f\circ\eta_Y.$$
Since $\mathcal{T}_p$ is standard and hence generated by its irreducible morphisms subject to the mesh relations, $\eta$ is a
natural isomorphism from $F_{\sigma}$ to $\operatorname{id}_{\mathcal{T}_p}$.

Moreover, $c_Y\in\{1,-1\}$ for every $Y$, and hence $\operatorname{id}_{\mathcal{T}_p}(\eta_Y)\circ\eta_{F_{\sigma}(Y)}=\operatorname{id}_Y$. Thus, $(\operatorname{id}_{\mathcal{T}_p},\eta):\;\mathcal{T}_p\rightarrow\mathcal{T}_p$ is a $G$-equivariant functor. By \cite[Lemma 2.1]{CCR}, the equivariant category $(\mathcal{T}_p)^G$ with respect to the given $G$-action is equivalent to that with respect to the trivial $G$-action. By \cite[Proposition 2.1]{DRZ}, the latter is equivalent to $\mathcal{T}_p\times\mathcal{T}_p$. Hence, $(\mathcal{T}_p)^G\cong \mathcal{T}_p\times \mathcal{T}_p$.

(ii) Assume that $p=1$. Fix a ray
$$X_1\xrightarrow{f_1} X_2\xrightarrow{f_2} X_3\xrightarrow{f_3}\cdots$$ in $\mathcal{T}_1$. Define an autoequivalence $F'$ by setting $F'(f_i)=f_i$, $F'(\zeta f_i)=-\zeta f_i$, for every $i\geq1$. Similarly to the proof in part (i), we obtain an equivalence between the two $G$-actions $\{F_e,F_\sigma\}$ and $\{\operatorname{id}_{\mathcal T_1},F'\}$.

We next consider the $G$-action $\{\operatorname{id}_{\mathcal{T}_1},F'\}$ on $\operatorname{mod}^{\textup{nil}}{\bf k}[X]$, the category of nilpotent ${\bf k}[X]$-modules, which is equivalent to $\mathcal{T}_1$. Put $M_i={\bf{k}}[X]/(X^i)$ and $M_0=0$. The almost split sequences in $\operatorname{mod}^{\textup{nil}}{\bf k}[X]$ are of the form
$$0\rightarrow M_i\xrightarrow{\begin{pmatrix}
    X\\(-1)^i
\end{pmatrix}}M_{i+1}\oplus M_{i-1}\xrightarrow{\begin{pmatrix}
    (-1)^{i+1}&X
\end{pmatrix}}M_{i}\rightarrow0.$$
Consider the $\sigma$-action on ${\bf k}[X]$ defined by ${\sigma}(X)=-X$, and let $F$ denote the induced action on $\operatorname{mod}^{\textup{nil}}{\bf k}[X]$. Each $F(M_i)$ is isomorphic to $M_i$ via
$$F(M_i)\to M_i\;\;\;\;\overline{h(X)}\mapsto\overline{h(-X)}.$$
With respect to these identifications, $F$ sends every irreducible morphism labelled by $X$ to $-X$, and fixes every irreducible morphism labelled by a nonzero scalar. Thus, under the standard equivalence $$\mathcal{T}_1\cong \operatorname{mod}^{\textup{nil}}{\bf k}[X],$$
the action $F$ corresponds to the object-fixing action $F'$ on $\mathcal{T}_1$. By \cite[Lemma 2.1]{CCR}, we have $$(\mathcal{T}_1)^G\cong (\operatorname{mod}^{\textup{nil}}{\bf k}[X])^G.$$

Moreover, the skew group algebra $({\bf k}[X])[G]$ has two primitive orthogonal idempotents, namely $(e+\sigma)/2$ and $(e-\sigma)/2$. The bound quiver associated to $({\bf k}[X])[G]$ is
$$\xymatrix{\frac{e+\sigma}{2}\ar@<0.4ex>[rr]^{X-X\sigma}&&\frac{e-\sigma}{2}\ar@<0.4ex>[ll]^{X+X\sigma}.}$$
By Lemma \ref{EqFromA^GToA[G]}, $\Phi$ induces an equivalence between $(\operatorname{mod}^{\textup{nil}}{\bf k}[X])^G$ and $\operatorname{mod}^{\textup{nil}}({\bf k}[X])[G]$. Consequently,
$$(\mathcal{T}_1)^G\cong (\operatorname{mod}^{\textup{nil}}{\bf k}[X])^G\xrightarrow[\Phi]{\simeq}\operatorname{mod}^{\textup{nil}}({\bf k}[X])[G]\cong\mathcal{T}_2.$$
\end{proof}

\section{Equivariantization of string modules}

Recall that $H(\widetilde{C}_{2n-2})$ and $H(\widetilde{CD}_n)$ denote the GLS algebras of types $\widetilde{C}_{2n-2}$ and $\widetilde{CD}_n$, respectively; see Figures \ref{quiverForC} and \ref{quiverForCD} for their quivers. By Example \ref{EXAMPLEcd+c}, the $G$-action on $H(\widetilde{C}_{2n-2})$ induces an equivalence $$\Phi:\;\big(\operatorname{mod}H(\widetilde{C}_{2n-2})\big)^G\xrightarrow{\simeq}\operatorname{mod}H(\widetilde{CD}_{n}).$$ 
In this appendix, we make explicit the induced correspondence between string modules and certain ex-string modules.

First, we describe the structure of the ex-string modules $N(\gamma\gamma^{-1})$, where $\gamma=(\gamma_{0^+},\gamma_{0^-})$ is defined in Subsection \ref{exstring}. Note that the $H(\widetilde{CD}_n)$-module $N(\gamma\gamma^{-1})$ can be regarded as the module
$$\mathbf{k}\xleftarrow{(1,1)}\mathbf{k}^2\xrightarrow{(1,-1)}\mathbf{k}$$
over the quiver $0^{+}\xleftarrow{\gamma_{0^+}}n-1\xrightarrow{\gamma_{0^-}}0^{-}$.

\begin{lem}\label{A3caseIso}
\begin{enumerate}
    \item There exists an isomorphism between 
 $${{\bf k}}\xleftarrow{(1,1)}{{\bf k}}^2\xrightarrow{(1,-1)}{{\bf k }}\;\;\;\textup{and}\;\;\;{{\bf k}}\xleftarrow{(1,1)}{{\bf k}}^2\xrightarrow{(-1,1)}{{\bf k }}$$
    that preserves the basis of the vector space at vertex $n-1$.
    \item We have
    $$N(\gamma\gamma^{-1})\cong N(\gamma_{0^{+}})\oplus N(\gamma^{-1}_{0^{-}}).$$
\end{enumerate}
\end{lem}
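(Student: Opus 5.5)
Both parts come down to an explicit change of basis in a representation of the three-vertex subquiver $0^{+}\xleftarrow{\gamma_{0^+}}n-1\xrightarrow{\gamma_{0^-}}0^{-}$. The module $N(\gamma\gamma^{-1})$ is supported only on the vertices $n-1,0^{+},0^{-}$, and all other arrows, including $\varepsilon_1$ and $\gamma_1,\dots,\gamma_{n-2}$, act by zero. So any vertexwise linear isomorphism that intertwines $\gamma_{0^+}$ and $\gamma_{0^-}$ is automatically an isomorphism of $H(\widetilde{CD}_n)$-modules. I will use the description of $N(\gamma\gamma^{-1})$ as ${\bf k}\xleftarrow{(1,1)}{\bf k}^2\xrightarrow{(1,-1)}{\bf k}$ stated just before the lemma.

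For (1), I would take the identity at vertex $n-1$, the scalar $1$ at $0^{+}$, and the scalar $-1$ at $0^{-}$. I then check the two commutativity squares:
\[
1\cdot(1,1)=(1,1)\cdot I_2,\qquad (-1)\cdot(1,-1)=(-1,1)\cdot I_2 .
\]
These hold, so the triple $(I_2,1,-1)$ is an isomorphism between the two representations. It fixes the chosen basis of ${\bf k}^2$ at vertex $n-1$, which is the asserted basis-preservation. An alternative is to swap the two basis vectors at $n-1$ and keep $1$ at both outer vertices; either choice works, and I would keep the first.

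For (2), I would change basis at vertex $n-1$ to $v_1=\tfrac12(e_1+e_2)$ and $v_2=\tfrac12(e_1-e_2)$. This is legitimate because $\operatorname{char}{\bf k}=0$, so $2$ is invertible. In this basis the two arrows act as follows:
\[
\gamma_{0^+}\colon v_1\mapsto 1,\ v_2\mapsto 0, \qquad \gamma_{0^-}\colon v_1\mapsto 0,\ v_2\mapsto 1 .
\]
Hence the representation splits as the direct sum of the subrepresentation spanned by $v_1$ and the basis vector at $0^{+}$, and the subrepresentation spanned by $v_2$ and the basis vector at $0^{-}$. The first summand is ${\bf k}\xrightarrow{1}{\bf k}$ along $\gamma_{0^+}$, which is $N(\gamma_{0^+})$. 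The second is ${\bf k}\xrightarrow{1}{\bf k}$ along $\gamma_{0^-}$, which is $N(\gamma_{0^-})\cong N(\gamma_{0^-}^{-1})$ by the observation $N(w)\cong N(w^{-1})$. This gives the claimed decomposition.

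I do not expect a genuine obstacle. The only point needing care is the sign convention in the matrix of $\gamma_{0^-}$. If one reads the maps literally from the ex-walk, both arrows appear with matrix $(1,1)$. Part (1) shows that this sign is immaterial up to isomorphism, so the splitting in part (2) applies to either presentation after first applying the isomorphism from (1).
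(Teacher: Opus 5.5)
Your proof of both parts is correct and is the paper's own argument. For (1) the paper uses exactly the vertical maps $(1,I_2,-1)$. For (2) it uses the coordinate change $\begin{pmatrix}1&1\\1&-1\end{pmatrix}$ at vertex $n-1$, which turns the two maps into $(1,0)$ and $(0,1)$. Its inverse is, up to the factor $\tfrac12$, your basis $v_1,v_2$.

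Your closing paragraph, however, is false and should be deleted.

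\begin{itemize}
\item \textbf{What (1) actually shows.} Part (1) only shows that it does not matter \emph{which} of the two basis vectors at $n-1$ carries the minus sign in $\gamma_{0^-}$. It does not show that the sign can be removed.
\item \textbf{Why the sign cannot be removed.} The representations ${\bf k}\xleftarrow{(1,1)}{\bf k}^2\xrightarrow{(1,1)}{\bf k}$ and ${\bf k}\xleftarrow{(1,1)}{\bf k}^2\xrightarrow{(1,-1)}{\bf k}$ are not isomorphic. Compare the combined map ${\bf k}^2\to{\bf k}\oplus{\bf k}$, $v\mapsto(\gamma_{0^+}v,\gamma_{0^-}v)$: its rank is an isomorphism invariant. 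In the first representation it has rank $1$, since $e_1-e_2$ is killed by both arrows, and the module splits as $S_{n-1}\oplus\bigl({\bf k}\xleftarrow{1}{\bf k}\xrightarrow{1}{\bf k}\bigr)$. In the second it has rank $2$.
\item \textbf{Consequence.} Under the sign-free literal reading of the ex-walk, statement (2) would be false, not merely in need of an extra step.
\end{itemize}

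The sign is genuinely part of the module. It comes from $\gamma_{0^-}=\tfrac12(\beta_{n-1}-\beta_{1-n}\sigma)$ in Example~\ref{EXAMPLEcd+c}. It is also what the $-\operatorname{id}$ convention in the definition of ex-band modules encodes, placing the sign on one of the two $\gamma_{0^-}$-arrows. The real role of part (1) is to show that this placement is immaterial. Your main argument already rests on the signed description, as the paper's does, so simply drop the final remark.
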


\begin{proof}
   Both assertions follow from the commutative diagrams below.
   $$\xymatrix@R=6ex@C=10ex{{\bf k}\ar[d]_{1}&{{\bf k}}^2\ar[d]_*+!<4pt,2pt>{\tiny{\begin{pmatrix}
       1&0\\0&1
\end{pmatrix}}}\ar[l]_{(1,1)}\ar[r]^{(1,-1)}&{\bf k}\ar[d]_{-1}
&{\bf k}\ar[d]_{1}&{{\bf k}}^2\ar[d]_*+!<4pt,2pt>{\tiny{\begin{pmatrix}
       1&1\\1&-1
\end{pmatrix}}}\ar[l]_{(1,1)}\ar[r]^{(1,-1)}&{\bf k}\ar[d]_{1}\\
  {\bf k}&{{\bf k}}^2\ar[l]_{(1,1)}\ar[r]^{(-1,1)}&{\bf k}
  &{\bf k}&{{\bf k}}^2\ar[l]_{(1,0)}\ar[r]^{(0,1)}&{\bf k} }$$
\end{proof}

Define a map $\phi$ from the strings in $H(\widetilde{C}_{2n-2})$ to the ex-strings in $H(\widetilde{CD}_{n})$ by 
$$\phi(\varepsilon^{\pm1}_{\pm1})=\varepsilon^{\pm1}_{1},\;\;\phi(\beta^{\pm1}_{\pm i})=\gamma_{i}^{\pm1},\;\;\textup{and}\;\;\phi(\beta^{\pm1}_{\pm(n-1)})=\gamma^{\pm1}$$
for each $1\leq i\leq n-2$, extended multiplicatively. 

\begin{prop}\label{StToExst}
Let $w$ be a non-trivial string in $H(\widetilde{C}_{2n-2})$. Then there exists an isomorphism of $H(\widetilde{CD}_{n})$-modules
$$\Phi\circ\textup{Ind}\big(M(w)\big)\cong N\big(\phi(w)\big).$$
\end{prop}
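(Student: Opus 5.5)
The plan is a direct computation: write out $\Phi\circ\textup{Ind}(M(w))$ as a representation of $Q(\widetilde{CD}_n)$, then exhibit a basis adapted to the ex-walk of $\phi(w)$.

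\emph{Step 1: the underlying vector spaces.} By definition, $\textup{Ind}(M(w))=(M(w)\oplus F_\sigma M(w),\alpha)$ with $\alpha$ the swap. By Proposition~\ref{StringEquai}'s argument (or directly), $F_\sigma M(w)\cong M(\sigma(w))$. Via Lemma~\ref{EqFromA^GToA[G]}, this becomes the $H(\widetilde C_{2n-2})[G]$-module $V:=M(w)\oplus M(\sigma(w))$, on which $\sigma$ acts by exchanging the two summands. Passing to $\textup{bas}(H(\widetilde C_{2n-2})[G])\simeq H(\widetilde{CD}_n)$ as in Example~\ref{EXAMPLEcd+c}, I would use the idempotents $e_i$ for $1\le i\le n-1$ and $e_{0^\pm}=(e_0\pm e_0\sigma)/2$. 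Here $e_{-i}$ is conjugate to $e_i$ by $\sigma$, which is why one representative per pair suffices. Consequently:
\begin{itemize}
\item for $1\le u\le n-1$, the space at $u$ is $Ve_u=M(w)_u\oplus M(\sigma w)_u\cong M(w)_u\oplus M(w)_{-u}$;
\item for $u=0^\pm$, the space $Ve_{0^\pm}$ is the $(\pm1)$-eigenspace of $\sigma$ on $M(w)_0\oplus M(\sigma w)_0$.
\end{itemize}
Each basis vector $y$ of $M(w)_0$ therefore contributes exactly one vector $y\pm\sigma y$ to each of $0^+$ and $0^-$.

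\emph{Step 2: matching with the ex-walk of $\phi(w)$.} First I would check that these dimensions agree with the recipe for $N(\phi(w))$. The $i\equiv u$ and $i\equiv 1-u\pmod{2n}$ points of the ex-walk correspond exactly to the walk points of $w$ lying at $\pm u$. Each passage through $0$ via $\beta_{n-1}\beta_{1-n}^{-1}$, and each endpoint at $0$, becomes a $\gamma$ or $\gamma^{-1}$ reaching the pair $(0^+,0^-)$. Next I would compute the arrow actions:
\begin{itemize}
\item $\gamma_i=\beta_i$ acts componentwise, sending walk vertices to walk vertices on the $w$-copy and the $\sigma(w)$-copy;
\item $\gamma_{0^\pm}=(\beta_{n-1}\pm\beta_{1-n}\sigma)/2$ sends a basis vector at vertex $n-1$ (coming from $\pm(n-1)$ in $w$) to $\tfrac12(y\pm\sigma y)$, up to a sign determined by which copy it came from;
\item $\varepsilon_1$ acts as $\varepsilon_1$ or $\varepsilon_{-1}$ on the respective summand.
\end{itemize}
Locally around each visit to $0$, this produces precisely the $A_3$-configuration ${\bf k}\xleftarrow{(1,1)}{\bf k}^2\xrightarrow{(1,\pm1)}{\bf k}$.

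\emph{Step 3: normalising signs (main obstacle).} The hard part is the global bookkeeping: rescaling basis vectors so that every map becomes the identity on the basis, as the definition of $N(\phi(w))$ requires. I would first rescale the $0^\pm$ vectors by $2$ to remove the factors $\tfrac12$. I would then use Lemma~\ref{A3caseIso}(1), which changes the sign in $(1,-1)$ while preserving the basis at $n-1$, applied successively along the walk from left to right. Because this change fixes the basis at $n-1$, each local correction leaves the previously normalised part of the walk untouched, so an induction on the number of visits of $w$ to vertex $0$ closes the argument. Separately, I would check the two endpoint cases:
\begin{itemize}
\item $w$ ends with $\beta_{\pm(n-1)}^{\pm1}$ at $0$, giving a truncation ending in $\gamma$;
\item $w$ stops at $\pm(n-1)$, giving a truncation $(\gamma_{\max})_{\le k}$.
\end{itemize}
Finally, I would confirm that $\phi(w)$ is indeed an ex-string of the form $\psi^{-1}\theta\psi'$. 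This follows from the description of strings as truncations of $\xi(\delta)$, since $\phi(\beta_{\max})=\gamma_{\max}$ and $\phi(\beta_{\max}^{-1})=\gamma_{\max}$, matching $\theta(\delta)$ entrywise.
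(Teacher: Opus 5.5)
Your Steps 1--2 are the same computation as the paper's: the idempotents $e_{0^\pm}=(e_0\pm e_0\sigma)/2$, the elements $\gamma_{0^\pm}$, and the matching of vertex spaces with the ex-walk. Step 3, however, aims at a normal form that does not exist, and this is a genuine gap.

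At each passage of $w$ through $0$, one of the two basis vectors at $n-1$, say $x$, lies in $M(w)$, and the other, $x'$, lies in $F_\sigma M(w)$. Since $\sigma$ swaps $y$ and $\sigma y$, you get $x\gamma_{0^-}=\tfrac12(y-\sigma y)$ but $x'\gamma_{0^-}=-\tfrac12(y-\sigma y)$. So the configuration in Step 2 is always the signed one ${\bf k}\xleftarrow{(1,1)}{\bf k}^2\xrightarrow{(1,-1)}{\bf k}$ (up to ordering the two vectors at $n-1$), never the unsigned one.

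This sign cannot be rescaled away. If the diamond coefficients are $a,b$ for $\gamma_{0^+}$ and $c,d$ for $\gamma_{0^-}$, then $ad/(bc)$ is unchanged by rescaling individual basis vectors. Lemma~\ref{A3caseIso}(1) merely moves the $-1$ from one entry to the other; it does not remove it.

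The two configurations are genuinely non-isomorphic. In ${\bf k}\xleftarrow{(1,1)}{\bf k}^2\xrightarrow{(1,1)}{\bf k}$ the vector $(1,-1)$ spans a summand $S_{n-1}$, whereas in the signed one no nonzero vector at $n-1$ is killed by both arrows. For a concrete case, take $n=2$ and the non-symmetric string $w=\varepsilon_1\beta_1\beta_{-1}^{-1}$. Then $\Phi\circ\textup{Ind}(M(w))$ is indecomposable. But the all-identity module on the ex-walk of $\phi(w)=\varepsilon_1\gamma\gamma^{-1}$ splits off $S_1$, spanned by the difference of the two basis vectors at vertex $1$ adjacent to $0^\pm$.

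Hence your induction would fail at the first diamond. Moreover, under the literal reading ``every map is the identity on the basis'', the claimed isomorphism is false whenever $\phi(w)$ contains $\gamma\gamma^{-1}$.

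\emph{The missing idea} is that $N(\phi(w))$ has to be read with one fixed $-1$ on a $\gamma_{0^-}$-arrow of each $\gamma\gamma^{-1}$-diamond. This is how the paper uses it when it regards $N(\gamma\gamma^{-1})$ as ${\bf k}\xleftarrow{(1,1)}{\bf k}^2\xrightarrow{(1,-1)}{\bf k}$ (compare the $-\operatorname{id}$ in the definition of ex-band modules).

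With that target your Step 3 works:
\begin{itemize}
\item removing one edge from each diamond leaves a tree, so rescaling basis vectors makes all the remaining coefficients equal to $1$;
\item each removed edge is then forced to carry the invariant $-1$;
\item Lemma~\ref{A3caseIso}(1) only serves to put that sign in the conventional position.
\end{itemize}
The truncation cases and the check that $\phi(w)$ is an ex-string are fine as you wrote them.
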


\begin{proof}
We observe that any non-trivial string can be realized as a truncation
$$\big(\xi(\delta)\big)_{[i,j]}=(\varepsilon_1^{\delta_1}\beta_{\textup{max}}\varepsilon_{-1}^{\delta_2}\beta_{\textup{max}}^{-1}\varepsilon_1^{\delta_3}\beta_{\textup{max}}\cdots\beta_{\textup{max}}\varepsilon_{-1}^{\delta_{2k}})_{[i,j]}$$
for some $\delta=\delta_1\delta_2\cdots\delta_{2k}\in\mathcal{I}_{e}$ and $1\leq i\leq j\leq 4kn-2n-2k+2$. Since the construction below is compatible with truncation, it suffices to treat the case $w=\xi(\delta)$.

According to the construction of the string modules, we have
$$M(w)_{u}=\left\{\begin{array}{ll}
{\textup{span}_{\bf k}}\{x_i\mid i\equiv 1+u\;\textup{or}\;2-u(\textup{mod}\;4n-2)\}     & 1\leq u\leq n-1, \\
{\textup{span}_{\bf k}}\{x_i\mid i\equiv 1+n\;\textup{or}\;2-n(\textup{mod}\;4n-2)\}&u=0,\\
{\textup{span}_{\bf k}}\{x_i\mid i\equiv 2n+1+u\;\textup{or}\;2n-u(\textup{mod}\;4n-2)\}     & 1-n\leq u\leq -1.
\end{array}\right.$$
Denoting $x'_i=F_{\sigma}(x_i)$, we obtain that $F_{\sigma}(M(w))$ is a string module with
$$F_{\sigma}\big(M(w)\big)_{u}=\left\{\begin{array}{ll}
{\textup{span}_{\bf k}}\{x'_i\mid i\equiv 1-u\;\textup{or}\;2+u(\textup{mod}\;4n-2)\}     & 1-n\leq u\leq -1, \\
{\textup{span}_{\bf k}}\{x'_i\mid i\equiv 1+n\;\textup{or}\;2-n(\textup{mod}\;4n-2)\}&u=0,\\
{\textup{span}_{\bf k}}\{x'_i\mid i\equiv 2n+1-u\;\textup{or}\;2n+u(\textup{mod}\;4n-2)\}     & 1\leq u\leq n-1.
\end{array}\right.$$

By Example \ref{EXAMPLEcd+c}, for $1\leq i\leq n-1$, the primitive idempotent at vertex $i$ in $H(\widetilde{CD}_n)$ identifies---via the functor $\Phi\circ\textup{Ind}$---with the primitive idempotent at vertex $i$ in $H(\widetilde{C}_{2n-2})$, while the arrow $\gamma_i$ coincides with $\beta_i$. Meanwhile, the primitive idempotents at vertices $0^{\pm}$ take the form $(e_0\pm e_0\sigma)/2$, and the arrows $\gamma_{0^{\pm}}$ are represented by $(\beta_{n-1}\pm\beta_{1-n}\sigma)/2$.

Hence, the module $\Phi\big(\textup{Ind}(M(w))\big)$ is determined by the following vector spaces.
$$\big(\Phi\circ\textup{Ind}(M(w))\big)_u=\left\{
\begin{array}{ll}
 M(w)_u\oplus F_{\sigma}(M(w))_{u}    &   1\leq u\leq n-1, \\
{\textup{span}_{\bf k}}\{\frac{x_i+x'_i}{2}\mid i\equiv 1+n\;\textup{or}\;2-n(\textup{mod}\;4n-2)\}     & u=0^{+},\\
{\textup{span}_{\bf k}}\{\frac{x_i-x'_i}{2}\mid i\equiv 1+n\;\textup{or}\;2-n(\textup{mod}\;4n-2)\}     & u=0^{-}.
\end{array}
\right.$$
This implies that $$\Phi\circ\textup{Ind}\big(M(w)\big)=N(\varepsilon_1^{\delta_1}\gamma_{\textup{max}}\varepsilon_{1}^{\delta_2}\gamma_{\textup{max}}\cdots\gamma_{\textup{max}}\varepsilon_{1}^{\delta_{2k}})=N(\theta(\delta))=N(\phi(w)).$$
\end{proof}

For simplicity, we write $w_{[i,-j]}:=w_{[i,l(w)+1-j]}$. Recall that a non-trivial string $w$ is symmetric if $\sigma(w)=w^{-1}$. Then $w$ necessarily takes the form
$$\big(\xi(\delta)\big)_{[i,-i]}=(\varepsilon_1^{\delta_1}\beta_{\textup{max}}\varepsilon_{-1}^{\delta_2}\beta_{\textup{max}}^{-1}\varepsilon_1^{\delta_3}\beta_{\textup{max}}\cdots\beta_{\textup{max}}\varepsilon_{-1}^{\delta_{2k}})_{[i,-i]}$$
for some $\delta=\delta_1\delta_2\cdots\delta_{2k}\in\mathcal{I}_{e}$ with $\delta=(\delta_1\delta_2\cdots\delta_{k})\circ(\delta_1\delta_2\cdots\delta_{k})^{-1}$, and some $1\leq i\leq 4n-2$.

\begin{prop}\label{ClassifyStToExst} Let $w$ be a non-trivial string in $H(\widetilde{C}_{2n-2})$.
\begin{enumerate}
    \item If $w$ is non-symmetric, then $\Phi\circ\textup{Ind}\big(M(w)\big)$ is indecomposable.
    \item If $w$ is symmetric, i.e., $w=(\xi(\delta))_{[i,-i]}$ with $\delta=\delta'\circ\delta'^{-1}\in\mathcal{I}_{e}$ for some $1\leq i\leq 4n-2$, then $\Phi\circ\textup{Ind}\big(M(w)\big)$ admits a decomposition
$$\Phi\circ\textup{Ind}\big(M(w)\big)\cong N\Big(\big(\gamma_{\textup{semi}+}^{-1}\cdot\theta(\delta')\big)_{\leq-i}\Big)\oplus N\Big(\big(\gamma_{\textup{semi}-}^{-1}\theta(\delta')\big)_{\leq-i}\Big).$$   
\end{enumerate}
\end{prop}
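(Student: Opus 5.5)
The plan is to use the standard theory of equivariantization for a $\mathbb Z_2$-action on a Krull--Schmidt category, together with the explicit description of $\Phi\circ\textup{Ind}(M(w))\cong N(\phi(w))$ from Proposition~\ref{StToExst}. The decomposition behaviour of $\textup{Ind}(M)$ for indecomposable $M$ is governed by whether $F_\sigma(M)\cong M$. Since $\textup{Ind}$ is left and right adjoint to the forgetful functor, we have
\[
\operatorname{End}(\textup{Ind}M)\cong\operatorname{Hom}(M,M\oplus F_\sigma M).
\]

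\textbf{Part (1).} Let $w$ be non-symmetric. Since $\sigma(w)\not\sim w$, string combinatorics \cite{BR1987} gives $F_\sigma(M(w))\cong M(\sigma(w))\not\cong M(w)$. Then $\operatorname{End}(\textup{Ind}M(w))$ is local: modulo the radical it is $\operatorname{End}(M(w))/\operatorname{rad}\cong\mathbf k$, since morphisms $M(w)\to F_\sigma M(w)$ lie in the radical. Equivalently, the forgetful functor sends any nontrivial decomposition of $\textup{Ind}M(w)$ to one of $M(w)\oplus M(\sigma(w))$ into two $\sigma$-stable pieces, which is impossible because $\sigma$ swaps the two non-isomorphic summands. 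Hence $\Phi\circ\textup{Ind}(M(w))$ is indecomposable.

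\textbf{Part (2).} Let $w=(\xi(\delta))_{[i,-i]}$ be symmetric with $\delta=\delta'\circ\delta'^{-1}$. Now $F_\sigma M(w)\cong M(w)$, and $\textup{Ind}M(w)$ splits as $(M(w),\alpha)\oplus(M(w),-\alpha)$ for an equivariant structure $\alpha$, with $\operatorname{char}\mathbf k=0$ and $\mathbf k$ algebraically closed. I would identify these two summands concretely.
\begin{enumerate}
\item Write the basis $x_1,\dots,x_{m+1}$ of $M(w)$ and choose $\alpha$ as the walk reversal $x_j\mapsto x_{m+2-j}$. This is $\sigma$-compatible because $\sigma(w)=w^{-1}$.
\item The $\pm$-eigenvectors $x_j\pm x_{m+2-j}$ span the two summands. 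At vertices $u\in\{1,\dots,n-1\}$, via $\Phi$, the summand identifies with the half of the walk indexed by $\delta'$.
\item At vertex $0$ the midpoint configuration matters. The central piece of $w$ is $\beta_{\max}$ or $\beta_{\max}^{-1}$, passing through $0$ in the middle, or the middle letter is some $\varepsilon_{\pm1}$. The idempotents $(e_0\pm e_0\sigma)/2$ then pick out exactly one of $0^{+}$ or $0^{-}$ for each summand.
\item As a result, the two summands are the ex-string modules of $\gamma_{\textup{semi}\pm}^{-1}\theta(\delta')$, truncated at the far end by $\leq -i$ to match the truncation of $w$.
\end{enumerate}
Lemma~\ref{A3caseIso} handles the sign change $(1,-1)$ versus $(-1,1)$ at $\gamma_{0^-}$, so that both summands are genuinely the modules $N(\cdot)$ as defined. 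Summing dimensions then checks that nothing is lost.

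\textbf{Main obstacle.} The hardest step is the local bookkeeping at vertex $0$ in part (2). One must verify that the $\sigma$-eigen-decomposition of the middle segment matches the ex-walk conventions, in particular which of $0^{+},0^{-}$ each summand hits. One must also confirm that the sign of $\alpha$ on the $\varepsilon_1$ versus $\varepsilon_{-1}$ letters is consistent along the whole walk. I would first do the small case $n=2$, $w=\beta_1\beta_{-1}^{-1}$, and then argue that the general case is the same computation propagated along the path.
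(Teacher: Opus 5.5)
Your proposal is correct. Part (1) is the paper's argument: $F_\sigma M(w)\cong M(\sigma(w))\not\cong M(w)$, so $\textup{Ind}M(w)$ is indecomposable. You add the Krull--Schmidt justification that the paper leaves out.

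In part (2) you reach the same decomposition by a different mechanism. You split $\textup{Ind}M(w)\cong(M(w),\alpha)\oplus(M(w),-\alpha)$ already in $(\operatorname{mod}H(\widetilde{C}_{2n-2}))^G$, with $\alpha$ the walk reversal; this is an equivariant structure because $\sigma(w)=w^{-1}$ and $\alpha^2=\mathrm{id}$. You then compute $\Phi$ of each summand: $M(w)_u$ at $u=1,\dots,n-1$, and the $(\pm1)$-eigenspaces of $\alpha$ on $M(w)_0$ at $0^{\pm}$. The paper instead passes first to $N(\theta(\delta))$ via Proposition~\ref{StToExst}, and splits it by the basis change of Lemma~\ref{A3caseIso} at the middle $\gamma\gamma^{-1}$. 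Inside $N(\theta(\delta))$ your summands are spanned by the mirror combinations $y\pm\bar y$ (sign reversed at the $0^-$ vertices), so the two splittings coincide.

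Each route buys something different. Yours guarantees a priori that you get a direct sum of submodules; the paper's change at the middle only works because it is propagated symmetrically along the whole ex-walk, which it leaves implicit. Yours also explains why the $\alpha$-fixed midpoint lies in $0^+$ for one summand and in $0^-$ for the other. The paper's route reads the ex-strings off directly. In yours, you still must check that each mirror pair of non-central passages of $w$ through $0$ glues two maximal subwalks of $w$ in the vertices $1,\dots,n-1$ into a square $\gamma\gamma^{-1}$ of the non-split form $\mathbf{k}\xleftarrow{(1,1)}\mathbf{k}^2\xrightarrow{(1,-1)}\mathbf{k}$, up to rescaling. If $P_1,\dots,P_k$ denote these subwalks in their order along $w=\xi(\delta)$, they occur in the summand in the interleaved order $P_1,P_k^{-1},P_2,P_{k-1}^{-1},\dots$.

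There are two small corrections. First, a symmetric string has even length and its midpoint is the vertex $0$. So the case ``the middle letter is some $\varepsilon_{\pm1}$'' never occurs.

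Second, and more substantively, executing your computation for $i=1$ gives the summands $N(\theta(\delta')\cdot\gamma_{\mathrm{semi}\pm})\cong N(\gamma_{\mathrm{semi}\pm}^{-1}\theta(\delta'^{-1}))$. For example, take $n=2$ and $w=\varepsilon_1\beta_1\beta_{-1}^{-1}\varepsilon_{-1}^{-1}$, so $\delta'=(1)$. Both summands are the uniserial modules $N(\varepsilon_1\gamma_{0^{\pm}})$, not $N(\gamma_{0^{\pm}}^{-1}\varepsilon_1)$. Hence the index in the displayed formula should be read as $\delta'^{-1}$. The same applies to the last line of the paper's proof, whose preceding line is correct. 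The relabelling is harmless for the later indexing, but your write-up should state it correctly.
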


\begin{proof}
(1) Since $w$ is non-symmetric, we have $$F_{\sigma}(M(w))=M({\sigma}(w))\ncong M(w).$$
This implies that $\textup{Ind}(M(w))$ is indecomposable.

(2) It suffices to treat the case $i=1$, since the general case is obtained by applying the corresponding truncation to $\xi(\delta)$ below. We assume that $w=\xi(\delta)$. By Proposition \ref{StToExst}, we have $$\Phi\circ\textup{Ind}\big(M(w)\big)\cong N(\varepsilon_1^{\delta_1}\gamma_{\textup{max}}\varepsilon_{1}^{\delta_2}\gamma_{\textup{max}}\cdots\gamma_{\textup{max}}\varepsilon_{1}^{\delta_{2k}}).$$

Applying the two basis changes from Lemma \ref{A3caseIso} yields
\begin{align*}
    N\big(\theta(\delta)\big)&\cong N(\varepsilon_1^{\delta_1}\gamma_{\textup{max}}\cdots\gamma_{\textup{max}}\varepsilon_{1}^{\delta_{k}}\gamma_{\textup{semi}+})\oplus N(\gamma_{\textup{semi}-}^{-1}\varepsilon_1^{\delta_{k+1}}\gamma_{\textup{max}}\cdots\gamma_{\textup{max}}\varepsilon_{1}^{\delta_{2k}})\\
    &\cong N\big(\gamma_{\textup{semi}+}^{-1}\theta(\delta')\big)\oplus N\big(\gamma_{\textup{semi}-}^{-1}\theta(\delta')\big).
\end{align*}
This completes the proof.
\end{proof}

\section{Equivariantization of band modules}
In this appendix, we similarly determine the equivariant correspondence for band modules.

\begin{prop}\label{FROMBdTOExBd}
Let $w$ be a band of $H(\widetilde{C}_{2n-2})$ and $\varphi$ an automorphism of ${{\bf k}}^s$. Then there exists an isomorphism of $H(\widetilde{CD}_{n})$-modules
$$\Phi\circ\textup{Ind}\big(M(w,\varphi)\big)\cong N\big(\phi(w),\varphi\big).$$
\end{prop}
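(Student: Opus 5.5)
The plan is to imitate the proof of Proposition~\ref{StToExst}, now keeping track of the automorphism $\varphi$ and of the signs in the definition of ex-band modules.

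\emph{Step 1: normal form.} Up to band equivalence, write $w=\xi(\delta)\cdot\beta_{\textup{max}}^{-1}$ with $\delta=\delta_1\cdots\delta_{2k}\in\mathcal I_e$. This is harmless: $M(w,\varphi)\cong M(w_{(i)},\varphi)$ and $M(w,\varphi)\cong M(w^{-1},\varphi^{-1})$ on the $H(\widetilde C_{2n-2})$ side, and the ex-band modules are compatible with the same operations under $\phi$. Then
\[
\phi(w)=\varepsilon_1^{\delta_1}\gamma_{\textup{max}}\varepsilon_1^{\delta_2}\gamma_{\textup{max}}\cdots\varepsilon_1^{\delta_{2k}}\gamma_{\textup{max}}=\theta(\delta)\cdot\gamma_{\textup{max}},
\]
which is exactly the standard form of an ex-band used to define $N(\phi(w),\varphi)$.

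\emph{Step 2: compute $\Phi\circ\textup{Ind}(M(w,\varphi))$.} Label the vertex spaces of the band walk as $V_{x_i}$, with $x_i$ indexed modulo $(4n-2)k$. By Proposition~\ref{StringEquai}, $F_\sigma(M(w,\varphi))=M(\sigma(w),\varphi)$, which has the same vector spaces, written $V'_{x_i}$, placed at the mirrored vertices. By Example~\ref{EXAMPLEcd+c}, the resulting $H(\widetilde{CD}_n)$-module $\Phi\circ\textup{Ind}(M(w,\varphi))$ is described as follows.
\begin{itemize}
\item At each vertex $1\le u\le n-1$ the space is $M(w,\varphi)_u\oplus F_\sigma(M(w,\varphi))_u$, and $\gamma_i$ acts as $\beta_i$ for $i\le n-2$.
\item At $0^\pm$ the space is spanned by the elements $(v\pm v')/2$, with $v\in V_{x_i}$ sitting over vertex $0$ and $v'$ its image under $\sigma$.
\item $\gamma_{0^\pm}$ acts by $(\beta_{n-1}\pm\beta_{1-n}\sigma)/2$.
\end{itemize}
Next, group the $2k$ passages of the band through the two arms into $k$ blocks, each consisting of one $\gamma_{\textup{max}}$ segment. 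Matching the vertex spaces blockwise to the spaces $U_{j,l}\cong\mathbf k^s$ identifies the underlying graded vector space with that of $N(\theta(\delta)\gamma_{\textup{max}},\varphi)$. Under this identification:
\begin{itemize}
\item $\varepsilon_1$ acts by the identity everywhere except at the distinguished edge $c_1=\varepsilon_1^{\delta_1}$, where it acts by $\varphi$ (or $\varphi^{-1}$ when $\delta_1=-1$), as required;
\item the arms act by identity maps.
\end{itemize}

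\emph{Step 3: the vertices $0^\pm$ (main obstacle).} Each passage through $0$ of $M(w,\varphi)$ contributes one copy of $\mathbf k^s$. Together with its $\sigma$-partner, it gives one copy at $0^+$ and one at $0^-$. Read in the basis $(v\pm v')/2$, the two incoming arrows from the upper and lower copies at $n-1$ act by the matrices $(1,1)$ and $(1,-1)$, tensored with $\operatorname{id}_{\mathbf k^s}$. The definition of $N(w,\varphi)$, however, prescribes $\operatorname{id}$ on one $\gamma_{0^\pm}$-leg of each $\gamma\gamma^{-1}$ and $-\operatorname{id}$ on the leg going from $x_{2ln+j}$ to $x_{2ln+j+2}$. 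I plan to resolve this with Lemma~\ref{A3caseIso}(1), applied blockwise: rescale $U_{n+1,l}$ by $-1$ (or use the basis change fixing the $n-1$ spaces) so that the signs match. Because that basis change is the identity at vertex $n-1$, it does not disturb the $\varepsilon_1$ and $\gamma_i$ actions.

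The delicate point is that the sign changes must be made consistently around the whole cycle. There must be no leftover factor $-1$ that would replace $\varphi$ by $-\varphi$. I will check this by noting that each sign change is local to one $0^\pm$ vertex space, which has no other incident arrows, so no global monodromy is introduced. With the signs matched, the identification of Step 2 is an isomorphism of modules and gives $\Phi\circ\textup{Ind}(M(w,\varphi))\cong N(\phi(w),\varphi)$.
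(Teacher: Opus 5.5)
Your proposal is correct and follows the paper's own route. You compute $\Phi\circ\textup{Ind}\big(M(w,\varphi)\big)$ vertex by vertex via $e_{0^\pm}=(e_0\pm e_0\sigma)/2$ and identify it with $N(\theta(\delta)\cdot\gamma_{\textup{max}},\varphi)$; your Step~3 (local rescaling of the $0^-$ copies by Lemma~\ref{A3caseIso}(1), with no monodromy since $0^-$ has no other incident arrows) makes explicit a sign check the paper leaves implicit. One counting slip: each of the $2k$ segments $\beta_{\textup{max}}^{\pm1}$ of $w$ folds into its own $\gamma_{\textup{max}}$ block, so there are $2k$ blocks ($l=0,\dots,2k-1$), not $k$, as your own formula $\phi(w)=\theta(\delta)\cdot\gamma_{\textup{max}}$ with $\delta$ of length $2k$ already shows.
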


\begin{proof}
Assume $w=\xi(\delta)\cdot\beta_{\textup{max}}^{-1}$ with $\delta\in\mathcal{I}_{3}$. By the definition of a band module, we can write the vector spaces of $M(w,\varphi)$ at each vertex $u$ as
$$M(w,\varphi)_u=\left\{
\begin{array}{ll}
\bigoplus_{x_i\in w,i\equiv u\textup{\;or\;}1-u(\textup{mod\;}4n-2)}V_{x_i}     &  1\leq u\leq n-1,\\
\bigoplus_{x_i\in w,i\equiv 2n+u\textup{\;or\;}2n-1-u(\textup{mod\;}4n-2)}V_{x_i}   & 1-n\leq u\leq -1,\\
\bigoplus_{x_i\in w,i\equiv n(\textup{mod\;}2n-1)}V_{x_i}   & u=0,
\end{array}\right.$$
where each $V_{x_i}\cong{{\bf k}}^s$ has basis $\{v_{i,1},v_{i,2},\dots,v_{i,s}\}$. Under the action of $F_{\sigma}$, we have
$$F_{\sigma}\big(M(w,\varphi)\big)_u=\left\{
\begin{array}{ll}
\bigoplus_{x_i\in w,i\equiv -u\textup{\;or\;}1+u(\textup{mod\;}4n-2)}V'_{x_i}     &  1-n\leq u\leq -1,\\
\bigoplus_{x_i\in w,i\equiv 2n-u\textup{\;or\;}2n-1+u(\textup{mod\;}4n-2)}V'_{x_i}   & 1\leq u\leq n-1,\\
\bigoplus_{x_i\in w,i\equiv n(\textup{mod\;}2n-1)}V'_{x_i}   & u=0,
\end{array}\right.$$
where $V'_{x_i}$ has basis $\{v'_{i,1},v'_{i,2},\dots,v'_{i,s}\}$ with $v'_{i,j}=F_{\sigma}(v_{i,j})$. Now consider $\Phi\circ\textup{Ind}\big(M(w,\varphi)\big)$. For the vertices we have 
$$\Phi\circ\textup{Ind}\big(M(w,\varphi)\big)_u=\left\{
\begin{array}{ll}
M(w,\varphi)_u\oplus F_{\sigma}\big(M(w,\varphi)\big)_{u}& 1\leq u\leq n-1, \\
{\textup{span}_{\bf k}}\{\frac{v_{i,j}+v'_{i,j}}{2}\mid i\equiv n(\textup{mod}\;2n-1),1\leq j\leq s\}     & u=0^{+},\\ 
{\textup{span}_{\bf k}}\{\frac{v_{i,j}-v'_{i,j}}{2}\mid i\equiv n(\textup{mod}\;2n-1),1\leq j\leq s\}     & u=0^{-}
\end{array}
\right.$$
Consequently,
$$\Phi\circ\textup{Ind}\big(M(w,\varphi)\big)=N(\varepsilon_1^{\delta_1}\gamma_{\textup{max}}\varepsilon_{1}^{\delta_2}\gamma_{\textup{max}}\cdots\gamma_{\textup{max}}\varepsilon_{1}^{\delta_{2k}}\gamma_{\textup{max}},\varphi)=N(\phi(w),\varphi).$$
\end{proof}

Let $J(\lambda,r)$ denote the $r\times r$ Jordan block with eigenvalue $\lambda$. It is well known that every indecomposable automorphism $\varphi:\;{\bf{k}}^r\rightarrow {\bf{k}}^r$ can be represented by a Jordan matrix with respect to a suitable basis. The following lemma is important for investigating the change of morphisms under the group action.

\begin{lem}\label{MatrixLemma}
Let $r$ be a positive integer and $\lambda=\pm1$. Then there exist involutive matrices $L(\lambda,r)$ satisfying $$L(\lambda,r)=L(\lambda,r)^{-1}\;\;\;\textup{and}\;\;\;L(\lambda,r)J(\lambda,r)^{-1}=J(\lambda,r)L(\lambda,r)$$
\end{lem}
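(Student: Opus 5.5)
Since $J(\lambda,r)$ is similar to its inverse when $\lambda=\pm1$ (both have the single eigenvalue $\lambda$ and a single Jordan block, because $J(\lambda,r)^{-1}-\lambda^{-1}I$ is nilpotent of rank $r-1$), there is some invertible $P$ with $PJ(\lambda,r)^{-1}P^{-1}=J(\lambda,r)$. The task is to pick $P$ involutive. I will do this by giving an explicit construction, working with the nilpotent part.

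Write $J=J(\lambda,r)=\lambda(I+N)$, where $N=\lambda^{-1}J(0,r)$ is nilpotent with $N^r=0$. Since $\lambda^{-1}=\lambda$, we get $J^{-1}=\lambda(I+N)^{-1}=\lambda(I+N')$, where $N'=(I+N)^{-1}-I=-N+N^2-\cdots$. The required identity $LJ^{-1}=JL$ then becomes $L(I+N)^{-1}=(I+N)L$, i.e.
\[
L\,u^{-1}=u\,L,\qquad u:=I+N .
\]
So the plan is to find an involution $L$ that conjugates the unipotent matrix $u$ to $u^{-1}$.

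\textbf{Reduction.} Put $\nu:=\log u=N-N^2/2+\cdots$, which is a nilpotent polynomial in $N$ with the same kernel flag as $N$ (we are in characteristic zero). Then $u^{-1}=\exp(-\nu)$, and it suffices to find an involution $L$ with $L\nu L=-\nu$. Since $\nu$ is a single nilpotent Jordan block of size $r$, there is a basis $b_1,\dots,b_r$ with $\nu b_i=b_{i+1}$ and $\nu b_r=0$. Define $L$ by $L b_i=(-1)^i b_i$. Then $L^2=I$, and
\[
L\nu L\, b_i=(-1)^i L\, b_{i+1}=(-1)^{2i+1}b_{i+1}=-\nu\, b_i .
\]
Exponentiating gives $L u L=u^{-1}$, and hence $LJ^{-1}L=J$, i.e. $LJ^{-1}=JL$. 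Writing $L$ back in the standard basis via the change-of-basis matrix $S$ (so $L=S\,\mathrm{diag}((-1)^i)\,S^{-1}$) produces the desired $L(\lambda,r)$, and involutivity is preserved under conjugation.

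\textbf{Main difficulty.} The only step that needs care is the one above: conjugating $J$ to $J^{-1}$ by a \emph{square-root-of-identity} matrix, rather than merely by some invertible matrix. The logarithm trick handles this. An alternative is to take $L$ lower triangular with entries given by binomial signs, $L_{ij}=(-1)^{i}\binom{i-1}{j-1}$ in suitable indexing, and check both identities directly. For later use, in the proof of Proposition~\ref{forthm003}, one would also want compatibility with the inclusions $\binom{I_r}{0}$ ($L(\lambda,r+1)$ restricting to $L(\lambda,r)$). To arrange this, choose the bases $b_i$ compatibly across $r$ by taking $b_1$ as the last standard vector, so that everything is built from the same polynomials in $N$.
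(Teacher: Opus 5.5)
Your argument is correct and proves the lemma as stated, but by a different route from the paper. The paper writes down explicit upper triangular matrices whose columns are signed binomial combinations: $\vec l_1=\vec e_1$ and $\vec l_i=(-1)^{i-1}\sum_{j}\binom{i-2}{j}\vec e_{j+2}$ for $\lambda=1$, with a signed variant for $\lambda=-1$. It leaves both identities to be checked by hand, which is closer in spirit to the binomial alternative you mention at the end. You instead reduce to the unipotent matrix $u=\lambda J$ and pass to the single nilpotent Jordan block $\nu=\log u$. You then use that, in a cyclic basis, the diagonal sign matrix anticommutes with $\nu$, so exponentiating gives $LuL=u^{-1}$. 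Your route treats $\lambda=\pm1$ uniformly and verifies every step. It costs characteristic zero, which is harmless here, and gives no closed formula. The paper's route is characteristic-free and produces matrices with extra structure that the lemma does not state but that is used later.

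That extra structure is where your closing remark fails. It does not affect the lemma itself. In the proof of Proposition~\ref{forthm003} the paper uses $L(\lambda,r+1)\binom{I_r}{0}=\binom{I_r}{0}L(\lambda,r)$. Its matrices satisfy this because their columns do not depend on $r$. Your recipe, with cyclic generator $b_1=e_r$ and $Lb_i=(-1)^ib_i$, does not give this. Since $e_1$ is proportional to $b_r$, you get $L(\lambda,r)e_1=(-1)^re_1$, so already $L(\lambda,1)=(-1)$ while $L(\lambda,2)=\operatorname{diag}(1,-1)$.

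Fixing the overall sign does not help. For $\lambda=1$ and $r=3$ one has $\nu=N-\tfrac12N^2$ and $\nu e_3=e_2-\tfrac12e_1$. Your $L(1,3)$ then sends $e_2\mapsto e_2-e_1$, which is not $\pm L(1,2)e_2=\mp e_2$. The reason is that the sign involution depends on the chosen cyclic generator. On $\mathbf{k}^r$, the matrix $L(\lambda,r+1)$ is effectively built from the generator $\nu e_{r+1}$, not from $e_r$.

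If you want nested matrices, build the chains from the socle. Take $g_1=e_1$, choose $g_{r+1}\in\mathbf{k}^{r+1}$ with $\nu g_{r+1}=g_r$, and set $L(\lambda,r)(\nu^kg_r)=(-1)^{r-1-k}\nu^kg_r$. Alternatively, simply use the paper's explicit matrices.
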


\begin{proof}
Let $\vec{e}_i$ denote the $i$-th canonical basis column vector in an $r$-dimensional vector space. 

If $\lambda=1$, we set $\vec{l}_1=\vec{e}_1$ and for $2\leq i\leq r$, 
$$\vec{l}_i=(-1)^{i-1}\sum_{j=0}^{i-2}C_{i-2}^j \vec{e}_{j+2}.$$
It can then be verified that the matrix
$$L(1,r)=(\vec{l}_1,\vec{l}_2,\cdots,\vec{l}_r)$$ satisfies the required condition.

If $\lambda=-1$, we define $\vec{l}'_1=\vec{l}_1$ and for $2\leq i\leq r$, $$\vec{l}'_i={\sum_{j=0}^{i-2}(-1)^{j+1}C_{i-2}^j \vec{e}_{j+2}}.$$ 
One may check that $$L(-1,r)=(\vec{l}'_1,\vec{l}'_2,\cdots,\vec{l}'_r)$$ satisfies the desired property.
\end{proof}

For $\lambda\neq0$, denote by $\sqrt{J(\lambda,r)}$ a square root of the Jordan block $J(\lambda,r)$. Indeed, the Jordan form of $J(\sqrt{\lambda},r)^2$ is precisely $J(\lambda,r)$; hence there exists an invertible matrix $P$ such that
$$PJ(\sqrt{\lambda},r)^2P^{-1}=J(\lambda,r).$$
This yields
$$J(\lambda,r)=(PJ(\sqrt{\lambda},r)P^{-1})^2,$$
so we may take $\sqrt{J(\lambda,r)}=PJ(\sqrt{\lambda},r)P^{-1}$. 

Let $\varphi$ be the linear automorphism represented by the Jordan block $J=J(\lambda,r)$ with $\lambda\neq0$. For notational convenience, we also denote by  $\sqrt{\varphi}$ and $\lambda$ the linear automorphisms represented by the matrices $\sqrt{J}$ and $J(\lambda,1)$, respectively.

\begin{prop}\label{ClassifyBaToExba}
    Let $w=\xi(\delta)\cdot\beta_{\textup{max}}^{-1}$ be a band of $H(\widetilde{C}_{2n-2})$ with $\delta\in\mathcal{I}_{3}$, and $\varphi$ an indecomposable automorphism of a ${\bf k}$-vector space. 
\begin{enumerate}
    \item If $w$ is non-symmetric, then $\Phi\circ\textup{Ind}\big(M(w,\varphi)\big)$ is indecomposable.
    \item If $w$ is reversed with $\delta=\delta'\circ\delta'$, then
    $$\Phi\circ\textup{Ind}\big(M(w,\varphi)\big)\cong N\big(\theta(\delta')\cdot\gamma_{\textup{max}},\sqrt{\varphi}\big)\oplus N\big(\theta(\delta')\cdot\gamma_{\textup{max}},-\sqrt{\varphi}\big).$$
    \item If $w$ is rotated with $\delta=\delta'\circ\delta'^{-1}$, then
$$\Phi\circ\textup{Ind}\big(M(w,1)\big)\cong N\big(\gamma_{\textup{semi}+}^{-1}\cdot\theta(\delta')\cdot\gamma_{\textup{semi}+}\big)\oplus N\big(\gamma_{\textup{semi}-}^{-1}\cdot\theta(\delta')\cdot\gamma_{\textup{semi}-}\big),$$
    and  $$\Phi\circ\textup{Ind}\big(M(w,-1)\big)\cong N\big(\gamma_{\textup{semi}-}^{-1}\cdot\theta(\delta')\cdot\gamma_{\textup{semi}+}\big)\oplus N\big(\gamma_{\textup{semi}+}^{-1}\cdot\theta(\delta')\cdot\gamma_{\textup{semi}-}\big).$$
\end{enumerate}
\end{prop}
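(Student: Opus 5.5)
Fix the indecomposable automorphism $\varphi$, represented by $J=J(\lambda,r)$, and the band $w=\xi(\delta)\cdot\beta_{\textup{max}}^{-1}$. Proposition~\ref{FROMBdTOExBd} gives $\Phi\circ\textup{Ind}\big(M(w,\varphi)\big)\cong N\big(\phi(w),\varphi\big)$, and in each case I will decompose this module by explicit changes of basis. The case split follows Proposition~\ref{StringEquai} and Lemma~\ref{ClassifyOfBands}.

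Part (1) is formal. If $w$ is non-symmetric, then $F_\sigma(M(w,\varphi))\cong M(\sigma(w),\varphi)\not\cong M(w,\varphi)$ by Proposition~\ref{StringEquai}. Standard equivariantization (as in \cite{ReitenRiedtmann1985Skew}) says $\textup{End}(\textup{Ind}X)\cong \textup{End}(X)\oplus \textup{Hom}(X,F_\sigma X)$. Since $X$ and $F_\sigma X$ are non-isomorphic indecomposables, the second summand lies in the radical. So $\textup{End}(\textup{Ind}X)$ is local, and $\textup{Ind}(X)$ is indecomposable, exactly as in Proposition~\ref{ClassifyStToExst}(1). Transporting along $\Phi$ gives the claim.

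Part (2): $w$ reversed with $\delta=\delta'\circ\delta'$. The ex-walk of $\phi(w)=\theta(\delta'\delta')\cdot\gamma_{\textup{max}}$ is a double cover of the ex-walk of $\theta(\delta')\cdot\gamma_{\textup{max}}$. Write $N(\phi(w),\varphi)$ with the $2k$ blocks $U_{j,l}$, $0\le l<2k$. Pair block $l$ with block $l+k$, and let the single $\varphi$-twist at $(0,0)$ act. On each pair, use the new basis $(u, \psi u)$ with $\psi=\sqrt{\varphi}$ at every vertex. One obtains the cyclic covering module $\mathbf{k}[t]/(t^2-\varphi)$, which splits according to the eigen-decomposition $t=\pm\sqrt{\varphi}$. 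Concretely, take the subspaces $\{(v,\pm\sqrt{\varphi}\,v)\}$ along the two sheets. These are submodules because all other structure maps are identities or $\pm\mathrm{id}$. The signs $-\mathrm{id}$ on $\gamma$-arrows with $j'-j=2$ occur at the same positions in both sheets, so they are compatible. This produces $N\big(\theta(\delta')\gamma_{\textup{max}},\sqrt{\varphi}\big)\oplus N\big(\theta(\delta')\gamma_{\textup{max}},-\sqrt{\varphi}\big)$. Here $\sqrt{\varphi}$ and $-\sqrt{\varphi}$ are non-similar because $\lambda\neq0$. Checking this sign bookkeeping is tedious but routine; a dimension count confirms that nothing is lost.

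Part (3): $w$ rotated with $\delta=\delta'\circ\delta'^{-1}$ and $\varphi=\pm1$ one-dimensional. Here the band "folds" back on itself. Choose the cyclic rotation from Lemma~\ref{ClassifyOfBands}(2) so that the fold points of the involution $i\mapsto 2l+1-i$ sit at the two $\gamma\gamma^{-1}$ segments located at positions $l$ and $l+k$.

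\begin{itemize}
\item At each fold point, apply Lemma~\ref{A3caseIso}(1)–(2) as in the proof of Proposition~\ref{ClassifyStToExst}(2). This splits the local $\mathbf{k}\leftarrow\mathbf{k}^2\to\mathbf{k}$ piece into a $0^+$-branch and a $0^-$-branch.
\item Away from the folds, take the symmetric and antisymmetric combinations of the two sheets. This cuts the band into two ex-strings, each running $\gamma_{\textup{semi}\pm}^{-1}\theta(\delta')\gamma_{\textup{semi}\pm}$.
\item The scalar $\varphi=\pm1$ enters exactly once, when the two sheets are identified at the second fold. If $\varphi=1$, the same-sign branches match, giving ends $(+,+)$ and $(-,-)$. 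If $\varphi=-1$, the sign is flipped, giving the mixed pairs $(-,+)$ and $(+,-)$.
\end{itemize}

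This sign tracking, together with the $-\mathrm{id}$ conventions in the definition of $N(w,\varphi)$, is the main obstacle. I would verify it first for $n=2$, $\delta'=1$, which is the band $w_2$ in the example, and then argue that the general case is compatible with concatenation.
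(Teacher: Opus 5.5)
Your proposal is correct and follows essentially the paper's route. You reduce via Proposition~\ref{FROMBdTOExBd} to $N(\phi(w),\varphi)$, get (1) from $F_\sigma M(w,\varphi)\not\cong M(w,\varphi)$, and split (2) and (3) by symmetric/antisymmetric basis changes, using Lemma~\ref{A3caseIso} at the fixed vertices $0^\pm$. The only difference is cosmetic and lies in (2): the paper first redistributes the twist as $\sqrt{\varphi}$ on the two $\varepsilon_1$-arrows lying $k$ blocks apart and then applies $(u,v)\mapsto(u+v,u-v)$, whereas you keep a single $\varphi$-twist and take $\pm\sqrt{\varphi}$-eigenspaces of the deck shift. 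The latter works provided the coefficient $\sqrt{\varphi}$ is placed on the correct sheet, with the sheets cut at the twist, as you indicate.
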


\begin{proof}
(1) By Proposition \ref{StringEquai}, $$F_{\sigma}\big(M(w,\varphi)\big)\ncong M(w,\varphi).$$
This implies that $\textup{Ind}(M(w,\varphi))$ is indecomposable, and consequently $\Phi\circ\textup{Ind}\big(M(w,\varphi)\big)$ is also indecomposable.

(2) If $w$ is reversed, we may assume that $k$ is the odd integer such that the length of $\delta$ equals $2k$ and $\delta_1=\delta_{k+1}=1$. Then we may view $N(\phi(w),\varphi)$ as obtained from the corresponding ex-band module by replacing both $$\varphi:\; U_{(2n,2k-1)}\rightarrow U_{(1,0)}\;\;\textup{and}\;\;\operatorname{id}:\; U_{(2n,k-1)}\rightarrow U_{(1,k)}$$
by $\sqrt{\varphi}$. In this $N(\phi(w),\varphi)$, we perform the basis change 
$$\begin{pmatrix}
    \operatorname{id} & \operatorname{id}\\
    \operatorname{id} & -\operatorname{id}
\end{pmatrix}:\;U_{j,l}\oplus U_{j,l+k}\rightarrow U'_{j,l}\oplus U''_{j,l}$$
for each $1\leq j\leq 2n$ and $0\leq l<k$. This yields an $H(\widetilde{CD}_{n})$-module isomorphism 
$$N(\phi(w),\varphi)\cong N'\oplus N''$$
where $$N'=\oplus_{1\leq j\leq 2n,\;0\leq l<k}\;U'_{j,l}\;\;\textup{and}\;\;N''=\oplus_{1\leq j\leq 2n,\;0\leq l<k}\;U''_{j,l}$$
as ${{\bf k}}$-vector spaces. Consequently, by Proposition \ref{FROMBdTOExBd}, we obtain
$$\Phi\circ\textup{Ind}\big(M(w,\varphi)\big)\cong N'\oplus N''\cong N\big(\theta(\delta')\cdot\gamma_{\textup{max}},\sqrt{\varphi}\big)\oplus N\big(\theta(\delta')\cdot\gamma_{\textup{max}},-\sqrt{\varphi}\big).$$

(3) In this case, we assume that the index $\delta$ has length $2k$. Since all $U_{j,l}$ are one-dimensional, we identify ${\bf k}x_i$ with $U_{j,l}$ via $i=2ln+j$. Here, we allow the index $i$ of $x_i$ (and later of $z_i$) to lie in $\mathbb{Z}_{4kn}$. 

For $\Phi\circ\textup{Ind}\big(M(w,1)\big)$, we perform the basis change
$$z_{i}=\left\{
\begin{array}{ll}
x_i+x_{-2n+1-i} & i\not\equiv n,n+1\;(\textup{mod}\;2n)\;\textup{and}\;-n+2\leq i<2kn-n, \\
x_i-x_{-2n+1-i} & i\not\equiv n,n+1\;(\textup{mod}\;2n)\;\textup{and}\;2kn-n+2\leq i<4kn-n, \\
x_i+x_{-2n-i} & i\equiv n\;(\textup{mod}\;2n)\;\textup{and}\;-n+2\leq i<2kn-n, \\
x_i-x_{-2n+2-i} & i\equiv n+1\;(\textup{mod}\;2n)\;\textup{and}\;-n+2\leq i<2kn-n, \\
x_i-x_{-2n-i} & i\equiv n\;(\textup{mod}\;2n)\;\textup{and}\;2kn-n+2\leq i<4kn-n, \\
x_i+x_{-2n+2-i} & i\equiv n+1(\textup{mod}\;2n)\;\textup{and}\;2kn-n+2\leq i<4kn-n, \\
2x_i    & i=2kn-n\textup{\;or\;} 4kn-n,\\
-2x_i   &i=2kn-n+1\textup{\;or\;} 4kn-n+1.
\end{array}
\right.$$

By assumption, $\delta_{k+1+i}=-\delta_{k-i}$ for each $0\leq i<k$. This yields an $H(\widetilde{CD}_{n})$-module isomorphism
$$N(\phi(w),\varphi)\cong N'\oplus N'',$$
where $N'$ has a ${{\bf k}}$-basis $$\{z_{4kn-n}\}\cup\{z_i\mid -n+2\leq i\leq 2kn-n\},$$
and $N''$ has a ${{\bf k}}$-basis $$\{z_{4kn-n+1}\}\cup\{z_i\mid 2kn-n+1\leq i< 4kn-n\}.$$
Consequently, by Proposition \ref{FROMBdTOExBd}, we obtain
$$\Phi\circ\textup{Ind}\big(M(w,1)\big)\cong N'\oplus N''\cong N\big(\gamma_{\textup{semi}+}^{-1}\cdot\theta(\delta')\cdot\gamma_{\textup{semi}+}\big)\oplus N\big(\gamma_{\textup{semi}-}^{-1}\cdot\theta(\delta')\cdot\gamma_{\textup{semi}-}\big).$$

The case $\Phi\circ\textup{Ind}\big(M(w,-1)\big)$ follows by a similar calculation, which we omit here.
\end{proof}

\begin{rem}
We establish the equivariant correspondence for all band modules except those lying in the second and higher layers of a homogeneous tube associated to a rotated band with eigenvalues $\pm1$. These modules are neither ex-string modules nor ex-band modules. However, since they do not affect the description of the index sets in the Auslander--Reiten quiver, we omit their correspondence to avoid tedious calculations.
\end{rem}

\bibliographystyle{amsplain}

\begin{thebibliography}{20}

\bibitem{Elements1}
I.~Assem, D.~Simson, and A.~Skowro\'nski.
\newblock {\em Elements of the representation theory of associative algebras.
  {V}ol. 1}, volume~65 of {\em London Mathematical Society Student Texts}.
\newblock Cambridge University Press, Cambridge, 2006.
\newblock Techniques of representation theory.

\bibitem{RepTheIII}
M.~Auslander and I.~Reiten.
\newblock Representation theory of {A}rtin algebras. {III}. {A}lmost split
  sequences.
\newblock {\em Comm. Algebra}, 3:239--294, 1975.

\bibitem{ARS1995}
M.~Auslander, I.~Reiten, and S.~O. Smal\o.
\newblock {\em Representation theory of {A}rtin algebras}, volume~36 of {\em
  Cambridge Studies in Advanced Mathematics}.
\newblock Cambridge University Press, Cambridge, 1995.

\bibitem{BGP1973}
I.~N. Bernstein, I.~M. Gel'fand, and V.~A. Ponomarev.
\newblock Coxeter functors, and {G}abriel's theorem.
\newblock {\em Uspehi Mat. Nauk}, 28(2(170)):19--33, 1973.

\bibitem{Bongartz1983}
K.~Bongartz.
\newblock Algebras and quadratic forms.
\newblock {\em J. London Math. Soc. (2)}, 28(3):461--469, 1983.

\bibitem{Bong1984}
K.~Bongartz.
\newblock A criterion for finite representation type.
\newblock {\em Math. Ann.}, 269(1):1--12, 1984.

\bibitem{Bongartz1984}
K.~Bongartz.
\newblock Critical simply connected algebras.
\newblock {\em Manuscripta Math.}, 46(1-3):117--136, 1984.

\bibitem{BG1982}
K.~Bongartz and P.~Gabriel.
\newblock Covering spaces in representation-theory.
\newblock {\em Invent. Math.}, 65(3):331--378, 1981/82.

\bibitem{BR1987}
M.~C.~R. Butler and C.~M. Ringel.
\newblock Auslander-{R}eiten sequences with few middle terms and applications
  to string algebras.
\newblock {\em Comm. Algebra}, 15(1-2):145--179, 1987.

\bibitem{Car}
R.~W. Carter.
\newblock {\em Lie algebras of finite and affine type}, volume~96 of {\em
  Cambridge Studies in Advanced Mathematics}.
\newblock Cambridge University Press, Cambridge, 2005.

\bibitem{CCR}
J.~Chen, X.-W. Chen, and S.~Ruan.
\newblock The dual actions, equivariant autoequivalences and stable tilting
  objects.
\newblock {\em Ann. Inst. Fourier (Grenoble)}, 70(6):2677--2736, 2020.

\bibitem{ChenXW2017}
X.-W. Chen.
\newblock Equivariantization and {S}erre duality {I}.
\newblock {\em Appl. Categ. Structures}, 25(4):539--568, 2017.

\bibitem{ChenWang2019}
X.-W. Chen and R.~Wang.
\newblock The finite {EI} categories of {C}artan type.
\newblock {\em J. Algebra}, 546:62--84, 2020.

\bibitem{DR1976}
V.~Dlab and C.~M. Ringel.
\newblock Indecomposable representations of graphs and algebras.
\newblock {\em Mem. Amer. Math. Soc.}, 6(173):v+57, 1976.

\bibitem{DRZ}
Q.~Dong, S.~Ruan, and H.~Zhang.
\newblock Equivariant approach to weighted projective curves.
\newblock {\em J. Algebra}, 608:388--411, 2022.

\bibitem{DF}
P.~Donovan and M.~R. Freislich.
\newblock {\em The representation theory of finite graphs and associated
  algebras}, volume No. 5 of {\em Carleton Mathematical Lecture Notes}.
\newblock Carleton University, Ottawa, ON, 1973.

\bibitem{Dro1977}
Y. A. Drozd.
\newblock Tame and wild matrix problems.
\newblock In {\em Matrix problems ({R}ussian)}, pages 104--114. Akad. Nauk
  Ukrain. SSR, Inst. Mat., Kiev, 1977.

\bibitem{Dro1980}
Y. A. Drozd.
\newblock Tame and wild matrix problems.
\newblock In {\em Representation theory, {II} ({P}roc. {S}econd {I}nternat.
  {C}onf., {C}arleton {U}niv., {O}ttawa, {O}nt., 1979)}, volume 832 of {\em
  Lecture Notes in Math.}, pages 242--258. Springer, Berlin, 1980.

\bibitem{Gab1972}
P.~Gabriel.
\newblock Unzerlegbare {D}arstellungen. {I}.
\newblock {\em Manuscripta Math.}, 6: 71--103; correction, ibid. 6(1972), 309, 1972.

% \bibitem{Gabriel1980}
% P.~Gabriel.
% \newblock Auslander-{R}eiten sequences and representation-finite algebras.
% \newblock In {\em Representation theory, {I} ({P}roc. {W}orkshop, {C}arleton {U}niv., {O}ttawa, {O}nt., 1979)}, volume 831 of {\em Lecture Notes in Math.}, pages 1--71. Springer, Berlin, 1980.

\bibitem{Gar1981}
P.~Gabriel.
\newblock The universal cover of a representation-finite algebra.
\newblock In {\em Representations of algebras ({P}uebla, 1980)}, volume 903 of {\em Lecture Notes in Math.}, pages 68--105. Springer, Berlin-New York, 1981.

\bibitem{GLSIII}
C.~Gei\ss, B.~Leclerc, and J.~Schr\"oer.
\newblock Quivers with relations for symmetrizable {C}artan matrices {III}:
  {C}onvolution algebras.
\newblock {\em Represent. Theory}, 20:375--413, 2016.

\bibitem{GLSI}
C.~Gei\ss, B.~Leclerc, and J.~Schr\"oer.
\newblock Quivers with relations for symmetrizable {C}artan matrices {I}:
  {F}oundations.
\newblock {\em Invent. Math.}, 209(1):61--158, 2017.

\bibitem{GLSII}
C.~Gei\ss, B.~Leclerc, and J.~Schr\"oer.
\newblock Quivers with relations for symmetrizable {C}artan matrices {II}:
  change of symmetrizers.
\newblock {\em Int. Math. Res. Not. IMRN}, (9):2866--2898, 2018.

\bibitem{GLSIV}
C.~Gei\ss, B.~Leclerc, and J.~Schr\"oer.
\newblock Quivers with relations for symmetrizable {C}artan matrices {IV}:
  crystal graphs and semicanonical functions.
\newblock {\em Selecta Math. (N.S.)}, 24(4):3283--3348, 2018.

\bibitem{GLSV}
C.~Gei\ss, B.~Leclerc, and J.~Schr\"oer.
\newblock Quivers with relations for symmetrizable {C}artan matrices {V}:
  {C}aldero-{C}hapoton formulas.
\newblock {\em Proc. Lond. Math. Soc. (3)}, 117(1):125--148, 2018.

\bibitem{HLS2023}
H.~Huang, Z.~Lin, and X.~Su.
\newblock Components of {AR}-quivers for string algebras of type
  {$\tilde{\Bbb{C}}$} and a conjecture by {G}eiss-{L}eclerc-{S}chr\"oer.
\newblock {\em J. Algebra}, 632:331--362, 2023.

\bibitem{Ler1987}
M.~Lersch.
\newblock Minimal wilde Algebren.
\newblock Diplomarbeit, D\"usseldorf, 1987.

\bibitem{LS}
Z.~Leszczy\'nski and A.~Skowro\'nski.
\newblock Tame tensor products of algebras.
\newblock {\em Colloq. Math.}, 98(1):125--145, 2003.

\bibitem{LS2025}
Z.~Lin and X.~Su.
\newblock Affine root systems, stable tubes, and a conjecture by
  {G}eiss-{L}eclerc-{S}chr\"oer.
\newblock {\em Int. Math. Res. Not. IMRN}, 2025(2):rnae279, 2025.

% \bibitem{Lus00}
% G.~Lusztig.
% \newblock Semicanonical bases arising from enveloping algebras.
% \newblock {\em Adv. Math.}, 151 (2000), no. 2, 129--139.

% \bibitem{MP1983}
% R.~Mart\'inez-Villa and J.~A. de~la Pe\~na.
% \newblock The universal cover of a quiver with relations.
% \newblock {\em J. Pure Appl. Algebra}, 30(3): 277--292, 1983.

\bibitem{Naz1973}
L. A. Nazarova.
\newblock Representations of quivers of infinite type.
\newblock {\em Izv. Akad. Nauk SSSR Ser. Mat.} 37 (1973), 752–791. In Russian; translated in Math. USSR-Izv. 7:4 (1973), 749--792.

\bibitem{NS1997}
R.~N\"orenberg and A.~Skowro\'nski.
\newblock Tame minimal non-polynomial growth simply connected algebras.
\newblock {\em Colloq. Math.}, 73(2): 301--330, 1997.

\bibitem{delaPena1990}
J.~A. de~la Pe\~na.
\newblock Algebras with hypercritical {T}its form.
\newblock In {\em Topics in algebra, {P}art 1 ({W}arsaw, 1988)}, volume 26,
  Part 1 of {\em Banach Center Publ.}, pages 353--369. PWN, Warsaw, 1990.

\bibitem{delaPena1991} 
J.~A. de~la Pe\~na.
\newblock On the dimension of the module-varieties of tame and wild algebras.
\newblock {\em Comm. Algebra}, 19(6):1795--1807, 1991.

\bibitem{ReitenRiedtmann1985Skew}
I.~Reiten and C.~Riedtmann.
\newblock Skew group algebras in the representation theory of {A}rtin algebras.
\newblock {\em J. Algebra}, 92(1): 224--282, 1985.

\bibitem{Covering15}
C.~Riedtmann.
\newblock Algebren, {D}arstellungsk\"{o}cher, \"{U}berlagerungen und zur\"{u}ck.
\newblock {\em Comment. Math. Helv.}, 55(2):199--224, 1980.

\bibitem{[Rin]}
C.~M. Ringel.
\newblock {\em Tame algebras and integral quadratic forms}, volume 1099 of {\em Lecture Notes in Mathematics}.
\newblock Springer-Verlag, Berlin, 1984.

\bibitem{Elements2}
D.~Simson and A.~Skowro\'nski.
\newblock {\em Elements of the representation theory of associative algebras. {V}ol. 2}, volume~71 of {\em London Mathematical Society Student Texts}.
\newblock Cambridge University Press, Cambridge, 2007.
\newblock Tubes and concealed algebras of Euclidean type.

\bibitem{Skow1987}
A.~Skowro\'nski.
\newblock Group algebras of polynomial growth.
\newblock {\em Manuscr. Math.}, 59:499–516, 1987.

\bibitem{Unger1990}
L.~Unger.
\newblock The concealed algebras of the minimal wild, hereditary algebras.
\newblock {\em Bayreuther Math. Schr.}, (31): 145--154, 1990.

\bibitem{Wittman1990}
J.~Wittman.
\newblock Verkleidete zahme und minimal wilde Algebren.
\newblock Diplomarbeit, Bayreuth, 1990.

\end{thebibliography}

\end{document}